\newcommand*\circleit[1]{\tikz[baseline=(char.base)]{
            \node[shape=circle,draw,inner sep=2pt] (char) {#1};}}
\newtheorem{theorem}{Theorem}[section]
\newtheorem{lemma}[theorem]{Lemma}
\newtheorem{proposition}[theorem]{Proposition}
\newtheorem{corollary}[theorem]{Corollary}
\theoremstyle{definition}
\newtheorem{definition}[theorem]{Definition}
\newtheorem{example}[theorem]{Example}
\newtheorem{remark}[theorem]{Remark}
\begin{document}

\title{Orientations and Topological Modular Forms with Level Structure}
\author{Dylan Wilson\footnote{The author was supported by NSF grant DGE-1324585 while completing this work.}}
\maketitle
\begin{abstract} Using the methods of Ando-Hopkins-Rezk, we describe the characteristic series arising from $E_\infty$-genera valued in topological modular forms with level structure. We give examples of such series for $tmf_0(N)$ and show that the Ochanine genus comes from an $E_\infty$-ring map. We also show that, away from 6, certain $tmf$ orientations of $MString$ descend to orientations of $MSpin$. 
\end{abstract}
\tableofcontents
\newpage
\section*{Introduction}
One of our jobs as topologists is to study invariants of manifolds. The most accessible of these are cobordism invariants, called {\it genera}, which are ring maps
\[
\phi: MSO_* \longrightarrow R
\]
Here $MSO$ is the Thom spectrum with coefficients the ring of cobordism classes of oriented-manifolds. Precomposing with the forgetful map $MU_* \longrightarrow MSO_*$ this genus determines a formal group law over $R$. If $R$ is a $\mathbb{Q}$-algebra, the genus is then determined by the logarithm of this formal group law which takes the form
\[
\log_\phi(x) = \sum_{i\ge 0} \frac{\phi(\mathbb{C}^{2i})}{2i+1} x^{2i+1}
\]
or equivalently by the Hirzebruch characteristic series $K_\phi(u) = \dfrac{u}{\log_\phi^{-1}(u)}$. 
\begin{example} The $L$-genus (i.e. the signature) with characteristic series
\[
K_{\textup{Sign}}(u) = \exp\left(\sum_{k\ge 2} \frac{2^{k+1}(2^{k-1}-1)B_k}{k} \frac{u^k}{k!}\right)
\]
where the Bernoulli numbes $B_k$ are defined by the generating series $\sum_{k\ge 0} B_k x^k/k! = \dfrac{x}{e^x -1}$. 
\end{example}
\begin{example} The $\widehat{A}$-genus, with characteristic series
\[
K_{\widehat{A}}(u) = \exp\left(-\sum_{k\ge 2} \frac{B_k}{k} \frac{u^k}{k!}\right)
\]
\end{example}
The first two genera have the useful property that they vanish on projective bundles associated to even-dimensional complex vector bundles. Such genera were classified in a beautiful theorem:
\begin{theorem}[Ochanine] The logarithm of a genus $\phi$ that vanishes on projective bundles of even-dimensional complex vector bundles is always of the form
\[
\textup{log}_{\phi}(x) = \int_0^x \frac{du}{\sqrt{1 - 2\delta u^2 + \epsilon u^4}}
\]
where $\delta, \epsilon \in R$. 
\end{theorem}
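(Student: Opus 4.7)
The plan is to translate the vanishing condition on projective bundles into a functional equation for $f := \log_\phi^{-1}$, then solve for $f$. As in the excerpt, $K_\phi(u) = u/f(u)$, and the explicit series for $\log_\phi$ shows that both $g := \log_\phi$ and $f$ are odd functions.

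First I would invoke the projective bundle formula for complex-oriented genera (via the forgetful map $MU \to MSO$, which rationally determines $\phi$). For $E = L_1 \oplus \cdots \oplus L_n \to B$ a sum of line bundles with Chern roots $x_i = c_1(L_i)$, the vertical tangent bundle is $T^{\mathrm{vert}} P(E) = p^*E \otimes \mathcal{O}(1) - \mathcal{O}$, giving
\[
\phi[P(E)] = \int_B \mathcal{K}_\phi(TB) \cdot p_*\Big(\prod_{i=1}^n K_\phi(x_i + \ell)\Big),
\]
with $\ell = c_1(\mathcal{O}(1))$. Applying the Lagrange-interpolation formula for $p_*$ along $P(E)\to B$ and simplifying using $K_\phi(u) = u/f(u)$ together with the oddness of $f$, the condition $\phi[P(E)] = 0$ for every base $B$ and every splitting translates into the residue identity
\[
\sum_{j=1}^n \; \prod_{i \neq j} \frac{1}{f(x_i - x_j)} \;=\; 0 \qquad \text{for every even } n.
\]
By the splitting principle this split case already captures the general one.

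For $n=2$ the identity merely reiterates that $f$ is odd, so the first genuine constraint is $n=4$. Setting $x_1 = 0$ and Taylor-expanding the resulting three-variable functional equation, I would match coefficients. The key observation is that an odd power series with $f(0) = 0$, $f'(0) = 1$ has only two truly free Taylor coefficients (at $x^3$ and $x^5$); the $n=4$ identity should force every higher coefficient to be a polynomial in these two, and in fact be equivalent to the quartic differential equation
\[
(f')^2 \;=\; 1 - 2\delta f^2 + \epsilon f^4
\]
for some $\delta, \epsilon \in R$ (corresponding up to scalar to those two free coefficients). Separating variables and integrating from $0$ then yields $g(x) = \int_0^x du / \sqrt{1 - 2\delta u^2 + \epsilon u^4}$, as desired.

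The main obstacle is the detailed analysis of the $n=4$ functional equation: extracting the clean quartic ODE requires care because $f$ sits in denominators and pole cancellations must be tracked precisely. A useful conceptual check is that $(f, f')$ parametrises the affine elliptic curve $y^2 = 1 - 2\delta u^2 + \epsilon u^4$; then $f$ is an elliptic function and the residue identities for all even $n \ge 4$ follow from the classical fact that the sum of residues of a meromorphic differential on a compact Riemann surface vanishes. This confirms both that no further constraints arise at larger $n$ and that the two-parameter family $(\delta, \epsilon)$ genuinely exhausts all such genera.
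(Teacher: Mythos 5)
The paper does not prove this statement; it is quoted as classical background (Ochanine's theorem, with Zagier's computation of the characteristic series cited separately), so there is no in-paper argument to compare against. Judged on its own, your outline is the standard route — essentially Ochanine's original argument as presented in Hirzebruch--Berger--Jung and Landweber's survey. The reduction is correct: the vertical tangent bundle identity $T^{\mathrm{vert}}P(E)\oplus\mathbb{C}\cong p^*E\otimes\mathcal{O}(1)$ together with the residue form of $p_*$ does give $p_*\bigl(\prod_i K_\phi(x_i+\ell)\bigr)=\sum_j\prod_{i\ne j}1/f(x_i-x_j)$, the $n=2$ case is indeed vacuous (it only encodes oddness of $f$), and once one has $(f')^2=1-2\delta f^2+\epsilon f^4$ the passage to $\log_\phi=\int_0^x(1-2\delta u^2+\epsilon u^4)^{-1/2}\,du$ is immediate from $g'(u)=1/f'(g(u))$.

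The genuine gap is that the entire content of the theorem sits in the step you defer: the claim that the $n=4$ residue identity forces the quartic ODE is asserted via the heuristic that ``only two coefficients should remain free,'' but an arbitrary odd $f$ has infinitely many free coefficients, and it is precisely this identity that must be shown to determine $a_7,a_9,\dots$ as polynomials in $a_3,a_5$ \emph{and} to do so compatibly with the specific relation $(f')^2=1-2\delta f^2+\epsilon f^4$. Two further points need attention before the coefficient-matching can even begin. First, you must justify that vanishing of $\phi[P(E)]$ for all $B$ and $E$ forces the universal symmetric expression to vanish identically — this requires exhibiting enough test bundles (sums of powers of hyperplane bundles over products of projective spaces suffice) to detect every coefficient; the splitting principle alone identifies the expression but does not detect it. Second, setting $x_1=0$ is a specialization that a priori loses information, so you must either verify that the specialized identity still yields the full ODE or work with the unspecialized four-variable identity (the classical treatments use specializations such as $(x,-x,y,-y)$, which reduce the identity to a two-variable addition formula for $f$ from which the ODE is extracted cleanly). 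Your closing remark that the elliptic-function/residue argument handles all even $n\ge 4$ is correct for nonsingular $(\delta,\epsilon)$, but note it addresses the converse implication, which is not part of the stated theorem.
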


In particular, there is a universal such with target the ring $\mathbb{Q}[\delta, \epsilon]$ of level 2 modular forms over $\mathbb{Q}$ called the \emph{Ochanine genus}. The characteristic series was computed by Zagier \cite{Zag}:
\[
K_{\textup{Och}}(u) = \textup{exp}\left(\sum_{k \ge 2} 2\widetilde{G}_{k} \frac{u^k}{k!}\right)
\]
where the $\widetilde{G}_{k}$ are certain Eisenstein series of level 2. 

Atiyah-Bott-Shapiro showed that the $\widehat{A}$-genus on Spin-manifolds arose from a finer, integral invariant on families, neatly expressed by a map of spectra
\[
\textup{MSpin} \longrightarrow KO
\] 
and this motivated the search for a cohomology theory to be the target of the Ochanine genus. Such a theory was built by Landweber-Ravenel-Stong and dubbed elliptic cohomology. Around the same time, Witten introduced a genus on Spin-manifolds that took values in the ring of quasi-modular forms. He proved that when the Spin-manifold satisfies $p_1(TM) = 0$ the genus is actually valued in modular forms. The characteristic series for this genus was also computed by Zagier \cite{Zag} in terms of the classical Eisenstein series $G_k$:
\[
K_{\textup{Wit}}(u) = \textup{exp}\left(\sum_{k \ge 2} 2G_{k}\frac{u^k}{k!}\right)
\]
The requirement on the first Pontryagin class comes from the fact that $G_2$ is not a modular form.

In Hopkins' 1994 ICM address he proposed that the Witten genus comes from a map of spectra into the then conjectural spectrum of topological modular forms
\[
\sigma: \textup{MString} \longrightarrow tmf
\]
where $MString_*$ is the cobordism ring for Spin manifolds such that the generator $p_1/2\in H^4(Bspin, \mathbb{Z})$ vanishes on their stable normal bundle. 

Since then, work of Ando, Goerss, Hopkins, Mahowald, Rezk, Strickland, and others has culminated in a construction of this map of $E_\infty$-ring spectra. Furthermore, Ando-Hopkins-Rezk \cite{AHR} determined all possible characteristic series of $E_\infty$-genera on String manifolds valued in topological modular forms. Their method of proof sheds light on the appearance of the Bernoulli numbers and the Eisenstein series in the formulae for characteristic series above: it is crucial that both sequences can be $p$-adically interpolated. 

Their work settles the question of genera taking values in modular forms of level 1, but leaves open questions about genera valued in higher level modular forms, including the Ochanine genus and others constructed in work of Hirzebruch \cite{Hir}.

Thanks to work of Hill and Lawson \cite{HL}, we now have candidates for $E_\infty$-rings that deserve to be called topological modular forms with level structure, and we can ask about constructing $E_\infty$-maps out of Thom spectra
\[
MG \longrightarrow tmf(\Gamma)
\]
The purpose of this paper is to apply the machinery of \cite{AHR} and \cite{ABGHR} to the case of topological modular forms with level structure and construct various genera. In order to state the main result we will recall a bit of notation. If $\Gamma \subset GL_2(\mathbb{Z}/N)$ there is a notion of modular forms with level structure $\Gamma$. When
\[
\Gamma = \Gamma_1(N):= \left\{ \begin{pmatrix} 1&b\\ 0&1\end{pmatrix} : \right\}
\]
then the group $(\mathbb{Z}/N)^{\times}$ acts on $MF_*(\Gamma_1(N))$ and there is a nice formula for the operation $\psi^p$ in terms of $q$-expansions. Indeed, if $g_k \in MF_k(\Gamma_1(N))$ has a $q$-expansion given by $g_k(q) = \sum a_nq^n$ and the action of $p \in (\mathbb{Z}/N)^{\times}$ is given by
\[
(g_k)\vert_{\langle p\rangle}(q) = \sum_{n\ge 0} b_n q^n
\]
then we have:
\[
(\psi^pg_k)(q) = p^k\sum_{n \ge 0} b_nq^{pn}
\]
All of this will be reviewed in detail below, but for now we at least state the main result. 
\begin{theorem}\label{theorem:main} Let $\Gamma \subset GL_2(\mathbb{Z}/N)$ be a level structure, and consider a sequence of modular forms $\{g_k\}_{k \ge 2} \in MF_*(\Gamma) \otimes \mathbb{Q}$. Then 
\begin{enumerate}
\item There exists an $E_\infty$-ring map $\textup{MString}[1/N] \longrightarrow tmf(\Gamma)$ with associated characteristic series $\exp(2\sum_{k \ge 4}g_k \frac{u^k}{k!})$ if and only if 
\begin{enumerate}
\item For each $i\ge 2$, $g_{2i+1} = 0$, 
\item For every prime $p \!\not\vert N$ and every unit $\lambda \in \mathbb{Z}_p^{\times}/\{\pm 1\}$ the sequence of rational $p$-adic modular forms  $\{(1-\lambda^k)(1-\frac{1}{p}\psi^p)g_k\}_{k \ge 4} \in MF_{p, *}(\Gamma) \otimes \mathbb{Q}$ satisfies the generalized Kummer congruences (\ref{definition:kummer}),
\item For every prime $p \not\vert N$, $T_pg_k = (1+p^{k-1})g_k$, where $T_p$ is the $p$th Hecke operator,
\item We have the congruence $g_k \equiv G_k$ \textup{mod} $MF_*(\Gamma, \mathbb{Z}[1/N, \zeta_N])$, where $G_k$ is the unnormalized Eisenstein series of weight $k$. 
\end{enumerate}
\item Suppose that $2\vert N$. There exists an $E_\infty$-ring map $\textup{MSpin}[1/N] \longrightarrow tmf(\Gamma)$ with associated characteristic series $\exp(2\sum_{k \ge 2}g_k \frac{u^k}{k!})$ if and only if the conditions (a)-(c) above are satisfied for $k\ge 2$ and we have the congruence
\[
g_k \equiv \widetilde{G}_k \textup{ mod } MF_*(\Gamma, \mathbb{Z}[1/N, \zeta_N])
\]
where $\widetilde{G}_k$ is the level 2 modular form with $q$-expansion $\widetilde{G}_k = -\dfrac{B_k}{2k} + \sum_{n\ge 1} q^n \sum_{d\vert n} (-1)^{n/d}d^{k-1}$. 
\end{enumerate}
In each instance the set of homotopy classes of $E_\infty$-ring maps corresponding to a given characteristic series is a non-empty torsor for the group
\[
[\Sigma KO^{\wedge}_2, L_{K(1)}L_{K(2)}tmf(\Gamma)^{\wedge}_2]
\]
which has exponent at most 2. In particular, upon inverting $2$, the homotopy class of an $E_\infty$-genus valued in $tmf(\Gamma)$ is determined by its characteristic series. 
\end{theorem}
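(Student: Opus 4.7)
The plan is to adapt the Ando-Hopkins-Rezk (AHR) strategy, combined with the Ando-Blumberg-Gepner-Hopkins-Rezk (ABGHR) identification of $E_\infty$-Thom spectra with $E_\infty$-maps into spectra of units. An $E_\infty$-ring map $\textup{MString}[1/N] \to tmf(\Gamma)$ (resp.\ $\textup{MSpin}[1/N] \to tmf(\Gamma)$) is equivalent to a map of connective spectra $bstring[1/N] \to gl_1(tmf(\Gamma))$ (resp.\ $bspin[1/N] \to gl_1(tmf(\Gamma))$). I would then smash with the chromatic/arithmetic fracture square for $gl_1(tmf(\Gamma))$ and reduce the existence problem to assembling a rational map, a $K(1)$-local map at each prime $p \nmid N$, a $K(2)$-local map (automatic from the structure of $tmf(\Gamma)$), and the pairwise compatibilities between them.

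For the rational piece, a map $bstring_{\mathbb{Q}} \to gl_1(tmf(\Gamma))_{\mathbb{Q}}$ is determined by its Hirzebruch characteristic series, and every even power series of the prescribed form $\exp(2\sum_{k \ge 4} g_k u^k/k!)$ with $g_k \in MF_*(\Gamma)\otimes\mathbb{Q}$ arises from a unique such map; the constraint $g_{2i+1}=0$ in (a) is the even-parity condition for real orientation-theoretic characteristic series. The analogous analysis for $bspin$ allows all weights $k \ge 2$.

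The principal work is the $K(1)$-local analysis at each prime $p \nmid N$, where $K(1)$-local $E_\infty$-orientations correspond, via the AHR measure-theoretic dictionary, to $\mathbb{Z}_p^\times$-measures valued in rational $p$-adic modular forms of level $\Gamma$. Condition (b) is precisely the Mahler/Kummer criterion that a sequence of such forms comes from such a measure once one twists by $(1-\lambda^k)(1-\tfrac{1}{p}\psi^p)$ to kill contributions at the cusps and at $\pm 1 \in \mathbb{Z}_p^\times$; here the action of $(\mathbb{Z}/N)^\times$ through $\psi^p$ is essential, and this is why the formula for $\psi^p$ on $q$-expansions had to be set up beforehand. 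Condition (c), $T_p g_k = (1+p^{k-1})g_k$, expresses that the sequence has Eisenstein-type Hecke eigenvalues, the only way the measure can match the one produced by an $E_\infty$-orientation. Condition (d) enforces that the $q$-expansion agrees, modulo integral modular forms over $\mathbb{Z}[1/N,\zeta_N]$, with the classical Eisenstein series, providing the compatibility needed to glue the $K(1)$-local map to the rational one.

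The hardest step will be proving sufficiency of (b)--(d) for constructing genuine $K(1)$-local $E_\infty$-maps with level structure; this requires extending the AHR moduli-theoretic arguments (using Serre-Tate theory and the $K(1)$-local logarithm) to generalized elliptic curves with $\Gamma$-structure, for which the Hill-Lawson construction of $tmf(\Gamma)$ provides the needed input. For the $\textup{MSpin}$ case, factoring through $bspin$ rather than $bstring$ requires invariance of the measure under the $[-1]$-automorphism of the elliptic curve, which at $p=2$ forces $2 \mid N$ and replaces $G_k$ by the level-2 Eisenstein series $\widetilde{G}_k$ in condition (d). Finally, the torsor statement follows from standard obstruction theory: once the characteristic series and compatibility data are fixed, the remaining ambiguity in the $E_\infty$-lift is captured by maps $\Sigma KO_2^\wedge \to L_{K(1)}L_{K(2)}tmf(\Gamma)_2^\wedge$ arising from the $K(1)\wedge K(2)$ corner of the fracture square, and the exponent-$2$ bound comes from the $C_2$-action by complex conjugation on $KO_2^\wedge$.
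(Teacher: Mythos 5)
Your high-level framework (ABGHR to convert orientations into nullhomotopy problems for $gl_1$, arithmetic fracture, and the AHR measure dictionary for the $K(1)$-local piece) matches the paper's. But there is a genuine gap in your reduction of the $p$-complete problem to ``a $K(1)$-local map, a $K(2)$-local map, and compatibilities'': $gl_1 tmf(\Gamma)^{\wedge}_p$ is \emph{not} $K(1)\vee K(2)$-local. The fiber $F$ of $gl_1 tmf(\Gamma)^{\wedge}_p \to L_{K(1)\vee K(2)}gl_1 tmf(\Gamma)^{\wedge}_p$ is $3$-truncated (by the AHR theorem on $d_n(R)$ together with the identification of $L_2 \to L_{K(1)\vee K(2)}$ with $p$-completion), and since $bspin$ is only $3$-connected, $[bspin,\Sigma F]\cong \pi_3F$ \emph{never} vanishes. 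So a nullhomotopy of $gl_1S/spin \to L_{K(1)\vee K(2)}gl_1tmf(\Gamma)^{\wedge}_p$ need not lift to $gl_1tmf(\Gamma)^{\wedge}_p$, and one must compute the discrepancy map into $\textup{Nulls}(spin,\Sigma F)$. This is where the paper spends all of \S4, identifying $\pi_3F$ via weight-$2$ modular forms and showing that the lift exists precisely when $g_2$ lies in the image of $MF_2(\Gamma)\otimes\mathbb{Z}_p \to MF_{p,2}(\Gamma)$ --- i.e.\ the weight-$2$ instance of your condition (d) is an honest obstruction, not merely a gluing normalization. For $MString$ the issue is invisible because $string$ is $6$-connected and $[string,F]=0$ (which is why AHR never confront it), but as written your argument for part (2) only produces a $K(1)\vee K(2)$-local orientation of $MSpin$, not a $p$-complete one.

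Two smaller misattributions. First, the hypothesis $2\mid N$ is not forced by $[-1]$-invariance of the measure --- passing to $\mathbb{Z}_p^\times/\{\pm1\}$ is the $KO$ versus $K$ distinction and happens at every prime; rather, $2\mid N$ removes $p=2$ from the set of primes where the delicate $\pi_3F$ analysis is needed (at $p=2$ torsion in $\pi_3F$ genuinely appears) and makes $\widetilde{G}_k$ a form of level $\Gamma$. Second, condition (c) is not a generic ``Eisenstein eigenvalue'' matching: it arises from the gluing over $L_{K(1)}L_{K(2)}$, where comparing Rezk's $K(1)$- and $K(2)$-local logarithms produces the topological Atkin operator and the condition $(1-U_p)g_k^{(p)}=0$, which via the $p$-adic Eichler--Shimura relation $T_p = U_p + \tfrac{1}{p}\psi^p$ and $U_p\psi^p = p^k$ is equivalent to $T_pg_k=(1+p^{k-1})g_k$. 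Likewise the exponent-$2$ bound on the torsor is read off from the $K(1)$-local Adams--Novikov computation $\pi_1F(KO_2,L_{K(1)}L_{K(2)}tmf(\Gamma))=M/2$ with $M$ torsion-free, not from complex conjugation on $KO^{\wedge}_2$. Finally, note that sufficiency still requires exhibiting at least one orientation whose series satisfies the congruence (via Katz's Eisenstein measure) before the total-mass condition on the measures can be traded for condition (d).
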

\begin{corollary} Up to homotopy, there is a unique $E_\infty$-ring map $\textup{MSpin}[1/2] \longrightarrow tmf_0(2)$ refining the Ochanine genus. 
\end{corollary}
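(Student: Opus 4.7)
The plan is to deduce the corollary directly from part~(2) of Theorem~\ref{theorem:main} by taking $N=2$, $\Gamma = \Gamma_0(2)$, and $g_k = \widetilde{G}_k$. By Zagier's formula recalled in the introduction, the associated characteristic series $\exp\bigl(2\sum_{k\ge 2}\widetilde{G}_k u^k/k!\bigr)$ is exactly $K_{\textup{Och}}(u)$, so any $E_\infty$-map produced this way automatically refines the Ochanine genus on homotopy groups. Existence therefore reduces to verifying conditions~(a)--(d) for this sequence; uniqueness up to homotopy will follow from the final torsor statement of the theorem once $2$ is inverted in the source.

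Conditions~(a), (c), and~(d) are either tautological or standard. Condition~(d) is immediate since $\widetilde{G}_k \equiv \widetilde{G}_k$. For~(a), the form $\widetilde{G}_k$ vanishes for all odd $k \ge 3$ because $-I \in \Gamma_0(2)$ forces every modular form of odd weight to be zero at this level. For~(c), the identity $T_p\widetilde{G}_k = (1+p^{k-1})\widetilde{G}_k$ at each odd prime~$p$ is the standard Hecke eigenvalue equation for normalized Eisenstein series and can be checked directly from the divisor-sum $q$-expansion.

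The substantive point is condition~(b): the generalized Kummer congruences for the family $\{(1-\lambda^k)(1-\tfrac{1}{p}\psi^p)\widetilde{G}_k\}_{k\ge 2}$ at each odd prime~$p$ and each $\lambda \in \mathbb{Z}_p^\times/\{\pm 1\}$. My approach is to realize this family as the moments of a $p$-adic measure on $\mathbb{Z}_p^\times$. The stabilization $(1-\tfrac{1}{p}\psi^p)$ produces the $p$-depleted Eisenstein series, whose non-constant $q$-coefficients are sums over divisors coprime to~$p$ and are therefore manifestly $p$-adically analytic in~$k$, while the twist $(1-\lambda^k)$ removes the denominator obstruction in the constant term, identifying it up to a unit with a value of the Kubota--Leopoldt $p$-adic $L$-function. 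The Kummer congruences then follow coefficient-by-coefficient; this analytic input is the main obstacle in the proof. Once~(b) is established, Theorem~\ref{theorem:main}(2) furnishes the desired map, and because the torsor $[\Sigma KO^\wedge_2, L_{K(1)}L_{K(2)}tmf_0(2)^\wedge_2]$ has exponent at most~$2$, it is killed upon inverting~$2$ in $\textup{MSpin}$, yielding uniqueness.
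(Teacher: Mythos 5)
Your proposal follows essentially the same route as the paper: invoke Theorem~\ref{theorem:main}(2) with $g_k=\widetilde{G}_k$, note via Zagier's formula (recorded in the paper as \ref{theorem:characteristic-series-formulae}) that this characteristic series is $K_{\textup{Och}}$, reduce existence to the Kummer congruences for the twisted $p$-depleted Eisenstein family --- which the paper obtains by citing Katz's Eisenstein measure rather than re-deriving it, but the mechanism (interpolation of the depleted divisor sums plus the Kubota--Leopoldt value in the constant term) is exactly the one you describe --- and get uniqueness from the vanishing of the $2$-complete torsor group since $tmf_0(2)$ is a $\mathbb{Z}[1/2]$-algebra. This matches the paper's argument in \S 5.2 together with Proposition~\ref{proposition:congruences}.
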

\begin{remark} This answers in the affirmative the Question 1.1 of \cite{HL}.
\end{remark}
A more careful analysis at the prime $2$ reveals the following
\begin{theorem} Let $\textup{MSwing}$ denote the Thom spectrum associated to the fiber of the map
\[
bspin \stackrel{w_4}{\longrightarrow} \Sigma^4H\mathbb{Z}/2
\]
Then Theorem \ref{theorem:main} holds mutatis mutandis for $E_\infty$-ring maps $\textup{MSwing}[1/N] \longrightarrow tmf(\Gamma)$ when $3 \vert N$. 
\end{theorem}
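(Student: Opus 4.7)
The plan is to carry through the proof of Theorem \ref{theorem:main} with $\textup{MSwing}$ in place of $\textup{MString}$, and to identify the steps that require modification. A sequence $\{g_k\}$ satisfying conditions (a)--(d) already produces an $E_\infty$-ring map $\textup{MString}[1/N] \to tmf(\Gamma)$; the content here is that, under the extra hypothesis $3 \mid N$, this map lifts uniquely (up to the stated torsion group) to $\textup{MSwing}[1/N] \to tmf(\Gamma)$. Via the AHR--Hill--Lawson obstruction theory, this lifting question decomposes prime-by-prime for $p \nmid N$.

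At odd primes $p \nmid N$, the class $w_4$ vanishes automatically, so $bswing_{(p)} \simeq bspin_{(p)}$, and an MSwing-orientation is an MSpin-orientation at $p$. For $p > 3$ these exist under conditions (a)--(d), since $tmf(\Gamma)_{(p)}$ is Landweber-exact there and MSpin-orientations of complex-orientable elliptic cohomology theories are standard. At $p = 3$, however, an MSpin-orientation of $tmf(\Gamma)_{(3)}$ does not generally exist (the exotic $3$-primary torsion in $tmf(\Gamma)$ is invisible to $\textup{MSpin}$), which is precisely why one hypothesizes $3 \mid N$: inverting $3$ sidesteps this obstruction, just as $2 \mid N$ did for MSpin-orientations in part (2) of Theorem \ref{theorem:main}.

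At the prime $2$ (assuming $2 \nmid N$), the map $bstring \to bswing$ has homotopy fiber $K(\mathbb{Z}, 3)$, arising from the short exact sequence $\mathbb{Z} \xrightarrow{\cdot 2} \mathbb{Z} \to \mathbb{Z}/2$. Thomifying, one obtains a cofiber sequence relating $\textup{MString}$ and $\textup{MSwing}$ at the prime $2$, and the MString-orientation produced by Theorem \ref{theorem:main} lifts to an MSwing-orientation precisely when its composite with the inclusion of the $K(\mathbb{Z},3)$-Thom summand is null as an $E_\infty$-map. I would use the logarithm formulae of AHR at $p=2$, together with conditions (a)--(d) applied to $\{g_k\}$, to force this vanishing. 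The uniqueness statement (torsor structure over $[\Sigma KO^{\wedge}_2, L_{K(1)}L_{K(2)}tmf(\Gamma)^{\wedge}_2]$) is then inherited from Theorem \ref{theorem:main}, since the forgetful map from MSwing- to MString-orientations is injective on homotopy classes.

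The main obstacle is this $2$-local lifting step. Concretely, I expect it to require computing (or carefully estimating) the $K(2)$-local $tmf(\Gamma)^{\wedge}_2$-cohomology of $K(\mathbb{Z}, 3)$, and verifying that the obstruction class associated to a sequence $\{g_k\}$ satisfying (a)--(d) vanishes. This parallels, but is technically distinct from, the analogous calculation involving $K(\mathbb{Z}, 4)$ and the fiber sequence $bstring \to bspin \to K(\mathbb{Z},4)$ that enters the proof of Theorem \ref{theorem:main}; the new input is that the relevant cells of $K(\mathbb{Z},3)$ at $p=2$ pair trivially against the characteristic series data once conditions (a)--(d) are imposed.
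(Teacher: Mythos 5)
Your global strategy---fracture over the primes $p\nmid N$, note that at odd $p$ the class $w_4$ dies so that $bswing_{(p)}\simeq bspin_{(p)}$ and the question reduces to the $\textup{MSpin}$ case, and observe that $3\mid N$ removes the one odd prime where that case is delicate---matches the paper. But the step carrying all the new content, the prime $2$, rests on a false premise. Thom spectra do not convert the fiber sequence $K(\mathbb{Z},3)\to BString\to BSwing$ into a cofiber sequence of spectra: there is no ``$K(\mathbb{Z},3)$-Thom summand'' of $\textup{MString}$, and no computation of the $K(2)$-local $tmf(\Gamma)$-cohomology of an Eilenberg--MacLane space enters the argument anywhere (nor does any $K(\mathbb{Z},4)$-calculation enter the proof of Theorem \ref{theorem:main}: the string case is disposed of because $[string,F]=0$, which is immediate from connectivity once one knows $\pi_qF=0$ for $q\ge 4$). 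So the obstruction class you propose to show vanishes is not defined, and the plan has nothing to act on.

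What is actually needed at $p=2$ is the following. By Lemma \ref{lemma:pi-4}, the only obstruction to descending a nullhomotopy from $L_{K(1)\vee K(2)}gl_1tmf(\Gamma)^{\wedge}_2$ to $gl_1tmf(\Gamma)^{\wedge}_2$ is the image $\phi\in\pi_3F$ of a generator of $\pi_4bg$, where $F=\textup{fib}\bigl(gl_1tmf(\Gamma)^{\wedge}_2\to L_{K(1)\vee K(2)}gl_1tmf(\Gamma)^{\wedge}_2\bigr)$. The entire point of replacing $bspin$ by $bswing$ is that $\pi_4bswing\to\pi_4bspin$ is multiplication by $2$, so the swing obstruction is $2\phi$. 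One then bounds the torsion of $\pi_3F$ by $\mathbb{Z}/2$, using the exact sequence
\[
\pi_4gl_1tmf(\Gamma)^{\wedge}_2\longrightarrow\pi_4L_{K(1)\vee K(2)}gl_1tmf(\Gamma)^{\wedge}_2\longrightarrow\pi_3F\longrightarrow\pi_3gl_1tmf(\Gamma)^{\wedge}_2,
\]
the computation $\pi_3tmf_0(3)^{\wedge}_2=\mathbb{Z}/2$ from \cite{MR}, and the torsion-freeness of $\textup{coker}(1-\psi^2)$ as in Corollary \ref{corollary:torsion-free}. Hence $2\phi=0$ if and only if $\phi$ dies in $\pi_3F/\textup{tors}$, and the argument of Theorem \ref{theorem:p-complete} identifies that condition with $g_2\in MF_2(\Gamma)\otimes\mathbb{Z}_2$; the torsor statement comes from the $K(1)\vee K(2)$-local analysis, which is identical for $spin$, $swing$, and $string$ since all three $K(1)$-localize to $KO_2$. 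Two smaller corrections: at $p\ge 5$ the existence of $E_\infty$ $\textup{MSpin}$-orientations is not a consequence of Landweber exactness (which at best produces homotopy-commutative complex orientations), but of the torsion-freeness of $\pi_3F$ (Corollary \ref{corollary:torsion-free}) combined with Theorem \ref{theorem:p-complete}; and the hypothesis $3\mid N$ is imposed to avoid possible $3$-torsion in $\pi_3F$ when the moduli problem is not representable (\S 4.5), not because of torsion in $tmf(\Gamma)$ invisible to $\textup{MSpin}$.
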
 
\begin{remark} It seems plausible that such orientations reproduce the real $E$-theory orientations found in \cite{KS}. 
\end{remark}
A curious corollary of the proof of the main theorem is the following
\begin{theorem} There is an $E_\infty$-ring map
\[
\textup{MSpin} \longrightarrow tmf[1/6]
\]
\end{theorem}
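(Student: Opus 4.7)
The plan is to apply the proof machinery underlying Theorem \ref{theorem:main} with trivial level structure $\Gamma = SL_2(\mathbb{Z})$ and Spin input, working $p$-locally at each prime $p \ge 5$ and then gluing via the arithmetic fracture square. The only reason part (2) of Theorem \ref{theorem:main} demands $2 \mid N$ is to handle the $k=2$ term of the Spin characteristic series: $G_2$ is not a classical modular form of level $1$, and the statement therefore uses $\widetilde{G}_2 \in MF_2(\Gamma_0(2))$ instead. Once $6$ is inverted this obstacle evaporates, because $G_2$ is a $p$-adic modular form of level $1$ for every $p \ge 5$ by Serre's theory.

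Concretely, I would take the Witten-like characteristic series
\[
K(u) = \exp\!\left(2\sum_{k\ge 2} G_k \frac{u^k}{k!}\right)
\]
and verify the $p$-local analogs of conditions (a)--(d) of Theorem \ref{theorem:main}(2) with $g_k = G_k$, and with $G_k$ replacing $\widetilde{G}_k$ in (d). Condition (a) is the vanishing of odd-weight level-$1$ Eisenstein series. Condition (b), the Kummer congruences for $\{(1-\lambda^k)(1-\tfrac{1}{p}\psi^p)G_k\}$, is classical von Staudt--Clausen for $k \ge 4$ and extends through $k=2$ via Serre's $p$-adic interpolation of Eisenstein series. Condition (c), the Hecke eigenform relation $T_p G_k = (1+p^{k-1})G_k$, is standard for all even $k \ge 2$ in the appropriate ring of $p$-adic modular forms. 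Condition (d) reduces to a tautology at trivial level once $2$ is invertible. This produces, for each prime $p \ge 5$, a $p$-local $E_\infty$-ring map $\textup{MSpin}_{(p)} \to tmf_{(p)}$, while rationally the series $K(u)$ itself defines a map $\textup{MSpin}_* \otimes \mathbb{Q} \to tmf_*[1/6] \otimes \mathbb{Q}$.

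To glue these into a single $E_\infty$-ring map $\textup{MSpin} \to tmf[1/6]$ I would invoke the arithmetic fracture square together with the torsor statement at the end of Theorem \ref{theorem:main}: since the ambiguity in an $E_\infty$-refinement of a prescribed characteristic series has exponent at most $2$ and $2$ is invertible in $tmf[1/6]$, the $p$-local maps are uniquely determined by the single series $K(u)$ and are therefore automatically coherent. The hard part will be confirming that the proof of Theorem \ref{theorem:main} really does adapt to trivial level at $p \ge 5$ with $G_2$ playing the role of a $p$-adic rather than classical modular form. Concretely, the $K(2)$-local step of the AHR argument must accept $G_2$ as a legitimate input, i.e., define a section of the relevant sheaf of $E_\infty$-algebras on the formal supersingular locus satisfying the Hecke eigenform relation there. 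For $p \ge 5$ this should follow because $G_2$ can be rigidified at every supersingular point using the Serre--Tate coordinate, so no level-$2$ datum is required, and the argument otherwise runs in parallel with the String case treated by the main theorem.
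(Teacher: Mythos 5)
Your proposed characteristic series is exactly the one the paper proves \emph{cannot} be realized. Theorem \ref{theorem:tmf-away-from-6} has two halves: existence of \emph{some} $E_\infty$-map $\textup{MSpin}\to tmf[1/6]$, and non-existence of any such map refining the Witten genus, i.e.\ with series $\exp(2\sum_{k\ge 2}G_k u^k/k!)$. The obstruction is elementary: since $\pi_4 tmf[1/6]=0$, any genus valued in $tmf[1/6]$ kills $MSpin_4$, so its composite with $tmf[1/6]\to KO^{\wedge}_5$ (which by the characteristic series computation and \cite[7.12]{AHR} would have to be the $\widehat{A}$-genus) would vanish on all $4$-dimensional Spin manifolds; but $\widehat{A}(K3)=2$. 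The structural source of this failure is the content of \S 4: for Spin (unlike String) orientations there is an obstruction beyond the $K(1)\vee K(2)$-local conditions, living in $\pi_3F$ where $F=\textup{fib}(gl_1tmf^{\wedge}_p\to L_{K(1)\vee K(2)}gl_1tmf^{\wedge}_p)$, and Theorem \ref{theorem:p-complete} shows it vanishes if and only if $g_2$ lies in the image of the \emph{classical} forms $MF_2(\Gamma)\otimes\mathbb{Z}_p\to MF_{p,2}(\Gamma)$. At level $1$ we have $MF_2=0$, so $g_2=0$ is forced, while $G_2\ne 0$ as a $p$-adic form. Your claim that the obstacle ``evaporates'' because $G_2$ is a $p$-adic modular form for $p\ge 5$ is precisely backwards: $p$-adic modularity of $G_2$ takes care of the $K(1)$-local Kummer-congruence conditions, but the degree-$4$ obstruction demands classicality, which $G_2$ does not have. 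Your discussion of rigidifying $G_2$ at supersingular points via Serre--Tate coordinates addresses the $K(2)$-local step, which is not where the problem lives; the problem is the failure of $gl_1$ to commute with localization in low degrees.

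What the paper actually does for existence is perturb the Eisenstein sequence: it takes $g_k=(1-q_k)G_k$ with integers $q_k$ satisfying $q_2=1$ (so that $g_2=0$), $q_{\textup{odd}}=0$, and such that the $q_{2k}$ extend to measures on $\mathbb{Z}_p^{\times}/\{\pm 1\}$ simultaneously for all $p\ge 5$; uncountably many such sequences exist by the interpolation results cited as [Spr]. The resulting maps exist but deliberately do \emph{not} restrict to the Witten genus on $\textup{MString}$. Your fracture-square gluing and the use of the exponent-$2$ torsor to get coherence after inverting $2$ are fine in spirit and match the paper, but they are moot given that the $p$-complete existence step fails for your choice of $g_2$.
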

\begin{remark} No such map exists refining the Witten genus, even as homotopy commutative spectra. See Theorem \ref{theorem:tmf-away-from-6} below. 
\end{remark}
We now turn to a more precise overview of our results and outline of the proofs. In \S 1 we give a quick introduction to topological modular forms and elliptic curves with level structure, and review the obstruction theory for $E_\infty$-genera given in \cite{ABGHR}. Our problem is to describe the space of dotted arrows
\[
\xymatrix{
g \ar[r] & gl_1S \ar[r]\ar[d] & gl_1S/g \ar@{-->}[dl]\\
& gl_1tmf(\Gamma)
}
\]
where $g$ is a delooping of either $Spin$ or $String$, $gl_1R$ denotes the spectrum of units for an $E_\infty$ ring, and the top line is a fiber sequence. 

In \S 2 we analyze the substantially easier problem of parameterizing the space of dotted arrows
\[
\xymatrix{
g \ar[r] & gl_1S \ar[r]\ar[d] & gl_1S/g \ar@{-->}[dl]\\
& L_{K(1)}gl_1tmf(\Gamma)^{\wedge}_p
}
\]
where $p$ is a prime not dividing the level, and we have localized with respect to Morava $K$-theory. Following Ando-Hopkins-Rezk, we use the solution to the Adams conjecture to reduce this to a computation of $L_{K(1)}tmf(\Gamma)^0(bspin)$ which we give in terms of measures on $\mathbb{Z}_p^{\times}/\{\pm 1\}$ valued in $p$-adic modular forms of level $\Gamma$. This sheds light on the appearance of Eisenstein series and Bernoulli numbers in the characteristic series of these genera. We have included an especially detailed description of this computation as it has not yet appeared elsewhere in the literature, even for the case of $tmf$. 

In \S 3 we review Rezk's logarithm, construct a topological lift of the Atkin operator, and describe the space of dotted arrows
\[
\xymatrix{
g \ar[r] & gl_1S \ar[r]\ar[d] & gl_1S/g \ar@{-->}[dl]\\
& L_{K(1)\vee K(2)}gl_1tmf(\Gamma)^{\wedge}_p
}
\]
as certain measures valued in $p$-adic modular forms of level $\Gamma$ which are fixed by the Atkin operator. Up to now there is no significant difference from the case of $tmf$. When replacing the $K(1)\vee K(2)$-local case with the $p$-complete case, however, our story diverges from that of \cite{AHR}. Indeed, the difference is measured by the fiber $F$ in the sequence
\[
F \longrightarrow gl_1tmf(\Gamma)^{\wedge}_p\longrightarrow L_{K(1)\vee K(2)}gl_1tmf(\Gamma)^{\wedge}_p
\]
and the main obstruction to an orientation is the map
\[
[g, F] \longrightarrow [g, gl_1tmf(\Gamma)^{\wedge}_p]
\]
When $g = string$ we are in the case analyzed by \cite{AHR} and one can prove that the group $[g, F]$ is zero. However, when $g = spin$, this group {\it never} vanishes, so one must analyze the map itself. In \S4 we investigate the spectrum $F$ and the above map. Finally, in \S5 we prove the main theorem and build the examples described above. We expect the same methods to produce $E_\infty$-ring maps
\[
\textup{MU} \longrightarrow tmf_1(N)
\]
refining Hirzebruch's level $N$ genera. 
\newline

\textbf{Acknowledgements}. I would like to thank Tyler Lawson for asking me whether the Ochanine genus comes from an $E_\infty$-ring map. I owe a huge intellectual debt to Matt Ando, Mike Hopkins, and Charles Rezk. They were each very helpful and graciously answered my questions about their paper. I would also like to thank Joel Specter and Frank Calegari for patiently explaining the theory of $p$-adic modular forms. Finally, this paper would not exist without the help of my advisor, Paul Goerss, to whom I am very grateful. 

\section{Background on Orientations and Level Structures}
\subsection{Orientations} The spectrum analog of the group algebra functor, $\Sigma_+^{\infty}\Omega^{\infty}$, participates in an adjunction (\cite[5.2]{ABGHR2})
\[
\Sigma_+^\infty\Omega^\infty: \textup{Spectra}_{\ge 0} \leftrightarrows E_{\infty}\textup{-Ring} : gl_1
\]
from connective spectra to $E_\infty$-rings. If $X$ is a space and $R$ an $E_{\infty}$-ring, then
\[
gl_1(R)^0(X) = R^0(X)^{\times}
\]
In the special case when $X = S^n$, $n>0$, and we use {\it reduced} homology we get an identification of $\pi_n(gl_1R)$ with the group $$\{1+x \in \left(\pi_nR\right)^{\times}: x \in \pi_nR\}.$$ We will make heavy use of the fact that the assignment $1+x \mapsto x$ gives an equivalence
\[
\pi_qgl_1R \cong \pi_qR, \quad q>0
\]
though this is {\it not} induced by a map of spectra in general. 

For a connective spectrum $g$, we will occasionally denote the suspension $\Sigma g$ by $bg$, and their zeroth space by $G$ and $BG$ respectively. (The only exceptions to this convention are the connective $K$-theory spectra $bo$ and $bu$ whose zeroth spaces are $BO \times \mathbb{Z}$ and $BU \times \mathbb{Z}$, respectively.) 

Given a map
\[
f: bg \longrightarrow bgl_1S
\] 
we can form the associated Thom spectrum, which can be defined as the homotopy pushout of $E_\infty$-rings
\[
\xymatrix{
\Sigma_+^{\infty}BG \ar[r]\ar[d] & \Sigma_+^\infty BGL_1S\ar[d]\\
S \ar[r] & MG}
\]
One can show that the underlying spectrum agrees with the usual notion of Thom spectrum in the examples of interest (see, e.g., the comparison results in \cite[\S3]{ABGHR1} for the connection with the Thom spectra constructed in Lewis's thesis \cite[Ch. IX]{LMS}).  An immediate corollary of this definition is the following:
\begin{theorem}[{\cite[4.3]{ABGHR}}]\label{theorem:ABGHR obstruction} Let $R$ be an $E_\infty$-ring with unit map $\iota: S \longrightarrow R$. Then there is a homotopy pullback diagram of spaces
\[
\xymatrix{
\textup{Map}_{\textup{CAlg}}(MG, R) \ar[r]\ar[d] & \textup{Map}_{\textup{Spectra}}(gl_1S/g, gl_1R)\ar[d]\\
\{gl_1\iota\} \ar[r] & \textup{Map}_{\textup{Spectra}}(gl_1S, gl_1R)
}
\]
and any choice of point in $\textup{Map}_{\textup{CAlg}}(MG, R)$ gives an equivalence $$\textup{Map}_{\textup{Spectra}}(bg, gl_1R) \stackrel{\cong}{\longrightarrow} \textup{Map}_{\textup{CAlg}}(MG, R)$$
\end{theorem}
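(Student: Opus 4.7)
The plan is to apply $\mathrm{Map}_{\mathrm{CAlg}}(-,R)$ to the pushout defining $MG$ and then use the adjunction $\Sigma_+^\infty\Omega^\infty \dashv gl_1$ to recognize the resulting pullback as the one in the statement. Concretely, applying the (contravariant) right-adjoint $\mathrm{Map}_{\mathrm{CAlg}}(-,R)$ to the defining pushout yields a homotopy pullback square
\[
\xymatrix{
\mathrm{Map}_{\mathrm{CAlg}}(MG, R) \ar[r]\ar[d] & \mathrm{Map}_{\mathrm{CAlg}}(\Sigma_+^\infty BGL_1 S, R)\ar[d]\\
\mathrm{Map}_{\mathrm{CAlg}}(S, R) \ar[r] & \mathrm{Map}_{\mathrm{CAlg}}(\Sigma_+^\infty BG, R),
}
\]
and the adjunction identifies the right-hand column with $\mathrm{Map}_{\mathrm{Sp}_{\ge 0}}(bgl_1 S, gl_1 R) \to \mathrm{Map}_{\mathrm{Sp}_{\ge 0}}(bg, gl_1 R)$. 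Since $S$ is initial in $E_\infty$-rings, the lower-left corner is a point, namely $\iota$, and by naturality of the adjunction its image in the lower-right is the composite of the orientation with $gl_1(\iota)$.

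Next I would translate this into the formulation in the statement. For this, apply $\mathrm{Map}_{\mathrm{Sp}}(-, gl_1 R)$ to the rotated cofiber sequence $gl_1 S \to gl_1 S/g \to bg$ obtained from $g \to gl_1 S \to gl_1 S/g$; this yields a fiber sequence of mapping spaces, one piece of which reads
\[
\mathrm{Map}_{\mathrm{Sp}}(bg, gl_1 R) \longrightarrow \mathrm{Map}_{\mathrm{Sp}}(gl_1 S/g, gl_1 R) \longrightarrow \mathrm{Map}_{\mathrm{Sp}}(gl_1 S, gl_1 R).
\]
Combining with the previous step identifies $\mathrm{Map}_{\mathrm{CAlg}}(MG, R)$ with the homotopy fiber of the right-hand map over $gl_1\iota$, giving the pullback square in the statement.

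The torsor claim is then immediate: by the fiber sequence above, the fiber over the basepoint is $\mathrm{Map}_{\mathrm{Sp}}(bg, gl_1 R)$, so any non-empty fiber (here over $gl_1\iota$) is a torsor for it, and a choice of point produces the equivalence $\mathrm{Map}_{\mathrm{Sp}}(bg, gl_1 R) \xrightarrow{\sim} \mathrm{Map}_{\mathrm{CAlg}}(MG, R)$. The whole argument is entirely formal; there is no serious obstacle. The only genuine care needed lies in correctly identifying the image of the unique point of $\mathrm{Map}_{\mathrm{CAlg}}(S,R)$ with $gl_1\iota$ under the adjunction and in the shift bookkeeping when passing between the pushout-derived pullback and the rotated cofiber sequence formulation of the statement.
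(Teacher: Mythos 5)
Your overall strategy (apply $\mathrm{Map}_{\mathrm{CAlg}}(-,R)$ to a pushout presentation of $MG$ and use the adjunction $\Sigma_+^\infty\Omega^\infty \dashv gl_1$) is the intended one, but the translation step at the end contains a genuine gap. Taking the displayed pushout at face value, its left leg $\Sigma_+^\infty BG \to S$ is the augmentation, whose adjoint is the \emph{zero} map $bg \to gl_1S$; hence the image of $\iota$ in $\mathrm{Map}_{\mathrm{Spectra}}(bg, gl_1R)$ is the null map, not ``the composite of the orientation with $gl_1(\iota)$'' (which in any case does not typecheck: composing $f\colon bg \to bgl_1S$ with a delooping of $gl_1\iota$ lands in $\mathrm{Map}(bg, bgl_1R)$, not $\mathrm{Map}(bg, gl_1R)$). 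What your first step therefore produces is the fiber of $f^*\colon \mathrm{Map}(bgl_1S, gl_1R) \to \mathrm{Map}(bg, gl_1R)$ over the zero map, which is $\mathrm{Map}(\mathrm{cofib}(f), gl_1R) = \mathrm{Map}(\Sigma(gl_1S/g), gl_1R)$ --- an infinite loop space, in particular never empty. The square in the statement instead exhibits $\mathrm{Map}_{\mathrm{CAlg}}(MG,R)$ as the fiber of $j^*\colon \mathrm{Map}(gl_1S/g, gl_1R) \to \mathrm{Map}(gl_1S, gl_1R)$ over the non-basepoint $gl_1\iota$, which can perfectly well be empty; its emptiness is exactly the obstruction to an $E_\infty$ orientation. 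Rotating the cofiber sequence, as you propose, only matches up fibers over zero maps (up to shift); it cannot convert a fiber over $0$ into a fiber over $gl_1\iota$, so ``combining with the previous step'' does not yield the statement's pullback. Indeed the two candidate answers are inconsistent: the fiber of $j^*$ over the basepoint is $\mathrm{Map}(\mathrm{cofib}(j), gl_1R) = \mathrm{Map}(bg, gl_1R)$ (consistent with the torsor clause of the theorem), whereas $\mathrm{Map}(\Sigma(gl_1S/g), gl_1R)$ is not equivalent to that in general.

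The repair is to use the correct ABGHR pushout presentation: $MG \simeq S \otimes_{\Sigma_+^\infty GL_1S} \Sigma_+^\infty \Omega^\infty(gl_1S/g)$, where $\Omega^\infty(gl_1S/g)$ is the fiber of $BG \to BGL_1S$, the top map is induced by $j\colon gl_1S \to gl_1S/g$, and the left leg is the counit $\Sigma_+^\infty GL_1 S \to S$ adjoint to $\mathrm{id}_{gl_1S}$ --- \emph{not} an augmentation out of $\Sigma_+^\infty BG$. Applying $\mathrm{Map}_{\mathrm{CAlg}}(-,R)$ and the adjunction to this square yields exactly the statement's pullback with no rotation needed, and the point of $\mathrm{Map}_{\mathrm{CAlg}}(S,R)$ lands on $gl_1\iota$ precisely because the counit is adjoint to the identity of $gl_1S$. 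Your torsor argument for the second assertion is fine once the correct square is in place, using $\mathrm{cofib}(j) = bg$.
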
 
In practice, we interpret this theorem as the statement that the space of $E_\infty$ $R$-orientations of $MG$ is the space of dotted arrows in the diagram
\[
\xymatrix{
g \ar[r] & gl_1S \ar[r]\ar[d] & gl_1S/g \ar@{-->}[dl]\\
& gl_1R &
}
\]
In particular, the obstruction to the existence of an orientation is a class in $[g, gl_1R]$. We will have occasion to consider more general targets in this diagram, so we introduce a notation for this.
\begin{definition} Let $\epsilon: gl_1S \longrightarrow X$ be a spectrum under $gl_1S$, then we denote by $\textup{Nulls}(g, X)$ the homotopy pullback
\[
\xymatrix{
\textup{Nulls}(g,X) \ar[r]\ar[d] & \textup{Map}_{\textup{Spectra}}(gl_1S/g, X)\ar[d]\\
\{\epsilon\} \ar[r] & \textup{Map}_{\textup{Spectra}}(gl_1S, X)
}
\]
In particular, $\textup{Map}_{CAlg}(MG, R) = \textup{Nulls}(g,gl_1R)$. 
\end{definition}
By construction this gives a homotopy limit preserving functor
\[
\textup{Nulls}(g, -): \textup{Spectra}_{gl_1S/} \longrightarrow \textup{Spaces}
\]
For the remainder of the paper we will assume that we are in the geometric situation where the map $bg \longrightarrow bgl_1S$ factors through the spectrum $bso$. In this setting we can recover Hirzebruch's characteristic series when $R$ is a rational $E_\infty$-ring. Indeed, in this case, we have a canonical orientation 
\[
MG \longrightarrow MSO \longrightarrow H\mathbb{Q} \stackrel{\cong}{\leftarrow} S_{\mathbb{Q}} \longrightarrow R
\]
Thus, by Theorem \ref{theorem:ABGHR obstruction}, any other element in $\pi_0\textup{Map}_{\textup{CAlg}}(MG, R)$ is specified by an element in $[bg, gl_1R]$. However, since $R$ is rational, there is a logarithm
\[
\ell_0: \tau_{\ge 1}(gl_1R) \stackrel{\cong}{\longrightarrow} \tau_{\ge 1}R
\]
where $\tau_{\ge n}$ denotes the truncation functor from spectra to spectra with homotopy concentrated in degrees at least $n$. Since $g$ is connective we deduce that orientations are determined by difference classes in $[bg, R]$. 

For $n \ge 1$ we denote by $bo\langle 2n\rangle:= \tau_{\ge 2n}bo$ the $2n$-connective cover of $bo$. We also observe the following conventions:
\[
bso:= bo\langle 2\rangle, \quad bspin := bo\langle 4\rangle, \quad bstring:=bo\langle 8\rangle \cong \Sigma^8bo
\]
Recall that we have
\[
\pi_*bu = \mathbb{Z}[v], \quad \pi_*bo = \mathbb{Z}[\eta, x_4, v_{\mathbb{R}}]/(2\eta, \eta x_4, x_4^2-4v_{\mathbb{R}}, \eta^3)
\]
Complexification of vector bundles induces a map $c: bo \longrightarrow bu$, and forgetting structure induces a map (which does not give a ring homomorphism on homotopy groups) $r: bu \longrightarrow bo$. On homotopy these are
\[
c_*v_{\mathbb{R}} = v,\,\, c_*x_4 = 2v^2\quad \textup{and}\quad r_*v^4k = 2v_{\mathbb{R}}^{k}, \,\,r_*v^{4k+2} = x_4v_{\mathbb{R}}^{k},\,\, r_*v^{\textup{odd}} = 0
\]

\begin{theorem}[{\cite{AHR}}] Let $R$ be a rational $E_\infty$-ring and $n\le 4$. There is a natural isomorphism
\[
\pi_0\textup{Map}_{\textup{CAlg}}(MO\langle 2n\rangle, R) \stackrel{\cong}{\longrightarrow} \prod_{k\ge n} \pi_{2k}R
\]
where the $k$th component $t_k$ is given by
\[
S^{2k} \stackrel{v^k}{\longrightarrow} bu\langle 2n\rangle \longrightarrow bo\langle 2n\rangle \stackrel{\delta}\longrightarrow \tau_{\ge 1}gl_1R \cong \tau_{\ge 1}R
\] 
(which is zero if $k$ is odd). The Hirzebruch characteristic series is then $K(u) = \exp\left(\sum_{k \ge n} t_k \dfrac{u^k}{k!}\right)$. 
\end{theorem}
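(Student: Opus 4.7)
The plan is to reduce to a rational homotopy computation using the ABGHR obstruction theory (Theorem \ref{theorem:ABGHR obstruction}) followed by the rational logarithm, and then to analyze maps out of $bo\langle 2n\rangle$ using the explicit description of $\pi_*bo$ recalled above. Fix as basepoint the canonical rational orientation $\iota_0: MO\langle 2n\rangle \to R$ obtained from the zigzag $MO\langle 2n\rangle \to MSO \to H\mathbb{Q} \stackrel{\simeq}{\leftarrow} S_\mathbb{Q} \to R$. By Theorem \ref{theorem:ABGHR obstruction}, basing at $\iota_0$ produces a bijection
\[
\pi_0\textup{Map}_{\textup{CAlg}}(MO\langle 2n\rangle, R) \cong [bo\langle 2n\rangle, gl_1R].
\]
Since $bo\langle 2n\rangle$ is connective, any such map factors through $\tau_{\ge 1}gl_1R$, which by the rational logarithm $\ell_0$ is canonically identified with $\tau_{\ge 1}R$.

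Next, since $R$ is rational the spectrum $\tau_{\ge 1}R$ splits as $\prod_{k\ge 1}\Sigma^k H\pi_k R$, and hence
\[
[bo\langle 2n\rangle, \tau_{\ge 1}R] \cong \prod_{k\ge 1} \textup{Hom}_\mathbb{Q}\bigl(\pi_k(bo\langle 2n\rangle)\otimes\mathbb{Q},\, \pi_k R\bigr).
\]
The formulas for $\pi_*bo$ show that $\pi_k(bo\langle 2n\rangle)\otimes\mathbb{Q}$ is one-dimensional over $\mathbb{Q}$ when $k = 2k'$ with $k'\ge n$ and $k'$ even, and vanishes otherwise. Moreover, the recalled formulas for $r_*$ give $r_*v^{k'} = 0$ for $k'$ odd, while for $k'$ even $r_*v^{k'}$ is a nonzero rational multiple of the rational generator of $\pi_{2k'}bo$; hence the composite $v^{k'}: S^{2k'} \to bu\langle 2n\rangle \to bo\langle 2n\rangle$ is a rational generator precisely when $k'\ge n$ is even. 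Dualizing against $\pi_{2k'}R$ identifies the product above with $\prod_{k'\ge n}\pi_{2k'}R$ in the sense of the statement (with the odd components automatically zero), and the $k'$th component of a genus $\phi$ is precisely the $t_{k'}$ of the theorem.

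Finally, we match the Hirzebruch characteristic series. For a rational genus, the multiplicative difference class $\delta: bo\langle 2n\rangle \to gl_1R$ corresponds under the rational logarithm to the additive class $\ell_0\circ\delta$, which is classically $\log K$ for $K$ the characteristic series. Evaluating on the cohomology duals to the $v^k$, which in $H^*(BU;\mathbb{Q})$ assemble into the divided powers $u^k/k!$, recovers $\log K(u) = \sum_{k\ge n}t_k u^k/k!$, whence $K(u) = \exp\bigl(\sum_{k\ge n}t_k u^k/k!\bigr)$. The main obstacle is this final normalization step: one must track factorials, signs, and the canonical equivalence $H\mathbb{Q}\simeq S_\mathbb{Q}$ to check that the homotopically-defined $t_k$ really match the coefficients of the classical Hirzebruch series. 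Once that bookkeeping is confirmed, the rest of the argument is formal from Theorem \ref{theorem:ABGHR obstruction} and the rational splitting of $\tau_{\ge 1}R$.
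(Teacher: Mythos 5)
The paper does not prove this theorem---it is quoted from \cite{AHR}---but your argument is the standard one and is exactly the reduction the paper itself sketches just before Definition \ref{definition:char}: identify $\pi_0\textup{Map}_{\textup{CAlg}}(MO\langle 2n\rangle, R)$ with $[bo\langle 2n\rangle, gl_1R]$ via Theorem \ref{theorem:ABGHR obstruction} based at the canonical rational orientation, apply the rational logarithm $\ell_0$, and use the splitting of $\tau_{\ge 1}R$ into Eilenberg--MacLane spectra together with the stated formulas for $\pi_*bo\otimes\mathbb{Q}$ and $r_*v^k$. The one substantive step you defer---that the additive difference class evaluated on $r_*v^k$ is precisely the coefficient of $u^k/k!$ in $\log K$---is the ``Miller invariant'' computation carried out in \cite{AHR}, and your description of how to verify it is correct.
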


We now return to the case where $R$ is an arbitrary $E_\infty$-ring. The localization $R \longrightarrow R_{\mathbb{Q}}$ gives a map
\[
\pi_0\textup{Map}_{\textup{CAlg}}(MO\langle 2n\rangle, R) \longrightarrow \prod_{k \ge n} \pi_{2k}R \otimes \mathbb{Q}
\]
It turns out (\cite[5.14]{AHR}) that the elements $\{(t_k/2)\}: = \{b_k\} \in \pi_{2k}R \otimes \mathbb{Q}$ are independent of the orientation modulo $\mathbb{Z}$, which motivates the following. 
\begin{definition}\label{definition:char} We denote the image of the above map postcomposed with division by 2 as $\textup{Char}(\textup{Spin}, R)$ when $n=2$. This is the set of characteristic series of $E_\infty$ Spin genera valued in $R$ under the correspondence $\{b_k\} \mapsto \exp\left(2\sum b_k u^k/k!\right)$. We define $\textup{Char}(\textup{String}, R)$ analogously for $n=4$. 
\end{definition}
\subsection{Level Structures and Modular Forms} This section exists mainly to establish notation and provide references for precise definitions and results. Intimate knowledge of algebraic stacks is not necessary to understand the main results of the paper, nor their proofs. Our main reference is \cite{Con}, but most of the results and definitions were originally written down in \cite{KM} and \cite{DR}. 

A {\it generalized elliptic curve} \cite[2.1.4]{Con} over a base scheme $S$ is a separated, flat map $\pi: E \longrightarrow S$ such that all geometric fibers are either elliptic curves or N\'eron polygons \cite[II.1.1]{DR} together with a morphism $+: E^{\textup{sm}}\times_S E \longrightarrow E$ and a section $e \in E^\textup{sm}(S)$ in the smooth locus. We require that this data gives $E^{\textup{sm}}$ the structure of a commutative group scheme with identity $e$ and defines an action of $E^{\textup{sm}}$ on $E$ that acts by rotations on the graph of irreducible components on each singular fiber. 

A $\Gamma(N)$-{\it structure} \cite[2.4.2]{Con} on a generalized elliptic curve is an ordered pair $(P,Q)$ with $P,Q \in E^{\textup{sm}}[N](S)$ such that 
\begin{enumerate}
\item the rank $N^2$ Cartier divisor
\[
D = \sum_{i, j \in \mathbb{Z}/N\mathbb{Z}} [iP+jQ]
\]
is a subgroup scheme killed by $N$
\item $D$ meets all irreducible components of all geometric fibers of $E$. 
\end{enumerate}
In the case where $E = E^{\textup{sm}}$ and $N$ is invertible in $\mathcal{O}_S$, this is equivalent to the data of an isomorphism of constant group schemes $(\mathbb{Z}/N\mathbb{Z})^2 \cong E[N]$ (after finite \'etale base change). 

\begin{remark} When doing homotopy theory, we will almost always be interested in the case when $N$ is invertible, since we must invert $N$ to get $E_\infty$-versions of these moduli problems. However, to define Hecke operators (i.e. power operations) as in \S3.1-3.2, we pass through moduli problems defined over bad primes. It is possible to avoid these more arithmetic moduli entirely through ad-hoc constructions, but the author feels the present approach is more conceptual.
\end{remark}

\begin{theorem}[Katz-Mazur, Deligne-Rappaport, Conrad] The moduli problem of generalized elliptic curves with $\Gamma(N)$-structure is representable by a proper, flat Deligne-Mumford stack $\mathscr{M}_{\Gamma(N)}$ which is CM of relative dimension 1 over $\mathbb{Z}$. Moreover, the restriction to $\mathbb{Z}[1/N]$ is smooth.  
\end{theorem}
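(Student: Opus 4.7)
My plan is to treat this as essentially a repackaging of the cited works \cite{KM}, \cite{DR}, and \cite{Con}, organized in two stages: first construct and analyze the open moduli of smooth elliptic curves with $\Gamma(N)$-structure, then build the Deligne-Rapoport compactification by allowing Néron polygon degenerations.

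For the open part, I would work over the Deligne-Mumford stack $\mathscr{M}_{1,1}$ of elliptic curves and define the moduli as the closed substack of $E[N]\times_{\mathscr{M}_{1,1}} E[N]$ cut out by the Drinfeld condition that the effective Cartier divisor $\sum_{i,j\in\mathbb{Z}/N}[iP+jQ]$ equals $E[N]$. Being a closed subscheme of the finite flat cover $E[N]\times E[N]$, the resulting cover is finite over $\mathscr{M}_{1,1}$, with fibers parametrizing Drinfeld bases of the finite flat group scheme $E[N]$ of rank $N^2$. The core computational input, due to Katz-Mazur, is an explicit local analysis around supersingular points showing that this cover is regular of relative dimension 1 over $\mathbb{Z}$, hence flat and Cohen-Macaulay. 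Over $\mathbb{Z}[1/N]$, where $E[N]$ is étale, a Drinfeld basis reduces to an isomorphism of constant group schemes $(\mathbb{Z}/N)^2 \simeq E[N]$, the cover becomes finite étale, and smoothness descends from the smoothness of $\mathscr{M}_{1,1}[1/N]$.

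For the compactification, I would follow Conrad's refinement of the Deligne-Rapoport construction. The key subtlety is that a $\Gamma(N)$-structure on a generalized elliptic curve whose singular geometric fibers are Néron $n$-gons can exist only when $n \mid N$, and the requirement that $D$ meet every irreducible component forces the level data to surject onto the component group $\mathbb{Z}/n$. To glue the various strata one uses the contraction morphisms of generalized elliptic curves and tracks how level structures transport across them. Once the candidate stack $\mathscr{M}_{\Gamma(N)}$ is assembled, properness follows from the valuative criterion applied fiberwise, reduced to a stable-reduction-type statement for pairs consisting of a generalized elliptic curve and a $\Gamma(N)$-structure, while flatness and the CM property at the cusps follow from an étale-local model given by contractions of $\mathrm{Tate}(q)$ along subgroups of $\mu_N$.

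The step I expect to be hardest, and where I would lean most heavily on the original sources rather than attempt a clean conceptual rederivation, is the verification of regularity (or at least of flatness and the CM property) at the two truly arithmetic strata: the supersingular mixed-characteristic points where $p \mid N$, which require Drinfeld's theory of level structures on one-dimensional formal groups, and the cusps where the number of components of a Néron polygon jumps, which require Conrad's detailed local-to-global gluing along contraction morphisms. Neither step admits a shortcut; they are the genuine content that forces the arithmetic hypothesis ``CM of relative dimension 1 over $\mathbb{Z}$'' to fall out exactly as stated, with smoothness recovered only after inverting $N$.
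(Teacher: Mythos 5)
The paper offers no proof of this theorem: it is imported wholesale from the cited sources (Katz--Mazur, Deligne--Rapoport, and Conrad), so there is no argument in the text to compare yours against. Your outline is an accurate roadmap of how those sources establish the result --- the Drinfeld-level-structure analysis at supersingular points in residue characteristic dividing $N$ for the open part, and Conrad's gluing along contraction morphisms of N\'eron polygons for the compactification --- and you correctly flag the two strata where the genuine arithmetic content lives. Since you explicitly defer those hard verifications to the original references rather than rederive them, your proposal functions as a citation with commentary rather than a self-contained proof, which is exactly the level at which the paper itself treats this statement.
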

For each $N\ge 1$, there is a finite, faithfully flat map \cite[4.1.1]{Con}
\[
\mathscr{M}_{\Gamma(N)} \longrightarrow \mathscr{M}_{\Gamma(1)}:=\mathscr{M}_{\textup{ell}}
\]
which forgets the data of $P$ and $Q$ and collapses N\'eron $n$-gon fibers to a nodal cubic fibers. 

There is an action of $GL_2(\mathbb{Z}/N\mathbb{Z})$ on the universal generalized elliptic curve with $\Gamma(N)$-structure $\mathscr{E}_{\Gamma(N)}\longrightarrow \mathscr{M}_{\Gamma(N)}$ given by modifying the points $(P,Q)$. Unless otherwise specified, from now on $\Gamma$ will denote one of the following subgroups:
\[
\Gamma_1(N) : = \left\{ \begin{pmatrix}1 & *\\
0 & *\end{pmatrix}\right\},\quad\quad
\Gamma_0(N) : = \left\{ \begin{pmatrix}* & *\\
0 & *\end{pmatrix}\right\}
\]
When $N$ is square-free, we denote by $\mathscr{M}_{\Gamma}$ the normalization of  $\mathscr{M}_{\Gamma(1)}$ in the quotient stack $\mathscr{M}^{\circ}_{\Gamma(N)}[1/N]/\Gamma$. Here $\mathscr{M}^{\circ}_{\Gamma(N)}$ denotes the open complement of the nonsingular locus of the universal generalized elliptic curve. When $N$ is not square-free, one can still produce algebraic stacks by giving a modular description and proving representability, but we will not need these here. 

These stacks classify generalized elliptic curves together with a point of `exact order $N$' in the sense of Drinfeld (resp. a subgroup scheme of order $N$) which must satisfy conditions on the singular fibers \cite[4.1.5]{Con}. For smooth elliptic curves over a base where $N$ acts invertibly, one can remove the quotation marks. While $\mathscr{M}_{\Gamma_1(N)}$ is a Deligne-Mumford stack, the same is not true in general for $\mathscr{M}_{\Gamma_0(N)}$, though it is so if $N$ is square-free or inverted \cite[3.1.7]{Con}.  

Let $\mathscr{E}_{\Gamma(N)} \longrightarrow \mathscr{M}_{\Gamma(N)}$ denote the universal generalized elliptic curve. Denote by $\omega_{\Gamma(N)}$ (or just $\omega$ if there is no confusion) the relative dualizing sheaf for the map $\mathscr{E}_{\Gamma(N)} \longrightarrow \mathscr{M}_{\Gamma(N)}$. This turns out to be a line bundle, and can be equivalently defined as the dual of the Lie algebra of $\mathscr{E}_{\Gamma(N)}^{\textup{sm}}$, i.e. $e^*(\Omega^1_{\mathscr{E}_{\Gamma(N)}/\mathscr{M}_{\Gamma(N)}})$. 

\begin{definition} The group of {\it modular forms of level $\Gamma$ and weight $k$} over a ring $R$ is defined as
\[
M_k(\Gamma, R):= H^0(\mathscr{M}_{\Gamma} \otimes R, \omega^{\otimes k})
\]
\end{definition}
By flat base change, if $S$ is a flat $R$-algebra, we have
\[
M_k(\Gamma, S) = M_k(\Gamma, R)\otimes_RS
\]
In particular, $M_k(\Gamma, \mathbb{Z}) \otimes \mathbb{C} \cong M_k(\Gamma, \mathbb{C})$ and one can check that this latter group recovers the classical group of holomorphic modular forms. 

The Fourier expansion of a holomorphic modular form can be recovered algebraically using the theory of {\it Tate curves}. First, one may define an elliptic curve of the form $\mathbb{G}_m/q^{\mathbb{Z}}$ over $\mathbb{Z}((q))$. This has a unique extension to a generalized elliptic curve over $\mathbb{Z}[[q]]$ with special fiber a nodal curve. This is called the Tate curve, $\textup{Tate}(q)$, and comes with a canonical isomorphism of formal groups
\[
\widehat{\mathbb{G}}_m \cong \textup{Tate}(q)^{\wedge}_{q=0}
\]
For each $N \ge 1$, there is a unique generalized elliptic curve $\textup{Tate}_N$ over $\mathbb{Z}[[q^{1/N}]]$ restricting to $\textup{Tate}_1\otimes_{\mathbb{Z}[[q]]}\mathbb{Z}((q^{1/N}))$ over $\mathbb{Z}((q^{1/N}))$ and having $n$-gon fiber at $q^{1/N}= 0$. This curve comes with a canonical $\Gamma_1(N)$ structure (corresponding to $q^{1/N}$) classified by a map
\[
\textup{Spec}(\mathbb{Z}[[q^{1/N}]]) \longrightarrow \mathscr{M}_{\Gamma_1(N)}
\]
Evaluation of a modular form at this curve is called the {\it evaluation at the cusp}, and, using the canonical trivialization of $\omega$ on the Tate curve, corresponds to a power series in $q$. This agrees with the Fourier expansion over $\mathbb{C}$.

The map to the $fpqc$ algebraic stack of formal groups
\[
\mathscr{M}_{\Gamma} \longrightarrow \mathscr{M}_{FG}
\]
classifying the formal group associated to the universal generalized elliptic curve is flat and so defines a diagram of weakly even periodic Landweber exact cohomology theories parameterized by flat maps 
\[
\textup{Spec}(R) \longrightarrow \mathscr{M}_{\Gamma}
\]
\begin{theorem}[Goerss-Hopkins-Miller, Hill-Lawson] There is a sheaf of $E_\infty$-rings $\mathcal{O}^{\textup{top}}$ on the \'etale site of $\mathscr{M}_{\Gamma}[1/N]$ rigidifying the given diagram of Landweber exact cohomology theories. 
\end{theorem}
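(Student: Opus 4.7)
The plan is to follow the Goerss--Hopkins--Miller strategy, with modifications due to Hill--Lawson to accommodate level structure and the behavior at cusps. Since the statement concerns a sheaf on an \'etale site, the construction is local in nature, so it suffices to produce $E_\infty$-refinements compatibly at each \'etale point together with sufficient higher coherence, and then to invoke descent.

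First I would work one prime $p \not\vert N$ at a time, using the chromatic fracture square to separately construct the $K(1)$- and $K(2)$-local pieces and then to assemble them. For the $K(2)$-local piece, Serre--Tate theory identifies the completion of $\mathscr{M}_{\Gamma}[1/N]$ at a supersingular point with the Lubin--Tate deformation space of the associated height-$2$ formal group; the Goerss--Hopkins--Miller theorem then produces a canonical (up to contractible choice) $E_\infty$-structure on the corresponding Morava $E$-theory, which assembles into a sheaf on the completed supersingular locus together with a compatible action of the (finite) automorphism groups of supersingular points. For the $K(1)$-local piece, one uses Goerss--Hopkins obstruction theory on the ordinary locus, where the relevant Andr\'e--Quillen (or operadic) cohomology groups can be computed and shown to vanish; concretely, one builds the $E_\infty$-structure by lifting along a suitable tower of $p$-adic $K$-theoretic approximations. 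Rationally, the Landweber-exact theories factor through $H\mathbb{Q}$ and the problem becomes formal. Finally, Hill--Lawson's contribution handles the behavior at cusps via Tate curves with level structure, verifying that the Landweber condition persists across the compactification and that the local rigidifications glue over the boundary.

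The main obstacle, and the heart of the Goerss--Hopkins--Miller theorem, is promoting a collection of pointwise $E_\infty$-structures to a genuine sheaf. This requires showing that the moduli spaces of $E_\infty$-refinements are not merely nonempty but contractible, so that the local structures glue with contractible ambiguity and functoriality for \'etale morphisms can be rigidified into coherent sheaf data. The Goerss--Hopkins obstruction machinery, which filters the problem by a Postnikov-style tower whose layers are controlled by operadic cohomology of the underlying even-periodic homotopy ring, is designed precisely for this purpose; in our situation the required cohomology computations are governed by the \'etale-local structure of $\mathscr{M}_{\Gamma}[1/N]$ together with the Lubin--Tate input at supersingular points. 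The passage to level $\Gamma$ (as opposed to the classical case $\Gamma = \Gamma(1)$) adds no new obstructions to the $E_\infty$-structure itself, since $\mathscr{M}_\Gamma[1/N] \to \mathscr{M}_{\textup{ell}}[1/N]$ is finite \'etale away from the cusps; the genuinely new work of Hill--Lawson is the careful control of the compactification, which is the last ingredient needed to produce the sheaf on the full \'etale site of $\mathscr{M}_\Gamma[1/N]$.
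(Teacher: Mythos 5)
This statement is quoted in the paper as a background theorem with attribution to Goerss--Hopkins--Miller and Hill--Lawson; the paper supplies no proof of it, so there is no in-paper argument to compare yours against. Judged on its own terms, your proposal is a correct \emph{strategic outline} of the literature proof rather than a proof: every substantive step is asserted rather than carried out. The chromatic fracture decomposition, the use of Serre--Tate theory and the Goerss--Hopkins--Miller theorem at supersingular points, the $\theta$-algebra obstruction theory on the ordinary locus, and the emphasis on the contractibility (more precisely, the connectivity and vanishing of higher obstruction groups for the Dwyer--Kan moduli space of realizations) are all the right ingredients. But the actual content of the theorem lives in the computations you defer: the vanishing of the relevant Andr\'e--Quillen/operadic cohomology groups, the construction of the $E_\infty$ gluing map from the $K(1)$-local piece to $L_{K(1)}L_{K(2)}$ of the supersingular piece, and the assembly of the chromatic pieces into a sheaf on the \'etale site.

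One point in your sketch is actively misleading. You say the new work of Hill--Lawson is ``verifying that the Landweber condition persists across the compactification and that the local rigidifications glue over the boundary,'' and elsewhere that the level map is finite \'etale away from the cusps so no new obstructions arise. The difficulty is precisely that $\overline{\mathscr{M}}_\Gamma \to \overline{\mathscr{M}}_{\textup{ell}}$ is \emph{not} \'etale at the cusps --- it is ramified there --- so one cannot simply pull back $\mathcal{O}^{\textup{top}}$ along it or invoke \'etale descent over the boundary. Hill and Lawson resolve this by working with log structures: the map of compactified moduli is log-\'etale, and they extend the sheaf to a suitable log-\'etale site, which requires a separate and rather delicate analysis of $K(1)$-local Tate $K$-theories $KO[\![q^{1/N}]\!]$ with their group actions near the cusps. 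Without that ingredient your argument does not produce a sheaf on the full (compactified) moduli stack, only on the open locus of smooth curves.
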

We adopt the following notations:
\[
TMF(\Gamma):= \mathcal{O}^{\textup{top}}(\mathscr{M}^{\circ}_{\Gamma}[1/N]), \quad Tmf(\Gamma) := \mathcal{O}^{\textup{top}}(\mathscr{M}_{\Gamma}[1/N]), \quad tmf(\Gamma):= \tau_{\ge 0}Tmf(\Gamma)
\]
\begin{remark} The definition of $tmf(\Gamma)$ as a connective cover is known to be unsatisfactory when the genus of the modular curve $\mathscr{M}_{\Gamma}$ is bigger than one. This doesn't affect the construction of our genera, but instead suggests that, in these cases, the genera should have more severe restrictions on their image.  
\end{remark}

\section{The $K(1)$-local Case}
Fix a level $\Gamma \in \{\Gamma_0(N), \Gamma_1(N)\}$ and a prime $p$ not dividing $N$. The goal of this section is to prove the following theorem. All the undefined terms and notation will be explained below.
\begin{theorem}\label{theorem:K(1)-local parameterization} The image of the map
\[
\pi_0\textup{Nulls}(spin, L_{K(1)}gl_1tmf(\Gamma)^{\wedge}_p) \longrightarrow \prod_{k \ge 2}MF_{p,k}(\Gamma)\otimes_{\mathbb{Z}_p} \mathbb{Q}_p
\]
is an isomorphism onto the set of sequences $\{g_k\}_{k\ge 2}$ of $p$-adic modular forms satisfying the following conditions:
\begin{enumerate}
\item For all $k\ge 1$, $g_{2k+1} = 0$,
\item For all $c \in \mathbb{Z}_p^{\times}$, the sequence $\{(1-c^k)g^{(p)}_k\}$ extends to a measure $\mu_c$ on $\mathbb{Z}^{\times}_p/\{\pm 1\}$ valued in $V_{\infty}(\Gamma)$,
\item For all $c \in \mathbb{Z}_p^{\times}$, we have $\int_{\mathbb{Z}^{\times}_{p}/\{\pm 1\}} 1 \, d\mu_c = \frac{1}{2p}\log(c^{p-1})$.
\end{enumerate}
\end{theorem}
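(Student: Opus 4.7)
The strategy follows the Ando-Hopkins-Rezk template adapted to level $\Gamma$, with three main stages: apply the ABGHR pullback to control $\textup{Nulls}$, trade $gl_1$ for the ring itself using a $K(1)$-local logarithm, and then identify $L_{K(1)}tmf(\Gamma)^0(bspin)$ with a ring of measures.

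First I would apply Theorem~\ref{theorem:ABGHR obstruction} with target $X = L_{K(1)}gl_1tmf(\Gamma)^{\wedge}_p$ to exhibit $\pi_0 \textup{Nulls}(spin, X)$ as, when nonempty, a torsor for the abelian group $[bspin, X]$, with distinguished point coming from the rational unit orientation $\{gl_1\iota\}$ since $X$ is rational on high-degree homotopy in the spin range. The next move is to pass from units to the ring via a $K(1)$-local (Rezk/Bousfield) logarithm
\[
\ell \colon L_{K(1)}gl_1tmf(\Gamma)^{\wedge}_p \longrightarrow L_{K(1)}tmf(\Gamma)^{\wedge}_p,
\]
which in positive homotopy degree is, up to bounded torsion, given by the Euler factor $(1 - \psi^p/p)$ acting on a copy of $L_{K(1)}tmf(\Gamma)^{\wedge}_p$. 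Rationalizing reduces the problem to describing $L_{K(1)}tmf(\Gamma)^0(bspin) \otimes \mathbb{Q}$ together with the value of the distinguished class coming from $gl_1\iota$ under $\ell$.

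Second, I would identify this cohomology group as a ring of measures. The classical Adams-conjecture analysis identifies $K^{\wedge}_p(bspin)$ with the continuous $\mathbb{Z}_p$-valued measures on $\mathbb{Z}_p^{\times}/\{\pm 1\}$, with the Bott class $v^k \in \pi_{2k}K$ dual to evaluation against the function $c \mapsto c^k$; the $\{\pm 1\}$-quotient is forced by $bspin = bo\langle 4 \rangle$ factoring through $bo$, where complex conjugation acts by inversion on the torus. Since the sheaf $\mathcal{O}^{\textup{top}}$ of Goerss-Hopkins-Miller-Hill-Lawson realizes $L_{K(1)}tmf(\Gamma)^{\wedge}_p$ as a Landweber-exact $K$-theory with coefficients in Katz's ring of generalized $p$-adic modular forms $V_\infty(\Gamma)$, I expect a natural isomorphism
\[
L_{K(1)}tmf(\Gamma)^0(bspin) \otimes \mathbb{Q} \;\cong\; \textup{Meas}\bigl(\mathbb{Z}_p^{\times}/\{\pm 1\},\, V_\infty(\Gamma)\bigr) \otimes \mathbb{Q},
\]
under which the $k$-th moment $\int c^k\, d\mu$ records the weight-$k$ coefficient of the associated characteristic series in $MF_{p,k}(\Gamma) \otimes \mathbb{Q}_p$.

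Finally, I would read each ABGHR condition through these identifications. The $\{\pm 1\}$-quotient annihilates odd moments, yielding condition (1). Condition (2) is precisely the Kummer-congruence characterization of sequences that extend to a $V_\infty(\Gamma)$-valued measure on $\mathbb{Z}_p^{\times}/\{\pm 1\}$; the twist $(1 - c^k)$ appears because passing from $[gl_1S, X]$ to $[gl_1S/g, X]$ in the ABGHR diagram and then through $\ell$ multiplies a measure $\mu$ by $(1 - [c])$, which on moments is precisely multiplication by $(1 - c^k)$, while the Euler factor $(1 - \psi^p/p)$ of the logarithm converts $g_k$ to its $p$-stabilization $g_k^{(p)}$. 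Condition (3) pins down the constant term of $\mu_c$: tracing the basepoint $\{gl_1\iota\}$ through $\ell$ and extrapolating the identity $\int c^k\, d\mu_c = (1 - c^k) g_k^{(p)}$ to the weight-zero slot forces $\int 1\, d\mu_c$ to record the $p$-adic logarithm of the cyclotomic unit $c^{p-1}$, normalized by $1/(2p)$ to account for the $\{\pm 1\}$-averaging and the Euler factor. The main obstacle is exactly this last normalization: one must carefully chase the rational basepoint through $\ell$ and through the measure identification, and confirm that the constant gets absorbed into the symmetric formula $\tfrac{1}{2p}\log(c^{p-1})$. Once this constant is fixed, surjectivity onto the set of sequences satisfying (1)--(3) is immediate from the measure-theoretic description, and naturality in $\Gamma$ comes for free from the sheaf property of $\mathcal{O}^{\textup{top}}$ on $\mathscr{M}_\Gamma[1/N]$.
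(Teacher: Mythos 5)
Your proposal follows the same Ando--Hopkins--Rezk route as the paper's proof: the ABGHR pullback, Rezk's $K(1)$-local logarithm $\ell_1$ with its Euler factor $(1-\tfrac{1}{p}\psi^p)$, the Adams-conjecture equivalence $L_{K(1)}(gl_1S/spin)\simeq KO_p$, the measure-theoretic computation of $[KO_p, L_{K(1)}tmf(\Gamma)^{\wedge}_p]$ via Mahler/Kummer congruences, and the normalization $\rho(c)^{-1}=\tfrac{1}{2p}\log(c^{p-1})$ pinning down condition (3). One caution: the key identification must be stated integrally, as $[KO_p, L_{K(1)}tmf(\Gamma)^{\wedge}_p]\cong\textup{Meas}(\mathbb{Z}_p^{\times}/\{\pm 1\}, V_\infty(\Gamma))$, rather than after tensoring both sides with $\mathbb{Q}$ as in your displayed formula (which would make condition (2) vacuous); your stage-three discussion of the Kummer congruences shows you intend the integral version, so this is a slip of notation rather than of substance.
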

\begin{remark} As a corollary of \ref{theorem:units}, below, we have $\textup{Nulls}(spin, L_{K(1)}gl_1tmf(\Gamma)^{\wedge}_p) \cong \textup{Nulls}(spin, gl_1L_{K(1)}tmf(\Gamma)^{\wedge}_p)$ so this is actually a description of $K(1)$-local orientations.
\end{remark}
In order to prove this theorem we need a good understanding of $[gl_1S/spin, L_{K(1)}gl_1tmf]$. It turns out that, as a consequence of the Adams conjecture and the computation of the $K(1)$-local sphere by Adams and Mahowald, we have
\[
L_{K(1)}(gl_1/spin) \cong KO_p
\]
Together with Rezk's logarithm, this reduces much of the work to understanding $[KO_p, L_{K(1)}tmf(\Gamma)]$. We will compute this in \S2.2
 using the $K(1)$-local Adams-Novikov spectral sequences, after collecting the necessary preliminaries on $p$-adic modular forms in \S2.1. Finally, we will translate our computation into the language of measures and complete the proof following \cite{AHR}.
\subsection{Complements on $p$-adic modular forms}
For this section we fix an integer $N\ge 1$, $\Gamma \in \{\Gamma(N), \Gamma_1(N), \Gamma_0(N)\}$, and a prime $p$ not dividing $N$. The Hasse invariant, $A$, is a level 1 modular form that takes the value $1$ on the mod $p$ Tate curve. Thus, from the point of view of $q$-expansions, $A$ behaves like a unit mod $p$. It is essentially for this reason that the theory of $p$-adic modular forms requires that we invert a lift of (some power) of the Hasse invariant. This corresponds to restricting attention to those elliptic curves with height 1 formal group. 

Recall that we have a map of $fpqc$-stacks
\[
\mathscr{M}_{\Gamma} \longrightarrow \mathscr{M}_{FG}
\]
classifying the formal group of the universal generalized elliptic curve. Let $\mathscr{M}_{FG}^{\ge n}\subset \mathscr{M}_{FG}\otimes \mathbb{Z}_{(p)}$ denote the regularly embedded, closed substack classifying formal groups of height at least $n$, and denote by $\mathscr{M}^{<n}_{FG}$ the open complement. 

We define the {\it ordinary locus} of $\left(\mathscr{M}_{\Gamma}\right)^{\wedge}_p$ by the pullback of formal stacks
\[
\xymatrix{
\mathscr{M}^{\textup{ord}}_{\Gamma}\ar[r] \ar[d]& \left(\mathscr{M}_{\Gamma}\right)^{\wedge}_p\ar[d]\\
\mathscr{M}^{<2}_{FG}\ar[r] & \left(\mathscr{M}_{FG}\right)^{\wedge}_p
}
\]
This classifies elliptic curves over complete, local Noetherian algebras with residue field of characteristic $p$ whose formal group is a deformation of a height 1 formal group. It is the open complement of the supersingular locus (the pullback of $\mathscr{M}^{\ge 2}_{FG}$). 

\begin{definition} Let $R$ be a $p$-complete $\mathbb{Z}_p$-algebra. We define the ring of {\it$p$-adic modular forms of level $\Gamma$ and weight $k$} over $R$ by
\[
MF_{p,k}(\Gamma, R) := H^0(\mathscr{M}^{\textup{ord}}_{\Gamma}\widehat{\otimes} R, \omega^{\otimes k})
\]
\end{definition}
Concretely, a $p$-adic modular form over $R$ is a rule $f$ which assigns to each triple $(R', E, \alpha)$ consisting of a $p$-complete $R$-algebra, a generalized ordinary elliptic curve $E$ over $R'$, and a level $\Gamma$ structure $\alpha$, an element $f(R', E, \alpha) \in H^0(E, \omega^{\otimes k})$. Furthermore, this assignment only depends on the isomorphism class of $(E, \alpha)$ and commutes with extension of scalars. 

By Lubin-Tate theory, $\mathscr{M}^{<2}_{FG}$ admits a pro-Galois cover by $\textup{Spf}(\mathbb{Z}_p)$,
\[
\textup{Spf}(\mathbb{Z}_p) \longrightarrow \mathscr{M}^{<2}_{FG}
\]
classifying the multiplicative group over $\mathbb{Z}_p$. In particular, this is a torsor for the group $\mathbb{Z}^{\times}_p$, which means that the shearing map
\[
\textup{Spf}(C(\mathbb{Z}_p^{\times}, \mathbb{Z}_p)) \times_{\textup{Spf}(\mathbb{Z}_p)} \textup{Spf}(\mathbb{Z}_p) \longrightarrow \textup{Spf}(\mathbb{Z}_p)\times_{\mathscr{M}^{<2}_{FG}}\textup{Spf}(\mathbb{Z}_p)
\]
is an isomorphism. The subgroups $1+p^n\mathbb{Z}_p \subset \mathbb{Z}^{\times}_p$ give intermediate covers
\[
\mathscr{M}_{FG}(p^n) \longrightarrow \mathscr{M}^{<2}_{FG}
\]
which are Galois with Galois group $(\mathbb{Z}/p^n)^{\times}$. Pulling back this tower gives a diagram
\[
\xymatrix{
\mathscr{M}^{\textup{triv}}_{\Gamma} \ar[r] \ar[d]& \textup{Spf}(\mathbb{Z}_p) \ar[d]\\
\vdots \ar[d]& \vdots\ar[d]\\
\mathscr{M}_{\Gamma}(p^n) \ar[r] \ar[d]& \mathscr{M}_{FG}(p^n) \ar[d]\\
\vdots \ar[d]& \vdots\ar[d]\\
\mathscr{M}^{\textup{ord}}_{\Gamma} \ar[r] & \mathscr{M}^{<2}_{FG}
}
\]
The left hand tower is known as the \emph{Igusa tower}. The formal stack $\mathscr{M}_{\Gamma}(p^n)$ classifies triples $(E, \alpha, \eta)$ where $(E, \alpha)$ is as before and $\eta: \mu_{p^n} \hookrightarrow E[p^n]$ is an injection of $p$-divisible groups. The formal stack $\mathscr{M}^{\textup{triv}}_{\Gamma}$ parameterizes triples $(E, \alpha, \eta)$ where $\eta: \widehat{\mathbb{G}}_m \stackrel{\cong}{\longrightarrow} \widehat{E}$ is a trivialization of the formal group. We record a few facts about this tower.
\begin{proposition} The stack $\mathscr{M}_{\Gamma}(p^n)$ is formally affine for $p$ odd and $n\ge 1$ and $p=2$ for $n \ge 2$. We denote the corresponding $p$-complete rings by $V_n(\Gamma)$. In particular $\mathscr{M}^{\textup{triv}}_{\Gamma}$ is formally affine with coordinate ring $V_{\infty}(\Gamma)$ which is flat over $\mathbb{Z}_p$. 
\end{proposition}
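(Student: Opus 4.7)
The plan is to prove formal affineness by first reducing to the level 1 case, where the result is essentially due to Katz, and then pulling back along the finite map to $\mathscr{M}_{\Gamma(1)}$. The key technical input is the rigidification of the moduli problem by the Igusa level structure, which fails precisely in the excluded case $(p,n)=(2,1)$.

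First I would verify that the stack $\mathscr{M}_\Gamma(p^n)$ is rigid, i.e.\ has no nontrivial automorphisms, so that it is representable by a formal scheme rather than a genuine stack. The automorphism group of a generic ordinary elliptic curve is $\{\pm 1\}$, and the $\Gamma$-structure is preserved by $[-1]$ in all our cases; thus the question reduces to whether $[-1]$ can fix an injection $\eta:\mu_{p^n}\hookrightarrow \widehat{E}[p^n]$. Since $[-1]$ acts by inversion on the formal group, fixing $\eta$ forces $2\eta=0$, which happens exactly when $p^n\mid 2$. This rules out $[-1]$ as an automorphism for $p$ odd and $n\ge 1$, and for $p=2$ and $n\ge 2$, and accounts for the hypotheses in the proposition. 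The surviving extra automorphisms at CM points are handled analogously (they act nontrivially on the formal group, hence on $\eta$, after possibly passing to the residue field).

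Next I would reduce to the case $\Gamma=\Gamma(1)$. The map $\mathscr{M}_\Gamma \longrightarrow \mathscr{M}_{\Gamma(1)}$ is finite by the Katz–Mazur–Conrad representability results cited above, and it remains finite after restricting to the ordinary locus and pulling back along the Igusa tower, giving a finite map
\[
\mathscr{M}_\Gamma(p^n)\longrightarrow \mathscr{M}_{\Gamma(1)}(p^n).
\]
Since a finite morphism over a formally affine base is formally affine, it suffices to treat $\Gamma=\Gamma(1)$. For this I would invoke Katz's fundamental result: the ordinary locus of the modular curve admits the Hasse invariant as a lift of a unit, so that on the characteristic-$p$ fiber the ordinary locus is affine, and the Igusa tower over it is a disjoint union of affine schemes indexed by $(\mathbb{Z}/p^n)^\times$; formally completing along the special fiber gives the formal affineness of $\mathscr{M}_{\Gamma(1)}(p^n)=\mathrm{Spf}(W_n)$, with $W_n$ a $p$-complete, $p$-torsion free $\mathbb{Z}_p$-algebra.

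Finally, I would take the colimit over $n$ to obtain $\mathscr{M}^{\mathrm{triv}}_\Gamma=\mathrm{Spf}(V_\infty(\Gamma))$ with $V_\infty(\Gamma)=\varinjlim_n V_n(\Gamma)$. Flatness over $\mathbb{Z}_p$ follows because each $V_n(\Gamma)$ is $p$-torsion free — as a finite extension of the $p$-torsion free ring $W_n$ inside a flat cover of the smooth stack $\mathscr{M}_\Gamma[1/N]^{\wedge}_p$ — and flatness is preserved under filtered colimits. The main obstacle is the rigidification step and the careful handling of the $p=2$ case: the whole argument collapses if one tries to include $(p,n)=(2,1)$, since then $[-1]$ acts trivially on $\mu_2$ and the resulting stack has a residual $\mathbb{Z}/2$-inertia that obstructs representability by a formal scheme. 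Everything else is comparatively soft and follows from standard preservation properties (finite over affine is affine, colimits of flat modules are flat).
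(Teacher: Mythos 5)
Your argument follows the same route as the paper, which simply reduces to level $1$ via finiteness of $\mathscr{M}_\Gamma \to \mathscr{M}_{\Gamma(1)}$ and then cites Behrens \cite[5.2]{Beh} for exactly the rigidity-plus-Hasse-invariant argument you spell out; the flatness claim is likewise handled the same way (flatness of the ordinary locus). Two small corrections, neither of which is a gap: the Igusa tower over $\mathbb{F}_p$ is \emph{irreducible} by Igusa's theorem, not a disjoint union of affines indexed by $(\mathbb{Z}/p^n)^{\times}$ --- but all your affineness argument needs is that a finite cover of a (formally) affine scheme is (formally) affine, so the conclusion stands; and ``finite extension of a $p$-torsion-free ring'' does not by itself give $p$-torsion-freeness of $V_n(\Gamma)$ --- the correct reason is that $V_n(\Gamma)$ is \'etale, hence flat, over $\mathscr{M}^{\textup{ord}}_{\Gamma}$, which is flat over $\mathbb{Z}_p$.
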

\begin{proof} A proof of representability at level 1 (which implies representability for higher levels) can be found in \cite[5.2]{Beh}. Flatness follows from the same result for $\mathscr{M}^{\textup{ord}}_{\Gamma}$. 
\end{proof}
\begin{remark} Of course, for sufficiently large $N$ and small $\Gamma$, we get representability from the start. 
\end{remark}
The ring $V_\infty(\Gamma)$ is called the ring of {\it generalized $p$-adic modular forms}. This carries an action of $\mathbb{Z}_p^{\times}$ by automorphisms of $\mathbb{G}_m$, as well as a ring map $\psi^p: V_\infty(\Gamma) \longrightarrow V_{\infty}(\Gamma)$ which is a lift of Frobenius. Its effect on $q$-expansions is via
\[
\psi^p: \sum a_nq^n \mapsto \sum a_nq^{pn}
\]
We extend this to a map on $V_{\infty}(\Gamma)_*:= H^0(\mathscr{M}^{\textup{triv}}_{\Gamma}, \omega^{\otimes \ast})$  by asking that $\psi^p(v) = pv$ for the periodicity generator. For any generalized $p$-adic modular form $g_k \in H^0(\mathscr{M}^{\textup{triv}}_{\Gamma}, \omega^{\otimes k})$ we set the following notation:
\[
g_k^{(p)} := \left(1 - \frac{1}{p}\psi^p\right)g_k
\]
so that, on $q$-expansions, we have
\[
g_k^{(p)}(q) = \sum (a_n - p^{k-1}a_{n/p}) q^n, \textup{ where } g_k = \sum a_nq^n
\]

We will also need to know that there is a formal scheme $\textup{Spf}(V_\infty(\Gamma)^{\textup{ss}})$ which captures the data of the ordinary part of a formal neighborhood of the supersingular locus. We will construct this for full level $N$ structure, $\Gamma = \Gamma(N)$, following \cite{Beh} and in general define 
\[
V_{\infty}(\Gamma)^{\textup{ss}} := \left(V_{\infty}(\Gamma(N))^{\textup{ss}}\right)^{\Gamma}.
\]
At level $N$, a formal neighborhood of the supersingular locus is given by 
\[
\mathscr{M}^{\textup{ss}}_{\textup{ell}}(\Gamma(N)) = \coprod \textup{Spf}(W(k_i)[\![u_1]\!])
\]
for some collection of finite fields $k_i$ depending on $N$. We denote the global sections by $A_N$ and define
\[
B_N := A_n[u_1^{-1}]^{\wedge}_p, \quad \mathscr{M}^{\textup{ss}}_{\textup{ell}}(N)^{\textup{ord}} :=\textup{Spf}(B_N)
\]
Then $\textup{Spf}(V_{\infty}(\Gamma(N))^{\textup{ss}})$ is defined by the formal pullback
\[
\xymatrix{\textup{Spf}(V_{\infty}(\Gamma(N))^{\textup{ss}}) \ar[r]\ar[d] & \mathscr{M}(p^{\infty}) \ar[d]\\
\mathscr{M}^{\textup{ss}}_{\textup{ell}}(N)^{\textup{ord}} \ar[r] & \mathscr{M}^{\textup{ord}}_{\textup{ell}}}
\]
\subsection{$K(1)$-local topological modular forms}
The intersection of the previous section with topology comes from the following proposition.
\begin{proposition} There are canonical isomorphisms
\[
L_{K(1)}tmf(\Gamma) \cong \mathcal{O}^{\textup{top}}(\mathscr{M}^{\textup{ord}}_{\Gamma})
\]
\[
L_{K(1)}\left(K_p \wedge L_{K(1)}tmf(\Gamma)\right) \cong \mathcal{O}^{\textup{top}}(\mathscr{M}^{\textup{triv}}_{\Gamma})
\]
and the latter induces an isomorphism of $\theta$-algebras
\[
(K_p)^{\wedge}_*(L_{K(1)}tmf(\Gamma)) \cong (K_p)_* \otimes_{\mathbb{Z}_p} V_{\infty}(\Gamma)
\]
where the structure on the right hand side is diagonal and the structure on $V_\infty(\Gamma)$ comes from the action of $\mathbb{Z}^{\times}_p$ and a lift of Frobenius. Similarly, we have
\[
(K_p)^{\wedge}_*(L_{K(1)}L_{K(2)}tmf(\Gamma)) \cong (K_p)_* \otimes_{\mathbb{Z}_p} V_{\infty}(\Gamma)^{\textup{ss}}
\]
\end{proposition}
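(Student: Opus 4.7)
The plan is to treat these as four geometric statements: $K(1)$-localisation of an elliptic $E_\infty$-ring corresponds to pullback along the open immersion $\mathscr{M}^{<2}_{FG}\hookrightarrow \mathscr{M}_{FG}\otimes \mathbb{Z}_{(p)}$; smashing with $K_p$ and re-localising corresponds to further pullback along the Lubin--Tate cover $\textup{Spf}(\mathbb{Z}_p)\to \mathscr{M}^{<2}_{FG}$; and $L_{K(1)}L_{K(2)}$ corresponds to pulling back to the formal neighbourhood of the supersingular locus and then restricting to its ordinary part. Once translated this way, all four identifications follow from the construction of $\mathcal{O}^{\textup{top}}$ together with the tower definitions given earlier in \S2.1.

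For the first isomorphism I would argue that $\mathcal{O}^{\textup{top}}(\mathscr{M}^{\textup{ord}}_\Gamma)$ is already $K(1)$-local: by \'etale descent it is a limit of Landweber-exact elliptic cohomology theories whose formal groups have height exactly one, and $K(1)$-local spectra are closed under limits. The restriction map $tmf(\Gamma)\to \mathcal{O}^{\textup{top}}(\mathscr{M}^{\textup{ord}}_\Gamma)$ therefore factors through $L_{K(1)}tmf(\Gamma)$, and the descent spectral sequence together with the standard local computation (inverting a lift of the Hasse invariant) shows the induced map is an equivalence. For the second isomorphism I would use the identification $K_p\simeq \mathcal{O}^{\textup{top}}(\textup{Spf}(\mathbb{Z}_p)\to \mathscr{M}^{<2}_{FG})$ from Lubin--Tate theory together with the general principle that, on $K(1)$-local elliptic $E_\infty$-rings, attaching a spectrum to each flat formal chart over $\mathscr{M}^{<2}_{FG}$ takes pullbacks of formal stacks to $K(1)$-local smash products. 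Applied to the definition of $\mathscr{M}^{\textup{triv}}_\Gamma$ as the pullback of $\mathscr{M}^{\textup{ord}}_\Gamma$ along the Lubin--Tate cover, this gives the desired equivalence.

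For the third isomorphism one exploits that $\mathscr{M}^{\textup{triv}}_\Gamma = \textup{Spf}(V_\infty(\Gamma))$ is formally affine with $V_\infty(\Gamma)$ flat over $\mathbb{Z}_p$ and carries a trivialisation of the formal group. The descent spectral sequence for $\mathcal{O}^{\textup{top}}(\mathscr{M}^{\textup{triv}}_\Gamma)$ therefore collapses onto the zero-line, yielding $(K_p)_*\otimes_{\mathbb{Z}_p} V_\infty(\Gamma)$. The $\theta$-algebra structure must then be identified: the $\mathbb{Z}_p^\times$-action arises from the Galois action on the Lubin--Tate cover, acting on the $K_p$-factor as Adams operations and on $V_\infty(\Gamma)$ by the action recorded in \S2.1, while Rezk's Frobenius lift $\psi^p$ reduces to multiplication by $p$ on the periodicity generator together with the canonical-subgroup isogeny on the Igusa tower, matching the $q$-expansion formula $\sum a_n q^n \mapsto \sum a_n q^{pn}$. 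The supersingular case is formally identical with $\mathscr{M}^{\textup{triv}}_\Gamma$ replaced by $\textup{Spf}(V_\infty(\Gamma)^{\textup{ss}})$, using the construction of the latter as the trivialised ordinary part of a formal neighbourhood of the supersingular locus.

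The main technical obstacle is verifying that $K(1)$-localisation commutes with global sections of $\mathcal{O}^{\textup{top}}$ over the non-affine stacks $\mathscr{M}^{\textup{ord}}_\Gamma$ and its supersingular counterpart. I expect the cleanest route is to first quote the analogous statement for $\mathscr{M}^{\textup{ord}}_{\textup{ell}}$ and $tmf$, and then transport along the finite \'etale cover $\mathscr{M}_\Gamma[1/N]\to \mathscr{M}_{\textup{ell}}[1/N]$, which is manifestly compatible with $K(1)$-localisation. Matching the topological and algebraic $\theta$-algebra structures also requires care, but once the source and target are identified geometrically it reduces to a naturality check on the action of the Lubin--Tate Galois group and the universal isogeny on the Igusa tower.
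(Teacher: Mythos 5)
Your proposal is correct and follows essentially the same route as the paper: the first equivalence via $K(1)$-locality of the right-hand side (a homotopy limit of $K(1)$-local ring spectra) plus convergence of the descent spectral sequence, the second via a stacky pullback lemma identifying the $K(1)$-local smash product with the pullback along the Lubin--Tate cover, and the remaining identifications by formal affineness of $\mathscr{M}^{\textup{triv}}_{\Gamma}$ and reduction to the level-one case in \cite{Beh}. The paper's own proof is only a brief sketch of this same chain of ideas (noting the statements are essentially part of the definition of $tmf(\Gamma)$), so your more detailed elaboration is consistent with it.
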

\begin{proof} In fact, with our current construction of $tmf(\Gamma)$, this is part of the {\it definition}. However, this proposition can be recovered from any reasonable construction. Indeed, the first statement is a combination of the fact that the right hand side, being a homotopy limit of $K(1)$-local ring spectra, is $K(1)$-local, together with the fact that the Adams-Novikov spectral sequence (descent spectral sequence) terminates at a finite stage with a horizontal vanishing line. The second statement can be proved using a variant of the usual `stacky pullback lemma' for formal stacks, once one identifies the given moduli of elliptic curves with the stack presented by the $L$-complete Hopf algebroid $(\pi_*(L_{K(1)}(K_p \wedge L_{K(1)}tmf(\Gamma))), \pi_*(L_{K(1)}(K_p \wedge K_p \wedge L_{K(1)}tmf(\Gamma))))$. For references at level 1, see \cite[7.9, 8.6]{Beh}. 
\end{proof}
We will occasionally make use of the following notation:
\[
tmf(\Gamma; p^n) := \mathcal{O}^{\textup{top}}(\mathscr{M}_{\Gamma}(p^n))
\]
When the corresponding moduli stack is representable this spectrum has an action of $(\mathbb{Z}/p^n)^{\times}$ by $E_\infty$-ring maps and we have equivalences
\[
tmf(\Gamma; p^n)^{h(\mathbb{Z}/p^n)^{\times}} \cong L_{K(1)}tmf(\Gamma)
\]
\[
((K_p)^{\wedge}_*tmf(\Gamma; p^n))^{(\mathbb{Z}/p^n)^{\times}} = (K_p)^{\wedge}_*L_{K(1)}tmf(\Gamma)
\]
Finally, we record two more computations.
\begin{proposition} For any prime $p$, we have isomorphisms of $\theta$-algebras
\[
(K_p)^{\wedge}_*K_p \cong (K^{\wedge}_p)_* \otimes_{\mathbb{Z}_p}C(\mathbb{Z}^{\times}_p, \mathbb{Z}_p)
\]
\[
(K_p)^{\wedge}_*KO_p \cong (K^{\wedge}_p)_* \otimes_{\mathbb{Z}_p} C(\mathbb{Z}^{\times}_p/\{\pm 1\}, \mathbb{Z}_p)
\]
Where the $\theta$-algebra structures on $C(A, \mathbb{Z}_p)$ come from the evident actions of $\mathbb{Z}^{\times}_p$ together with a lift of Frobenius acting on $\mathbb{Z}_p$. 
\end{proposition}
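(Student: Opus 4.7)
The strategy is to mirror the argument given earlier for $L_{K(1)}tmf(\Gamma)$: identify $K_p$ with $\mathcal{O}^{\textup{top}}$ of the height $\leq 1$ stratum of $\mathscr{M}_{FG}$, invoke the shearing isomorphism already recorded in \S2.1 which presents $\mathscr{M}^{<2}_{FG}$ as the quotient of $\textup{Spf}(\mathbb{Z}_p)$ by the pro-group $\mathbb{Z}_p^{\times}$, and read off the cooperations as continuous functions on that group.

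More concretely, I would first observe that $K_p \cong \mathcal{O}^{\textup{top}}(\mathscr{M}^{<2}_{FG})$: the cover $\textup{Spf}(\mathbb{Z}_p)\to \mathscr{M}^{<2}_{FG}$ classifies $\widehat{\mathbb{G}}_m$ and realizes $K_p$ as the Lubin--Tate spectrum of height 1. Then one applies the stacky pullback lemma used in the proof of the previous proposition to obtain
\[
(K_p)^{\wedge}_*K_p \cong \pi_*\mathcal{O}^{\textup{top}}\bigl(\textup{Spf}(\mathbb{Z}_p)\times_{\mathscr{M}^{<2}_{FG}}\textup{Spf}(\mathbb{Z}_p)\bigr).
\]
The shearing isomorphism recalled in \S2.1 identifies this fiber product with $\textup{Spf}(C(\mathbb{Z}_p^{\times},\mathbb{Z}_p))\times_{\textup{Spf}(\mathbb{Z}_p)}\textup{Spf}(\mathbb{Z}_p)$, whose ring of functions is $C(\mathbb{Z}_p^{\times},\mathbb{Z}_p)\otimes_{\mathbb{Z}_p}(K_p)_*$ (the periodicity generator is pulled back from the left factor).

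Next I would identify the $\theta$-algebra structure. The $\mathbb{Z}_p^{\times}$-action on the cooperations comes from permuting the second trivialization, which under the shearing isomorphism is translation on $\mathbb{Z}_p^{\times}$; this recovers the standard interpretation of Adams operations as elements of the Morava stabilizer group at height 1. The Frobenius lift $\psi^p$ acts on $(K_p)_*$ by $v\mapsto pv$ and on the coefficient $\mathbb{Z}_p$ by the identity (the unique lift of Frobenius), giving the stated diagonal $\theta$-algebra structure.

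For $KO_p$, I would use the equivalence $KO_p \simeq K_p^{hC_2}$ where $C_2 = \{\pm 1\} \subset \mathbb{Z}_p^{\times}$ acts by complex conjugation. Since the $C_2$-action is free on $\mathbb{Z}_p^{\times}$ (even at $p=2$, as $-1\neq 1$), the homotopy fixed point spectral sequence for $(K_p)^{\wedge}_*(K_p^{hC_2})$ collapses to $C_2$-invariants on the second factor, yielding
\[
(K_p)^{\wedge}_*KO_p \cong C(\mathbb{Z}_p^{\times},\mathbb{Z}_p)^{C_2}\otimes_{\mathbb{Z}_p}(K_p)_* \cong C(\mathbb{Z}_p^{\times}/\{\pm 1\},\mathbb{Z}_p)\otimes_{\mathbb{Z}_p}(K_p)_*,
\]
with the same $\theta$-algebra structure descending. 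The main subtlety is verifying the collapse of the descent spectral sequence at $p=2$, but this follows from the fact that the $C_2$-action on the coefficient ring $C(\mathbb{Z}_p^{\times},\mathbb{Z}_p)$ is free and so higher cohomology vanishes; alternatively one can deduce the statement by direct appeal to the classical Adams--Harris--Switzer computation of $KO^*KO$ after $p$-completion.
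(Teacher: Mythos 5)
Your argument is essentially correct, and it is worth noting that the paper itself does not give a proof here: it simply cites Strickland via \cite{GHMR} and \cite{BH}, remarking that ``alternatively, one may compute directly the associated (affine) moduli problem.'' Your proposal carries out exactly that alternative, so it supplies more than the paper does. Two points deserve repair. First, the opening identification is misstated: $\mathcal{O}^{\textup{top}}(\mathscr{M}^{<2}_{FG})$ is the $K(1)$-local sphere $K_p^{h\mathbb{Z}_p^{\times}}$, not $K_p$; what you mean (and what your displayed formula actually uses) is that $K_p$ is $\mathcal{O}^{\textup{top}}$ of the Lubin--Tate cover $\textup{Spf}(\mathbb{Z}_p) \to \mathscr{M}^{<2}_{FG}$, so that $L_{K(1)}(K_p \wedge K_p)$ is computed by the self-fiber-product of that cover, which the shearing isomorphism identifies with $\textup{Spf}(C(\mathbb{Z}_p^{\times},\mathbb{Z}_p))$. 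Second, in the $KO_p$ step you should justify the exchange $L_{K(1)}(K_p \wedge K_p^{hC_2}) \simeq L_{K(1)}(K_p\wedge K_p)^{hC_2}$: this is not automatic for an infinite homotopy limit, but follows either from $K(1)$-local Tate vanishing for finite groups (Greenlees--Sadofsky) or, more elementarily, from the Wood equivalence $K \simeq KO \wedge C\eta$ exhibiting $K$ as a dualizable $KO$-module. Granting that, your observation that complex conjugation acts on $C(\mathbb{Z}_p^{\times},\mathbb{Z}_p)\otimes (K_p)_*$ purely by translation on the function factor (once the Bott class is pulled back from the left) is correct, the module is coinduced for $C_2$ because $\{\pm 1\}$ acts freely on $\mathbb{Z}_p^{\times}$ even at $p=2$, and the spectral sequence collapses as you say; this is the same ``cohomology of torsors'' mechanism the paper exploits in \S 2.3. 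The payoff of your route over the paper's citation is that it makes the $\theta$-algebra structure visible from the moduli description, at the cost of the two justifications above.
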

\begin{proof} This goes back to Strickland. A useful reference is \cite{GHMR} or \cite{BH}. Alternatively, one may compute directly the associated (affine) moduli problem.
\end{proof}
Given a $K(n)$-local spectrum $X$ satisfying some mild conditions, the completed homology $(E_n)^{\wedge}_*X$ is an $L$-complete comodule over the $L$-complete Hopf algebroid $((E_n)_*, (E_n)^{\wedge}_*E_n)$. In particular, this is so if $(E_n)^{\wedge}_*X$ is finitely-generated or pro-free. There is a standard method for computing spaces of maps between $K(n)$-local spectra.
\begin{theorem}[Barthel-Heard] Suppose that $(E_n)^{\wedge}_*X$ is pro-free and $(E_n)^{\wedge}_*Y$ is either finitely generated, pro-free, or has bounded $\mathfrak{m}$-torsion where $\mathfrak{m} \subset \pi_0E_n$ is the maximal ideal. Then the $E_2$-term of the $K(n)$-local $E_n$-based Adams-Novikov spectral sequence is
\[
E_2^{s, t} = \widehat{Ext}^{s,t}_{(E_n)^{\wedge}_*E_n}((E_n)^{\wedge}_*X, (E_n)^{\wedge}_*Y)
\] 
This spectral sequence is strongly convergent, with abutment $\pi_{t-s}F(X, L_{K(n)}Y)$. 
\end{theorem}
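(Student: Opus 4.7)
The plan is to realize the spectral sequence as the Bousfield--Kan spectral sequence associated to a canonical cosimplicial resolution, identify the $E_2$-page algebraically, and then obtain strong convergence from a horizontal vanishing line. Concretely, let $Y^\bullet$ denote the cosimplicial spectrum with $Y^s = L_{K(n)}(E_n^{\wedge (s+1)} \wedge Y)$; since $E_n$ generates the $K(n)$-local category as a thick ideal, its totalization is $L_{K(n)} Y$. Applying $F(X, -)$ and taking homotopy groups produces a conditionally convergent spectral sequence abutting to $\pi_{t-s} F(X, L_{K(n)} Y)$.

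The identification of the $E_2$-term is the heart of the argument. First I would compute the $E_1$-page by showing
\[
\pi_* F(X, Y^s) \cong \textup{Hom}^{c}_{(E_n)_*}\bigl((E_n)^{\wedge}_* X,\, (E_n)^{\wedge}_*(E_n^{\wedge s} \wedge Y)\bigr),
\]
where $\textup{Hom}^{c}$ denotes continuous homomorphisms of $L$-complete $(E_n)_*$-modules. The pro-freeness hypothesis on $(E_n)^{\wedge}_* X$ is used precisely here: one writes $X$ as a filtered homotopy limit of duals of finite free $E_n$-module spectra and uses that derived $L$-completion is exact on pro-free modules, so that mapping out of $X$ genuinely computes continuous $\textup{Hom}$ with no $\varprojlim^1$ or higher-completion contribution. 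Next, one checks that the resulting cosimplicial abelian group is the completed cobar complex of the $L$-complete Hopf algebroid $((E_n)_*, (E_n)^{\wedge}_* E_n)$; its cohomology then computes $\widehat{\textup{Ext}}$ of $L$-complete comodules in the sense of Barthel--Frankland. This step is formal once the $E_1$-page has been identified.

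For strong convergence, the plan is to exhibit a horizontal vanishing line at a finite page by smashing $Y$ with a cofinal tower of generalized Moore spectra $M_I$ indexed by opens of the maximal ideal $\mathfrak{m} \subset \pi_0 E_n$. For each finite $I$, $F(X, L_{K(n)}(M_I \wedge Y))$ admits a classical $E_n$-Adams spectral sequence with a uniform vanishing line of slope depending only on $n$, and the spectral sequence of interest is recovered as an inverse limit over $I$. The three listed conditions on $(E_n)^{\wedge}_* Y$ --- pro-free, finitely generated, or bounded $\mathfrak{m}$-torsion --- are precisely what ensures that the $\varprojlim^{1}$ obstructions to transporting the vanishing line and the identification of the abutment through the limit vanish. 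The main obstacle throughout is the delicate bookkeeping of $L$-completion: one must verify that the completed tensor products in the cobar construction compute what they should and that each derived functor of completion degenerates in the range it matters. I would follow the Barthel--Heard framework, building on Devinatz--Hopkins and Hovey--Strickland, which handles these $L$-completion subtleties uniformly under exactly the hypotheses stated.
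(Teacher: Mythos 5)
The paper does not prove this statement at all: it is quoted as an external result of Barthel--Heard, so there is no internal proof to compare against, and any argument you give is necessarily your own. Measured against the actual published proof, your outline has the right architecture --- the Devinatz--Hopkins cosimplicial $E_n$-resolution in the $K(n)$-local category, identification of the $E_1$-page using pro-freeness of $(E_n)^{\wedge}_*X$, recognition of the completed cobar complex of the $L$-complete Hopf algebroid, and convergence via a vanishing line at a finite page obtained from generalized Moore spectra. Two points of emphasis are misplaced, and they are exactly where the content of the theorem lives. First, the three alternative hypotheses on $(E_n)^{\wedge}_*Y$ (finitely generated, pro-free, or bounded $\mathfrak{m}$-torsion) are not primarily there to kill $\varprojlim^1$ obstructions in the convergence step; they are the Hovey--Strickland conditions ensuring that the higher derived functors $L_i$ of $\mathfrak{m}$-adic completion vanish on the relevant modules, so that the K\"unneth map
\[
((E_n)^{\wedge}_*E_n)^{\widehat{\otimes} s}\,\widehat{\otimes}_{(E_n)_*}\,(E_n)^{\wedge}_*Y \longrightarrow (E_n)^{\wedge}_*(E_n^{\wedge s}\wedge Y)
\]
is an isomorphism and the $E_1$-page genuinely is the completed cobar complex computing $\widehat{\textup{Ext}}$. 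Without this, the identification of the $E_2$-term --- which is the whole point of the theorem --- does not go through. Second, the universal-coefficient step for pro-free $(E_n)^{\wedge}_*X$ is normally run by observing that $L_{K(n)}(E_n\wedge X)$ splits as a completed wedge of suspensions of $E_n$ and then applying the $E_n$-module universal coefficient theorem; your route through writing $X$ as a filtered homotopy limit of duals of finite free $E_n$-module spectra would need an additional argument that this limit commutes with $(E_n)^{\wedge}_*$ and with $F(-,Y^s)$, which is not automatic. Neither issue invalidates the sketch as a roadmap, but both mark the places where the real work of Barthel--Heard is done.
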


We will recall the definition of $\widehat{\textup{Ext}}$ below, but first we remark that, in the case $n=1$, we can often interpret the $s=0$ part in a much more pedestrian fashion. If $M$ is an $L$-complete comodule which is pro-free, then it is a \emph{Morava module}. That is, the coaction map
\[
M \longrightarrow (K_p)^{\wedge}_*K_p \widehat{\otimes}_{(K_p)_*} M = C(\mathbb{Z}^{\times}_p,\mathbb{Z}_p) \widehat{\otimes}_{\mathbb{Z}_p} M
\]
defines a continuous semi-linear action of $\mathbb{Z}^{\times}_p$ given explicitly by
\[
\mathbb{Z}_p[[\mathbb{Z}_p^{\times}]] \widehat{\otimes}_{\mathbb{Z}_p} M \longrightarrow \mathbb{Z}_p[[\mathbb{Z}^{\times}_p]] \widehat{\otimes}_{\mathbb{Z}_p}C(\mathbb{Z}^{\times}_p,\mathbb{Z}_p) \widehat{\otimes}_{\mathbb{Z}_p} M\stackrel{ev \otimes 1}{\longrightarrow} M
\]
In this case we have a natural isomorphism
\[
\textup{Hom}^{\textup{cts}}_{\mathbb{Z}^{\times}_p}(M, N) \cong \textup{Hom}_{\widehat{\textup{Comod}}_{(K_p)^{\wedge}_*K_p}}(M, N)
\]
where the left hand side denotes continuous, $\mathbb{Z}_p^{\times}$-equivariant homomorphisms. At this point we may state our first main calculation.
\begin{theorem}\label{theorem:edge map} The following edge maps are isomorphisms:
\[
[KO_p, L_{K(1)}tmf(\Gamma)] \longrightarrow \textup{Hom}^{\textup{cts}}_{\mathbb{Z}_p^{\times}}(C(\mathbb{Z}^{\times}_p/\{\pm 1\}, \mathbb{Z}_p), V_{\infty}(\Gamma))
\]
\[
[KO_p, L_{K(1)}L_{K(2)}tmf(\Gamma)] \longrightarrow \textup{Hom}^{\textup{cts}}_{\mathbb{Z}_p^{\times}}(C(\mathbb{Z}^{\times}_p/\{\pm 1\}, \mathbb{Z}_p), V_{\infty}(\Gamma)^{\textup{ss}})
\]
\[
[L_{K(1)}tmf(\Gamma), L_{K(1)}tmf(\Gamma)] \longrightarrow \textup{Hom}^{\textup{cts}}_{\mathbb{Z}_p^{\times}}(V_{\infty}(\Gamma), V_{\infty}(\Gamma))
\]
\end{theorem}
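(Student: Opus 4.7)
The strategy is to identify all three edge maps as the edge maps of the $K(1)$-local Adams--Novikov spectral sequence of Barthel--Heard just stated, and then to verify that the $E_2^{s,\ast}$-terms vanish for $s\geq 1$ in each case. First I would check the hypotheses of Barthel--Heard: in all three cases the source and target have pro-free completed $K_p$-homology, given by $(K_p)_*$ tensored with the $\mathbb{Z}_p$-flat modules $C(\mathbb{Z}_p^\times/\{\pm 1\}, \mathbb{Z}_p)$, $V_\infty(\Gamma)$, or $V_\infty(\Gamma)^{\textup{ss}}$, with flatness coming from the Igusa-tower proposition above. The spectral sequence then converges strongly, and the edge map in question is precisely the Hurewicz-type map into $E_\infty^{0,0}$.

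Next I would identify the $E_2$-page with continuous group cohomology. Under the Devinatz--Hopkins/Morava equivalence between $L$-complete comodules over $((K_p)_*, (K_p)^\wedge_* K_p)$ and pro-free continuous semi-linear $\mathbb{Z}_p^\times$-modules, one has
\[
\widehat{\textup{Ext}}^{s,t}_{(K_p)^\wedge_* K_p}(M, N) \cong H^s_c\!\left(\mathbb{Z}_p^\times,\, \textup{Hom}^{\textup{cts}}_{(K_p)_*}(M, N)_{t}\right),
\]
so that the $E_2^{0,0}$ term is precisely the continuous $\mathbb{Z}_p^\times$-equivariant Hom appearing in the statement of the theorem.

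It remains to show that $E_2^{s,t}$ vanishes for all $s\geq 1$. For the first two edge maps, the source module $C(\mathbb{Z}_p^\times/\{\pm 1\}, \mathbb{Z}_p) \otimes (K_p)_*$ is coinduced from the subgroup $\{\pm 1\} \subset \mathbb{Z}_p^\times$, so Shapiro's lemma in continuous cohomology reduces the computation to $H^s_c(\{\pm 1\}, N_t)$, where $N_t$ is the degree-$t$ piece of the Morava module for the target with its induced $\{\pm 1\}$-action. For $p$ odd, $|\{\pm 1\}| = 2$ is a unit in $\mathbb{Z}_p$ and the vanishing is immediate. For $p=2$, the splitting $\mathbb{Z}_2^\times = \{\pm 1\}\times (1+4\mathbb{Z}_2)$ together with the fact that $\mathscr{M}^{\textup{triv}}_\Gamma \to \mathscr{M}^{\textup{ord}}_\Gamma$ is a $\mathbb{Z}_2^\times$-torsor implies that $V_\infty(\Gamma)$ is \'etale-locally a free $\{\pm 1\}$-module, so its $\{\pm 1\}$-cohomology vanishes in positive degrees; the argument for $V_\infty(\Gamma)^{\textup{ss}}$ is identical. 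For the third edge map, the vanishing of the self-Ext follows from $\mathbb{Z}_p^\times$-Galois descent: $V_\infty(\Gamma)$ is cohomologically trivial as a continuous $\mathbb{Z}_p[\![\mathbb{Z}_p^\times]\!]$-module, by the classical normal basis theorem in its Iwasawa-theoretic form.

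The main obstacle is the $p=2$ case for the $KO_p$-edge maps: naively $H^*(\{\pm 1\}, -)$ is 2-periodic and typically nonzero, and it is only the freeness of $V_\infty(\Gamma)$ (and $V_\infty(\Gamma)^{\textup{ss}}$) as $\{\pm 1\}$-modules, coming from the torsor structure, that makes the argument go through and ensures that the Adams--Novikov spectral sequence collapses onto its $E_2^{0,0}$ corner.
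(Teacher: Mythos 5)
Your reduction to the Barthel--Heard spectral sequence, the identification of $E_2$ with continuous group cohomology, and the torsor/Shapiro vanishing argument all match the paper's proof for $p$ odd: there the relevant Morava modules are (summands of) $\mathbb{Z}_p^\times$-torsors and the higher $\widehat{\textup{Ext}}$ groups genuinely vanish. The problem is your treatment of $p=2$, where there is a real gap.

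You claim that $E_2^{s,t}=0$ for $s\ge 1$ at $p=2$ because ``$V_\infty(\Gamma)$ is \'etale-locally a free $\{\pm 1\}$-module, coming from the torsor structure.'' This is false precisely in the cases that matter. For $\Gamma=\Gamma_0(N)$ (including $\Gamma_0(2)$, which is needed for the Ochanine genus), the element $-1\in\mathbb{Z}_2^\times$ acts \emph{trivially} on $V_\infty(\Gamma)$: the change of trivialization $\eta\mapsto \eta\circ[-1]$ is absorbed by the automorphism $[-1]$ of the elliptic curve, which preserves a $\Gamma_0(N)$-structure. So $V_\infty(\Gamma_0(N))$ is a torsor only for $\mathbb{Z}_2^\times/\{\pm1\}$ (equivalently, for $1+4\mathbb{Z}_2$ after the Teichm\"uller splitting), not for $\mathbb{Z}_2^\times$. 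After your (correct) Shapiro reduction, the relevant group is $H^s_c(\{\pm1\}, (K_2)_*\widehat{\otimes}V_\infty(\Gamma))$ with $\{\pm1\}$ acting trivially on the torsion-free module $V_\infty(\Gamma)$ in internal degree $0$; this is $V_\infty(\Gamma)/2\ne 0$ in positive even degrees. So the spectral sequence does \emph{not} collapse --- indeed the paper records $\pi_1 F(KO_2,X)=M/2\ne 0$, which already witnesses nonzero positive-filtration contributions. The same objection defeats your ``normal basis theorem'' argument for the third edge map: $V_\infty(\Gamma_0(N))$ is not cohomologically trivial over $\mathbb{Z}_2[\![\mathbb{Z}_2^\times]\!]$.

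What is actually needed at $p=2$ is an argument that the edge map onto $E_2^{0,0}$ is an isomorphism \emph{despite} the nonvanishing higher Ext. The paper does this by identifying the entire spectral sequence $E_r(KO_2,X)$ with $E_r(KO_2)\widehat{\otimes}_{\mathbb{Z}_2}M$ as a module over $E_r(KO_2)$, using the injection induced by restriction to a punctured neighborhood of the cusp, $X\to KO_2(\!(q^{1/N})\!)$, to control the differentials: the first possible differential $d_3$ vanishes on bidegree $(0,0)$, and the column $t-s=-1$ is then empty, so the $(0,0)$ classes are permanent cycles and are not hit. For the third edge map at $p=2$ one further needs the splitting of $L_{K(1)}tmf(\Gamma)$ as a completed wedge of copies of $KO_2$ to reduce to the $KO_2$ source. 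Your proposal is missing this entire layer of argument.
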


We show here how to deduce this from the following theorem.

\begin{theorem}\label{theorem:torsor} Let $G$ be a profinite group, and let $R$ be a complete, local Noetherian graded ring with maximal homogeneous ideal generated by a sequence of regular elements. If $A$ is a $G$-torsor and $M$ is pro-free, then
\[
\widehat{\textup{Ext}}^{s}_{G}(M, A) = 0, \quad s>0
\]
where this is computed in the category of $L$-complete $C(G, R)$-comodules. 
\end{theorem}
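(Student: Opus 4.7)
The plan is to reduce the theorem to a continuous group cohomology calculation and then invoke faithfully flat Galois descent for the $G$-torsor $R \to A$. The hypothesis on $R$ places us in the Hovey--Strickland setting, where pro-free $R$-modules are precisely the projective objects of the category of $L$-complete $R$-modules; in particular $\widehat{\mathrm{Ext}}^t_R(M, -) = 0$ for $t > 0$ whenever $M$ is pro-free. Combined with the cofree/forgetful adjunction between $L$-complete $C(G,R)$-comodules and $L$-complete $R$-modules, a Grothendieck spectral sequence argument applied to the factorization $\widehat{\mathrm{Hom}}_{C(G,R)}(M,-) = (-)^G \circ \widehat{\mathrm{Hom}}_R(M,-)$ degenerates and yields a natural isomorphism
\[
\widehat{\mathrm{Ext}}^s_G(M, N) \;\cong\; H^s_{\mathrm{cts}}\bigl(G;\, \widehat{\mathrm{Hom}}_R(M, N)\bigr)
\]
for every $L$-complete $C(G,R)$-comodule $N$. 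Thus it suffices to show that $H^s_{\mathrm{cts}}(G; V) = 0$ for $s > 0$, where $V := \widehat{\mathrm{Hom}}_R(M, A)$.

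Next, $V$ is naturally an $A$-module via post-composition with the action on $A$, and carries a continuous semi-linear $G$-action satisfying $g(a\phi) = g(a)\,(g\phi)$, immediate from combining conjugation on $M$ with the given action on $A$. The torsor hypothesis supplies a shearing isomorphism $A \widehat{\otimes}_R A \cong C(G, A)$, which iterates to $A^{\widehat{\otimes}(n+1)} \cong C(G^n, A)$. For any $A$-module $V$ with semi-linear $G$-action, Galois descent gives $V \cong A \widehat{\otimes}_R V^G$, and under these identifications the continuous cochain complex
\[
V \longrightarrow C(G, V) \longrightarrow C(G^2, V) \longrightarrow \cdots
\]
computing $H^*_{\mathrm{cts}}(G; V)$ is identified with the tensor product of the Amitsur complex
\[
R \longrightarrow A \longrightarrow A \widehat{\otimes}_R A \longrightarrow \cdots
\]
with $V^G$, shifted so that $V$ sits in degree $0$. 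Faithful flatness of $R \to A$ (which follows from the torsor hypothesis) makes the augmented Amitsur complex a flat resolution of $R$, and tensoring with $V^G$ preserves exactness since every term is $R$-flat. This delivers the vanishing $H^s_{\mathrm{cts}}(G; V) = 0$ for $s > 0$.

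The main technical obstacle is ensuring that the classical ingredients have honest $L$-complete analogues: that the Grothendieck spectral sequence survives $L$-completion, that the shearing map is compatible with $\widehat{\otimes}_R$, that Galois descent $V \cong A \widehat{\otimes}_R V^G$ still holds after $L$-completion, and that the differentials of the continuous cochain and (completed) Amitsur complexes really match up --- not merely the underlying pro-free modules in each degree. None of these checks is deep on its own, but they constitute the bulk of the work, and the second one in particular must be handled carefully because $\widehat{\otimes}_R$ is only right-exact and need not commute with arbitrary limits.
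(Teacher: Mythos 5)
Your strategy is essentially the one the paper uses: reduce to the cohomology of the trivial comodule with coefficients in $\widehat{\operatorname{Hom}}_R(M,A)$ using pro-freeness of $M$ (the paper does this by a direct cochain-level identification $\operatorname{Hom}^{\textup{cts}}_R(M, C(G^n,N)) \cong C(G^n, \operatorname{Hom}^{\textup{cts}}_R(M,N))$ rather than a Grothendieck spectral sequence, but it is the same point), and then kill the resulting cobar complex by identifying it with an Amitsur complex via the shearing isomorphism and invoking faithfully flat descent. Two corrections, one of substance. First, the paper's definition of $G$-torsor (deferred to \S 2.3) takes the shearing map over the fixed ring: $A$ is required to be faithfully flat over $A^G$ with $A \widehat{\otimes}_{A^G} A \cong C(G,A)$, \emph{not} $A \widehat{\otimes}_R A \cong C(G,A)$. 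In the intended applications ($A = (K_p)^{\wedge}_* L_{K(1)}tmf(\Gamma) $, etc.) the fixed ring $A^G$ is much larger than $R$, so your shearing isomorphism over $R$ is simply false there, and your implicit assumption $A^G = R$ does not hold; every occurrence of $\widehat{\otimes}_R$ in your descent step must be replaced by $\widehat{\otimes}_{A^G}$. Second, you route the argument through effectivity of descent, i.e.\ the assertion $V \cong A \widehat{\otimes}\, V^G$ for an arbitrary $A$-module with compatible semi-linear action; you correctly flag this as a technical obstacle, but it is an unnecessary (and in the $L$-complete setting genuinely delicate) input. The paper avoids it: since $A^G \to A$ is faithfully flat, it suffices to check acyclicity of the cobar complex after applying $-\boxtimes_{A^G} A$, and \emph{that} complex is identified term-by-term with the Amitsur complex $M \boxtimes_{A^G} A \to M \boxtimes_{A^G}(A \boxtimes_{A^G} A) \to \cdots$ by the torsor isomorphism, whose acyclicity is the standard (one-directional) form of faithfully flat descent. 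With the base ring corrected and the descent step reorganized in this way, your argument coincides with the paper's.
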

We defer the proof and a precise definition of torsor in this context until the next section. When $p$ is odd, the theorem is an immediate consequence of the following
\begin{proposition}\label{proposition:oddvanishing} Let $p\ge 3$. Then we have the following vanishing results for $s>0$
\[
\widehat{\textup{Ext}}^s_{\mathbb{Z}_p^{\times}}((K_p)^{\wedge}_*KO_p, (K_p)^{\wedge}_*L_{K(1)}tmf(\Gamma)) = 0
\]
\[
\widehat{\textup{Ext}}^s_{\mathbb{Z}_p^{\times}}((K_p)^{\wedge}_*KO_p, (K_p)^{\wedge}_*L_{K(1)}L_{K(2)}tmf(\Gamma)) = 0
\]
\[
\widehat{\textup{Ext}}^s_{\mathbb{Z}_p^{\times}}((K_p)^{\wedge}_*L_{K(1)}tmf(\Gamma), (K_p)^{\wedge}_*L_{K(1)}tmf(\Gamma)) = 0
\]
\end{proposition}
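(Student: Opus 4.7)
The plan is to recognize each target module appearing in the three $\widehat{\textup{Ext}}$ groups as a $\mathbb{Z}_p^{\times}$-torsor in the sense required by Theorem~\ref{theorem:torsor}, verify that the corresponding source is pro-free, and then apply that theorem directly.

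First, I would unpack the targets using the preceding proposition on $L_{K(1)}tmf(\Gamma)$. The modules $(K_p)^{\wedge}_*L_{K(1)}tmf(\Gamma) = (K_p)_*\otimes_{\mathbb{Z}_p}V_{\infty}(\Gamma)$ and $(K_p)^{\wedge}_*L_{K(1)}L_{K(2)}tmf(\Gamma) = (K_p)_*\otimes_{\mathbb{Z}_p}V_{\infty}(\Gamma)^{\textup{ss}}$ both carry diagonal comodule structures over $(K_p)^{\wedge}_*K_p = (K_p)_*\otimes C(\mathbb{Z}_p^{\times}, \mathbb{Z}_p)$. For $p \geq 3$ each formal stack $\mathscr{M}_{\Gamma}(p^n)$ is representable, so the Igusa tower identifies $\mathscr{M}^{\textup{triv}}_{\Gamma} \to \mathscr{M}^{\textup{ord}}_{\Gamma}$ as a pro-Galois cover with profinite Galois group $\mathbb{Z}_p^{\times}$; this is precisely the reason the hypothesis $p \geq 3$ enters, since at $p=2$ the full torsor is only visible after passage to the subgroup $1 + 4\mathbb{Z}_2$. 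Translating to comodules, $(K_p)_*\otimes V_{\infty}(\Gamma)$ becomes a $\mathbb{Z}_p^{\times}$-torsor in the $L$-complete comodule category, and the analogous statement for $V_{\infty}(\Gamma)^{\textup{ss}}$ is built into its definition via the formal pullback construction of \S2.1.

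Next I would verify the pro-free hypothesis on the sources. For the first two Ext groups, $(K_p)^{\wedge}_*KO_p = (K_p)_*\otimes C(\mathbb{Z}_p^{\times}/\{\pm 1\}, \mathbb{Z}_p)$ makes pro-freeness immediate, being continuous functions on a profinite set valued in a pro-free $(K_p)_*$-module. For the third Ext group, the same proposition writes the source as $(K_p)^{\wedge}_*L_{K(1)}tmf(\Gamma) = (K_p)_*\otimes V_{\infty}(\Gamma)$, and pro-freeness should follow from the Igusa tower presentation $V_\infty(\Gamma) = \lim_n V_n(\Gamma)$ together with the flatness of $V_\infty(\Gamma)$ over $\mathbb{Z}_p$ recorded in the representability proposition.

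With these two ingredients in hand, each of the three claimed vanishings follows by a direct application of Theorem~\ref{theorem:torsor} with $G = \mathbb{Z}_p^{\times}$, $R = (K_p)_*$, $M$ the pro-free source, and $A$ the torsor target. The main obstacle, and the step deserving the most care, is verifying the torsor hypothesis in the $L$-complete comodule category itself, rather than merely at the level of formal stacks: concretely, one must check that the shearing isomorphism coming from the Igusa tower descends to $L$-completed tensor products and is compatible with the diagonal $(K_p)^{\wedge}_*K_p$-comodule structure. Granting this reduction, the proposition follows at once from Theorem~\ref{theorem:torsor}.
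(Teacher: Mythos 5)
Your overall strategy---reduce everything to Theorem \ref{theorem:torsor}---is the right one, and your verification of pro-freeness of the sources is fine, but the central claim is wrong: $(K_p)_*\otimes V_\infty(\Gamma)$ is not, in general, a $\mathbb{Z}_p^{\times}$-torsor, so Theorem \ref{theorem:torsor} cannot be applied to it directly. Representability of the covers $\mathscr{M}_{\Gamma}(p^n)$ makes the top of the Igusa tower formally affine, but it does not rigidify the \emph{base}: $\mathscr{M}^{\textup{ord}}_{\Gamma}$ is still a stack whose points have automorphisms (at the very least $[-1]$, for $\Gamma=\Gamma_0(N)$ and at level $1$, and more at special $j$-invariants). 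Concretely, for $\Gamma_0(N)$ the element $-1\in\mathbb{Z}_p^{\times}$ acts trivially on $V_\infty(\Gamma)$ (this is precisely the dichotomy exploited later in \S 2.2), so the action is not even faithful, and the shearing map $A\widehat{\otimes}_{A^{G}}A\to C(G,A)$ required by the definition of a torsor cannot be an isomorphism; residual automorphisms of elliptic curves cause the same failure even after passing to the diagonal comodule structure on $(K_p)_*\otimes V_\infty(\Gamma)$. The same objection applies to the supersingular variant.

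What is true, and what the argument needs, is that each target is a \emph{summand} of a $\mathbb{Z}_p^{\times}$-torsor. One adds auxiliary level structure of order prime to $p$ so that the enlarged moduli problem is rigid (formally affine); the corresponding ring of generalized $p$-adic modular forms is then an honest $\mathbb{Z}_p^{\times}$-torsor over its fixed points, and the module of interest is recovered as the fixed points of a finite group $G$ of order prime to $p$ acting on it. Since $|G|$ is invertible, averaging splits the inclusion of the fixed points, so the vanishing of $\widehat{\textup{Ext}}^{s}$ for $s>0$ established for the torsor passes to the summand. This is also where the hypothesis $p\ge 3$ genuinely enters: at $p=2$ one cannot arrange $|G|$ to be odd, because $-1$ is always an automorphism, which is why the paper must retreat to the subgroup $1+4\mathbb{Z}_2$ there. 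With this correction---torsor replaced by summand-of-a-torsor---the rest of your outline goes through.
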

\begin{proof} Let $X$ denote either $(K_p)^{\wedge}_*L_{K(1)}tmf(\Gamma)$ or $(K_p)^{\wedge}_*L_{K(1)}L_{K(2)}tmf(\Gamma)$. Then it is enough, by Theorem \ref{theorem:torsor}, to show that $X$ is a summand of a $\mathbb{Z}_p^{\times}$-torsor. To do this, it is enough to realize $X$ as the $G$-fixed points of a $\mathbb{Z}_p^{\times}$-torsor where the order of $G$ is prime to $p$. This can always be done by adding level structure (c.f. \cite[Prop. 7.1, Prop. 8.6]{Beh}).  
\end{proof}

The case $p=2$ is more delicate, in this case the relevant spectral sequence does not collapse. We first reduce the calculation of the $E_2$ term to one in group cohomology.
\begin{proposition} For $s>0$ and $p=2$ we have vanishing as in (\ref{proposition:oddvanishing}) if we take $\textup{Ext}$ over the subgroup $1+4\mathbb{Z}_2$.
\end{proposition}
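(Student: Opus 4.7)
The plan is to run the proof of Proposition \ref{proposition:oddvanishing} verbatim at $p = 2$, with $\mathbb{Z}_2^\times$ replaced by its torsion-free, cohomological-dimension-one subgroup $G := 1 + 4\mathbb{Z}_2$. By Theorem \ref{theorem:torsor}, for each of the three $\widehat{\textup{Ext}}$-groups it suffices to exhibit the target module---after restriction along $G \hookrightarrow \mathbb{Z}_2^\times$---as a summand of a $G$-torsor.

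The key structural input is the representability result recalled above: at $p = 2$ the formal stack $\mathscr{M}_\Gamma(2^n)$ is representable already for $n \geq 2$, so that $\mathscr{M}_\Gamma(4) = \textup{Spf}(V_2(\Gamma))$, and the tail
\[
\mathscr{M}_\Gamma^{\textup{triv}} \longrightarrow \mathscr{M}_\Gamma(4)
\]
of the Igusa tower is a pro-Galois cover with Galois group exactly $\ker(\mathbb{Z}_2^\times \twoheadrightarrow (\mathbb{Z}/4)^\times) = G$. Thus $V_\infty(\Gamma)$ is a $G$-torsor over $V_2(\Gamma)$, and the same pullback recipe applied to $\mathscr{M}^{\textup{ss}}_{\textup{ell}}(N)^{\textup{ord}}$ produces the analogous $G$-torsor structure on the supersingular ring $V_\infty(\Gamma)^{\textup{ss}}$. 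Following the strategy cited in the odd-prime case, I would then add auxiliary prime-to-$2$ level structure as in \cite[Prop.~7.1, 8.6]{Beh} to realize each target as the fixed points of a finite group of odd order acting on a bona fide $G$-torsor; since that group has order invertible in $\mathbb{Z}_2$ and its action commutes with $G$, averaging exhibits our target as a $G$-equivariant summand and Theorem \ref{theorem:torsor} finishes the job.

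The main obstacle preventing the odd-prime argument from applying literally is the torsion subgroup $\{\pm 1\} \subset \mathbb{Z}_2^\times$, which is precisely what forces the failure of representability of $\mathscr{M}_\Gamma(2)$ at $p = 2$ and blocks any honest $\mathbb{Z}_2^\times$-torsor realization: adding level structure to trivialize the $\{\pm 1\}$-action would require modding out by a finite group of even order, breaking the averaging step. Passing to $G = 1 + 4\mathbb{Z}_2$ is the minimal restriction that sidesteps this obstruction; the price is that the residual $\{\pm 1\}$-action on $V_2(\Gamma)$ must be handled separately, which is deferred to the subsequent steps of the section.
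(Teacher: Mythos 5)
Your proposal is correct and is essentially the paper's argument: the proof there is a one-line citation of Theorem \ref{theorem:torsor} together with \cite[5.2]{Beh}, whose content is exactly your key point that $\mathscr{M}_\Gamma(4)$ is formally affine at $p=2$, so that $V_\infty(\Gamma)$ (and likewise $V_\infty(\Gamma)^{\textup{ss}}$) is an honest $(1+4\mathbb{Z}_2)$-torsor over $V_2(\Gamma)$. The extra step of adjoining auxiliary prime-to-$2$ level structure and averaging is harmless but unnecessary here, since representability already holds for every $\Gamma$ once $n\ge 2$.
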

\begin{proof} This follows from (\ref{theorem:torsor}) and \cite[5.2]{Beh}. 
\end{proof}
If $-1$ acts nontrivially on $V_{\infty}(\Gamma)$ (e.g. for $\Gamma_1(N)$), then, in fact, we could have taken the entire group $\mathbb{Z}_2^{\times}$ in the previous proposition. So the only remaining case is when $-1$ acts trivially (e.g. for $\Gamma_0(N)$). We will compute the homotopy type of the mapping space in question by working our way up from $KO$ (at the cusp), to $KO((q))$ (a punctured neighborhood of the cusp), and finally to $L_{K(1)}tmf(\Gamma)$ or $L_{K(1)}L_{K(2)}tmf(\Gamma)$. 

In the next few propositions, we will denote by $E_r(X, Y)$ (resp. $E_r(X)$) the $K(1)$-local Adams-Novikov spectral sequence computing $\pi_*F(X, Y)$ (resp. $\pi_*X$). 

\begin{proposition} We have an isomorphism of spectral sequences
\[
E_r(KO_2, KO_2) = E_r(KO_2) \widehat{\otimes}_{\mathbb{Z}_2} \textup{Hom}^{\textup{cts}}_{\mathbb{Z}_2^{\times}}(C(\mathbb{Z}_2^{\times}/\{\pm 1\}, \mathbb{Z}_2), C(\mathbb{Z}_2^{\times}/\{\pm 1\}, \mathbb{Z}_2))
\] 
\end{proposition}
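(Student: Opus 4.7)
The plan is to identify the two spectral sequences at the $E_2$-page via a continuous Shapiro's lemma and then propagate the identification through the differentials using the $KO_2$-module structure on $F(KO_2, KO_2)$. For the $E_2$-page, I would begin by applying the Barthel-Heard theorem quoted above to rewrite
\[
E_2^{*,*}(KO_2, KO_2) = \widehat{\textup{Ext}}^{*,*}_{\mathbb{Z}_2^{\times}}\!\bigl((K_2)^{\wedge}_*KO_2,\, (K_2)^{\wedge}_*KO_2\bigr).
\]
Writing $A := C(\mathbb{Z}_2^{\times}/\{\pm 1\}, \mathbb{Z}_2)$, the key structural observation is that $(K_2)_* \otimes_{\mathbb{Z}_2} A$ with its diagonal $\mathbb{Z}_2^{\times}$-action is canonically isomorphic to the continuous coinduction $\textup{CoInd}^{\mathbb{Z}_2^{\times}}_{\{\pm 1\}}\bigl((K_2)_*|_{\{\pm 1\}}\bigr)$; explicitly, the map sends $v^k \otimes f$ to $x \mapsto v^k f(x)$, which is $\{\pm 1\}$-equivariant because the sign in the involution $v \mapsto -v$ compensates for the descent of $f$ to the quotient. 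Applying Shapiro's lemma for coinduction in the target, and using that $\{\pm 1\}$ acts trivially on $A$ since translation by $-1$ is trivial on $\mathbb{Z}_2^{\times}/\{\pm 1\}$, the Ext splits as $H^*_{\textup{cts}}(\{\pm 1\}, (K_2)_*) \widehat{\otimes}_{\mathbb{Z}_2} \textup{Hom}^{\textup{cts}}_{\mathbb{Z}_2}(A, \mathbb{Z}_2)$. The first factor coincides with $E_2(KO_2)$ by the same Shapiro argument applied to the unit map $S^0 \to KO_2$, and the second is the Iwasawa algebra $\mathbb{Z}_2\llbracket\mathbb{Z}_2^{\times}/\{\pm 1\}\rrbracket$, which is naturally identified with $\textup{Hom}^{\textup{cts}}_{\mathbb{Z}_2^{\times}}(A, A)$.

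To promote the splitting to all pages, I would use that $F(KO_2, KO_2)$ is a $KO_2$-algebra via post-composition, so $E_r(KO_2, KO_2)$ is a module spectral sequence over $E_r(KO_2)$. The unit map $KO_2 \to F(KO_2, KO_2)$, adjoint to the identity, realizes $E_r(KO_2)$ as a sub-spectral-sequence. Meanwhile, classes in the $\textup{Hom}^{\textup{cts}}_{\mathbb{Z}_2^{\times}}(A, A)$-factor sit in bidegree $(0,0)$ and are permanent cycles: each is realized by an element of $[KO_2, KO_2]$, since the topological Adams operations $\psi^k$ for $k \in \mathbb{Z}_2^{\times}/\{\pm 1\}$ topologically generate the Iwasawa algebra inside $[KO_2, KO_2]$. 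Combined with the $E_r(KO_2)$-linearity of the differentials, this forces $d_r(x \otimes \phi) = d_r(x) \otimes \phi$, so the tensor decomposition propagates through all pages.

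The main obstacle I anticipate is the careful verification of continuous Shapiro's lemma in the $L$-complete comodule setting for the non-open closed subgroup $\{\pm 1\} \subset \mathbb{Z}_2^{\times}$, together with compatibility of the coinduction identification with completed tensor products. Once the projection-formula identification $(K_2)_* \otimes A \cong \textup{CoInd}^{\mathbb{Z}_2^{\times}}_{\{\pm 1\}}\bigl((K_2)_*|_{\{\pm 1\}}\bigr)$ is established, the remainder of the argument is formal from the module spectral sequence structure.
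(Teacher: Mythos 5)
Your overall skeleton matches the paper's: identify the $E_2$-page, show the bidegree-$(0,0)$ classes are permanent cycles by realizing them as honest elements of $[KO_2,KO_2]$, and propagate the identification through all pages using the $KO_2$-module structure on $F(KO_2,KO_2)$. The last two steps are sound and are essentially what the paper does; in particular the paper makes your ``Adams operations topologically generate'' claim precise by citing the computation $[KO_2,KO_2]\cong \textup{Hom}^{\textup{cts}}(C(\mathbb{Z}_2^{\times}/\{\pm 1\},\mathbb{Z}_2),\mathbb{Z}_2)$ from \cite[9.2]{AHR} and matching it against $E_2^{0,0}$.

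The step that fails is your ``key structural observation.'' The map $v^k\otimes f\mapsto (x\mapsto v^k f(x))$ does not land in $\textup{CoInd}^{\mathbb{Z}_2^{\times}}_{\{\pm 1\}}((K_2)_*|_{\{\pm 1\}})$ when $k$ is odd: a coinduced function $F$ in internal degree $2k$ must satisfy $F(-x)=\psi^{-1}F(x)=(-1)^kF(x)$, whereas your $F(x)=v^kf(x)$, with $f$ descending to $\mathbb{Z}_2^{\times}/\{\pm 1\}$, satisfies $F(-x)=+F(x)$. The ``compensation'' you invoke runs the wrong way: since $f$ is already even, nothing absorbs the sign of $\psi^{-1}(v^k)=(-v)^k$. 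This is not cosmetic, because swapping the genuine $\{\pm 1\}$-action on $(K_2)_*$ for the trivial one changes $H^*_{\textup{cts}}(\{\pm 1\},(K_2)_*)$ (for instance $H^1(\mathbb{Z}/2;\mathbb{Z}_2\cdot v)$ is $\mathbb{Z}/2$ for the sign action and $0$ for the trivial action), so your Shapiro computation would not reproduce $E_2(KO_2)$ with its $\eta$-towers in the correct bidegrees. You also assert without proof the identification $\textup{Hom}^{\textup{cts}}_{\mathbb{Z}_2^{\times}}(A,A)\cong \textup{Hom}^{\textup{cts}}(A,\mathbb{Z}_2)$; this is exactly the small lemma the paper does prove, by an eigenfunction argument (an equivariant $\phi$ must send $x^{2k}$ to a function on which $\lambda$ acts by $\lambda^{2k}$, hence to a scalar multiple of $x^{2k}$), and that direct argument is what lets the paper identify $E_2^{0,0}$ without invoking Shapiro's lemma for the non-open closed subgroup $\{\pm 1\}$ at all. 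To repair your write-up, either take the $E_2$-identification as given (as the paper does) or redo the coinduction bookkeeping with the correct sign twist in odd $v$-degrees.
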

\begin{proof} There is an action of $KO_2$ on $F(KO_2, KO_2)$, where the module structure map is adjoint to the composite
\[
KO_2 \wedge KO_2 \wedge F(KO_2, KO_2) \longrightarrow KO_2 \wedge F(KO_2, KO_2) \stackrel{ev}{\longrightarrow} KO_2
\] 
which induces a pairing on spectral sequences. The isomorphism claimed in the theorem certainly holds when $r=2$, so we get an isomorphism of spectral sequences if we can show that the elements in bidegree $(0,0)$ are permanent cycles. To see this, recall from, e.g., \cite[9.2]{AHR}, we have
\[
[KO_2, KO_2] \cong  \textup{Hom}^{\textup{cts}}(C(\mathbb{Z}_2^{\times}/\{\pm 1\}, \mathbb{Z}_2), \mathbb{Z}_2)
\]
But we have a natural isomorphism
\[
 \textup{Hom}^{\textup{cts}}_{\mathbb{Z}_2^{\times}}(C(\mathbb{Z}_2^{\times}/\{\pm 1\}, \mathbb{Z}_2), C(\mathbb{Z}_p^{\times}/\{\pm 1\}, \mathbb{Z}_2)) \cong \textup{Hom}^{\textup{cts}}(C(\mathbb{Z}_2^{\times}/\{\pm 1\}, \mathbb{Z}_2), \mathbb{Z}_2)
\]
Indeed, given an equivariant homomorphism $\phi$ from the left hand side, we note that its behavior is determined by knowledge of $\phi(x^{2k})$ for $k\ge 0$. Since $\lambda \ast x^{2k} = \lambda^{2k} x^{2k}$ for $\lambda \in \mathbb{Z}_2^{\times}$, $\phi(x^{2k})$ must have the same property. The only such function is a scalar multiple of $x^{2k}$, and so determined by an element in $\mathbb{Z}_2$. This completes the proof. 
\end{proof}

\begin{corollary} The preceding result holds for Tate $K$ theories, $KO_2[[q]]$ and $KO_2(\!(q)\!)$. That is, we have isomorphisms of spectral sequences
\[
E_r(KO_2, KO_2[[q]]) = E_r(KO_2) \widehat{\otimes}_{\mathbb{Z}_2} \textup{Hom}^{\textup{cts}}_{\mathbb{Z}_2^{\times}}(C(\mathbb{Z}_2^{\times}/\{\pm 1\}, \mathbb{Z}_2), C(\mathbb{Z}_2^{\times}/\{\pm 1\}, \mathbb{Z}_2[[q]]))
\]
\[
E_r(KO_2, KO_2(\!(q)\!)) = E_r(KO_2) \widehat{\otimes}_{\mathbb{Z}_2} \textup{Hom}^{\textup{cts}}_{\mathbb{Z}_2^{\times}}(C(\mathbb{Z}_2^{\times}/\{\pm 1\}, \mathbb{Z}_2), C(\mathbb{Z}_2^{\times}/\{\pm 1\}, \mathbb{Z}_2(\!(q)\!)))
\]
\end{corollary}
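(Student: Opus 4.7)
The plan is to bootstrap from the preceding proposition by a flat base change in the formal variable $q$. I would first realize $KO_2[[q]]$ as the $K(1)$-localization of the inverse limit of finite wedges of copies of $KO_2$ (one summand per monomial $1, q, \ldots, q^{n-1}$), and $KO_2(\!(q)\!)$ as the filtered colimit $\operatorname{colim}_m q^{-m} KO_2[[q]]$. Since $\mathbb{Z}_2[[q]]$ and $\mathbb{Z}_2(\!(q)\!)$ are flat over $\mathbb{Z}_2$ and $(K_2)^{\wedge}_* KO_2$ is pro-free, the Morava module computation of the preceding proposition extends to give
\[
(K_2)^{\wedge}_*(KO_2[[q]]) \cong C(\mathbb{Z}_2^\times/\{\pm 1\}, \mathbb{Z}_2[[q]]) \otimes_{\mathbb{Z}_2} (K_2)_*,
\]
with $\mathbb{Z}_2^\times$ acting trivially on $q$, and analogously for $KO_2(\!(q)\!)$.

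With the Morava modules identified, the bidegree $(0,0)$ term is computed by the same $\mathbb{Z}_2^\times$-equivariance argument as in the preceding proposition: an equivariant homomorphism sends each monomial $x^{2k}$ to a $\mathbb{Z}_2[[q]]$-scalar multiple of itself, so
\[
\textup{Hom}^{\textup{cts}}_{\mathbb{Z}_2^\times}(C(\mathbb{Z}_2^\times/\{\pm 1\}, \mathbb{Z}_2), C(\mathbb{Z}_2^\times/\{\pm 1\}, \mathbb{Z}_2[[q]])) \cong \textup{Hom}^{\textup{cts}}(C(\mathbb{Z}_2^\times/\{\pm 1\}, \mathbb{Z}_2), \mathbb{Z}_2[[q]]).
\]
To upgrade this to the full $E_r$-comparison, I would use the pairing of spectral sequences coming from the $F(KO_2, KO_2)$-module structure on $F(KO_2, KO_2[[q]])$ via composition. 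The $(0,0)$ classes lift to honest maps of spectra --- composites of the permanent cycles from the preceding proposition with the canonical map $KO_2 \to KO_2[[q]]$ --- so they are automatically permanent cycles, and the pairing then forces the claimed isomorphism of spectral sequences at every page.

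The main obstacle is the technical bookkeeping around $L$-completion. One must check that $(K_2)^{\wedge}_*$ commutes appropriately with the inverse limit over $q^n$ and the filtered colimit over $q^{-m}$, and that pro-freeness is preserved. For the limit, the relevant $\varprojlim^1$ terms on Morava modules vanish because the transition maps are split surjections; for the Laurent case, the transition maps are split monomorphisms, so the colimit is well-behaved. Once this is set up, the corollary follows formally from the preceding proposition.
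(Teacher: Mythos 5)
Your proposal is correct and follows essentially the same route as the paper: the paper likewise reduces the $KO_2[[q]]$ case to the preceding proposition via the inverse system of finite wedges of $KO_2$ (noting no $\lim^1$ issue because the transition maps are surjective) and obtains the $KO_2(\!(q)\!)$ case by localization. Your extra detail on the Morava-module identification and the pairing argument is consistent with what the paper leaves implicit; the only minor imprecision is that the $(0,0)$ classes are limits of finite sums of $q^i$-twisted composites rather than single composites with $KO_2 \to KO_2[[q]]$, which does not affect the argument.
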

\begin{proof} There is no $\lim^1$ issue in the first case because of surjectivity of the maps in the inverse system. The second case follows from localization of the first. 
\end{proof}
\begin{proposition} Suppose $-1$ acts trivially on $V_{\infty}(\Gamma)$. Let $X$ be either $L_{K(1)}tmf(\Gamma)$ or $L_{K(1)}L_{K(2)}tmf(\Gamma)$. Let $M$ denote the $\mathbb{Z}_p$-module
\[
\textup{Hom}^{\textup{cts}}_{\mathbb{Z}_2^{\times}}((K_2)^{\wedge}_*KO_2, (K_2)^{\wedge}_*X)
\]
Then we have an isomorphism of spectral sequences
\[
E_r(KO_2, X) \cong E_r(KO_2) \widehat{\otimes}_{\mathbb{Z}_2} M
\]
In particular, 
\[
\pi_0F(KO_2, X) = M, \quad \pi_1F(KO_2, X) = M/2
\]
\end{proposition}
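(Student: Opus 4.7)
The plan is to follow the template established in the preceding proposition and corollary for $F(KO_2, KO_2)$, promoting the argument to the more general target $X$. The crucial additional input is that the hypothesis ``$-1$ acts trivially on $V_\infty(\Gamma)$'' promotes $X$ to a $KO_2$-module spectrum in the $K(1)$-local category. Explicitly, writing $L_{K(1)}(K_2 \wedge X)$ as a $\mathbb{Z}_2^\times$-equivariant spectrum whose homotopy fixed points recover $X$, the assumption asserts that the subgroup $\{\pm 1\}$ acts only through the $(K_2)_\ast$-factor; hence its homotopy fixed points refine to a module over $K_2^{h\{\pm 1\}} \simeq KO_2$, and taking further fixed points for $1+4\mathbb{Z}_2$ produces a $KO_2$-module $K(1)$-locally equivalent to $X$.

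With this in hand, $F(KO_2, X)$ inherits a $KO_2$-module structure from the one on $X$, and so does the $K(1)$-local Adams--Novikov spectral sequence computing its homotopy. Pairing with the unit map $S \to KO_2$ yields a morphism of $E_r(KO_2)$-module spectral sequences
\[
E_r(KO_2) \,\widehat{\otimes}_{\mathbb{Z}_2}\, M \longrightarrow E_r(KO_2, X),
\]
where on the $(0,0)$-column this is tautologically the identity of $M$. The main step is to check this map is an isomorphism at $E_2$. Following the strategy of the preceding proposition, I would invoke the Hochschild--Serre spectral sequence for the splitting $\mathbb{Z}_2^\times \cong \{\pm 1\} \times (1+4\mathbb{Z}_2)$: by the previous proposition, higher $\widehat{\textup{Ext}}$ over the subgroup $1+4\mathbb{Z}_2$ vanishes, so the $E_2$-term of the RHS reduces to the $\{\pm 1\}$-group cohomology
\[
H^\ast\!\big(\{\pm 1\},\, \widehat{\textup{Hom}}_{1+4\mathbb{Z}_2}((K_2)^\wedge_\ast KO_2,\, (K_2)^\wedge_\ast X)\big).
\]
Under the hypothesis on $V_\infty(\Gamma)$, the residual $\{\pm 1\}$-action on this Hom module is concentrated in the $(K_2)_\ast$-factor, which is exactly the action producing $\pi_\ast KO_2$ out of $\pi_\ast K_2$. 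A direct comparison of the two cohomology computations identifies the $E_2$-term with $E_2(KO_2)\,\widehat{\otimes}\, M$.

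Finally, every class in the $(0,0)$-column of the RHS is represented by an honest map $KO_2 \to X$, hence is a permanent cycle; since our morphism of $E_r(KO_2)$-module spectral sequences is an isomorphism at $E_2$ and sends permanent cycles to permanent cycles, it is an isomorphism on all pages. The identifications $\pi_0 F(KO_2, X) = M$ and $\pi_1 F(KO_2, X) = M/2$ are then immediate from the known $E_\infty$-pattern of $E_r(KO_2)$: the total-degree-$0$ column is $\mathbb{Z}_2$ and the total-degree-$1$ column is the $\eta$-multiple giving $\mathbb{Z}/2$, so tensoring with $M$ and using the fact that tensoring with $\mathbb{Z}/2$ is the same as reduction mod $2$ produces the stated answer. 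I expect the principal technical obstacle to be the clean verification of the $E_2$-identification---specifically, tracking the sign conventions for the $\{\pm 1\}$-action on the various tensor factors and ensuring that the hypothesis on $V_\infty(\Gamma)$ reduces the diagonal action to the $(K_2)_\ast$-factor alone---but once this is in place the rest of the argument proceeds in exact parallel with the preceding proposition for $E_r(KO_2, KO_2)$.
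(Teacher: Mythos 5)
Your identification of the $E_2$-term is essentially the paper's argument: the Cartan--Eilenberg/Hochschild--Serre spectral sequence for $1+4\mathbb{Z}_2\subset\mathbb{Z}_2^{\times}$, the torsor vanishing over $1+4\mathbb{Z}_2$ from the previous proposition, and the observation that the residual $\{\pm 1\}$-action on $M$ is trivial so that only the action on $(K_2)_*$ survives, giving $E_2(KO_2)\widehat{\otimes}_{\mathbb{Z}_2}M$. That part is fine.

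The degeneration argument, however, has a genuine gap, and in fact two problems. First, the assertion that every class in bidegree $(0,0)$ is ``represented by an honest map $KO_2\to X$'' is precisely the surjectivity of the edge homomorphism
\[
[KO_2, X] \longrightarrow \textup{Hom}^{\textup{cts}}_{\mathbb{Z}_2^{\times}}((K_2)^{\wedge}_*KO_2, (K_2)^{\wedge}_*X) = M,
\]
which is the statement being proved (it is the content of Theorem \ref{theorem:edge map}, and the later splitting theorem explicitly cites ``the part of Theorem \ref{theorem:edge map} proved so far'' in order to realize elements of $M$ by maps). You cannot assume it to establish that the $(0,0)$-classes are permanent cycles; the argument is circular. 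Second, the $KO_2$-module structure on $X$ that you use to build the comparison map $E_r(KO_2)\widehat{\otimes}M\to E_r(KO_2,X)$ is not justified: even granting that $\{\pm 1\}$ acts only through the $K_2$-factor of $L_{K(1)}(K_2\wedge X)$, the residual $1+4\mathbb{Z}_2$-action on the $\{\pm1\}$-fixed points is by Adams operations $\psi^c$, which are \emph{not} $KO_2$-module maps, so the further fixed points recovering $X$ do not obviously inherit a $KO_2$-module structure. (Compare the remark following the splitting theorem, that $L_{K(1)}tmf(\Gamma)$ is not a $KO_2^{\wedge}$-algebra despite splitting as a wedge of copies of $KO_2$.) The paper's route around both issues is a comparison with the cusp: the restriction $X\to KO_2(\!(q^{1/N})\!)$ induces an \emph{injection} on $E_2$-terms into $E_2(KO_2, KO_2(\!(q^{1/N})\!))$, where the $(0,0)$-classes are already known to be permanent cycles (via the AHR computation of $[KO_2,KO_2]$ and the Tate $K$-theory corollary). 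Injectivity forces $d_3=0$ on bidegree $(0,0)$, and after $d_3$ the column $t-s=-1$ is empty, so no higher differential can be nonzero there. You would need to supply this (or some other non-circular) degeneration argument for your proof to go through.
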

\begin{proof} First notice that the isomorphism holds when $r=2$. One way to see this is to compute the Ext group via the Cartan-Eilenberg spectral sequence
\[
H_{\textup{cts}}^s((\mathbb{Z}/4)^{\times}, \widehat{\textup{Ext}}_{1+4\mathbb{Z}_2}^m((K_2)^{\wedge}_*KO_2, (K_2)^{\wedge}_*X)) \Rightarrow \widehat{\textup{Ext}}_{\mathbb{Z}_2^{\times}}^{s+m}((K_2)^{\wedge}_*KO_2, (K_2)^{\wedge}_*X)
\]
Since $(K_2)^{\wedge}_*X$ is a torsor for $1+4\mathbb{Z}_2$, this spectral sequence collapses to
\[
H_{\textup{cts}}^s((\mathbb{Z}/4)^{\times}, (K_2)_* \widehat{\otimes}_{\mathbb{Z}_2} M)
\]
and the action of $(\mathbb{Z}/4)^{\times}$ is trivial on $M$ so we get the desired isomorphism for $r=2$.

For either choice of $X$ we have a restriction map
\[
X \longrightarrow KO_2(\!(q^{1/N})\!)
\]
which induces an injection
\[
\textup{Hom}^{\textup{cts}}_{\mathbb{Z}_2^{\times}}(C(\mathbb{Z}_2^{\times}/\{\pm 1\}, \mathbb{Z}_2), (K_2)^{\wedge}_*X)\longrightarrow  \textup{Hom}^{\textup{cts}}_{\mathbb{Z}_2^{\times}}(C(\mathbb{Z}_2^{\times}/\{\pm 1\}, \mathbb{Z}_2), C(\mathbb{Z}_2^{\times}/\{\pm 1\}, \mathbb{Z}_2(\!(q^{1/N})\!)))
\]
This in turn gives an injection
\[
E_2(KO_2, X) \longrightarrow E_2(KO_2, KO_2(\!(q^{1/N})\!))
\]
The first nontrivial differential is a $d_3$, which must be zero on bidegree $(0,0)$ since this is true of the right hand side. This leaves nothing in the column $t-s = -1$, so that, in fact, the elements of bidegree $(0,0)$ are permanent cycles. The proposition follows. 
\end{proof}

For the final function spectrum, we will need the following result, which is folklore at level 1. 
\begin{theorem} Suppose $-1$ acts nontrivially on $V_{\infty}(\Gamma)$. Then, when $p=2$, $L_{K(1)}tmf(\Gamma)$ splits as a (completed) wedge of copies of $KO_2$ (with no suspensions.)
\end{theorem}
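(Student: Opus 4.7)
The natural strategy is finite-level Galois descent along the Igusa tower, combined with the splitting principle for $K(1)$-local $KU_2$-module spectra with pro-free Morava module. First, I would present $L_{K(1)}tmf(\Gamma)$ via Galois descent as
\[
L_{K(1)}tmf(\Gamma) \simeq tmf(\Gamma; 4)^{h(\mathbb{Z}/4)^{\times}},
\]
where $(\mathbb{Z}/4)^{\times} = \{\pm 1\}$ acts through the Igusa-tower Galois action. At level $4$ the moduli stack $\mathscr{M}_{\Gamma}(4)$ is representable by the earlier proposition, and the canonical $\mu_4$-trivialization of the formal group over it makes $tmf(\Gamma;4)$ into a $K(1)$-local $KU_2$-algebra.

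Second, I would split $tmf(\Gamma;4)$ as a completed wedge of copies of $KU_2$. The Morava module is $(K_2)_*\otimes V_2(\Gamma)$, and the hypothesis that $-1$ acts nontrivially on $V_\infty(\Gamma)$ is precisely what forces the Igusa cover to be a genuine $\mathbb{Z}_2^{\times}$-torsor (not merely a torsor for the quotient $\mathbb{Z}_2^\times/\{\pm 1\}$). After choosing a topological $\mathbb{Z}_2$-basis of $R := V_\infty(\Gamma)^{\mathbb{Z}_2^{\times}}$, the torsor trivialization $V_2(\Gamma)\cong C(\mathbb{Z}_2^\times/(1+4\mathbb{Z}_2), R)$ produces a Morava-module-level splitting that lifts (since the Morava module is pro-free) to an equivalence $tmf(\Gamma;4) \simeq \widehat{\bigvee}_{\alpha}KU_2$.

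Third, and most delicately, I would identify the residual $\{\pm 1\}$-action on the splitting as internal summand-wise complex conjugation on each copy of $KU_2$. The torsor trivialization, valid under the $-1$-nontriviality hypothesis, places the $\{\pm 1\}$-action so that it acts as the element $-1 \in \mathbb{Z}_2^{\times}$ on each individual $KU_2$-factor via its Adams-operation action, which is exactly complex conjugation. The critical point is that the action fixes each summand rather than permuting summands; without the $-1$-nontriviality hypothesis the summands get swapped and only a wedge of $KU_2$'s survives.

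Fourth, I would conclude by passing to homotopy fixed points. Since $KU_2^{h\{\pm 1\}}\simeq KO_2$ with no suspension and homotopy fixed points commute with completed wedges for actions internal to each summand,
\[
L_{K(1)}tmf(\Gamma) \simeq \Bigl(\widehat{\bigvee}_{\alpha} KU_2\Bigr)^{h\{\pm 1\}} \simeq \widehat{\bigvee}_{\alpha} KU_2^{h\{\pm 1\}} \simeq \widehat{\bigvee}_{\alpha}KO_2,
\]
with no suspensions because complex conjugation is an honest involution of $KU_2$ sitting in bidegree $(0,0)$. The main obstacle is the third step: verifying that the $(\mathbb{Z}/4)^\times$-Galois action on the wedge decomposition preserves each summand and acts as complex conjugation, as opposed to permuting summands or introducing extra transition data. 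This is where one must carefully compare the $\mathbb{Z}_2^\times$-torsor structure of $V_\infty(\Gamma)$ with the canonical $KU_2$-structure on $tmf(\Gamma;4)$, and is the step in which the hypothesis on $-1$ plays its essential role.
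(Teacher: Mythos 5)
Your strategy is genuinely different from the paper's, and as written it has gaps that I do not see how to close without essentially reverting to the paper's argument. The first problem is the assertion that $tmf(\Gamma;4)=\mathcal{O}^{\textup{top}}(\mathscr{M}_{\Gamma}(4))$ is a $2$-complete $KU$-algebra via a ``canonical $\mu_4$-trivialization'': the stack $\mathscr{M}_{\Gamma}(4)$ classifies an embedding $\eta\colon \mu_4\hookrightarrow E[4]$, which is a finite level structure on the formal group, \emph{not} a trivialization $\widehat{\mathbb{G}}_m\cong\widehat{E}$. The only stage of the Igusa tower carrying a canonical trivialization is the top, $\mathscr{M}^{\textup{triv}}_{\Gamma}$, whose spectrum is $L_{K(1)}(K_2\wedge L_{K(1)}tmf(\Gamma))$ --- and there the residual Galois group is all of $\mathbb{Z}_2^{\times}$, not $\{\pm 1\}$, so descending from the top does not directly produce $K_2^{h\{\pm1\}}\simeq KO_2$. (Indeed $K_2^{h(1+4\mathbb{Z}_2)}\simeq\textup{fib}(\psi^5-1)$ is not $K_2$, so $tmf(\Gamma;4)$ is not even a module over a copy of $K_2$ supplied by the tower.) The second problem is that your ``torsor trivialization'' $V_2(\Gamma)\cong C(\mathbb{Z}_2^{\times}/(1+4\mathbb{Z}_2),R)$ is exactly the unproven point: being a torsor does not mean being a \emph{trivial} torsor, and producing a section is where all the content lies. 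Two smaller issues: $(-)^{h\{\pm1\}}$ is a homotopy limit and does not commute with completed wedges without an argument; and your heuristic that without the $-1$-nontriviality hypothesis ``only a wedge of $KU_2$'s survives'' conflicts with the level-$1$ case, where $-1$ acts trivially on $V_\infty$ and the $KO_2$-splitting is precisely the folklore statement the theorem generalizes. The actual role of the hypothesis is to make $V_\infty(\Gamma)$ a torsor for the \emph{full} group $\mathbb{Z}_2^{\times}$, which kills the higher $\widehat{\textup{Ext}}$ and makes the edge map for $[KO_2,L_{K(1)}tmf(\Gamma)]$ an isomorphism.

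The paper sidesteps all of this by never splitting an intermediate spectrum. It first computes the $K(1)$-local Adams--Novikov spectral sequence of $L_{K(1)}tmf(\Gamma)$ directly, by injecting its $E_2$-term into that of $KO_2((q^{1/N}))$ (restriction to a punctured neighborhood of the cusp) and using the $E_2(KO_2)$-module structure to identify $E_r(L_{K(1)}tmf(\Gamma))\cong E_r(KO_2)\widehat{\otimes}_{\mathbb{Z}_2}V_\infty(\Gamma)^{\mathbb{Z}_2^{\times}}$. It then constructs the comparison map \emph{from} the completed wedge, one copy of $KO_2$ for each basis element of $R=V_\infty(\Gamma)^{\mathbb{Z}_2^{\times}}$, by feeding the explicit equivariant homomorphisms $f\otimes r\mapsto f(1)\cdot r$ through the edge isomorphism $[KO_2,L_{K(1)}tmf(\Gamma)]\cong\textup{Hom}^{\textup{cts}}_{\mathbb{Z}_2^{\times}}(C(\mathbb{Z}_2^{\times}/\{\pm1\},\mathbb{Z}_2),V_\infty(\Gamma))$, and checks the resulting map is a $\pi_*$-isomorphism. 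If you want to pursue your route, you would need to actually produce a section of the relevant torsor over $R$; the paper's $ev_1$-construction is, in effect, the substitute for that.
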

\begin{proof} Arguing as in the previous proposition, we have an injection
\[
E_2(L_{K(1)}tmf(\Gamma)) \longrightarrow E_2(KO_2(\!(q^{1/N})\!))
\]
The $E_2(KO_2)$-module structure on the right-hand side preserves the image, and we get
\[
E_r(L_{K(1)}tmf(\Gamma)) \cong E_r(KO_2) \widehat{\otimes}_{\mathbb{Z}_2}V_{\infty}(\Gamma)^{\mathbb{Z}_2^{\times}}
\] 
in particular, on homotopy we have
\[
\pi_*L_{K(1)}tmf(\Gamma) \cong \pi_*KO_2 \widehat{\otimes}_{\mathbb{Z}_2}V_{\infty}(\Gamma)^{\mathbb{Z}_2^{\times}}
\]
so we just need a map from a wedge of $KO_2$'s inducing this isomorphism. Define an equivariant homomorphism
\[
ev_1: C(\mathbb{Z}_2^{\times}/\{\pm 1\}, \mathbb{Z}_2)  \longrightarrow V_{\infty}(\Gamma)
\]
by $f\mapsto f(1)$. Then consider the composite
\[
 C(\mathbb{Z}_2^{\times}/\{\pm 1\}, \mathbb{Z}_2) \widehat{\otimes}_{\mathbb{Z}_2}V_{\infty}(\Gamma)^{\mathbb{Z}_2^{\times}} \stackrel{ev_1 \otimes 1}{\longrightarrow} V_{\infty}(\Gamma) \widehat{\otimes}_{\mathbb{Z}_2} V_{\infty}(\Gamma)^{\mathbb{Z}_2^{\times}} \stackrel{mult}{\longrightarrow} V_{\infty}(\Gamma)
\]
By the part of Theorem \ref{theorem:edge map} proved so far, we get a map
\[
L_{K(1)}\left(KO_2 \wedge (V_{\infty}(\Gamma)^{\mathbb{Z}_2^{\times}})_+\right) \longrightarrow L_{K(1)}tmf(\Gamma)
\]
inducing the desired isomorphism on homotopy groups. 
\end{proof}
\begin{remark} Despite the splitting, $L_{K(1)}tmf(\Gamma)$ is not a $KO^{\wedge}_2$-algebra. 
\end{remark}
\begin{corollary} The edge map 
\[
[L_{K(1)}tmf(\Gamma), L_{K(1)}tmf(\Gamma)] \longrightarrow \textup{Hom}^{\textup{cts}}_{\mathbb{Z}_p^{\times}}(V_{\infty}(\Gamma), V_{\infty}(\Gamma))
\]
is an isomorphism.
\end{corollary}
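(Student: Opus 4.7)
The plan is to mirror the two already-established edge-map isomorphisms of Theorem \ref{theorem:edge map}. Since $(K_p)^\wedge_* L_{K(1)}tmf(\Gamma) \cong (K_p)_*\widehat{\otimes}_{\mathbb{Z}_p} V_\infty(\Gamma)$ is pro-free, Barthel--Heard yields a strongly convergent $K(1)$-local Adams--Novikov spectral sequence
\[
E_2^{s,t} = \widehat{\textup{Ext}}^{s,t}_{\mathbb{Z}_p^\times}\!\bigl(V_\infty(\Gamma)\widehat{\otimes}(K_p)_*,\,V_\infty(\Gamma)\widehat{\otimes}(K_p)_*\bigr) \Longrightarrow \pi_{t-s}F\!\bigl(L_{K(1)}tmf(\Gamma),L_{K(1)}tmf(\Gamma)\bigr)
\]
whose $E_2^{0,0}$ is exactly the Hom group on the right of the claim, and the edge map is the canonical $\pi_0 F \to E_2^{0,0}$. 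For odd $p$, the third vanishing of Proposition \ref{proposition:oddvanishing} gives $E_2^{s,*}=0$ for $s>0$, so the spectral sequence collapses at $E_2$ and the edge map is an iso.

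For $p=2$, I split on the action of $-1$ on $V_\infty(\Gamma)$. When $-1$ acts non-trivially, the splitting theorem proved just above writes $L_{K(1)}tmf(\Gamma)$ as a $K(1)$-local completed wedge of copies of $KO_2$ indexed by a pro-free basis of $V_\infty(\Gamma)^{\mathbb{Z}_2^\times}$; mapping out of this wedge and assembling via the first iso of Theorem \ref{theorem:edge map} on each summand yields the desired identification. When $-1$ acts trivially, I adapt the proof of the previous proposition: passing to the subgroup $1+4\mathbb{Z}_2$ where the corresponding vanishing holds and running the Cartan--Eilenberg spectral sequence for $(\mathbb{Z}/4)^\times = \mathbb{Z}_2^\times/(1+4\mathbb{Z}_2)$ computes the $E_2$. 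The restriction $L_{K(1)}tmf(\Gamma) \to KO_2(\!(q^{1/N})\!)$ then embeds our spectral sequence into the one for maps into $KO_2(\!(q^{1/N})\!)$ already handled, forcing $d_3$ to vanish on bidegree $(0,0)$ (the column $t-s=-1$ of the target is empty in that range) and producing an iso of spectral sequences
\[
E_r \cong E_r(KO_2)\,\widehat{\otimes}_{\mathbb{Z}_2}\,\textup{Hom}^{cts}_{\mathbb{Z}_2^\times}\!\bigl(V_\infty(\Gamma),V_\infty(\Gamma)\bigr)
\]
whose $\pi_0$ contribution is the desired edge-map iso.

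The hard part is this last case: the higher-filtration groups $E_2^{s,t}$ for $s>0$ are genuinely nonzero (coming from $\{\pm 1\}$-cohomology with trivial action on a $2$-torsion Hom), so a pure torsor-vanishing argument does not work. What makes the argument go through is the rigidity provided by the $KO_2$-module structure on $F(L_{K(1)}tmf(\Gamma),L_{K(1)}tmf(\Gamma))$ together with the injectivity of the cusp restriction, which pair up to show that the higher $E_\infty^{s,s}$ terms match exactly those of $KO_2$ and therefore contribute nothing to $\pi_0$.
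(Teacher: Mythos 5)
Your odd-$p$ branch and your appeal to the splitting theorem do contain the paper's actual argument, but the case division at $p=2$ is inverted, and that matters. The paper's entire proof is: $L_{K(1)}tmf(\Gamma)$ splits as a completed wedge of copies of $KO_2$, so naturality of edge homomorphisms reduces the claim to the $KO_2$-source isomorphism of Theorem \ref{theorem:edge map}, already proved. The hypothesis ``$-1$ acts nontrivially'' in the statement of that splitting theorem is evidently a typo for ``trivially'': its proof produces $E_r \cong E_r(KO_2)\,\widehat{\otimes}\,V_\infty(\Gamma)^{\mathbb{Z}_2^\times}$, whose $\eta$-towers can only appear when $\pm 1$ acts trivially on $V_\infty(\Gamma)$; the result is introduced as ``folklore at level $1$,'' where the action is trivial; and the whole passage sits inside the discussion of the remaining case (e.g.\ $\Gamma_0(N)$). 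In the genuinely nontrivial case (e.g.\ $\Gamma_1(N)$) you neither have nor need the splitting: there $V_\infty(\Gamma)$ is a torsor for all of $\mathbb{Z}_2^\times$, so the vanishing of Proposition \ref{proposition:oddvanishing} holds over the full group and the edge map is an isomorphism for the same reason as at odd primes.

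The branch you construct for the trivial-action case---the case the splitting theorem is actually meant for---has a genuine gap. Your comparison sends $E_r(L_{K(1)}tmf(\Gamma), L_{K(1)}tmf(\Gamma))$ into $E_r(L_{K(1)}tmf(\Gamma), KO_2((q^{1/N})))$, but the latter is \emph{not} ``already handled'': the paper only computes spectral sequences $E_r(KO_2,-)$ with source $KO_2$, where the identifications rest on $(K_2)^{\wedge}_*KO_2 = C(\mathbb{Z}_2^\times/\{\pm 1\},\mathbb{Z}_2)\,\widehat{\otimes}\,(K_2)_*$ being coinduced. With source $(K_2)^{\wedge}_*L_{K(1)}tmf(\Gamma)$ you would have to re-establish the $E_2$-identification, the module structure, and the $d_3$-vanishing for this new target from scratch; and the closing appeal to ``$KO_2$-module rigidity'' forcing $E_\infty^{s,s}=0$ for $s>0$ is an assertion rather than an argument. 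All of this is sidestepped by applying the splitting theorem precisely in this case and then citing the $KO_2$-source edge isomorphism summand by summand, which is the paper's one-line proof.
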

\begin{proof} We have already seen the result for $KO_2$ as the source, so the result is immediate from the previous theorem by naturality of edge homomorphisms. 
\end{proof}
\subsection{Cohomology of torsors}
We begin by briefly reviewing the definition of a torsor. Let $G$ be a profinite group and $R$ a complete, local Noetherian graded ring with a maximal homogeneous ideal generated by a sequence of regular elements.
\begin{definition} Let $A$ be a pro-free Morava module. Then $A$ is a $G$-torsor if $A$ is a faithfully flat extension of $A^G$ and the natural map
\[
A \widehat{\otimes}_{A^G}A \longrightarrow C(G, A)
\]
given by $(a, a') \mapsto (\phi(g) = ag(a'))$ is an isomorphism of rings. 
\end{definition}

First we reduce Theorem \ref{theorem:torsor} to a statement about group cohomology, as opposed to Ext groups.
\begin{proposition} Let $M$ and $N$ be pro-free Morava modules. Then we have a natural equivalence
\[
\widehat{\textup{Ext}}^*_{C(G, R)}(R, \textup{Hom}^{\textup{cts}}_{R}(M, N)) \cong \widehat{\textup{Ext}}^*_{C(G, R)}(M, N)
\]
\end{proposition}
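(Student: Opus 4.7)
The plan is to lift the classical adjunction $\textup{Hom}_{R[G]}(M,N) \cong \textup{Hom}_{R[G]}(R, \textup{Hom}_R(M,N))$ to the setting of $L$-complete Morava modules, and then to derive it. First I would verify the underived statement: for $M$ pro-free and $N$ any $L$-complete $C(G,R)$-comodule, the natural map
\[
\textup{Hom}^{\textup{cts}}_{C(G,R)}(M,N) \longrightarrow \textup{Hom}^{\textup{cts}}_{C(G,R)}(R, \textup{Hom}^{\textup{cts}}_R(M,N))
\]
(where $G$ acts diagonally on the internal Hom by $(g\cdot f)(m) = g\cdot f(g^{-1}\cdot m)$) is an isomorphism. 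This is essentially a repackaging of what $G$-equivariance means for a map out of $M$; the pro-freeness of $M$ ensures that $\textup{Hom}^{\textup{cts}}_R(M,-)$ interacts well with the completions involved in the definition of the continuous coaction.

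Next I would derive both sides in parallel. Choose a resolution $N \to I^{\bullet}$ by relative injectives of the form $C(G,R)\widehat{\otimes}_R L_k$ with each $L_k$ pro-free; such resolutions exist and compute $\widehat{\textup{Ext}}^{*}_{C(G,R)}(-,N)$ in the Barthel--Heard framework. The central claim to verify is that applying $\textup{Hom}^{\textup{cts}}_R(M,-)$ to this resolution produces a resolution of $\textup{Hom}^{\textup{cts}}_R(M,N)$ by relative injectives of the same form. This rests on a natural isomorphism
\[
\textup{Hom}^{\textup{cts}}_R\bigl(M,\, C(G,R)\widehat{\otimes}_R L\bigr) \;\cong\; C(G,R)\widehat{\otimes}_R \textup{Hom}^{\textup{cts}}_R(M,L),
\]
together with the exactness of $\textup{Hom}^{\textup{cts}}_R(M,-)$ on short exact sequences of pro-free modules. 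Both of these properties use in an essential way that $M$ and $C(G,R)$ are pro-free over $R$, so that completed tensor products and continuous Homs commute without derived corrections.

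With the resolution in hand, applying the underived adjunction of the first step levelwise gives an isomorphism of cochain complexes
\[
\textup{Hom}^{\textup{cts}}_{C(G,R)}(M, I^{\bullet}) \;\cong\; \textup{Hom}^{\textup{cts}}_{C(G,R)}\bigl(R,\, \textup{Hom}^{\textup{cts}}_R(M, I^{\bullet})\bigr).
\]
Taking cohomology on both sides yields the desired equality of $\widehat{\textup{Ext}}$ groups, since the left side computes $\widehat{\textup{Ext}}^{*}_{C(G,R)}(M,N)$ by construction, while the right side computes $\widehat{\textup{Ext}}^{*}_{C(G,R)}(R, \textup{Hom}^{\textup{cts}}_R(M,N))$ by the claim of the previous paragraph.

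The main obstacle I expect is the bookkeeping in the second step: one must be careful that the completed tensor products defining induced comodules commute past $\textup{Hom}^{\textup{cts}}_R(M,-)$ on the nose (rather than only up to some higher-derived correction), and that the resulting cochain complex is still a resolution in the $L$-complete sense. The pro-freeness of $M$ is precisely what allows one to avoid such corrections, and locating this hypothesis at the correct spot in the argument is the essential point.
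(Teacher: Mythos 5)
Your proposal is correct and is essentially the paper's argument: the paper defines $\widehat{\textup{Ext}}$ via the cobar complex, whose terms for the right-hand side are $\textup{Hom}^{\textup{cts}}_R(M, C(G^n,N))$, and uses exactly your key isomorphism (valid since $M$ is pro-free) to identify these termwise with $C(G^n, \textup{Hom}^{\textup{cts}}_R(M,N))$, the cobar complex for the left-hand side. Your extra scaffolding with general relative injective resolutions is harmless but unnecessary once one works with the canonical cobar complex on both sides.
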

\begin{proof} The cochain complex computing the right hand side has terms
\[
\textup{Hom}^{\textup{cts}}_R(M, C(G^n, N))
\]
Since $M$ is pro-free, the natural map to $C(G^n, \textup{Hom}^{\textup{cts}}_R(M, N))$ is an isomorphism, and this computes the left hand side. 
\end{proof}
The theorem is now a direct consequence of the following proposition applied to $\textup{Hom}^{\textup{cts}}_R(M, A)$. 

\begin{proposition} Let $A$ be a $G$-torsor and $M$ a complete $A$-module with compatible continuous $G$-action. Then
\[
\widehat{\textup{Ext}}^s_{C(G, R)}(R, M) = 0, \quad s>0
\]
\end{proposition}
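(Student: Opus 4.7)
The plan is to identify $\widehat{\textup{Ext}}^*_{C(G,R)}(R, M)$ with continuous group cohomology $H^*_{\textup{cts}}(G, M)$, and then to recognize the resulting cochain complex as an Amitsur complex for a faithfully flat extension whose higher cohomology vanishes by descent. First, I would unwind the cobar construction for the Hopf algebroid $(R, C(G,R))$ with its trivial left comodule $R$: in the $L$-complete setting this gives the complex with $C^n = C(G^n, M)$ and the usual continuous group cohomology differentials, so $\widehat{\textup{Ext}}^s_{C(G,R)}(R, M) \cong H^s_{\textup{cts}}(G, M)$.

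Next, I would iterate the torsor identity. The isomorphism $A \widehat{\otimes}_{A^G} A \cong C(G, A)$ bootstraps inductively to $C(G^n, A) \cong A^{\widehat{\otimes}_{A^G}(n+1)}$ for all $n \ge 0$. Combined with the $A$-module structure on $M$ (and pro-freeness of $A$), this gives
$$C^n(G, M) \cong C(G^n, A) \widehat{\otimes}_A M \cong A^{\widehat{\otimes}_{A^G}(n+1)} \widehat{\otimes}_A M.$$
Faithfully flat descent along $A^G \to A$ (which applies because $A$ is a $G$-torsor, hence faithfully flat over $A^G$ by assumption) together with the compatibility of the $G$-action and $A$-action on $M$ produces an isomorphism $A \widehat{\otimes}_{A^G} M^G \cong M$. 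Substituting this in, the entire cochain complex takes the form
$$C^\bullet(G, M) \cong A^{\widehat{\otimes}_{A^G}(\bullet + 1)} \widehat{\otimes}_{A^G} M^G,$$
which is the Amitsur complex for the faithfully flat cover $A^G \to A$ with coefficients in $M^G$.

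This last complex is acyclic in positive degrees by the standard descent argument: base-changing along $A^G \to A$ produces an extra degeneracy (furnished by multiplication) giving a contracting homotopy on the resulting complex, and faithful flatness of $A^G \to A$ pulls this acyclicity back down to the original. The main technical obstacle I expect is verifying that all the completed tensor products behave as advertised in the $L$-complete setting --- specifically, justifying the inductive step $C(G^n, A) \cong A^{\widehat{\otimes}_{A^G}(n+1)}$ and ensuring that descent produces a genuine isomorphism $M \cong A \widehat{\otimes}_{A^G} M^G$ rather than a mere comparison map. Pro-freeness of $A$, built into the definition of torsor, is what makes $\widehat{\otimes}_{A^G} A$ sufficiently well-behaved on the relevant subcategory, and this is the point where I would need to be most careful.
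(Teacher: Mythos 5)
Your skeleton matches the paper's: identify the cobar complex for $(R, C(G,R))$ with continuous group cochains $C(G^\bullet, M)$, convert to an Amitsur complex for $A^G \to A$ via the torsor isomorphism $A\widehat{\otimes}_{A^G}A \cong C(G,A)$, and conclude by faithfully flat descent. The difference is the direction in which you travel between $A^G$ and $A$, and it matters. You descend $M$ itself, asserting $M \cong A\widehat{\otimes}_{A^G}M^G$ so that the whole cochain complex becomes the Amitsur complex with coefficients in $M^G$. That assertion is the \emph{effectivity} half of faithfully flat descent for modules, and in the $L$-complete setting it is not free: $L_0\circ\otimes$ is not exact, $M$ is only assumed complete (not pro-free), and nothing in the hypotheses hands you an equivalence between $A$-modules with descent data and $A^G$-modules. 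You correctly flag this as the delicate point, but as written it is an unproved nontrivial claim on which the whole argument rests, so it is a genuine gap.

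The paper avoids it by going the other way: instead of descending $M$ to $A^G$, it base-changes the complex \emph{up} along $A^G \to A$. Since that map is faithfully flat (part of the torsor hypothesis), acyclicity may be checked after applying $-\boxtimes_{A^G}A$, and the torsor isomorphism then identifies the base-changed complex term-by-term with the Amitsur complex $M\boxtimes_{A^G}A^{\boxtimes_{A^G}(\bullet+1)}$ with coefficients in $M$ itself. That complex is split by the $A$-module structure on $M$ (an extra degeneracy), so it is acyclic in positive degrees outright. This uses only that faithfully flat base change is exact and conservative --- the cheap half of descent --- and never needs $M$ to be effective. If you want to salvage your version, you would either have to prove $L$-complete descent effectivity for this class of modules, or simply reroute through the base-change-up argument.
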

\begin{proof} We recall the definition of this Ext group. Let 
\[
\Omega^n(N): = \Gamma^{\boxtimes_R n+1}\boxtimes_{R} M
\]
be the usual cobar resolution, where $\boxtimes$ is the monoidal structure on $L$-complete modules given by $L_0\circ \otimes$ as in \cite{BH}. Then we have
\[
\widehat{\textup{Ext}}^s_{\Gamma}(R, M) := H^s\left(Hom^{\textup{cts}}_{\Gamma}(R, \Omega^*(M))\right)
\]
But, by the coinduction adjunction, we have
\[
Hom^{\textup{cts}}_{\Gamma}(R, \Omega^n(N)) = \Gamma^{\boxtimes_{R}n} \boxtimes_RM
\]
as $R$-modules. We want to show this complex is acyclic, and it is enough to do this after faithfully flat base change, i.e. we need only show that the complex with terms
\[
\Gamma^{\boxtimes_Rn} \boxtimes_RM \boxtimes_{A^G}A
\]
is acyclic. But, by the torsor assumption, this complex is isomorphic to the Amitsur complex
\[
M \boxtimes_{A^G}A \longrightarrow M\boxtimes_{A^G}(A \boxtimes_{A^G}A) \longrightarrow \cdots
\]
which is acyclic by faithfully flat descent. 
\end{proof}

\subsection{Reinterpretation via $p$-adic measures}
Now we'd like a better description of the group
\[
\textup{Hom}^{\textup{cts}}_{\mathbb{Z}_p^{\times}}(C(\mathbb{Z}^{\times}_p/\{\pm1\}, \mathbb{Z}_p), V_\infty(\Gamma)).
\] 
We first treat the non-equivariant case, following \cite{K2}. Let $X$ be a compact, totally disconnected topological space and $R$ a $p$-adically complete ring. We have
\[
C(X,R) = C(X, \mathbb{Z}_p) \widehat{\otimes}_{\mathbb{Z}_p} R
\]
and a \emph{measure} $\mu$ on $X$ with values in $R$ is a (necessarily continuous) $R$-linear map from $C(X,R)$ to $R$. That is
\[
\textup{Meas}(X, R) := \textup{Hom}_{R}(C(X, R), R) = \textup{Hom}_{\mathbb{Z}_p}(C(X, \mathbb{Z}_p), R)
\]
If $U \subset X$ is compact and open then measures on $U$ are in bijection with measures on $X$ supported on $U$. In the case where $X = \mathbb{Z}_p$ we have a very nice description of measures. The space $C(\mathbb{Z}_p, \mathbb{Z}_p)$ is pro-free with a very explicit basis.
\begin{theorem}[Mahler] For each integer $k\ge 0$ denote by $\binom{x}{k}$ the function $\mathbb{Z}_p \longrightarrow \mathbb{Z}_p$ given by 
\[
x \mapsto \begin{cases} 1 & k=0\\
\frac{x(x-1) \cdots (x-(n-1))}{k!} & k>0
\end{cases}
\]
Then $C(\mathbb{Z}_p, \mathbb{Z}_p)$ is pro-free with basis given by these functions.
\end{theorem}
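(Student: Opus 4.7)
The plan is to build a $\mathbb{Z}_p$-linear isomorphism between $C(\mathbb{Z}_p, \mathbb{Z}_p)$ and the pro-free module $c_0(\mathbb{Z}_p) := \varprojlim_n \bigoplus_{k\ge 0}\mathbb{Z}/p^n$ via the classical Mahler coefficient map. Set $\Delta f(x) := f(x+1) - f(x)$ and define $M(f) := (a_k(f))_{k\ge 0}$ with $a_k(f) := (\Delta^k f)(0) = \sum_{j=0}^k (-1)^{k-j}\binom{k}{j} f(j)$, which is manifestly $\mathbb{Z}_p$-valued, and let $I((c_k))(x) := \sum_{k\ge 0} c_k\binom{x}{k}$ whenever this converges. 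Both assignments are $\mathbb{Z}_p$-linear, and the theorem amounts to: (a) $M(f)\in c_0(\mathbb{Z}_p)$ for every continuous $f$; (b) $I$ is well-defined on $c_0(\mathbb{Z}_p)$ and lands in $C(\mathbb{Z}_p, \mathbb{Z}_p)$; (c) $M$ and $I$ are mutually inverse.

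Parts (b) and (c) are essentially formal. For (b), the estimate $|\binom{x}{k}|_p \le 1$ on $\mathbb{Z}_p$ shows the series defining $I((c_k))$ converges uniformly whenever $c_k \to 0$, so its sum is continuous. For (c), the lower-triangular integer matrix $[\binom{n}{k}]_{n,k\ge 0}$ has $1$s on the diagonal, hence is invertible over $\mathbb{Z}$, with inverse given precisely by the alternating sum expressing $a_k$. This yields the classical Newton forward-difference formula $f(n) = \sum_{k=0}^n a_k(f)\binom{n}{k}$ for all $n \in \mathbb{Z}_{\ge 0}$, which is $I \circ M = \mathrm{id}$ on the dense subset $\mathbb{Z}_{\ge 0} \subset \mathbb{Z}_p$ (and hence everywhere by continuity), while $M \circ I = \mathrm{id}$ follows from the same matrix inversion.

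The main obstacle, carrying all the analytic content, is (a): for continuous $f: \mathbb{Z}_p \to \mathbb{Z}_p$ one must show $|a_k(f)|_p \to 0$. Because $\mathbb{Z}_p$ is compact, $f$ is uniformly continuous, so given $M \ge 1$ we may choose $N$ with $f(x+p^N) \equiv f(x) \pmod{p^M}$ for all $x$. Let $\bar f$ be the strictly $p^N$-periodic integer-valued approximation $\bar f(x) := f([x]_{p^N})$ using standard residues; then $f \equiv \bar f \pmod{p^M}$ in the sup norm, hence $a_k(f) \equiv a_k(\bar f) \pmod{p^M}$ by $\mathbb{Z}_p$-linearity of $a_k$. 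It therefore suffices to bound $a_k(\bar f)$ for a genuinely $p^N$-periodic function. Direct expansion gives $\Delta^{p^N}\bar f(x) = \sum_{j=0}^{p^N}(-1)^{p^N - j}\binom{p^N}{j}\bar f(x+j)$; the $j=0$ and $j=p^N$ contributions combine to $((-1)^{p^N}+1)\bar f(x)$, which lies in $p\mathbb{Z}_p$ (either because $(-1)^{p^N} = -1$ when $p$ is odd, or because $2 \in p\mathbb{Z}_p$ when $p=2$), while by Legendre's formula $\nu_p\binom{p^N}{j} = N - \nu_p(j) \ge 1$ the remaining terms also lie in $p\mathbb{Z}_p$. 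Hence $\Delta^{p^N}\bar f$ takes values in $p\mathbb{Z}_p$, and since $p^N$-periodicity is preserved by $\Delta$, dividing by $p$ and iterating yields $(\Delta^{p^N})^m\bar f$ valued in $p^m\mathbb{Z}_p$ for every $m$. Consequently $|a_{p^N m + r}(\bar f)|_p \le p^{-m}$ for $0 \le r < p^N$, so $|a_k(f)|_p \le \max(p^{-M}, p^{-\lfloor k/p^N \rfloor}) \to 0$ as $k \to \infty$, completing the argument.
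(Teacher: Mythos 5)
Your proposal is correct, and it is genuinely more complete than the argument given in the paper. The paper's proof consists of writing down the Newton forward-difference inversion $f(x) = \sum_k (\Delta^k f)(0)\binom{x}{k}$ and then explicitly deferring the one nontrivial point --- that the Mahler coefficients $(\Delta^k f)(0)$ tend to zero $p$-adically --- to the literature (Mahler's original proofs and Elkies's short one). You handle the formal parts the same way the paper does (unitriangularity of $[\binom{n}{k}]$ over $\mathbb{Z}$, density of $\mathbb{Z}_{\ge 0}$, uniform convergence from $|\binom{x}{k}|_p \le 1$), but you then actually supply the analytic step: approximate $f$ mod $p^M$ by a locally constant, $p^N$-periodic function $\bar f$ using uniform continuity, observe that $\Delta^{p^N}\bar f$ is valued in $p\mathbb{Z}_p$ because the boundary terms $j=0, p^N$ cancel (or contribute $2\bar f(x) \in 2\mathbb{Z}_2$ when $p=2$) and $p \mid \binom{p^N}{j}$ for $0 < j < p^N$, and iterate using that $\Delta$ preserves periodicity. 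This gives the quantitative bound $|a_k(f)|_p \le \max(p^{-M}, p^{-\lfloor k/p^N\rfloor})$ and hence the decay. This is one of the classical periodic-approximation proofs, and it is exactly the content the paper chooses to cite rather than prove; the only point to be careful about in your write-up is the logical order --- the "by continuity" step in $I\circ M = \mathrm{id}$ presupposes part (a), so (a) must be established before (c) is invoked, which your final assembly respects. One cosmetic remark: $\bar f$ is $\mathbb{Z}_p$-valued rather than literally integer-valued, but nothing in the argument uses more than that it factors through $\mathbb{Z}/p^N\mathbb{Z}$.
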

\begin{proof} Let $f: \mathbb{Z}_p \longrightarrow \mathbb{Z}_p$ be arbitrary. Then we can recover $f$ uniquely as
\[
f(x) = \sum_{k \ge 0} (\Delta^kf)(0)\binom{x}{k}
\]
where $\Delta(g)(x) = g(x+1) - g(x)$ as long as this right hand side makes sense.

We need to know that the numbers $(\Delta^kf)(0)$ converge to zero $p$-adically, and this is not obvious. Many proofs may be found in the literature: \cite{Mah} has four different proofs, and \cite{Elk} has a particularly short one.\end{proof}
\begin{corollary} Let $R$ be a $p$-complete $\mathbb{Q}_p$-algebra, then evaluation on the functions $x^k$ for $k\ge m$ gives a bijection
\[
\textup{Meas}(\mathbb{Z}_p, R) \cong \prod_{k\ge 0} R
\]
\end{corollary}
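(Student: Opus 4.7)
The plan is to deduce this from Mahler's theorem by an elementary change of basis, using no new analytic input beyond what is already stated. By Mahler's theorem combined with the base-change identity $C(\mathbb{Z}_p, R) = C(\mathbb{Z}_p, \mathbb{Z}_p) \widehat{\otimes}_{\mathbb{Z}_p} R$, the binomial functions $\binom{x}{k}$ form a topological $R$-basis of $C(\mathbb{Z}_p, R)$. Consequently, evaluation $\mu \mapsto (\mu(\binom{x}{k}))_{k \geq 0}$ is already a bijection
\[
\textup{Meas}(\mathbb{Z}_p, R) \cong \prod_{k \geq 0} R
\]
with no hypothesis on $R$ whatsoever. It therefore suffices to compare this bijection with evaluation on the monomial functions $x^k$.

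Next, I would record the classical identity
\[
x^k \;=\; \sum_{j=0}^{k} j!\, S(k,j)\, \binom{x}{j},
\]
where $S(k,j)$ is a Stirling number of the second kind (so $S(k,k) = 1$ and $S(k,j) = 0$ for $j > k$). This expresses the transition from the basis $\{\binom{x}{j}\}_{j \geq 0}$ to the sequence $\{x^k\}_{k \geq 0}$ as a lower-triangular matrix (in the variable $k$) with diagonal entries equal to $k!$. Crucially, this matrix has finitely many nonzero entries in each row, so its action as a linear operator on $\prod_{k \geq 0} R$ is defined with no convergence issue.

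Because $R$ is a $\mathbb{Q}_p$-algebra, each $k!$ is a unit in $R$, and a triangular matrix with invertible diagonal entries is invertible as an endomorphism of $\prod_{k \geq 0} R$: the inverse is computed by finite back-substitution at each index. Composing the Mahler bijection with this change of basis yields the stated bijection $\mu \mapsto (\mu(x^k))_{k \geq 0}$. The argument is entirely formal once Mahler is in hand; the one conceptual point to emphasize is that invertibility of all the factorials, not merely of $p$, is what is actually being used, which is precisely why the $\mathbb{Q}_p$-algebra hypothesis is imposed.
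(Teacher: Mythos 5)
Your argument is correct and is essentially the paper's own proof: the paper likewise observes that $k!\binom{x}{k} = x^k + O(x^{k-1})$ with $k!$ a unit, i.e.\ exactly your triangular change of basis with invertible diagonal, just without writing out the Stirling numbers. The only cosmetic caveat is that your aside ``no hypothesis on $R$ whatsoever'' for the Mahler bijection still tacitly uses that $R$ is $p$-adically complete, which is part of the standing hypothesis.
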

\begin{proof} The functions $x^k$ for $k\ge 0$ form a basis of $C(\mathbb{Z}_p,R)$ in this case since $k! \binom{x}{k} = x^k + O(x^{k-1})$ and $k!$ is a unit.  
\end{proof}
\begin{definition}\label{definition:kummer} Let $R$ be a $p$-adically complete, flat $\mathbb{Z}_p$-algebra and $X$ a compact, totally disconnected subspace of $\mathbb{Z}_p$. We say that a sequence $\{b_k\} \in R[1/p]$ satisfies the \emph{generalized Kummer congruences} (for $X$) if, for every polynomial $\sum a_nz^n \in \mathbb{Q}_p[z] \cap C(X, \mathbb{Z}_p)$ we have
\[
\sum a_nb_n \in R \subset R[1/p]
\]
\end{definition}
\begin{proposition} Let $R$ be a $p$-adically complete, flat $\mathbb{Z}_p$-algebra. Then evaluation on $x^k$ gives an injection
\[
\textup{Meas}(\mathbb{Z}_p, R) \hookrightarrow \prod_{k \ge 0} R[1/p]
\]
onto the set of sequences $\{b_k\}$ satisfying the generalized Kummer congruences.
\end{proposition}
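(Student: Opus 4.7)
The plan is to compare two identifications of measures: the Mahler-basis description over $R$ and the monomial-basis description over $R[1/p]$, and then use the flatness of $R$ to establish injectivity and characterize the image.

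First, Mahler's theorem identifies $C(\mathbb{Z}_p, \mathbb{Z}_p)$ with the pro-free $\mathbb{Z}_p$-module on the basis $\{\binom{x}{k}\}$. Base-changing to $R$ yields $\textup{Meas}(\mathbb{Z}_p, R) \cong \prod_{k\ge 0} R$ via $\mu \mapsto (\mu(\binom{x}{k}))$, while the preceding corollary gives $\textup{Meas}(\mathbb{Z}_p, R[1/p]) \cong \prod_{k\ge 0} R[1/p]$ via $\mu \mapsto (\mu(x^k))$, since $k!$ is a unit in $R[1/p]$. Flatness of $R$ over $\mathbb{Z}_p$ implies $R \hookrightarrow R[1/p]$, so base change induces an injection $\textup{Meas}(\mathbb{Z}_p, R) \hookrightarrow \textup{Meas}(\mathbb{Z}_p, R[1/p])$; composing with evaluation on monomials yields the map of the proposition, whose injectivity is thereby established.

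For the image: any $\mu \in \textup{Meas}(\mathbb{Z}_p, R)$ satisfies $\mu(P) = \sum a_n b_n \in R$ for every $P(z) = \sum a_n z^n \in \mathbb{Q}_p[z] \cap C(\mathbb{Z}_p, \mathbb{Z}_p)$, by $\mathbb{Q}_p$-linearity of the extension to $R[1/p]$, so the image lies inside the sequences satisfying the generalized Kummer congruences. Conversely, given a sequence $\{b_k\}$ satisfying the Kummer congruences, expand $\binom{x}{k} = \sum_j \alpha_j^{(k)} x^j$ in $\mathbb{Q}_p[x]$ and set $c_k := \sum_j \alpha_j^{(k)} b_j \in R[1/p]$; the Kummer hypothesis, applied to the integer-valued polynomial $\binom{x}{k}$, forces $c_k \in R$. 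By the Mahler identification the sequence $(c_k)$ defines a genuine measure $\mu$ valued in $R$, and expanding $x^k = \sum_{j\le k} S(k,j)\, j!\, \binom{x}{j}$ (a finite sum, with $S(k,j)$ the Stirling numbers of the second kind) shows that $\mu(x^k) = b_k$, so $\{b_k\}$ is in the image.

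The one subtlety requiring comment is the verification that the formal prescription $c_k \in R$ really assembles into a continuous $R$-valued measure rather than just a linear functional on polynomials; but this is automatic from Mahler's theorem, since the pro-freeness of $C(\mathbb{Z}_p,\mathbb{Z}_p)$ on $\{\binom{x}{k}\}$ means any $(c_k) \in \prod_{k} R$ extends uniquely and continuously by Mahler convergence. The essence of the argument is therefore the change-of-basis identity between the Mahler basis (natural for integrality) and the monomial basis (natural for formulating the Kummer congruences), with the Kummer hypothesis being exactly what bridges the two.
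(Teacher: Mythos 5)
Your proof is correct and follows essentially the same route as the paper's: injectivity comes from the monomial-basis identification over $R[1/p]$ together with flatness ($R \hookrightarrow R[1/p]$), and the image is characterized by observing that the Kummer congruences need only be tested on the polynomials $\binom{x}{k}$, which by Mahler's theorem is exactly the condition for the sequence to assemble into an $R$-valued measure. Your write-up simply makes explicit the change-of-basis bookkeeping that the paper leaves implicit.
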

\begin{proof} Injectivity follows from the previous corollary and flatness. To characterize the image, note that we really only need to check the polynomials $\binom{x}{k}$, and this is a necessary and sufficient condition by Mahler's theorem. 
\end{proof}
\begin{corollary}\label{corollary:kummer condition} Let $R$ be as above and fix $m\ge 0$. Evaluation on $x^{2k}$ for $2k\ge m$ gives an injection
\[
\textup{Meas}(\mathbb{Z}^{\times}_p/\{\pm 1\}, R) \hookrightarrow \prod_{k \ge m} R[1/p]
\]
onto the set of sequences $\{b_k\}$ such that $b_{2k-1} = 0$ for all $k$ and satisfying the generalized Kummer congruences.
\end{corollary}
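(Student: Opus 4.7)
The plan is to reduce the claim to the preceding proposition by two successive translations. I first identify $\mathrm{Meas}(\mathbb{Z}_p^\times/\{\pm 1\}, R)$ with the subgroup of $\mathrm{Meas}(\mathbb{Z}_p, R)$ consisting of measures which (i) factor through the restriction $C(\mathbb{Z}_p, R) \twoheadrightarrow C(\mathbb{Z}_p^\times, R)$, available because $\mathbb{Z}_p^\times \subset \mathbb{Z}_p$ is clopen, and (ii) are invariant under the involution $x \mapsto -x$, using that $\{\pm 1\}$ acts freely on $\mathbb{Z}_p^\times$ and $C(\mathbb{Z}_p^\times/\{\pm 1\}, R) = C(\mathbb{Z}_p^\times, R)^{\{\pm 1\}}$. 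The preceding proposition then embeds $\mathrm{Meas}(\mathbb{Z}_p, R)$ into $\prod R[1/p]$ via the moments $b_k = \mu(x^k)$, with image characterized by the Kummer congruences for $\mathbb{Z}_p$.

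Next, I translate conditions (i) and (ii) into conditions on the moment sequence. For invariance, applying $\mu(f(-x)) = \mu(f(x))$ to $f(x) = x^{2k-1}$ yields $2b_{2k-1} = 0$ in $R[1/p]$; since $R$ is $\mathbb{Z}_p$-flat and $p$ is inverted (so $2$ is a unit in $R[1/p]$, crucially even when $p = 2$), this forces $b_{2k-1} = 0$. The converse, that vanishing of odd moments yields invariance, follows from Mahler density of polynomials in $C(\mathbb{Z}_p, R) \otimes \mathbb{Q}_p$. For the support condition, factoring through restriction to $\mathbb{Z}_p^\times$ amounts to the integrality of $\sum a_n b_n$ for every $\sum a_n z^n \in \mathbb{Q}_p[z] \cap C(\mathbb{Z}_p^\times, \mathbb{Z}_p)$; combined with the invariance, this trims the test polynomials to even ones and yields precisely the generalized Kummer congruences for $X = \mathbb{Z}_p^\times/\{\pm 1\}$ in the sense of Definition \ref{definition:kummer}.

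The main technical step, which I expect to require the most care, is surjectivity onto the prescribed image: given a sequence $\{b_k\}$ with $b_{2k-1} = 0$ satisfying those Kummer congruences, one must construct a genuine measure on $\mathbb{Z}_p^\times/\{\pm 1\}$ realizing it. The construction defines the functional on the dense subspace spanned by the even polynomials in $C(\mathbb{Z}_p^\times/\{\pm 1\}, R) \otimes \mathbb{Q}_p$ by $\sum a_{2k} z^{2k} \mapsto \sum a_{2k} b_{2k}$, and the Kummer condition is exactly what is needed to check that this functional is bounded with respect to the $\mathbb{Z}_p$-integral Mahler structure on $C(\mathbb{Z}_p^\times/\{\pm 1\}, R)$, hence extends uniquely by continuity to a measure. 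Injectivity is immediate from the preceding proposition, and the promised characterization of the image follows from the two translations above.
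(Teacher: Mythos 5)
Your reduction to the preceding proposition (identifying measures on $\mathbb{Z}_p^{\times}/\{\pm 1\}$ with measures on $\mathbb{Z}_p$ supported on the clopen subset $\mathbb{Z}_p^{\times}$ and invariant under $x\mapsto -x$, then translating supportedness and invariance into conditions on moments) is fine, and it is essentially what the paper dismisses as ``immediate.'' The parity argument via $2b_{2k-1}=0$ and the identification of the congruence condition are both correct.

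But you have skipped the one point that carries the actual content of the corollary: the statement is that evaluation on $x^{2k}$ only for $2k\ge m$, with $m\ge 0$ \emph{arbitrary}, already gives an injection with the stated image. Your injectivity claim (``immediate from the preceding proposition'') only establishes injectivity of the full moment map; the truncated map is the composite with the projection $\prod_{k\ge 0}R[1/p]\to\prod_{k\ge m}R[1/p]$, and injectivity does not pass to such a composite for free. Likewise, your surjectivity construction defines the functional on ``the dense subspace spanned by the even polynomials,'' but when $m>0$ the data $\{b_{2k}\}_{2k\ge m}$ does not prescribe the values on the low-degree even polynomials (e.g.\ the constant function $1=x^0$), so you have not defined the functional on a dense subspace at all. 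What is needed, and what the paper supplies, is that the closed span of $\{x^{2k}: 2k\ge m\}$ is already all of $C(\mathbb{Z}_p^{\times}/\{\pm 1\},\mathbb{Z}_p)$: one checks that $x^{(p-1)p^r}\to 1$ uniformly on $\mathbb{Z}_p^{\times}/\{\pm 1\}$, because any unit $c$ satisfies $c^{p-1}\equiv \pm 1 \bmod p$ and hence $c^{(p-1)p^r}\equiv (\pm 1)^{p^r}\bmod p^{r+1}$, so that the constant function (and with it every lower moment) is a uniform limit of high even powers. Until you insert this approximation step, both injectivity of the truncated map and the reconstruction of the measure from the high moments are unproved.
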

\begin{proof} This is immediate except for the added statement about only needing sufficiently high powers $x^{2k}$. To see this, note that $x^{(p-1)p^r} \rightarrow 1$ uniformly as functions on $\mathbb{Z}^{\times}_p/\{\pm 1\}$. Indeed, any $p$-adic unit $c$ satisfies $c^{p-1} \equiv \pm 1 \textup{ mod }p$ (where the minus only occurs when $p=2$). It follows that $c^{(p-1)p^r} \equiv (\pm 1)^{p^r} \textup{ mod }p^{r+1}$ and taking limits completes the proof. 
\end{proof}

We can now give a purely algebraic description of the group $[KO_p, L_{K(1)}tmf(\Gamma)]$.
\begin{theorem} Evaluation at the generator in $\pi_{4k}KO_p$, followed by division by $2$ when $k$ is odd, gives an injection
\[
[KO_p, L_{K(1)}tmf(\Gamma)] \longrightarrow \prod_{k \ge m} \textup{MF}_{p, k}(\Gamma, \mathbb{Q}_p)
\]
onto the set of sequences $\{g_k\}_{k\ge m}$ of $p$-adic modular forms such that 
\begin{enumerate} 
\item For $k$ odd, $g_k = 0$.
\item The sequence satisfies the generalized Kummer congruences. 
\end{enumerate}
Moreover, given $f: KO_p \longrightarrow L_{K(1)}tmf(\Gamma)$, the effect on $\pi_0$ is given by
\[
\pi_0f(1) = \lim_r g_{(p-1)p^r}
\]
\end{theorem}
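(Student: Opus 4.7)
The plan is to combine the edge-map isomorphism of Theorem 2.9 with the characterization of measures on $\mathbb{Z}_p^\times/\{\pm 1\}$ obtained in Corollary 2.17. By Theorem 2.9 the group $[KO_p, L_{K(1)}tmf(\Gamma)]$ is identified with the continuous $\mathbb{Z}_p^\times$-equivariant homomorphisms
\[
\phi\colon C(\mathbb{Z}_p^\times/\{\pm 1\}, \mathbb{Z}_p) \longrightarrow V_\infty(\Gamma),
\]
i.e.\ with $V_\infty(\Gamma)$-valued measures on $\mathbb{Z}_p^\times/\{\pm 1\}$ whose integration operator intertwines the $\mathbb{Z}_p^\times$-actions. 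Applying Corollary 2.17 to the flat $\mathbb{Z}_p$-algebra $V_\infty(\Gamma)$, such a measure is, ignoring equivariance, the same data as a sequence of moments $g_k := \int x^k \, d\phi \in V_\infty(\Gamma)\otimes\mathbb{Q}_p$ with $g_k = 0$ for $k$ odd and satisfying the generalized Kummer congruences.

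Next I would translate equivariance into weight. For any $\lambda \in \mathbb{Z}_p^\times$ the equality $\lambda \cdot x^k = \lambda^k x^k$ in $C(\mathbb{Z}_p^\times/\{\pm 1\}, \mathbb{Z}_p)$, combined with equivariance of $\phi$, forces $g_k$ to lie in the weight-$k$ subspace of $V_\infty(\Gamma)\otimes\mathbb{Q}_p$, which is exactly $MF_{p,k}(\Gamma, \mathbb{Q}_p)$; the reverse construction runs through verbatim, giving the claimed bijection onto the set of sequences described in the statement. To match the resulting $g_{2k}$ with evaluation at the generator of $\pi_{4k}KO_p$, one chases through the isomorphism $(K_p)^\wedge_\ast KO_p = (K_p)_\ast \otimes C(\mathbb{Z}_p^\times/\{\pm 1\}, \mathbb{Z}_p)$ using the complexification map $c$: the formulas for $c_\ast$ recorded in \S1.1 show that the generator of $\pi_{4k}KO_p$ is sent to the function $x^{2k}$ (suitably scaled by a Bott-periodicity class) when $k$ is even and to twice this when $k$ is odd, accounting for the factor of $\tfrac{1}{2}$ appearing in the statement.

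Finally, the formula for $\pi_0 f$ is an immediate consequence of continuity. The constant function $1 \in C(\mathbb{Z}_p^\times/\{\pm 1\}, \mathbb{Z}_p)$ corresponds to the unit $1 \in \pi_0 KO_p$, and, as noted in the proof of Corollary 2.17, the functions $x^{(p-1)p^r}$ tend uniformly to $1$; applying $\phi$ and passing to the limit yields
\[
\pi_0 f(1) = \phi(1) = \lim_r \phi(x^{(p-1)p^r}) = \lim_r g_{(p-1)p^r}.
\]
The main work lies in the bookkeeping between the topological $\mathbb{Z}_p^\times$-action on $\pi_\ast KO_p$, the weight grading on $V_\infty(\Gamma)\otimes\mathbb{Q}_p$, and the scalar factors coming from $c$; once this dictionary is in place the result is essentially a repackaging of Theorem 2.9 and Corollary 2.17.
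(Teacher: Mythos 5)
Your proposal is correct and follows essentially the same route as the paper: identify $[KO_p, L_{K(1)}tmf(\Gamma)]$ with equivariant $V_\infty(\Gamma)$-valued measures via the edge-map isomorphism, apply the Mahler/Kummer-congruence corollary to characterize the moments, convert equivariance on the dense span of even monomials into the weight condition picking out $MF_{p,k}(\Gamma,\mathbb{Q}_p)\subset V_\infty(\Gamma,\mathbb{Q}_p)$, track the factor of $2$ through the comparison of $\pi_*KO_p$ with $\pi_*K_p$, and deduce the $\pi_0$ formula from the uniform convergence $x^{(p-1)p^r}\to 1$. The only (harmless) cosmetic difference is that you route the factor of $2$ through complexification $c_*$ where the paper uses the forgetful map and defers the compatibility of the edge map with evaluation to the analogous statement for $K_p$.
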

\begin{proof} Recall that we have a natural inclusion
\[
MF_{p, k}(\Gamma, \mathbb{Q}_p) \hookrightarrow V_{\infty}(\Gamma, \mathbb{Q}_p)
\]
given by pulling back to a section of $\omega^{\otimes k}$ and then multiplying by a fixed unit in $\omega^{-1}$ coming from a $(p-1)$st root of the Hasse invariant. It suffices to show that the following diagram commutes:
\[
\xymatrix{
[KO_p, L_{K(1)}tmf(\Gamma)] \ar[r]^{\frac{1}{2}ev_{v^{2k}}}\ar[d]_{\cong} & \prod_{k \ge m} MF_{p,k}(\Gamma, \mathbb{Q}_p)\ar[d]\\
\textup{Meas}_{\mathbb{Z}_p^{\times}}(\mathbb{Z}_p^{\times}/\{\pm 1\}, V_{\infty}(\Gamma)) \ar[r]^-{ev_{x^{2k}}} & \prod_{k \ge m} V_{\infty}(\Gamma, \mathbb{Q}_p)
}
\]
Here $v^{2k} \in \pi_{4k}KO_p$ is the image of the similarly named element in $\pi_*bu$ under the forgetful map, and the subscript on measures indicates we restrict to $\mathbb{Z}^{\times}_p$-equivariant homomorphisms $C(\mathbb{Z}^{\times}_p/\{\pm 1\}, \mathbb{Z}_p) \longrightarrow V_{\infty}(\Gamma)$. 

Assuming the commutativity of the diagram, we see that evaluation at $v^{2k}$ gives an injection into the prescribed set of sequences. On the other hand, given such a sequence, we automatically get a measure by the preceding corollary and we need only check it is equivariant. But we may check equivariance on the dense subspace of $C(\mathbb{Z}^{\times}_p/\{\pm 1\}, \mathbb{Z}_p)$ consisting of the even polynomials. In this case equivariance is equivalent to the requirement that $x^{2k}$ maps to an element $g$ of $V_{\infty}(\Gamma)$ which satisfies
\[
\lambda \ast g = \lambda^{2k}g, \quad \lambda \in \mathbb{Z}^{\times}_p
\] 
Since $p$-adic modular forms of weight $2k$ satisfy this requirement, we conclude that the measure is equivariant. 

It remains to check the commutativity of the diagram. This follows from the commutativity of the diagram
\[
\xymatrix{
[K_p, L_{K(1)}tmf(\Gamma)] \ar[r]^{ev_{v^{2k}}}\ar[d]_{\cong} & \prod_{k \ge m} MF_{p,k}(\Gamma, \mathbb{Q}_p)\ar[d]\\
\textup{Meas}_{\mathbb{Z}_p^{\times}}(\mathbb{Z}_p^{\times}, V_{\infty}(\Gamma)) \ar[r]^-{ev_{x^{2k}}} & \prod_{k \ge m} V_{\infty}(\Gamma, \mathbb{Q}_p)
}
\]
by naturality and the observation that the map $KO \longrightarrow K$ sends $v^{2k} \in \pi_*KO_p$, as we've been denoting it, to $2v^{2k} \in \pi_*K_p$. The commutativity of this simpler diagram follows from the same proof as in \cite[9.5]{AHR}. 
\end{proof}
\subsection{$K(1)$-local orientations}
Before completing the proof of the main theorem of this section, we need to review the formula for Rezk's logarithm in the $K(1)$-local case. Let $R$ be a $K(1)$-local $E_\infty$ ring spectrum and denote by $\psi^p$ the power operation on $\pi_*R$ lifting the Frobenius. 
\begin{theorem}[Rezk]\label{theorem:formula} There is a $K(1)$-local equivalence
\[
\ell_1: gl_1R \longrightarrow R
\]
whose effect on homotopy groups in positive degrees is given by
\[
1+x \mapsto \left(1- \frac{1}{p}\psi^p\right)x
\]
\end{theorem}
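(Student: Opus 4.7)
This result is due to Rezk, and the plan is to recall the construction of $\ell_1$ via the Bousfield--Kuhn functor and then to identify the formula on homotopy. The Bousfield--Kuhn functor $\Phi_1$ from pointed spaces to $K(1)$-local spectra has the defining property that $\Phi_1(\Omega^\infty X) \simeq L_{K(1)} X$ for any spectrum $X$, and it only depends on the underlying pointed space through the basepoint component. For an $E_\infty$-ring $R$, the inclusion of infinite-loop spaces $GL_1(R) \hookrightarrow \Omega^\infty R$ becomes an isomorphism on each component of nonzero degree. Applying $\Phi_1$ therefore produces a natural map
\[
\ell_1: L_{K(1)} gl_1 R \longrightarrow L_{K(1)} R,
\]
which, when $R$ is already $K(1)$-local, is a $K(1)$-local equivalence from $gl_1 R$ to $R$. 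This establishes the existence part of the statement.

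To identify the formula on homotopy, the strategy is to compare $\ell_1$ with the operation $1 + x \mapsto (1 - \tfrac{1}{p}\psi^p)x$, which is a well-defined natural transformation on positive homotopy of $K(1)$-local $E_\infty$-rings, built using the canonical Frobenius lift $\psi^p$ that every $K(1)$-local $E_\infty$-ring possesses. Both the left-hand side (coming from Bousfield--Kuhn) and the right-hand side (an algebraic operation on $\theta$-algebras) are natural in $R$ and agree with the multiplicative structure. By naturality, it suffices to check that the two agree on a single universal example.

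The natural candidate is $R = K_p$, where $\psi^p$ is the classical Adams operation and both sides can be made fully explicit: on $\pi_{2k}$, the Bousfield--Kuhn logarithm of $1 + v^k$ is computed from the structure of $L_{K(1)} S$ and the $K_p$-Hurewicz map, and one checks it agrees with $(1 - p^{k-1})v^k$, precisely the value of $(1 - \tfrac{1}{p}\psi^p)$ on $v^k$. This computation is the main content of Rezk's paper, and once settled, naturality of $\Phi_1$ and of the $\theta$-algebra structure transports the identity to any $K(1)$-local $E_\infty$-ring $R$.

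The main obstacle is the universal computation at $K_p$, which requires careful identification of the Bousfield--Kuhn functor with a specific operation built out of power operations and Dyer--Lashof classes; the rest of the proof is formal manipulation using naturality and the fact that $L_{K(1)}$ of a connective spectrum depends only on the positive-degree part. For our purposes, however, we only need the formula as stated, and we will use it together with Theorem \ref{theorem:ABGHR obstruction} to translate obstruction-theoretic data into congruences on $p$-adic modular forms.
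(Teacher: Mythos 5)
The paper does not prove this statement at all: it is quoted as a black-box result of Rezk (from \emph{The units of a ring spectrum and a logarithmic cohomology operation}), and everything downstream only uses the formula. So the relevant question is whether your sketch is a faithful and correct account of the external proof. The existence half is essentially right: $\ell_1$ is obtained by applying the Bousfield--Kuhn functor $\Phi_1$ to $GL_1(R) \subset \Omega^\infty R$, using that $\Phi_1$ of a pointed space depends only on the basepoint component and that translation identifies the component of $1$ with the component of $0$. (Your phrasing ``isomorphism on each component of nonzero degree'' is garbled --- the inclusion is an inclusion of a union of components, not an isomorphism of anything --- but the idea is the standard one.)

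The identification of the formula is where there is a genuine gap. You propose to verify the identity on the single example $R = K_p$ and then ``transport it by naturality'' to all $K(1)$-local $E_\infty$-rings. That reduction is not valid: $K_p$ is not a universal (corepresenting) object for natural operations $\pi_n gl_1(-) \to \pi_n(-)$ on $K(1)$-local $E_\infty$-rings, so two natural transformations can agree on $K_p$ without agreeing in general. To argue by naturality you must check the identity on the actual corepresenting object, namely the $K(1)$-localization of the free $E_\infty$-ring $\Sigma^\infty_+\Omega^\infty \Sigma^n S$, and identifying $\ell_1$ there is precisely the hard content: Rezk analyzes the tower for $\Sigma^\infty_+\Omega^\infty$ and the $\theta$-algebra structure on the $K(1)$-local free object to produce the closed-form expression $\ell_1(x) = \tfrac{1}{p}\log(x^p/\psi^p x) = -\tfrac{1}{p}\log\bigl(1 + p\,\theta(x)/x^p\bigr)$, from which the stated formula in positive degrees follows because $1+x$ with $x \in \widetilde{R}^0(S^n)$ satisfies $x^2 = 0$, so both logarithms linearize. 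You do gesture at this (``the main content of Rezk's paper''), but as written the argument replaces the essential computation with an unjustified detection claim for $K_p$. Since the paper uses this theorem as a citation, the honest fix is either to cite Rezk outright or to carry out the free-object computation; the $K_p$ check by itself establishes nothing beyond consistency.
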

\begin{corollary}\label{corollary:formula} The effect of $\ell_1$ on $\pi_{2k}L_{K(1)}gl_1tmf(\Gamma) \otimes_{\mathbb{Z}_p} \mathbb{Q}_p \cong M_{p,k}(\Gamma, \mathbb{Q}_p)$ for $k\ge 1$ is given by
\[
g \mapsto g^{(p)}
\]
\end{corollary}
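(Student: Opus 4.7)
The proof is essentially a direct unpacking of the preceding theorem together with the recollection, made at the start of the Orientations subsection, that the map $1+x\mapsto x$ gives an isomorphism $\pi_q gl_1 R \cong \pi_q R$ for $q>0$ (although not as a map of spectra).

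First I would set up the identifications. The sheaf description of $L_{K(1)}tmf(\Gamma)$ as $\mathcal{O}^{\mathrm{top}}(\mathscr{M}_\Gamma^{\mathrm{ord}})$ gives, via the descent spectral sequence together with the vanishing of higher cohomology of $\omega^{\otimes k}$ after rationalization, a canonical isomorphism
\[
\pi_{2k}L_{K(1)}tmf(\Gamma)\otimes_{\mathbb{Z}_p}\mathbb{Q}_p \;\cong\; MF_{p,k}(\Gamma,\mathbb{Q}_p)
\]
for every $k$. Combining this with the isomorphism $\pi_{2k}gl_1 R \cong \pi_{2k}R$ valid for $2k>0$, we obtain the stated identification on the left-hand side, and under this identification an element $g\in MF_{p,k}(\Gamma,\mathbb{Q}_p)$ corresponds to the unit $1+g\in (\pi_{2k}L_{K(1)}tmf(\Gamma)\otimes\mathbb{Q}_p)^{\times}$.

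Next I would apply Rezk's formula (Theorem \ref{theorem:formula}) to $R = L_{K(1)}tmf(\Gamma)$. By that theorem, the $K(1)$-local equivalence $\ell_1$ sends $1+g$ to $(1-\tfrac{1}{p}\psi^p)g$ on positive-degree homotopy. By the very definition $g^{(p)}:=(1-\tfrac{1}{p}\psi^p)g$ given earlier in the paper, this is exactly $g^{(p)}$, so $\ell_1(g) = g^{(p)}$ as required.

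The only nontrivial point is that the $\psi^p$ appearing in Rezk's general formula (a power operation on the $K(1)$-local $E_\infty$-ring $L_{K(1)}tmf(\Gamma)$ lifting Frobenius) matches, under the identification above, the arithmetic $\psi^p$ on $p$-adic modular forms used to define $g^{(p)}$. This is the main thing to check, but it is already implicit in the preceding proposition, which identifies $(K_p)^\wedge_*L_{K(1)}tmf(\Gamma)$ as a $\theta$-algebra with $(K_p)_*\otimes_{\mathbb{Z}_p}V_\infty(\Gamma)$ whose Frobenius lift is exactly the arithmetic Atkin-style $\psi^p$ described by $\sum a_n q^n\mapsto \sum a_n q^{pn}$ on $q$-expansions. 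Restricting the identification to weight $k$ sections of $\omega^{\otimes k}$ embedded in $V_\infty(\Gamma)$ shows that the two operations agree, completing the proof.
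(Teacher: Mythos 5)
Your proof is correct and matches the paper's (implicit) argument: the corollary is stated without proof precisely because it is the immediate combination of Rezk's formula, the identification $1+x\mapsto x$ on positive homotopy of $gl_1$, and the definition $g^{(p)}=(1-\tfrac{1}{p}\psi^p)g$. Your added remark that the topological power operation $\psi^p$ agrees with the arithmetic Frobenius lift, via the $\theta$-algebra identification of $(K_p)^\wedge_*L_{K(1)}tmf(\Gamma)$ with $(K_p)_*\otimes_{\mathbb{Z}_p}V_\infty(\Gamma)$, is exactly the point the paper is relying on.
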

We will also need the following result
\begin{theorem}[Ando-Hopkins-Rezk] \label{theorem:diagram} If $c \in \mathbb{Z}_p^{\times}/\{\pm 1\}$ is a topological generator, we have a commutative diagram
\[
\xymatrix{L_{K(1)}S \ar[r]^{\rho(c)} & KO_p\ar[r]^{1-\psi^c} \ar[d]^{\cong}_{b_c}& KO_p\ar@{=}[d]\\
L_{K(1)}gl_1S \ar[r]\ar[u]^{\ell_1}_{\cong} & L_{K(1)}gl_1S/spin \ar[r] & KO_p}
\]
where $\rho(c)^{-1} = \frac{1}{2p}\log(c^{p-1})$. 
\end{theorem}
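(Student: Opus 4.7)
My plan is to realize the diagram as a compatibility between two well-understood fiber sequences. The top row is the Adams--Baker presentation of the $K(1)$-local sphere: for a topological generator $c$ of $\mathbb{Z}_p^\times/\{\pm 1\}$ there is a fiber sequence $L_{K(1)}S \to KO_p \xrightarrow{1-\psi^c} KO_p$. The bottom row is the $K(1)$-localization of the cofiber sequence $bspin \to gl_1 S \to gl_1 S/spin$. The vertical comparisons come from three sources: Rezk's logarithm $\ell_1: L_{K(1)}gl_1 S \xrightarrow{\simeq} L_{K(1)}S$ on the left (Theorem \ref{theorem:formula}); the identity on the right; and in the middle an equivalence $b_c: KO_p \xrightarrow{\simeq} L_{K(1)}(gl_1 S/spin)$ produced by the Adams conjecture, which asserts that the $K(1)$-local $J$-homomorphism $bspin \to gl_1 S$ splits off a copy of $KO_p$, making the cofiber canonically $KO_p$.

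With these three identifications in hand, producing the commutative diagram is a matter of comparing fiber sequences. I would define $\rho(c)$ as the composite
\[
L_{K(1)}S \xrightarrow{\ell_1^{-1}} L_{K(1)}gl_1 S \to L_{K(1)}gl_1 S/spin \xrightarrow{b_c^{-1}} KO_p,
\]
so that the left square commutes by construction, and then verify that the induced map on cofibers under $b_c$ matches $1-\psi^c$. Since both rows then present $L_{K(1)}S$ as the fiber of a common map, up to the action of the automorphism group of $KO_p$, this reduces to a uniqueness statement for fiber sequences with matching total space, fiber, and base.

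The substantive computation is to pin down the scalar $\rho(c)^{-1} = \frac{1}{2p}\log(c^{p-1})$. I would trace a generator of $\pi_0 L_{K(1)}S \cong \mathbb{Z}_p$ through the diagram. Rezk's formula (Theorem \ref{theorem:formula}) sends a unit $1+x$ to $(1-\frac{1}{p}\psi^p)x$, so $\ell_1$ contributes the factor $\frac{1}{p}$. The operation $\psi^c$ on $\pi_0 KO_p$ acts via the $p$-adic character $c^{p-1}$, obtained from $\psi^c v^{2k} = c^{2k}v^{2k}$ by passing to the $p$-adic limit of suitable even Bott powers, and its $p$-adic logarithm yields the factor $\log(c^{p-1})$. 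The remaining factor of $\frac{1}{2}$ reflects the discrepancy between $KO$- and $K$-theoretic Bott classes encoded in the formula $r_*v^{4k} = 2 v_\mathbb{R}^k$ of \S 1.1, which is what relates the $KO_p$-version of the Adams--Baker sequence to its $K_p$-version where Rezk's logarithm is most easily normalized.

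The main obstacle is keeping all normalizations aligned so that the scalar emerges as precisely $\frac{1}{2p}\log(c^{p-1})$ rather than as some other unit multiple; the Adams splitting is only canonical up to a choice of generator, and any slippage in conventions between $K$-theoretic and $KO$-theoretic presentations will be absorbed into this constant. At $p=2$ one must further restrict to the procyclic subgroup $1+4\mathbb{Z}_2 \subset \mathbb{Z}_2^\times$ for a topological generator to exist and descend along the quotient by $(\mathbb{Z}/4)^\times$, introducing additional factors of $2$ that must be tracked carefully. Once these bookkeeping issues are handled, the diagram commutes and the formula for $\rho(c)^{-1}$ follows from the explicit $\pi_0$-computation.
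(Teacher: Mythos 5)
Your overall strategy matches the paper's: define $\rho(c)$ as the composite $\ell_1^{-1}$ followed by the bottom map and $b_c^{-1}$, so the left square commutes by fiat, and obtain $b_c$ from the Adams conjecture; the paper likewise does not reprove the value of $\rho(c)^{-1}$ but cites \cite[7.15]{AHR}, which is where your heuristic $\pi_0$-computation would have to be made rigorous. However, there are two genuine gaps in how you produce $b_c$ and the right-hand square.

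First, the Adams conjecture is a statement about \emph{spaces}: it provides a nullhomotopy of the composite $BSpin^{\wedge}_p \xrightarrow{1-\psi^c} BSpin^{\wedge}_p \xrightarrow{J} (BGL_1S)^{\wedge}_p$, and this nullhomotopy is not given as an infinite-loop nullhomotopy, so it does not directly "split off a copy of $KO_p$ from the $K(1)$-local $J$-homomorphism" at the spectrum level. The paper bridges this by first building the space-level map of fiber sequences $(A_c, B_c, \mathrm{id})$ and then applying the Bousfield--Kuhn functor $\Phi_1$, using the factorization $L_{K(1)} \simeq \Phi_1 \circ \Omega^{\infty}$, to convert it into a diagram of $K(1)$-local spectra. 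Without this step your middle vertical map does not exist as a map of spectra. Second, your fallback of matching the two rows "up to the action of the automorphism group of $KO_p$" by uniqueness of fiber sequences is too weak for the purpose at hand: that automorphism group acts on $\pi_0$ through at least $\mathbb{Z}_p^{\times}$, and the entire content of the theorem is the precise unit $\rho(c)^{-1} = \frac{1}{2p}\log(c^{p-1})$, which an unspecified automorphism would destroy. In the paper the right square commutes by construction (it is the $\Phi_1$-image of the diagram produced by the nullhomotopy), and $b_c$ is an equivalence because the induced map $a_c$ on fibers is one --- this is exactly where the hypothesis that $c$ topologically generates $\mathbb{Z}_p^{\times}/\{\pm 1\}$ and the Adams--Mahowald computation of the $K(1)$-local sphere enter, rather than through an abstract uniqueness argument.
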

\begin{proof} First we construct the map $b_c$. Recall that the (proven) Adams Conjecture states that the composite
\[
\xymatrix{BO^{\wedge}_p \ar[r]^{1-\psi^c} & BO^{\wedge}_p \ar[r]^{J} & (BGL_1S)^{\wedge}_p}
\]
is nullhomotopic. This remains true upon taking connective covers, so that we have a nullhomotopy for the composite
\[
\xymatrix{BSpin^{\wedge}_p \ar[r]^{1-\psi^c} & BSpin^{\wedge}_p \ar[r]^J& (BGL_1S)^{\wedge}_p}
\]
This produces a homotopy commutative diagram
\[
\xymatrix{
\textup{fib}(1-\psi^c) \ar[r]\ar[d]^{A_c} & BSpin^{\wedge}_p \ar[r] \ar[d]^{B_c} & BSpin^{\wedge}_p \ar@{=}[d]\\
(GL_1S)^{\wedge}_p \ar[r] & (GL_1S/Spin)^{\wedge}_p \ar[r] & BSpin^{\wedge}_p \ar[r]^J & (BGL_1S)^{\wedge}_p
}
\]
On the other hand, for any $n$ we have a factorization
\[
\xymatrix{
\textup{Spectra} \ar[rr]^{L_{K(n)}}\ar[dr]_{\Omega^{\infty}} && \textup{Spectra}_{K(n)}\\
&\textup{Spaces}\ar[ur]_{\Phi_n}
}
\]
where $\Phi_n$ is the Bousfield-Kuhn functor. So, using $\Phi_1$, we get a homotopy commutative diagram
\[
\xymatrix{
L_{K(1)}\textup{fib}(1-\psi^c) \ar[r]\ar[d]^{a_c} & L_{K(1)}bspin \ar[r]^{1-\psi^c} \ar[d]^{b_c} & L_{K(1)}bspin \ar@{=}[d]\\
L_{K(1)}gl_1S \ar[r] & L_{K(1)}gl_1S/spin \ar[r] & L_{K(1)}bspin
}
\]
We have a canonical equivalence
\[
L_{K(1)}bspin \cong KO_p
\] 
so our diagram becomes
\[
\xymatrix{
j_c \ar[r]\ar[d]^{a_c} & KO_p \ar[r]^{1-\psi^c} \ar[d]^{b_c} & L_{K(1)}bspin \ar@{=}[d]\\
L_{K(1)}gl_1S \ar[r] & L_{K(1)}gl_1S/spin \ar[r] &KO_p
}
\]
Since $c$ is a generator, the computation of the $K(1)$-local sphere shows that the map $a_c$ is an equivalence, whence so is $b_c$. Now {\it define} the map $\rho(c): L_{K(1)}S \longrightarrow KO_p$ to make the following diagram commute
\[
\xymatrix{L_{K(1)}S \ar[r]^{\rho(c)} & KO_p\ar[r]^{1-\psi^c} \ar[d]^{\cong}_{b_c}& KO_p\ar@{=}[d]\\
L_{K(1)}gl_1S \ar[r]\ar[u]^{\ell_1}_{\cong} & L_{K(1)}gl_1S/spin \ar[r] & KO_p}
\]
The description of $\rho(c)$ as in the theorem statement can be found as \cite[7.15]{AHR}.  
\end{proof}
\begin{proof}[Proof of Theorem \ref{theorem:K(1)-local parameterization}] This is exactly as in \cite[14.6]{AHR}. For convenience, we recall the argument here.
We are interested in maps $\alpha$ making the following diagram commute up to homotopy
\[
\xymatrix{
gl_1S \ar[dr]\ar[r] & gl_1S/spin\ar@{-->}[d]^{\alpha}\\
& L_{K(1)}gl_1tmf(\Gamma)^{\wedge}_p
}
\]
(We are permitted to consider just homotopy classes here by the argument in \cite[14.3]{AHR}). Since the target is $K(1)$-local, we may use the previous theorem to replace this diagram with the following one
\[
\xymatrix{
L_{K(1)}S \ar[r]^{\rho(c)} \ar[dr]& KO_p \ar@{-->}[d]^{\ell_1\alpha b_c}\\
& L_{K(1)}tmf(\Gamma)^{\wedge}_p
}
\]
By Theorem \ref{theorem:edge map} a map $KO_p \longrightarrow L_{K(1)}tmf(\Gamma)^{\wedge}_p$ is determined by its rationalization so we must understand the composite
\[
\xymatrix{
& KO_p \ar@{-->}[d]^{\ell_1\alpha b_c}&\\
& L_{K(1)}tmf(\Gamma)^{\wedge}_p \ar[r] &L_{K(1)}tmf(\Gamma)^{\wedge}_p \otimes \mathbb{Q}
}
\]  
Since $gl_1S \otimes \mathbb{Q}$ is contractible, Theorem \ref{theorem:diagram} implies that we have a factorization
\[
\xymatrix{
& KO_p \ar@{-->}[d]^{\ell_1\alpha b_c}\ar[r]^{1-\psi^c}&KO_p\ar@{-->}[d]^{\ell_1\beta}\\
& L_{K(1)}tmf(\Gamma)^{\wedge}_p \ar[r] &L_{K(1)}tmf(\Gamma)^{\wedge}_p \otimes \mathbb{Q}
}
\]
for some $\beta: KO_p \longrightarrow L_{K(1)}gl_1tmf(\Gamma)^{\wedge}_p \otimes \mathbb{Q}$. Thus, if $t_k(\alpha)$ are the moments of the measure determined by $\ell_1\alpha b_c$, and $b_k(\alpha): = \beta_*\nu^{k} \in MF_{p,k}(\Gamma) \otimes \mathbb{Q}$, then the diagram and the formula (\ref{corollary:formula}) implies
\[
t_k(\alpha) = (1-c^k)b_k^{(p)}
\]
Thus, the existence of such a commutative square is determined by a sequence of $p$-adic modular forms $\{g_k\}_{k \ge 4}$ such that the sequence $\{(1-c^k)g^{(p)}\}$ extends to a measure $\mu_c$ on $\mathbb{Z}_p^{\times}/\{\pm 1\}$ valued in $V_{\infty}(\Gamma)$. This square, in turn, makes the diagram
\[
\xymatrix{
L_{K(1)}S \ar[r]^{\rho(c)}\ar[dr]& KO_p \ar@{-->}[d]^{\ell_1\alpha b_c}\ar[r]^{1-\psi^c}&KO_p\ar@{-->}[d]^{\ell_1\beta}\\
& L_{K(1)}tmf(\Gamma)^{\wedge}_p \ar[r] &L_{K(1)}tmf(\Gamma)^{\wedge}_p \otimes \mathbb{Q}
}
\]
commute if and only if the effect of $\ell_1\alpha b_c$ on $\pi_0$ is $\rho(c)^{-1}$. By \cite[7.15]{AHR} this happens if and only if $\int_{\mathbb{Z}_p^{\times}/\{\pm 1\}} 1 d\mu_c = \frac{1}{2p} \log(c^{p-1})$, which completes the proof. 
\end{proof}
\section{Gluing and the Atkin operator}
Crucial to the construction of these genera is an understanding of the bottom map in the fiber square
\[
\xymatrix{
L_{K(1)\vee K(2)}gl_1tmf(\Gamma)^{\wedge}_p \ar[r]\ar[d] & L_{K(2)}gl_1tmf(\Gamma)^{\wedge}_p\ar[d] \\
 L_{K(1)}gl_1tmf(\Gamma)^{\wedge}_p\ar[r]& L_{K(1)}L_{K(2)}gl_1tmf(\Gamma)^{\wedge}_p
}
\]
Using a $K(2)$-local version of Rezk's logarithm, and the $K(1)$-local version we've already met, we can replace this square with the following one
\[
\xymatrix{
L_{K(1)\vee K(2)}gl_1tmf(\Gamma)^{\wedge}_p \ar[r]\ar[d] & L_{K(2)}tmf(\Gamma)^{\wedge}_p\ar[d] \\
 L_{K(1)}tmf(\Gamma)^{\wedge}_p\ar[r]& L_{K(1)}L_{K(2)}tmf(\Gamma)^{\wedge}_p
}
\]
By naturality, the right hand vertical map is the usual one, but the bottom horizontal map is {\it not}. One of the key insights of Ando-Hopkins-Rezk is a computation of this map in terms of the Atkin operator on $p$-adic modular forms. In \S3.1 we review the necessary algebraic facts about this operator and its relationship to Hecke operators. In \S3.2 we construct a topological lift of the Atkin operator on $K(1)$-local topological modular forms and show that it fits in a fiber square as above. Finally, in \S3.3
we compute the connected components of $\textup{Nulls}(spin, L_{K(1)\vee K(2)}gl_1tmf(\Gamma)^{\wedge}_p)$, which is the penultimate step in the program to understanding $p$-complete orientations. 
\subsection{Hecke operators on modular forms}
Fix $\Gamma \in \{\Gamma_1(N), \Gamma_0(N)\}$, and a prime $p$ not dividing $N$. A $(\Gamma_1(N); p)$ level structure on a generalized elliptic curve $E_{/S}$ is a pair $(P, C)$ where $P$ is a $\Gamma_1(N)$-level structure and $C$ is a finite locally free $S$-subgroup scheme in $E^{\textup{sm}}$ that is cyclic of order $p$ subject to the condition that the effective Cartier divisor
\[
\sum_{j \in \mathbb{Z}/N\mathbb{Z}} (jP+C)
\]
meets all irreducible components of all geometric fibers. A $(\Gamma_0(N); p)$ level structure is a pair $(G, C)$ consisting of a cyclic subgroup $G \subset E^{\textup{sm}}$ of order $N$ such that, $fppf$ locally where $G$ admits a generator $P$, the pair $(P,C)$ is a $(\Gamma_1(N); p)$ level structure. These define stacks $\mathscr{M}_{(\Gamma_1(N);p)}$ and $\mathscr{M}_{(\Gamma_0(N);p)}$. On the smooth loci, there are two canonical maps $\pi_1^0, \pi_2^0: \mathscr{M}^0_{(\Gamma;p)} \longrightarrow \mathscr{M}^0_{(\Gamma;p)}$ given for $(\Gamma_1(N);p)$ by
\[
\pi_1^0(E; P, C) = (E,P), \quad \pi_2^0(E; P, C) = (E/C, P \textup{mod }C)
\]
and defined via flat descent for $(\Gamma_0(N);p)$. 
\begin{theorem}[Conrad] The correspondence $(\pi_1^0, \pi_2^0)$ uniquely extends to a finite, flat correspondence, the \emph{Hecke correspondence},
\[
\xymatrix{
&\mathscr{M}_{(\Gamma;p)}\ar[dr]^{\pi_2}\ar[dl]_{\pi_1}&\\
\mathscr{M}_{\Gamma}&&\mathscr{M}_{\Gamma}
}
\]
Likewise, the natural map on the smooth locus
\[
(\pi_2^0)^*\omega_{\Gamma} \longrightarrow \omega_{(\Gamma;p)}
\]
uniquely extends to a map on all of $\mathscr{M}_{(\Gamma;p)}$.
\end{theorem}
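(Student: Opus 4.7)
The plan is to establish this in four steps, following the strategy of Conrad's paper on arithmetic moduli of generalized elliptic curves. First, I would dispose of uniqueness by noting that the smooth locus $\mathscr{M}^0_{(\Gamma;p)}$ is scheme-theoretically dense in $\mathscr{M}_{(\Gamma;p)}$ (the cuspidal locus is a Cartier divisor in a flat, reduced Deligne--Mumford stack), and $\mathscr{M}_\Gamma$ is separated. Hence any two extensions of $\pi_2^0$ agree, and likewise any two extensions of the sheaf map between line bundles on a reduced stack agree once they agree on a dense open.

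The substantive content is existence of the extension of $\pi_2^0$ to all of $\mathscr{M}_{(\Gamma;p)}$. The map $\pi_1$ extends trivially by forgetting $C$, so the entire issue is the quotient construction $(E;P,C)\mapsto (E/C, P\!\!\mod C)$ across the cuspidal locus. Here I would work fpqc-locally on the base. Near a cusp, the universal curve is (after finite cover by $\mathbb{Z}[[q^{1/M}]]$ for appropriate $M$) a N\'eron $n$-gon degeneration of a Tate curve. The crucial input is the classification of order-$p$ finite locally free subgroup schemes $C\subset E^{\textup{sm}}$ in such a family. Since $p\nmid N$ and $p$ is prime to the number of components, such $C$ is either the canonical multiplicative $\mu_p\subset \mathbb{G}_m\subset E^{\textup{sm}}$ or is \'etale over a base change. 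In each case one writes down $E/C$ explicitly as another generalized elliptic curve (e.g.\ $\textup{Tate}(q)/\mu_p\cong \textup{Tate}(q^p)$, with the image of $P$ giving the new level structure) and verifies that the resulting object still satisfies the axioms of a $\Gamma$-structured generalized elliptic curve, by checking irreducible-component incidence for the associated effective Cartier divisor. These local constructions glue by uniqueness of the extension already established.

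Finiteness and flatness of $\pi_2$ then follow from a standard argument: $\mathscr{M}_{(\Gamma;p)}$ and $\mathscr{M}_\Gamma$ are both proper, flat, and Cohen--Macaulay of relative dimension $1$ over $\mathbb{Z}$ (via the moduli problem); on the generic fiber the map is finite flat of the expected degree $p+1$; and so $\pi_2$ is quasi-finite (by properness, finite) and, by the miracle flatness theorem applied to the finite morphism between equidimensional Cohen--Macaulay stacks, it is also flat. The main obstacle here is establishing the Cohen--Macaulay property of $\mathscr{M}_{(\Gamma;p)}$ along the cusps, which is where Conrad's careful normalization arguments are required.

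Finally, for the sheaf map, on the smooth locus the isogeny $\pi:E\to E/C$ induces a homomorphism $\pi^*\omega_{E/C}\to \omega_E$ on invariant differentials, equivalently a section of $\omega_{(\Gamma;p)}\otimes \pi_2^{0*}\omega_\Gamma^{-1}$. I would extend this section across the cuspidal divisor using the Tate-curve description: on $\mathbb{Z}[[q^{1/M}]]$, both sides acquire canonical trivializations via $dq/q$, and the natural map sends $dq/q$ to $dq/q$ (respectively $p\cdot dq/q$ or $dq/q$ depending on the type of $C$), in particular is a regular section, hence extends. Because the extension is unique (by the density argument) and regularity is local, this glues to a global map on $\mathscr{M}_{(\Gamma;p)}$. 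The hardest part throughout is the explicit combinatorial bookkeeping of how N\'eron polygons change under the quotient by $C$ in relation to $\Gamma$-structure axioms; everything else is formal once those local models are pinned down.
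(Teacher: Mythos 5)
The paper does not prove this statement: it is imported verbatim from Conrad's work on arithmetic moduli of generalized elliptic curves (\cite{Con}, \S4.4), so there is no internal proof to compare yours against. That said, your outline is a fair pr\'ecis of how Conrad's argument actually runs: uniqueness from schematic density of the smooth locus in a $\mathbb{Z}$-flat (hence reduced) stack together with separatedness of the target; existence of $\pi_2$ across the cusps by classifying the order-$p$ subgroups of degenerating Tate curves (multiplicative versus \'etale after a finite cover) and checking the axioms for the quotient datum; finite flatness from properness, fiberwise quasi-finiteness, and a depth/dimension argument; and the extension of the map on $\omega$ via the $dq/q$ trivializations, under which the two cases send the generator to $p\cdot dq/q$ and $dq/q$ respectively, in particular to a regular section.

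Two steps are stated loosely enough to be wrong as written. First, ``miracle flatness applied to a finite morphism between equidimensional Cohen--Macaulay stacks'' is not a theorem: the criterion requires the \emph{target} to be regular and only the source to be Cohen--Macaulay. Regularity of $\mathscr{M}_{\Gamma_1(N)}$ over $\mathbb{Z}$ is itself one of the main results of Katz--Mazur/Conrad, and the $\Gamma_0(N)$ case is handled by descent from $\Gamma_1(N)$ rather than directly. Second, quasi-finiteness of $\pi_2$ cannot be read off from the generic fiber; it must be verified on every geometric fiber, in particular at supersingular points in characteristic $p$, where $C$ may be the infinitesimal subgroup $\ker(F)$ and the fiber of the correspondence is a nonreduced finite scheme. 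Finally, the phrase ``verifies that the resulting object still satisfies the axioms'' conceals the actual crux of Conrad's paper: in residue characteristics dividing $Np$ the naive moduli problem of such triples fails to be flat and proper at the cusps, and the incidence condition on the divisor $\sum_j (jP+C)$ is precisely the repair built into the definition of a $(\Gamma;p)$-structure. One must check that this condition is inherited by $(E/C,\, P \bmod C)$ via the surjection of component groups $\pi_0(E)\twoheadrightarrow \pi_0(E/C)$, which is where the combinatorial bookkeeping you defer actually lives.
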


Completing at $p$ and restricting to the ordinary locus we get a correspondence
\[
\xymatrix{
& \mathscr{M}^{\textup{ord}}_{(\Gamma;p)} \ar[dl]_{\pi_1}\ar[dr]^{\pi_2}&\\
\mathscr{M}^{\textup{ord}}_{\Gamma}&&\mathscr{M}^{\textup{ord}}_{\Gamma}
}
\]
which induces a map $pT_p: M_{p,k}(\Gamma) \longrightarrow M_{p,k}(\Gamma)$ defined as the composite
\[
\xymatrix{
H^0(\mathscr{M}^{\textup{ord}}_{\Gamma}, \omega_{\Gamma}^{\otimes k})\ar[r]^{\pi_2^*} &H^0(\mathscr{M}^{\textup{ord}}_{(\Gamma;p)}, \pi_2^*\omega_{\Gamma}^{\otimes k})\ar[r]& H^0(\mathscr{M}^{\textup{ord}}_{(\Gamma;p)}, \omega^{\otimes k})=  H^0(\mathscr{M}^{\textup{ord}}_{\Gamma}, (\pi_1)_*\pi_1^*\omega^{\otimes k}) \ar[r] & H^0(\mathscr{M}^{\textup{ord}}_{\Gamma}, \omega^{\otimes k})
}
\]
The image of this map lies in $pM_{p,k}$, which is torsion free, so we have a well-defined operator
\[
T_p: M_{p,k}(\Gamma) \longrightarrow M_{p,k}(\Gamma)
\]
called the $p$th Hecke operator. As it happens, $\mathscr{M}^{\textup{ord}}_{(\Gamma;p)}$ is the disjoint union of two copies of $\mathscr{M}^{\textup{ord}}_{\Gamma}$. The correspondence then decomposes as a sum of two correspondences corresponding to the following finite, flat maps:
\[
\textup{Frob}: (E; P,C) \mapsto (E^{(p)}, \phi(P), \phi(C))
\]
\[
\langle p\rangle V: (E^{(p)}; p\phi(P), \phi(C)) \mapsto (E; P, C)
\]
Define the {\it Atkin operator} $U_p$ as $\frac{1}{p}\textup{tr}(\textup{Frob})$ on cohomology, and similarly define $\psi^p$ as the trace of the other map divided by $p$. We get
\[
pT_p = \textup{tr}(\textup{Frob}) + \psi^p
\]
on $M_{p,k}(\Gamma)$, which yields a $p$-adic lift of the Eichler-Shimura relation:
\[
T_p = U_p + \frac{1}{p}\psi^p
\] 

By pulling back along the map from $\mathscr{M}^{\textup{triv}}_{\Gamma}$, all of these operators are defined on $V_{\infty}(\Gamma)$ as well, and the same $p$-adic lift of the Eichler-Shimura relation holds. 

The effect on $q$-expansions of these operators is as follows for $f \in M_k(\Gamma_0(N))$ with $f(q) = \sum a_nq^n$:
\[
U_pf(q) = \sum a_{np} q^n, \quad \psi^pf(q) = p^k\sum a_nq^{np}
\]
and for level $\Gamma_1(N)$ there are similar formulae that depend on the nebentypus of the modular form. From these, or directly from the definition, one verifies that $U_p\psi^p = p^k$ for arbitrary level. 
\begin{remark} The reader may be confused that $\psi^p$ seems to be acting simultaneously like the Frobenius and a multiple of the Verschiebung. The mystery is solved by noting that there are two distinct notions of a level $p$ structure on an elliptic curve. One is a morphism $\mu_p \hookrightarrow E[p]$ (an Igusa level structure) and one is a morphism $\mathbb{Z}/p\mathbb{Z} \longrightarrow E[p]$ (a classical level $p$ structure). These are `transposes' of each other in the same way that the Frobenius and the Verschiebung are transposes of each other. See \cite[3.12]{Gro} and the remarks and references therein. 
\end{remark}
\begin{remark} In \cite{AHR}, for level 1 modular forms, the authors use the notation $V_p$ for what corresponds to our $\frac{1}{p^k}\psi^p$ in weight $k$. 
\end{remark}
\subsection{Lifting the Atkin operator}
In addition to Rezk's logarithm for $K(1)$-local $E_\infty$ rings, he also constructed a $K(n)$-local version specifically for forms of Morava $E$-theory. For our purposes, we require the following statement:
\begin{theorem}[Rezk] Let $k$ be a perfect field and $G$ a height $n$ formal group, denote by $E(k, G)$ the associated Morava $E$-theory. Then there is a $K(n)$-local equivalence
\[
\ell_n: gl_1E(k, G) \longrightarrow E(k, G)
\] 
Moreover, this is natural in the pair $(k, G)$. 

At height $2$, in the case $E(k, \widehat{C})$ for $C$ a supersingular elliptic curve over $k$, the effect on $\pi_{2k}$ is: 
\[
1+f \mapsto (1-T_p + p^{k-1})f
\]
\end{theorem}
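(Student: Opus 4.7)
The plan is to follow Rezk's construction of the logarithm, adapted to height~$2$. For existence and naturality, one applies the Bousfield-Kuhn functor $\Phi_n$, characterized by the identity $\Phi_n \circ \Omega^\infty \simeq L_{K(n)}$. Applying $\Phi_n$ to the canonical inclusion $GL_1 E(k,G) = \Omega^\infty gl_1 E(k,G) \hookrightarrow \Omega^\infty E(k,G)$ produces a map of $K(n)$-local spectra
\[
\ell_n: L_{K(n)}gl_1 E(k,G) \longrightarrow L_{K(n)}E(k,G) \simeq E(k,G),
\]
which is natural in $(k,G)$ by the naturality of $\Phi_n$. That $\ell_n$ is a $K(n)$-local equivalence is checked on $v_n$-periodic homotopy: since $\Phi_n$ only sees positive-degree homotopy information (where the inclusion $GL_1 E \hookrightarrow \Omega^\infty E$ is canonically identified via $1 + x \leftrightarrow x$), the map induces an isomorphism in positive degrees up to a correction expressible through power operations.

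For the height-$2$ formula, invoke Rezk's explicit description of $\ell_n$ as a polynomial in the power operations on $\pi_* E(k,G)$. At height $n$ these operations are indexed by finite flat subgroup schemes of the universal deformation of $G$. For $E = E(k,\widehat{C})$ with $C$ supersingular, the universal deformation of $\widehat{C}$ embeds into the formal completion of the universal deformation elliptic curve, so the relevant order-$p$ subgroup schemes of $\widehat{C}[p]$ correspond to the cyclic order-$p$ subgroups of the deformed elliptic curve. The sum of the associated power operations, on $\pi_{2k}$, identifies with $p T_p$ acting on $p$-adic modular forms of weight $k$, by the very definition of $T_p$ through the Hecke correspondence $(\pi_1, \pi_2)$ recalled in \S 3.1. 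Rezk's formula combines this with the identity and the Verschiebung contribution $\psi^p$ (which on weight-$k$ forms carries a factor of $p^k$), and the overall $1/p$ normalization inherent to the logarithm. After cancellation, one uses the Eichler-Shimura relation $pT_p = \textup{tr}(\textup{Frob}) + \psi^p$ from \S 3.1 to collapse the expression to $(1 - T_p + p^{k-1})f$ on $\pi_{2k}$.

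The main obstacle is the careful bookkeeping of signs, powers of $p$, and the weight-$k$ twist by $\omega^{\otimes k}$. Concretely, one must (i) identify the universal deformation of $\widehat{C}$ with the formal part of the universal deformation of $C$, so that power operations correspond to degree-$p$ isogenies; (ii) verify that the sum over order-$p$ subgroup schemes, after the appropriate normalization, recovers exactly the Hecke operator $T_p$ of \S 3.1 rather than some twist or scalar multiple; and (iii) extract the constant $p^{k-1}$ from the weight-$k$ action of the Verschiebung contribution, using compatibility of pullback and pushforward with $\omega^{\otimes k}$ under the Hecke correspondence. Each of these identifications is essentially routine modulo the conventions of \S 3.1, but the combinatorics of signs and powers of $p$ is delicate enough that a direct computation is the only safe route.
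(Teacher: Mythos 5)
The paper does not prove this statement; it is quoted as a black box from Rezk's work on the units of a ring spectrum, so what you are really doing is reconstructing Rezk's argument. Your first step --- applying the Bousfield--Kuhn functor $\Phi_n$ to the inclusion $GL_1E \hookrightarrow \Omega^{\infty}E$ --- is exactly his construction, and naturality in $(k,G)$ does follow from naturality of $\Phi_n$ together with the Goerss--Hopkins functoriality of $E(k,G)$. However, your justification that $\ell_n$ is a $K(n)$-local equivalence does not work as stated: ``induces an isomorphism in positive degrees up to a correction expressible through power operations'' is not an argument for invertibility, since the correction term ($-T_p+p^{k-1}$ here, $-\frac{1}{p}\psi^p$ at height $1$) is not topologically nilpotent and there is no way to invert $1-T_p+p^{k-1}$ degreewise from the formula alone. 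The actual reason is soft and separate from the formula: $GL_1E\subset \Omega^{\infty}E$ is an inclusion of a union of path components, $\Phi_n$ (for $n\ge 1$) depends only on the component of the basepoint, and translation by $1$ identifies the component of $1$ with that of $0$; hence $\Phi_n$ of the inclusion, composed with the translation identification $\Phi_n(\Omega^{\infty}E,1)\simeq L_{K(n)}E$, is an equivalence for formal reasons. All of the content then lies in computing what this composite does on homotopy.

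Second, your route to the height-$2$ formula through the Eichler--Shimura relation is the wrong tool in the wrong place. The decomposition $pT_p=\textup{tr}(\textup{Frob})+\psi^p$ of \S 3.1 reflects the fact that the level-$(\Gamma;p)$ correspondence splits into two components over the \emph{ordinary} locus (the canonical subgroup versus the $p$ \'etale ones); at a supersingular point there is no such splitting and no operator $\psi^p$, so there is no ``Verschiebung contribution'' to cancel against. What one actually does is specialize Rezk's general height-$n$ formula, whose terms are indexed by the subgroups of $\Lambda[p]=(\mathbb{Z}/p)^2$: the trivial subgroup contributes the identity, the $p+1$ subgroups of order $p$ contribute $\sum_{A}\psi_A$, and the full $p$-torsion contributes $[p]^*$. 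By Serre--Tate, subgroups of $\widehat{C}^{\textup{univ}}[p]$ are subgroups of the deformed elliptic curve (this part of your proposal is correct), so $\frac{1}{p}\sum_A\psi_A=T_p$ by the very definition of the Hecke correspondence, and $[p]^*$ acts on $\pi_{2k}\cong \omega^{\otimes k}$ of the deformation space by $p^{k}$; combined with Rezk's $\frac{1}{p}\log(\cdots)$ normalization and the square-zero observation $\log(1+f)=f$ for $f$ in positive degree, this yields $1-T_p+p^{k-1}$ directly. The Eichler--Shimura relation enters only later, in \S 3.2, where this height-$2$ formula is compared on the ordinary locus with the height-$1$ expression $(1-U_p)(1-\frac{1}{p}\psi^p)$.
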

We would like to apply this map to the $K(2)$-localization of $tmf(\Gamma)$. The relationship comes from the following
\begin{proposition}\label{proposition:supersingular} Let $\textup{SS}_p(\Gamma)$ denote the set of suspersingular elliptic curves at the prime $p$ with level $\Gamma$-structure. Then we have an equivalence
\[
L_{K(2)}tmf(\Gamma) \cong \prod_{(C,\alpha) \in \textup{SS}_p} E(k_C, \widehat{C})^{h\textup{Aut}(C,\alpha)}
\]
\end{proposition}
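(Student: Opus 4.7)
The plan is to adapt the standard level $1$ decomposition (as in \cite[\S7]{Beh}) to the level $\Gamma$ setting, combining three inputs: (i) $K(2)$-localization of sections of $\mathcal{O}^{\textup{top}}$ depends only on a formal neighborhood of the mod $p$ supersingular locus; (ii) the Serre-Tate theorem, which identifies deformations of a supersingular elliptic curve (with prime-to-$p$ level structure) with deformations of its formal group; and (iii) the Goerss-Hopkins-Miller theorem, which identifies the sections of $\mathcal{O}^{\textup{top}}$ on each Lubin-Tate deformation space with the associated Morava $E$-theory.

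First I would reduce to sheaf sections: since $K(2)$-localization is insensitive both to the cusps (which contribute only to height $\le 1$) and to the connective cover, one has $L_{K(2)}tmf(\Gamma) \simeq L_{K(2)}TMF(\Gamma) = L_{K(2)}\mathcal{O}^{\textup{top}}(\mathscr{M}^{\circ}_{\Gamma}[1/N])$, and the descent spectral sequence shows this is computed on a formal neighborhood of the supersingular locus. Because $p \nmid N$, the level structure is étale at $p$, so the supersingular locus of $\mathscr{M}^{\circ}_{\Gamma}[1/N]\otimes\mathbb{F}_p$ is a finite disjoint union of closed substacks $\textup{Spec}(k_C)/\textup{Aut}(C,\alpha)$, one for each isomorphism class $(C,\alpha) \in \textup{SS}_p(\Gamma)$. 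I would then fix a representative $(C,\alpha)$: étaleness of the level structure lets $\alpha$ lift uniquely along any deformation of $C$, so the formal deformation functor of $(C,\alpha)$ is canonically that of $C$, and by Serre-Tate this is in turn canonically the Lubin-Tate deformation functor of $\widehat{C}$. Applying Goerss-Hopkins-Miller yields an $\textup{Aut}(C,\alpha)$-equivariant equivalence between $\mathcal{O}^{\textup{top}}$ on this formal neighborhood and $E(k_C,\widehat{C})$, with $\textup{Aut}(C,\alpha)$ acting via the natural map to $\textup{Aut}(\widehat{C})$. Taking homotopy fixed points and then producting over the components of the supersingular locus assembles these local contributions into the stated decomposition.

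The main delicate step is the bookkeeping of automorphism groups and the identification of sections over the stacky supersingular locus with honest homotopy fixed points. One must check that $\mathcal{O}^{\textup{top}}$ applied to the quotient stack $\textup{Spf}(E_0)/\textup{Aut}(C,\alpha)$ genuinely yields $E(k_C,\widehat{C})^{h\textup{Aut}(C,\alpha)}$, and that the finiteness of $\textup{SS}_p(\Gamma)$ turns the derived global sections into a finite product rather than merely a limit. Since $p \nmid N$ keeps the entire picture étale at $p$, this bookkeeping parallels the level $1$ case essentially verbatim, but it is the step in which errors are most likely to creep in, and the one I would carry out with the greatest care.
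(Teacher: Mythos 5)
Your proposal is correct and follows essentially the same route as the paper: the paper's (very terse) proof likewise observes that on an \'etale affine chart near a supersingular point the $K(2)$-localization of $\mathcal{O}^{\textup{top}}$ is, by construction (i.e.\ via Serre--Tate and Goerss--Hopkins--Miller), the corresponding Morava $E$-theory, and then recovers the homotopy fixed points by \'etale descent along an affine cover of the supersingular locus. Your additional remarks on discarding the connective cover and the cusps, and on the finiteness of $\textup{SS}_p(\Gamma)$, are exactly the bookkeeping the paper leaves implicit.
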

\begin{proof} On an \'etale map from an affine, this is the definition of $L_{K(2)}\mathcal{O}^{\textup{top}}$ (there are no automorphisms left). So choose an affine cover, and then descent gives the desired homotopy fixed points. 
\end{proof}
We can tie these two together to get a logarithm for topological modular forms.
\begin{proposition} There is a $K(2)$-local equivalence
\[
gl_1tmf(\Gamma)^{\wedge}_p \longrightarrow L_{K(2)}tmf(\Gamma)^{\wedge}_p
\]
such that the following diagram commutes for $k\ge 1$,
\[
\xymatrix{
\pi_{2k}gl_1tmf(\Gamma)^{\wedge}_p \ar[r]\ar[d]& \pi_{2k}L_{K(2)}tmf(\Gamma)^{\wedge}_p\ar[d]\\
MF_k(\Gamma)^{\wedge}_p \ar[r]_{1-T_p+p^{k-1}} & MF_{p,k}(\Gamma)
}
\]
\end{proposition}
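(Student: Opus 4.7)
The plan is to assemble the desired logarithm from Rezk's $K(2)$-local logarithm $\ell_2$ on Morava $E$-theory, using Proposition \ref{proposition:supersingular} to decompose $L_{K(2)}tmf(\Gamma)^{\wedge}_p$ as a finite product of homotopy fixed-point spectra of Morava $E$-theories at supersingular elliptic curves with level structure. Applying $\ell_2$ on each factor $E(k_C,\widehat{C})$, and using that $\ell_2$ is natural in $(k,G)$ and therefore $\textup{Aut}(C,\alpha)$-equivariant, I obtain a $K(2)$-local equivalence on each homotopy fixed-point spectrum. Taking the product and commuting $gl_1$ with the product and with the homotopy fixed points gives a $K(2)$-local equivalence
\[
L_{K(2)}\, gl_1 tmf(\Gamma)^{\wedge}_p \xrightarrow{\;\simeq\;} L_{K(2)}tmf(\Gamma)^{\wedge}_p.
\]
Precomposing with the $K(2)$-localization map $gl_1 tmf(\Gamma)^{\wedge}_p \to L_{K(2)} gl_1 tmf(\Gamma)^{\wedge}_p$ produces the advertised map, which is a $K(2)$-local equivalence by construction.

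For the diagram, the plan is to check commutativity factor-by-factor. On each Morava $E$-theory factor $E(k_C,\widehat{C})^{h\textup{Aut}(C,\alpha)}$ Rezk's theorem records that $\ell_2$ sends $1+f$ to $(1-T_p+p^{k-1})f$, where $T_p$ here denotes the Ando power operation on $E(k_C,\widehat{C})$. I then identify the resulting per-factor operation, summed over the supersingular locus, with the classical Hecke operator $T_p$ on $p$-adic modular forms under the natural map $\pi_{2k}L_{K(2)}tmf(\Gamma)^{\wedge}_p \to MF_{p,k}(\Gamma)$. The comparison is moduli-theoretic: by the Serre-Tate theorem, because Morava $E$-theory is at height $2$ and $C$ is supersingular, finite flat subgroup schemes of order $p$ in the universal deformation of $\widehat{C}$ correspond bijectively to such subgroups in the universal deformation of the elliptic curve $C$; consequently, the Ando power operation classifies exactly the subgroups parameterized by the Hecke correspondence $\mathscr{M}_{(\Gamma;p)}\to \mathscr{M}_\Gamma$ of \S3.1, restricted to a formal neighborhood of the supersingular locus. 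Gluing over supersingular points and passing through the $K(1)$-localization, which identifies $\pi_{2k}L_{K(1)}L_{K(2)}tmf(\Gamma)^{\wedge}_p$ with a quotient of $V_\infty(\Gamma)^{\textup{ss}}$ into which $MF_{p,k}(\Gamma)$ embeds, recovers the classical Hecke operator.

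The main obstacle is precisely this identification of the factor-wise Ando power operation with the globally defined Hecke operator on $p$-adic modular forms; everything else reduces to naturality of $\ell_2$, Proposition \ref{proposition:supersingular}, and the $1+x\mapsto x$ identification of positive-degree homotopy groups on the $gl_1$ side. Once the power-operation comparison is established, chasing a class $1+f$ with $f\in MF_k(\Gamma)^{\wedge}_p$ around the diagram produces $(1-T_p+p^{k-1})f$ on both routes, and the proposition follows.
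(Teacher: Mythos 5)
Your construction matches the paper's: decompose $L_{K(2)}tmf(\Gamma)^{\wedge}_p$ via Proposition \ref{proposition:supersingular}, apply Rezk's natural (hence equivariant) $\ell_2$ factor-by-factor, and use that $gl_1$ commutes with the relevant homotopy limits. The one step you leave implicit is the identification $L_{K(2)}gl_1tmf(\Gamma)^{\wedge}_p \simeq L_{K(2)}gl_1L_{K(2)}tmf(\Gamma)^{\wedge}_p$ (Bousfield--Kuhn), which is what lets you pass from the $gl_1$ of the product decomposition of $L_{K(2)}tmf(\Gamma)^{\wedge}_p$ back to a $K(2)$-local equivalence out of $gl_1$ of the connective spectrum itself; the paper cites this explicitly and you should too, since without it your equivalence has source $gl_1L_{K(2)}tmf(\Gamma)^{\wedge}_p$ rather than the advertised one. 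Your extra discussion identifying the Ando power operation with the Hecke operator via Serre--Tate is more detail than the paper gives (it simply absorbs this into the statement of Rezk's theorem), and is a reasonable, correct elaboration.
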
  
\begin{proof} Recall that, by \cite{Bou2, Kuh}
, we have
\[
L_{K(n)}gl_1R \cong L_{K(n)}gl_1L_{K(n)}R
\]
so we may replace $tmf(\Gamma)^{\wedge}_p$ by its $K(2)$-localization in the source. By Goerss-Hopkins, there is an action of $\mathbb{G}_n$ on each $E(k, G)$ in the homotopy theory of $E_\infty$-rings. Applying $gl_1$ and the natural transformation $\ell_2: gl_1 \longrightarrow L_{K(2)}$ we get a map of diagrams. Now take the homotopy limits indicated in Proposition \ref{proposition:supersingular} and use that $gl_1$ commutes with homotopy limits, as it is a right adjoint.
\end{proof}

Using this equivalence we get the following diagram:
\[
\xymatrix{
L_{K(1)\vee K(2)}gl_1tmf(\Gamma) \ar[r] \ar[d] & L_{K(2)}gl_1tmf(\Gamma)\ar[r]^{\ell_2}_{\cong} \ar[d]&L_{K(2)}tmf(\Gamma)\ar[d]\\
L_{K(1)}gl_1tmf(\Gamma)\ar[d]^{\ell_1}_{\cong} \ar[r]&L_{K(1)}L_{K(2)}gl_1tmf(\Gamma) \ar[r]^{L_{K(1)}\ell_2}_{\cong}&L_{K(1)}L_{K(2)}tmf(\Gamma) \\
L_{K(1)}tmf(\Gamma)\ar@{-->}[urr]  &&
}
\]
The remainder of the section will be devoted to describing this dotted arrow as in the statement below. 
\begin{theorem}\label{theorem:atkin} There is a map $U_p: L_{K(1)}tmf(\Gamma) \longrightarrow L_{K(1)}tmf(\Gamma)$ making the following two diagrams commute
\[
\xymatrix{
\pi_{2k}L_{K(1)}tmf(\Gamma)^{\wedge}_p\ar[d]\ar[r]^{U_p} & \pi_{2k}L_{K(1)}tmf(\Gamma)^{\wedge}_p\ar[d]\\
MF_{p,k}(\Gamma) \ar[r]_{U_p} & MF_{p,k}(\Gamma)
}
\]
\[
\xymatrix{
L_{K(1)\vee K(2)}gl_1tmf(\Gamma)^{\wedge}_p \ar[rr]\ar[d] & & L_{K(2)}tmf(\Gamma)^{\wedge}_p\ar[d]\\
L_{K(1)}tmf(\Gamma)^{\wedge}_p \ar[r]_{1-U_p} & L_{K(1)}tmf(\Gamma)^{\wedge}_p \ar[r] & L_{K(1)}L_{K(2)}tmf(\Gamma)^{\wedge}_p
}
\]
\end{theorem}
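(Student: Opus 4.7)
The plan is to construct $U_p$ via the algebraic Atkin operator on $V_\infty(\Gamma)$ using Theorem \ref{theorem:edge map}, then verify the second diagram by expanding both composites and applying the Eichler-Shimura relation.

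\textbf{Construction of $U_p$ and the first diagram.} The algebraic Atkin operator of \S3.1 on $MF_{p,k}(\Gamma)$ extends canonically to a continuous endomorphism of $V_\infty(\Gamma)$ by pullback along the Igusa tower $\mathscr{M}^{\textup{triv}}_\Gamma \to \mathscr{M}^{\textup{ord}}_\Gamma$. This extension is $\mathbb{Z}_p^\times$-equivariant, since the Hecke correspondence is defined intrinsically on elliptic curves and commutes with a change of trivialization of the formal group. The third isomorphism of Theorem \ref{theorem:edge map} then produces a unique spectrum-level map $U_p: L_{K(1)}tmf(\Gamma) \to L_{K(1)}tmf(\Gamma)$ realizing this algebraic operator on homotopy, so the first diagram commutes by construction.

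\textbf{Identification of the gluing composite.} By the definition of the dotted arrow in the big diagram preceding the theorem, the commutativity of the second diagram is equivalent to the homotopy commutativity of a square with source $L_{K(1)}tmf(\Gamma)$ and target $L_{K(1)}L_{K(2)}tmf(\Gamma)$ whose two sides are
\[
(\textrm{A})\ \ L_{K(1)}tmf(\Gamma) \xrightarrow{\ell_1^{-1}} L_{K(1)}gl_1tmf(\Gamma) \to L_{K(1)}L_{K(2)}gl_1tmf(\Gamma) \xrightarrow{L_{K(1)}\ell_2} L_{K(1)}L_{K(2)}tmf(\Gamma),
\]
\[
(\textrm{B})\ \ L_{K(1)}tmf(\Gamma) \xrightarrow{1-U_p} L_{K(1)}tmf(\Gamma) \to L_{K(1)}L_{K(2)}tmf(\Gamma).
\]
By Theorem \ref{theorem:formula} and the logarithm formula stated just before Proposition \ref{proposition:supersingular}, the effect of (A) on $\pi_{2k}$ (rationally) is $(1-T_p+p^{k-1}) \circ (1-\tfrac{1}{p}\psi^p)^{-1}$, while the effect of (B) is $1-U_p$.

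\textbf{The algebraic identity.} These composites agree if and only if $(1-U_p)(1-\tfrac{1}{p}\psi^p) = 1 - T_p + p^{k-1}$ on $MF_{p,k}(\Gamma)$. Using the Eichler-Shimura relation $T_p = U_p + \tfrac{1}{p}\psi^p$ and the identity $U_p\psi^p = p^k$, both recorded in \S3.1, one has
\[
(1-U_p)(1-\tfrac{1}{p}\psi^p) = 1 - U_p - \tfrac{1}{p}\psi^p + \tfrac{1}{p}U_p\psi^p = 1 - T_p + p^{k-1},
\]
as required.

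\textbf{Lifting to a homotopy; main obstacle.} To promote agreement on $\pi_*\otimes\mathbb{Q}$ to a homotopy of spectra, I extend Theorem \ref{theorem:edge map} to an edge-map isomorphism $[L_{K(1)}tmf(\Gamma), L_{K(1)}L_{K(2)}tmf(\Gamma)] \cong \textup{Hom}^{\textup{cts}}_{\mathbb{Z}_p^\times}(V_\infty(\Gamma), V_\infty(\Gamma)^{\textup{ss}})$, using the same torsor and Adams-Novikov arguments of \S2.2-\S2.3 but now with target $V_\infty(\Gamma)^{\textup{ss}}$, which is again a pro-free Morava module. The main obstacle lies at $p=2$, where the descent spectral sequence does not collapse at $E_2$; this is handled by repeating the Tate-theory restriction comparison of \S2.2 with the cusp replaced by a punctured formal neighborhood of the supersingular locus, which still injects the relevant $E_2$-page and forces the permanent-cycle condition in bidegree $(0,0)$. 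Granted this extension, both (A) and (B) realize the same continuous equivariant homomorphism and are therefore homotopic as maps of spectra.
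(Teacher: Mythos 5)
Your proposal is correct and follows essentially the same route as the paper: construct $U_p$ from the equivariant algebraic Atkin operator via the edge-map isomorphism of Theorem \ref{theorem:edge map}, reduce the gluing square to rational homotopy using torsion-freeness of $[L_{K(1)}tmf(\Gamma), L_{K(1)}L_{K(2)}tmf(\Gamma)]$, and conclude with the $p$-adic Eichler--Shimura relation $T_p = U_p + \tfrac{1}{p}\psi^p$ together with $U_p\psi^p = p^k$. Your explicit verification that the edge-map isomorphism extends to the target $V_\infty(\Gamma)^{\textup{ss}}$ (including the $p=2$ case) spells out a step the paper leaves implicit in Corollary \ref{corollary:check rational}.
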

\begin{remark} This theorem is claimed in \cite{AHR}. See also \cite{Ba1}. 
\end{remark}
Before turning to the proof, we note an interesting and as-yet unexplained corollary, which is the existence of a logarithm for $p$-complete $tmf$ (though it is no longer an equivalence).
\begin{corollary} \label{corollary:tmf log}There is a fiber square
\[
\xymatrix{
L_{K(1)\vee K(2)}gl_1tmf(\Gamma)^{\wedge}_p \ar[r]^-{\ell_{tmf}}\ar[d] & Tmf(\Gamma)^{\wedge}_p\ar[d] \\
L_{K(1)}tmf(\Gamma)^{\wedge}_p \ar[r]_{1-U_p} & L_{K(1)}tmf(\Gamma)^{\wedge}_p
}
\]
\end{corollary}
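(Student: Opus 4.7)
The plan is to paste the fiber square of Theorem \ref{theorem:atkin} with the chromatic fracture square for $Tmf(\Gamma)^{\wedge}_p$. Let $j: L_{K(1)}tmf(\Gamma)^{\wedge}_p \to L_{K(1)}L_{K(2)}tmf(\Gamma)^{\wedge}_p$ denote the natural localization map. The factored form in Theorem \ref{theorem:atkin} already exhibits the bottom edge of its square as the composite $j \circ (1-U_p)$, so the square of Theorem \ref{theorem:atkin} is naturally the outer rectangle of a horizontally composed pair of squares, where the middle vertical object one inserts on top is $Tmf(\Gamma)^{\wedge}_p$ and the middle vertical map is $1-U_p$.

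First I would identify the right-hand square -- with top edge $Tmf(\Gamma)^{\wedge}_p \to L_{K(2)}tmf(\Gamma)^{\wedge}_p$, bottom edge $j$, left edge the natural $Tmf(\Gamma)^{\wedge}_p \to L_{K(1)}tmf(\Gamma)^{\wedge}_p$, and right edge the natural $L_{K(2)}tmf(\Gamma)^{\wedge}_p \to L_{K(1)}L_{K(2)}tmf(\Gamma)^{\wedge}_p$ -- as the chromatic fracture square for $Tmf(\Gamma)^{\wedge}_p$. Here one uses the standard identification $L_{K(i)}Tmf(\Gamma)^{\wedge}_p \simeq L_{K(i)}tmf(\Gamma)^{\wedge}_p$ for $i=1,2$, since $K(i)$-localization for $i \ge 1$ is insensitive to connective truncation, together with the fact that $Tmf(\Gamma)^{\wedge}_p$ is assembled as a limit of $E(2)$-local Landweber-exact spectra via the Goerss-Hopkins-Miller sheaf $\mathcal{O}^{\textup{top}}$, so the chromatic fracture square is a pullback.

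Next, the universal property of this pullback produces the desired logarithm: the maps out of $L_{K(1)\vee K(2)}gl_1tmf(\Gamma)^{\wedge}_p$ to $L_{K(2)}tmf(\Gamma)^{\wedge}_p$ (directly, from the top edge of Theorem \ref{theorem:atkin}) and to $L_{K(1)}tmf(\Gamma)^{\wedge}_p$ (precomposed with $1-U_p$, i.e., the middle vertical edge of the composite rectangle) are compatible after mapping to $L_{K(1)}L_{K(2)}tmf(\Gamma)^{\wedge}_p$, by the commutativity of the square in Theorem \ref{theorem:atkin}. This universal map is $\ell_{tmf}$. Then I would invoke the pullback pasting lemma: the right-hand square is a pullback (chromatic fracture) and the outer rectangle is a pullback (Theorem \ref{theorem:atkin}), so the left-hand square is also a pullback. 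This left-hand square is precisely the diagram in the corollary.

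The main obstacle is the first step: verifying that the chromatic fracture for $Tmf(\Gamma)^{\wedge}_p$ is indeed a pullback. Although standard for $E(2)$-local $p$-complete spectra constructed from chromatically well-behaved Landweber-exact pieces, some care is required at the small primes $p=2,3$, where the chromatic structure of topological modular forms is most subtle; one ultimately appeals to finite vanishing of the descent spectral sequence over $\mathscr{M}_\Gamma[1/N]$ together with the fact that each stalk of $\mathcal{O}^{\textup{top}}$ sits in the $(K(1) \vee K(2))$-local context after $p$-completion. The remaining steps are purely formal manipulations with pullback squares.
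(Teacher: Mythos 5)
Your proposal is correct and is essentially the paper's own argument: the paper proves the corollary by pasting the square of Theorem \ref{theorem:atkin} with the chromatic fracture square for $Tmf(\Gamma)^{\wedge}_p$, using the identification $L_{K(1)\vee K(2)}tmf(\Gamma)^{\wedge}_p \simeq Tmf(\Gamma)^{\wedge}_p$, and then applying the pullback pasting lemma. The only simplification available to you is that the fracture square $L_{K(1)\vee K(2)}X \to L_{K(1)}X \times_{L_{K(1)}L_{K(2)}X} L_{K(2)}X$ is a pullback for an arbitrary spectrum $X$ (as the paper records in \S4.1), so no prime-by-prime analysis of the descent spectral sequence is needed.
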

\begin{proof} This follows from pasting homotopy pullback squares together with the fact that $L_{K(1)\vee K(2)}tmf(\Gamma)^{\wedge}_p = Tmf(\Gamma)^{\wedge}_p$. 
\end{proof}
First we construct the map.
\begin{lemma} There is a unique map
\[
U_p: L_{K(1)}tmf(\Gamma) \longrightarrow L_{K(1)}tmf(\Gamma)
\]
whose effect upon applying $(K_p)^{\wedge}_*$ is the Atkin operator for generalized $p$-adic modular forms.
\end{lemma}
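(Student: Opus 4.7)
The strategy is to use the edge-map isomorphism
\[
[L_{K(1)}tmf(\Gamma), L_{K(1)}tmf(\Gamma)] \xrightarrow{\;\cong\;} \textup{Hom}^{\textup{cts}}_{\mathbb{Z}_p^{\times}}(V_{\infty}(\Gamma), V_{\infty}(\Gamma))
\]
from Theorem \ref{theorem:edge map}. Under this isomorphism, the construction (and uniqueness) of the desired map $U_p$ on spectra is reduced to the construction (and uniqueness) of a continuous, $\mathbb{Z}_p^{\times}$-equivariant ring endomorphism of $V_{\infty}(\Gamma)$ whose effect on generalized $p$-adic modular forms is the classical Atkin operator. Uniqueness is then automatic from the injectivity half of the edge-map isomorphism.

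For existence, I would take as the candidate the algebraic Atkin operator already defined on $V_\infty(\Gamma)$ in \S3.1: it was obtained as $\tfrac{1}{p}\textup{tr}(\textup{Frob})$ applied to the Hecke correspondence on $\mathscr{M}^{\textup{ord}}_{(\Gamma;p)}$, then pulled back along the cover $\mathscr{M}^{\textup{triv}}_{\Gamma}\to \mathscr{M}^{\textup{ord}}_{\Gamma}$. This operator is manifestly continuous (it arises from a finite flat map of formal schemes), and the needed verification is that it is equivariant for the $\mathbb{Z}_p^{\times}$-action coming from changing the trivialization $\eta\colon \widehat{\mathbb{G}}_m\xrightarrow{\cong}\widehat{E}$ of the formal group. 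This equivariance is a structural feature: the action on $\mathscr{M}^{\textup{triv}}_{\Gamma}$ only affects the trivialization datum $\eta$, whereas the Frobenius correspondence is defined purely in terms of $(E,\alpha,C)$ and functorially propagates $\eta$ via the quotient isogeny, so the two commute.

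The main (mild) obstacle is exactly checking this equivariance cleanly; the cleanest way I would write it is to rewrite both operators in terms of the action of $\mathbb{Z}_p^{\times}$ on the $p$-divisible group $\widehat{E}[p^\infty]$ and on the quotient isogeny $E\to E/C$, noting that the $\mathbb{Z}_p^{\times}$-action lifts through the Frobenius lift because $C$ is an \'etale subgroup in the ordinary locus and hence stable under automorphisms of the trivialization. An alternative, purely $q$-expansion-theoretic check is also possible: after base change to $\textup{Spf}(V_\infty(\Gamma))$ the operator $U_p$ has the explicit formula $\sum a_n q^n \mapsto \sum a_{np} q^n$, and the weight-$k$ eigenspaces for $\mathbb{Z}_p^{\times}$ (which correspond to holomorphic $p$-adic modular forms of weight $k$) are preserved by $U_p$; together with density of the direct sum of weight spaces in $V_\infty(\Gamma)$, this implies equivariance.

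Having exhibited such a continuous equivariant endomorphism, the edge map produces a unique homotopy class $U_p\colon L_{K(1)}tmf(\Gamma)\to L_{K(1)}tmf(\Gamma)$, and uniqueness of the lift is immediate. The stated property, namely that $(K_p)^{\wedge}_*U_p$ is the Atkin operator, is built into the construction via the identification $(K_p)^{\wedge}_*L_{K(1)}tmf(\Gamma)\cong (K_p)_*\otimes_{\mathbb{Z}_p}V_\infty(\Gamma)$, which completes the proof of the lemma.
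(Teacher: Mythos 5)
Your proof is correct and is essentially the paper's own argument: the paper deduces the lemma directly from the edge-map isomorphism $[L_{K(1)}tmf(\Gamma), L_{K(1)}tmf(\Gamma)] \cong \textup{Hom}^{\textup{cts}}_{\mathbb{Z}_p^{\times}}(V_{\infty}(\Gamma), V_{\infty}(\Gamma))$ together with the $\mathbb{Z}_p^{\times}$-equivariance of the algebraic Atkin operator, exactly as you do. The only difference is that you spell out the equivariance check, which the paper treats as immediate.
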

\begin{proof} This is immediate from Theorem \ref{theorem:edge map} and the fact that $U_p$ is $\mathbb{Z}_p^{\times}$ equivariant. 
\end{proof}
\begin{lemma} We have a commutative diagram
\[
\xymatrix{
\pi_{2k}L_{K(1)}tmf(\Gamma)^{\wedge}_p\ar[d]\ar[r]^{U_p} & \pi_{2k}L_{K(1)}tmf(\Gamma)^{\wedge}_p\ar[d]\\
MF_{p,k}(\Gamma) \ar[r]_{U_p} & MF_{p,k}(\Gamma)
}
\]
\end{lemma}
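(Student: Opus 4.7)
The strategy is to leverage the fact that $U_p$ on $L_{K(1)}tmf(\Gamma)$ was \emph{defined} by the preceding lemma through its effect on $(K_p)^{\wedge}_*$-homology, and then reduce the diagram's commutativity to a purely algebraic compatibility between the Atkin operator on $V_\infty(\Gamma)$ and its classical counterpart on $MF_{p,k}(\Gamma)$.

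First I would factor the identification $\pi_{2k}L_{K(1)}tmf(\Gamma)^{\wedge}_p \cong MF_{p,k}(\Gamma)$ through the Hurewicz-type map to $(K_p)$-homology. Concretely, the descent spectral sequence identifies $\pi_{2k}L_{K(1)}tmf(\Gamma)^{\wedge}_p$ with the continuous $\mathbb{Z}_p^{\times}$-invariants of $(K_p)^{\wedge}_{2k}L_{K(1)}tmf(\Gamma)^{\wedge}_p \cong v^kV_\infty(\Gamma)$, i.e.\ the weight-$k$ eigenspace, and under the inclusion $MF_{p,k}(\Gamma)\hookrightarrow V_\infty(\Gamma)$ obtained by pulling back sections of $\omega^{\otimes k}$ along the Igusa tower $\mathscr{M}^{\textup{triv}}_\Gamma \to \mathscr{M}^{\textup{ord}}_\Gamma$ and trivializing $\omega$, this identification agrees with the classical $MF_{p,k}(\Gamma)$ described in \S2.1. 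This embedding is natural in self-maps of $L_{K(1)}tmf(\Gamma)^{\wedge}_p$, so the top row of the desired square factors through the algebraic $(K_p)$-homology level.

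Next I would invoke the defining property of the topological $U_p$ from the previous lemma: its effect on $(K_p)^{\wedge}_*L_{K(1)}tmf(\Gamma)^{\wedge}_p = V_\infty(\Gamma)[v^{\pm 1}]$ is the algebraic Atkin operator on generalized $p$-adic modular forms. Since this operator is $\mathbb{Z}_p^{\times}$-equivariant and preserves weights, it restricts to an operator on the weight-$k$ subspace, namely on $MF_{p,k}(\Gamma)$. So the top map of the square, translated through the Hurewicz map and restricted to the $\mathbb{Z}_p^{\times}$-invariants, is the restriction of the algebraic $V_\infty(\Gamma)$-level $U_p$ to $MF_{p,k}(\Gamma)$.

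The remaining step, which I expect to be the main content, is the algebraic comparison: the Atkin operator on $V_\infty(\Gamma)$, when restricted along the inclusion $MF_{p,k}(\Gamma) \hookrightarrow V_\infty(\Gamma)$, coincides with the classical weight-$k$ Atkin operator on $MF_{p,k}(\Gamma)$. Both are defined as $\tfrac{1}{p}\textup{tr}(\textup{Frob})$ for the Frobenius component of the Hecke correspondence $\mathscr{M}^{\textup{ord}}_{(\Gamma;p)}$, the latter on $\mathscr{M}^{\textup{ord}}_\Gamma$ and the former on its pullback to $\mathscr{M}^{\textup{triv}}_\Gamma$. Because the Igusa tower is (pro-)finite flat over $\mathscr{M}^{\textup{ord}}_\Gamma$ and the Hecke correspondence is finite flat, trace commutes with flat base change, so the two operators are compatible under pullback. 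The claim follows.

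The main obstacle is really just bookkeeping: one must verify that the Hurewicz map $\pi_{2k}L_{K(1)}tmf(\Gamma)^{\wedge}_p \to (K_p)^{\wedge}_{2k}L_{K(1)}tmf(\Gamma)^{\wedge}_p$ is injective with image the weight-$k$ piece, and that it sends the descent-theoretic identification with $MF_{p,k}(\Gamma)$ to the inclusion described in \S2.1. Both assertions are consequences of the descent spectral sequence for $\mathcal{O}^{\textup{top}}$ over the Igusa tower (cf.\ the computation of $(K_p)^{\wedge}_*L_{K(1)}tmf(\Gamma)$ in Proposition 2.2) together with the earlier proof that the relevant spectral sequence collapses.
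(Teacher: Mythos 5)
Your proposal is correct and follows essentially the same route as the paper, which disposes of the lemma in one line: naturality of the edge homomorphism in the $K(1)$-local Adams--Novikov (descent) spectral sequence, given that the topological $U_p$ was defined precisely by its effect on $(K_p)^{\wedge}_*$. One small remark: the injectivity of the edge map onto the weight-$k$ invariants, which you flag as the ``main obstacle,'' is not actually needed for commutativity of the square (and is delicate at $p=2$, where the spectral sequence does not collapse) --- naturality of the edge map alone suffices, and the compatibility of the Atkin operator on $V_\infty(\Gamma)$ with the classical one on $MF_{p,k}(\Gamma)$ that you verify in your third paragraph is built into the paper's definition of these operators by pullback along the Igusa tower in \S3.1.
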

\begin{proof} This follows from naturality of the edge homomorphism in the Adams-Novikov spectral sequence
\end{proof}
This proves half of Theorem \ref{theorem:atkin}. For the other half, we first need a corollary of Theorem \ref{theorem:edge map}. 

\begin{corollary}\label{corollary:check rational} Evaluation on $\pi_*$ gives an injection
\[
[L_{K(1)}tmf(\Gamma)^{\wedge}_p, L_{K(1)}L_{K(2)}tmf(\Gamma)^{\wedge}_p] \longrightarrow \textup{Hom}(\pi_*L_{K(1)}tmf(\Gamma)^{\wedge}_p \otimes \mathbb{Q}, \pi_*L_{K(1)}L_{K(2)}tmf(\Gamma)^{\wedge}_p \otimes \mathbb{Q})
\]
\end{corollary}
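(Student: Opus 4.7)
The plan is to chain together a sequence of injections. Set $L := L_{K(1)}tmf(\Gamma)^{\wedge}_p$ and $L' := L_{K(1)}L_{K(2)}tmf(\Gamma)^{\wedge}_p$. First I would upgrade the third statement of Theorem \ref{theorem:edge map} to the mixed target, producing an injection
\[
[L, L']\hookrightarrow \textup{Hom}^{\textup{cts}}_{\mathbb{Z}_p^{\times}}(V_\infty(\Gamma), V_\infty(\Gamma)^{\textup{ss}}).
\]
Next, using $\mathbb{Z}_p$-flatness of $V_\infty(\Gamma)^{\textup{ss}}$, I would further inject this into $\textup{Hom}^{\textup{cts}}_{\mathbb{Z}_p^{\times}}(V_\infty(\Gamma), V_\infty(\Gamma)^{\textup{ss}}\otimes \mathbb{Q})$. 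Finally I would detect such rational equivariant maps by their restrictions to the weight eigenspaces, which are precisely the rational homotopy groups under the (rationally) collapsing descent spectral sequence.

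For the first step, the argument parallels the one already given for $[L, L]$. When $p$ is odd, Proposition \ref{proposition:oddvanishing} applies verbatim once one observes that $(K_p)^{\wedge}_*L' \cong (K_p)_*\otimes V_\infty(\Gamma)^{\textup{ss}}$ is again (a summand of) a $\mathbb{Z}_p^{\times}$-torsor, so the higher $\widehat{\textup{Ext}}$-groups vanish and the $K(1)$-local Adams--Novikov spectral sequence collapses along the zero line. When $p=2$, the same case analysis arises: if $-1$ acts non-trivially on $V_\infty(\Gamma)$ then the completed wedge decomposition $L\simeq \bigvee KO_2$ together with the second edge map of Theorem \ref{theorem:edge map} produces the desired injection; if $-1$ acts trivially, one runs the Tate-restriction argument to $KO_2(\!(q^{1/N})\!)$ familiar from the analogous proof for $[L, L]$. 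The second step is immediate from flatness. For the third step, equivariance of $\phi\otimes\mathbb{Q}$ forces preservation of the $\lambda\mapsto \lambda^k$ eigenspaces, so $\phi$ is determined by its restrictions on the weight-$k$ pieces $MF_{p,k}(\Gamma)\otimes \mathbb{Q}$; the resulting map into the weight-$k$ part of $V_\infty(\Gamma)^{\textup{ss}}\otimes\mathbb{Q}$ is then identified with the induced map on $\pi_{2k}L\otimes \mathbb{Q}\to \pi_{2k}L'\otimes \mathbb{Q}$.

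The main obstacle I anticipate is the density input in the final step: one needs that the continuous $\mathbb{Q}_p$-linear span of the weight-homogeneous $p$-adic modular forms is dense in $V_\infty(\Gamma)\otimes \mathbb{Q}$, so that any continuous equivariant $\phi$ vanishing on all rational weight pieces must vanish. This should follow from the description of $V_\infty(\Gamma)$ as the coordinate ring of the $\mathbb{Z}_p^{\times}$-torsor $\mathscr{M}^{\textup{triv}}_{\Gamma}$ over the ordinary locus, where a continuous weight decomposition becomes visible after inverting $p$ via the standard continuous representation theory of $\mathbb{Z}_p^{\times}$ acting on torsor coordinate rings. A secondary bookkeeping concern is verifying that the edge-map extensions in Step 1 genuinely behave well with respect to the $KO_2$-module structure used in the $p=2$, nontrivial-action case, but this is routine given the arguments already in place for the target $L_{K(1)}tmf(\Gamma)$.
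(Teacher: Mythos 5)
Your proposal is correct and follows essentially the same route as the paper, which compresses the whole argument into the one-line observation that, by the edge-map isomorphisms of Theorem \ref{theorem:edge map} (extended to the mixed source/target exactly as you describe), the group $[L,L']$ is identified with a torsion-free $\textup{Hom}$-group of pro-free $\mathbb{Z}_p$-modules and is therefore detected rationally on the weight eigenspaces, i.e.\ on $\pi_*\otimes\mathbb{Q}$. The density input you flag in your final step (that the span of the weight-homogeneous pieces is dense in $V_\infty(\Gamma)\otimes\mathbb{Q}$) is a genuine but standard fact from Katz's theory of generalized $p$-adic modular functions, and the paper leaves it implicit as well.
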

\begin{proof} By (\ref{theorem:edge map}), the left hand side is torsion-free.
\end{proof}
\begin{proposition} The diagram 
\[
\xymatrix{
L_{K(1)\vee K(2)}gl_1tmf(\Gamma)^{\wedge}_p \ar[rr]\ar[d] & & L_{K(2)}tmf(\Gamma)^{\wedge}_p\ar[d]\\
L_{K(1)}tmf(\Gamma)^{\wedge}_p \ar[r]_{1-U_p} & L_{K(1)}tmf(\Gamma)^{\wedge}_p \ar[r] & L_{K(1)}L_{K(2)}tmf(\Gamma)^{\wedge}_p
}
\]
commutes.
\end{proposition}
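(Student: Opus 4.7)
The plan is to invoke Corollary \ref{corollary:check rational} to reduce the commutativity of the square to an identity of rational homotopy group maps, and then verify that identity using the explicit formulas for the logarithms $\ell_1$ and $\ell_2$ together with the Eichler--Shimura relation.

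First I would observe that the two composites from $L_{K(1)\vee K(2)}gl_1tmf(\Gamma)^{\wedge}_p$ to $L_{K(1)}L_{K(2)}tmf(\Gamma)^{\wedge}_p$ both factor through $L_{K(1)}tmf(\Gamma)^{\wedge}_p$ (the bottom row factors this way by construction, and the top-right composite does too after using $L_{K(1)}\ell_2$ and the natural $L_{K(1)}$ of the map $L_{K(2)}tmf(\Gamma)^{\wedge}_p\to L_{K(1)}L_{K(2)}tmf(\Gamma)^{\wedge}_p$). So it suffices to compare two maps $L_{K(1)}tmf(\Gamma)^{\wedge}_p\to L_{K(1)}L_{K(2)}tmf(\Gamma)^{\wedge}_p$, namely the composite $L_{K(1)}\ell_2\circ\ell_1^{-1}$ and the composite $\iota\circ(1-U_p)$, where $\iota$ is the natural completion map. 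By Corollary \ref{corollary:check rational} this identity may be checked rationally on homotopy groups.

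Next I would identify each map in weight $2k$ under the isomorphism $\pi_{2k}L_{K(1)}tmf(\Gamma)^{\wedge}_p\otimes\mathbb{Q}\cong MF_{p,k}(\Gamma)\otimes\mathbb{Q}$. Using Theorem \ref{theorem:formula} (and Corollary \ref{corollary:formula}) for $\ell_1$ and the Ando--Hopkins--Rezk formula $1+f\mapsto (1-T_p+p^{k-1})f$ for $\ell_2$, the outer composite going through $L_{K(2)}tmf(\Gamma)^{\wedge}_p$ becomes multiplication by $1-T_p+p^{k-1}$, while the lower composite becomes $(1-U_p)\bigl(1-\tfrac{1}{p}\psi^p\bigr)$. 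Thus the proposition reduces to the identity
\[
(1-U_p)\!\left(1-\tfrac{1}{p}\psi^p\right)=1-T_p+p^{k-1}
\]
as operators on $MF_{p,k}(\Gamma)\otimes\mathbb{Q}$.

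This algebraic identity follows by expanding the left-hand side and applying the Eichler--Shimura relation $T_p=U_p+\tfrac{1}{p}\psi^p$ together with the identity $U_p\psi^p=p^k$ recorded in \S3.1: indeed,
\[
(1-U_p)\!\left(1-\tfrac{1}{p}\psi^p\right)=1-\tfrac{1}{p}\psi^p-U_p+\tfrac{1}{p}U_p\psi^p=1-\bigl(U_p+\tfrac{1}{p}\psi^p\bigr)+p^{k-1}=1-T_p+p^{k-1}.
\]
The main obstacle I anticipate is bookkeeping of the comparison maps: one needs to be careful that the equivalence $L_{K(1)}\ell_2$ is genuinely the $K(1)$-localization of $\ell_2$ so that the formulae for $\ell_1$ and $L_{K(1)}\ell_2$ truly refer to the same rationalization map, and that the Eichler--Shimura identity is applied on $V_\infty(\Gamma)$ rather than only on classical modular forms (which is already addressed at the end of \S3.1). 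Once these naturality points are in place, the calculation above completes the proof.
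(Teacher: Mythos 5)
Your proposal is correct and follows essentially the same route as the paper: reduce to the square relating $\ell_1$ and $L_{K(1)}\ell_2$, invoke Corollary \ref{corollary:check rational} to check the identity on rational homotopy, and verify $(1-U_p)\bigl(1-\tfrac{1}{p}\psi^p\bigr)=1-T_p+p^{k-1}$ via the $p$-adic Eichler--Shimura relation and $U_p\psi^p=p^k$. You simply carry out explicitly (and in the correct operator order) the modular-forms computation that the paper leaves implicit.
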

\begin{proof} It suffices to check commutativity of the following diagram
\[
\xymatrix{
L_{K(1)}gl_1tmf(\Gamma) \ar[r]\ar[d]^{\ell_1} & L_{K(1)}L_{K(2)}gl_1tmf(\Gamma) \ar[d]^{L_{K(1)}\ell_2}\\
L_{K(1)}tmf(\Gamma) \ar[r]^-{1-U_p} & L_{K(1)}L_{K(2)}tmf(\Gamma)
}
\]
By Corollary \ref{corollary:check rational} we need only check that the diagram
\[
\xymatrix{
\pi_*L_{K(1)}gl_1tmf(\Gamma)\otimes \mathbb{Q} \ar[r]\ar[d]^{\ell_1} & L_{K(1)}L_{K(2)}gl_1tmf(\Gamma)\otimes \mathbb{Q} \ar[d]^{L_{K(1)}\ell_2}\\
\pi_*L_{K(1)}tmf(\Gamma)\otimes\mathbb{Q} \ar[r]^-{1-U_p} & \pi_*L_{K(1)}L_{K(2)}tmf(\Gamma)\otimes \mathbb{Q}
}
\]
but now this is a statement about modular forms, which follows from Rezk's formulae for the logarithms and the $p$-adic lift of the Eichler-Shimura relation. 
\end{proof}
\subsection{Spin orientations in the unscrewable case}
We are now ready to solve the extension problem
\[
\xymatrix{
spin \ar[r] & gl_1S \ar[r]\ar[d] & gl_1S/spin \ar@{-->}[dl]\\
& L_{K(1)\vee K(2)}gl_1tmf(\Gamma)^{\wedge}_p
}
\]
\begin{theorem}\label{theorem:unscrewable orientations} If $p\ne 2$, the map
\[
\pi_0\textup{Nulls}(spin, L_{K(1)\vee K(2)}gl_1tmf(\Gamma)^{\wedge}_p) \longrightarrow \pi_0\textup{Nulls}(spin, L_{K(1)}gl_1tmf(\Gamma)^{\wedge}_p)
\]
is an isomorphism onto the set of sequences $\{g_k\}_{k\ge 2}$ as in Theorem \ref{theorem:K(1)-local parameterization} satisfying the additional condition that $U_pg_k^{(p)} = g_k^{(p)}$. If $p=2$, the map
\[
\textup{Char}(Spin, L_{K(1)\vee K(2)}gl_1tmf(\Gamma)) \longrightarrow \textup{Char}(Spin, L_{K(1)}tmf(\Gamma))
\]
(cf. Definition \ref{definition:char}) is an isomorphism onto the same set of sequences. 
\end{theorem}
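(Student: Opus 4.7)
The plan is to apply the homotopy-limit-preserving functor $\textup{Nulls}(spin, -)$ to the chromatic fracture square of Theorem~\ref{theorem:atkin} and to analyze the resulting Mayer--Vietoris sequence. Since all four corners are naturally under $gl_1S$ and the square commutes, we obtain a fiber square of spaces and a fiber sequence
\[
\textup{Nulls}(spin, L_{K(1)\vee K(2)}gl_1 tmf(\Gamma)^{\wedge}_p) \longrightarrow \textup{Nulls}(spin, L_{K(1)}tmf(\Gamma)^{\wedge}_p) \times \textup{Nulls}(spin, L_{K(2)}tmf(\Gamma)^{\wedge}_p) \longrightarrow \textup{Nulls}(spin, L_{K(1)}L_{K(2)}tmf(\Gamma)^{\wedge}_p),
\]
so on $\pi_0$ a $K(1)\vee K(2)$-local orientation corresponds to a pair $(\alpha_1,\alpha_2)$ whose images in the bottom-right agree.

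Next I would translate both paths into modular-form coordinates. By Theorem~\ref{theorem:K(1)-local parameterization} and the explicit formula for Rezk's $K(1)$-local logarithm (Corollary~\ref{corollary:formula}), the image of $\alpha_1 \leftrightarrow \{g_k\}$ along the bottom (apply $\ell_1$, then $1-U_p$, then include) is the $V_\infty(\Gamma)^{\textup{ss}}$-valued measure represented by $\{(1-U_p)g_k^{(p)}\}$. Similarly, by the $K(2)$-local Rezk logarithm, a $K(2)$-local orientation $\alpha_2 \leftrightarrow \{h_k\}$ is sent to $\{(1-T_p+p^{k-1})h_k\}$. The compatibility of these two formulae is the Eichler--Shimura identity $(1-U_p)(1-\tfrac{1}{p}\psi^p) = 1 - T_p + p^{k-1}$ on weight-$k$ forms, which in turn uses the relation $U_p\psi^p = p^k$ recorded at the end of \S3.1.

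For $p\ne 2$, the key step is to identify the image of $\pi_0\textup{Nulls}(spin, L_{K(2)}tmf(\Gamma)^{\wedge}_p) \to \pi_0\textup{Nulls}(spin, L_{K(1)}L_{K(2)}tmf(\Gamma)^{\wedge}_p)$ with the locus of sequences detected by $U_p$-fixed moments. Granting this, matching gives $\{(1-U_p)g_k^{(p)}\} = \{(1-T_p+p^{k-1})h_k\} = \{(1-U_p)h_k^{(p)}\}$; applying $U_p$ and using that $h_k^{(p)}$ lies in the classical image forces $(1-U_p)g_k^{(p)} = 0$, i.e.\ $U_p g_k^{(p)} = g_k^{(p)}$, and conversely any such $\alpha_1$ lifts via the trivial $\alpha_2$. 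Injectivity of the lifting map on $\pi_0$ then follows from the injectivity of the $K(2)$-local to $L_{K(1)}L_{K(2)}$-local restriction, which is a consequence of Theorem~\ref{theorem:edge map}.

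At $p=2$ the Mayer--Vietoris connecting homomorphism can pick up $2$-torsion from $\pi_1\textup{Nulls}(spin, L_{K(1)}L_{K(2)}tmf(\Gamma)^{\wedge}_2)$, a group which has exponent at most $2$ (as asserted in the torsor statement of Theorem~\ref{theorem:main}). This indeterminacy dies after passing to characteristic series (Definition~\ref{definition:char}), giving the stated isomorphism of $\textup{Char}$-sets; everything else in the argument goes through \emph{mutatis mutandis}. The main obstacle I expect is pinning down the identification of the $K(2)$-local Nulls image inside the $L_{K(1)}L_{K(2)}$-local Nulls as exactly the Atkin-fixed locus, since this is where the classical-to-$p$-adic bookkeeping via the inclusion $MF_{p,k}(\Gamma) \hookrightarrow V_\infty(\Gamma)^{\textup{ss}}$ and Rezk's formulae must interact cleanly; once this and the Eichler--Shimura rewriting are in place, the rest of the proof is formal.
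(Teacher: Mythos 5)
Your overall setup---apply the homotopy-limit-preserving functor $\textup{Nulls}(spin,-)$ to the fracture square of Theorem \ref{theorem:atkin} and identify the bottom map with $(1-U_p)\circ\ell_1$---matches the paper, but there is a genuine gap exactly where you flag one: the corner $\textup{Nulls}(spin, L_{K(2)}gl_1tmf(\Gamma)^{\wedge}_p)$. The paper's proof rests on the fact that this space is \emph{contractible} (the $K(2)$-local analysis of Ando--Hopkins--Rezk). Once that is known, the pullback square degenerates to a fiber sequence over $\textup{Nulls}(spin, L_{K(1)}L_{K(2)}gl_1tmf(\Gamma)^{\wedge}_p)$, the image of any $K(1)\vee K(2)$-local nullhomotopy in that corner must be the single basepoint supplied by the contractible corner, and the condition $U_pg_k^{(p)}=g_k^{(p)}$ falls out directly from the bottom map being $1-U_p$. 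Your substitute---parametrizing hypothetical $K(2)$-local orientations by sequences $\{h_k\}$ of classical forms and matching moments---is not established anywhere in the paper, and even granting such a parametrization the algebra does not close: from $(1-U_p)g_k^{(p)}=(1-U_p)h_k^{(p)}$ you cannot conclude $(1-U_p)g_k^{(p)}=0$ unless you already know the right-hand side vanishes, which is the contractibility (or at least rigidity) of the $K(2)$-local corner in disguise. So the "main obstacle" you identify is not bookkeeping; it is the missing key input.

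Separately, your mechanism for injectivity at $p\ne 2$ is misattributed. The fibers of the map on $\pi_0$ from the total space to the lower-left corner are torsors for $\pi_1$ of the lower-right corner, i.e.\ for $[\Sigma KO_p, L_{K(1)}L_{K(2)}tmf(\Gamma)^{\wedge}_p]$; this group vanishes for odd $p$ (homotopy concentrated in even degrees, cf.\ Proposition \ref{proposition:oddvanishing}), which is what yields the isomorphism. "Injectivity of the $K(2)$-local to $L_{K(1)}L_{K(2)}$-local restriction" is not the relevant input. Your $p=2$ discussion does correctly locate the indeterminacy in $\pi_1$ of the lower-right corner, where it is all $2$-torsion and hence invisible on characteristic series; the fix is to use that same mechanism uniformly and to supply the contractibility of the $K(2)$-local Nulls space.
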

\begin{proof} We have a homotopy pullback square
\[
\xymatrix{
\textup{Nulls}(spin, L_{K(1)\vee K(2)}gl_1tmf(\Gamma)^{\wedge}_p)\ar[d]\ar[r] & \textup{Nulls}(spin, L_{K(2)}tmf(\Gamma)^{\wedge}_p)\ar[d]\\
\textup{Nulls}(spin, L_{K(1)}tmf(\Gamma)^{\wedge}_p) \ar[r]& \textup{Nulls}(spin, L_{K(1)}L_{K(2)}tmf(\Gamma)^{\wedge}_p)
}
\]
The upper right hand square is contractible, so the image of the map
\[
\pi_0\textup{Nulls}(spin, L_{K(1)\vee K(2)}gl_1tmf(\Gamma)^{\wedge}_p) \longrightarrow \pi_0\textup{Nulls}(spin, L_{K(1)}tmf(\Gamma)^{\wedge}_p)
\]
consists of those sequences $\{g_k\}_{k\ge 2}$ as in Theorem \ref{theorem:K(1)-local parameterization} such that $(1-U_p)g_k = 0$ (c.f. \cite[13.7]{AHR}). The fiber of this map over any point is a torsor for
\[
[\Sigma KO_p, L_{K(1)}L_{K(2)}tmf(\Gamma)^{\wedge}_p]
\]
This group vanishes for $p\ne 2$ proving the claim in this case. When $p=2$ this group is all torsion, so the discrepancy vanishes on characteristic series. 
\end{proof}
\section{Parameterizing $p$-complete Orientations}
Now we'd like to solve the extension problem
\[
\xymatrix{
g \ar[r] & gl_1S\ar[r]\ar[d] &gl_1S/g\ar@{-->}[dl]\\
&gl_1tmf(\Gamma)^{\wedge}_p&
}
\]
where $g$ is either $spin$ or $string$. We have already solved this problem for $L_{K(1)\vee K(2)}gl_1tmf(\Gamma)$, so the composite
\[
g \longrightarrow gl_1S \longrightarrow gl_1tmf(\Gamma) \longrightarrow L_{K(1)\vee K(2)}gl_1tmf(\Gamma)
\]
is nullhomotopic. Let $F$ be defined by the fiber sequence
\[
F\longrightarrow gl_1tmf(\Gamma) \longrightarrow L_{K(1)\vee K(2)}gl_1tmf(\Gamma) 
\]
then there exists a dotted arrow making the following diagram commute:
\[
\xymatrix{
g \ar[r]\ar@{-->}[d] & gl_1S\ar[r]\ar[d] &gl_1S/g\\
F\ar[r]&gl_1tmf(\Gamma)^{\wedge}_p\ar[r]&L_{K(1)\vee K(2)}gl_1tmf(\Gamma)
}
\]
and the obstruction to the existence of an orientation of $g$ is determined by the map
\[
[g,F] \longrightarrow [g, gl_1tmf(\Gamma)^{\wedge}_p]
\]
When $g = string$ as in \cite{AHR} the group $[string, F]$ vanishes so there is no obstruction. However, when $g = spin$ this group {\it never} vanishes (\ref{lemma:pi-4}) so we must understand this map. We begin in \S4.1 by collecting some necessary preliminaries on Hida's theory of ordinary modular forms. In \S4.2 we review what the work of Ando-Hopkins-Rezk can tell us about the homotopy type of $F$ in general. In \S4.3 we compute $\pi_3F$ at level 1 in terms of modular forms and show how this implies the existence of $spin$ orientations of $tmf$ away from 6. In \S4.4 we specialize to level $p$ topological modular forms and parameterize $spin$ orientations of $tmf_0(p)^{\wedge}_\ell$ when $\ell \ne 2$. Finally, in \S4.5 we treat the case when $\ell = 2$ at various prime levels. 
\subsection{Units and localization}
Given an $L_n$-local $E_\infty$ ring spectrum $R$, the spectrum of units $gl_1R$ is almost never $L_n$-local. The following theorem of Ando-Hopkins-Rezk measures the difference.
\begin{theorem}[Ando-Hopkins-Rezk, Theorem 4.11]\label{theorem:units} Let $R$ be an $L_n$-local $E_\infty$ ring spectrum and let $d_n(R)$ denote the fiber
\[
d_n(R) \longrightarrow gl_1R \longrightarrow L_ngl_1R
\]
Then $\pi_qd_n(R) = 0$ for $q\ge n+1$.
\end{theorem}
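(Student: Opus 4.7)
The proof rests on the Bousfield--Kuhn functor $\Phi_n \colon \textup{Spaces}_* \to \textup{Spectra}$, which satisfies $\Phi_n(\Omega^\infty Y) \simeq L_n Y$ for any spectrum $Y$, and whose values (at least in the $K(n)$-local case) depend only on the pointed homotopy type of the basepoint component of the input. The strategy is to use this functor to identify $L_n gl_1 R$ with $R$, then trace through the identification on homotopy groups.

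First I would argue $L_n gl_1 R \simeq R$. The zero-space $\Omega^\infty gl_1 R = GL_1 R$ is a union of path components of $\Omega^\infty R$, indexed by units of $\pi_0 R$, and the basepoint component $SL_1 R$ of $GL_1 R$ coincides as a subspace with the component of $1 \in \Omega^\infty R$. Additive translation by $-1$ on $\Omega^\infty R$ is a pointed homotopy equivalence $(SL_1 R, 1) \simeq (\Omega^\infty_{[0]} R, 0)$. Applying $\Phi_n$ and invoking basepoint-component invariance yields
\[
L_n gl_1 R = \Phi_n(GL_1 R, 1) \simeq \Phi_n(SL_1 R, 1) \simeq \Phi_n(\Omega^\infty R, 0) = L_n R \simeq R,
\]
where the final equivalence uses $L_n$-locality of $R$.

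Next I would verify that under this identification the localization map $gl_1 R \to L_n gl_1 R$ recovers, on $\pi_q$ for $q \geq 1$, the standard identification $\pi_q gl_1 R \cong \pi_q R$ via $1+x \mapsto x$ noted earlier in the paper. Each intermediate equivalence above is $\Phi_n$ applied to a pointed homotopy equivalence of spaces, so it should induce the expected maps on positive-degree homotopy. Feeding this into the long exact sequence of the fiber sequence $d_n(R) \to gl_1 R \to L_n gl_1 R$ then yields $\pi_q d_n(R) = 0$ in an appropriate range of positive degrees, implying the claimed bound.

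The hard part will be the second step, together with tracking the $L_n$-vs-$L_{K(n)}$ distinction carefully. Showing that the chain of $\Phi_n$-equivalences respects the $1+x \mapsto x$ identification requires unwinding the Bousfield--Kuhn construction beyond its abstract characterization. Moreover, the bound $q \geq n+1$ (rather than something uniformly stronger like $q \geq 2$) presumably reflects that the $L_n$-variant of $\Phi_n$ is sensitive to $v_0$-periodic (i.e.\ rational) information and hence to $\pi_0$, so basepoint-component invariance may fail through a range of low degrees. Reconciling this cleanly will likely require induction on $n$ via the chromatic fracture square $L_n \simeq L_{n-1} \times_{L_{n-1}L_{K(n)}} L_{K(n)}$, combining the induction hypothesis for $L_{n-1}$ with the Kuhn--Rezk identification $L_{K(n)} gl_1 R \simeq L_{K(n)} R$ at the top chromatic layer.
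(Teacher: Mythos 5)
The paper offers no proof of this statement---it is quoted directly from Ando--Hopkins--Rezk (their Theorem 4.11)---so your proposal must be judged against their argument. It has a genuine gap, and it sits exactly where you suspect: the opening identity $\Phi_n(\Omega^\infty Y)\simeq L_nY$ is false. The Bousfield--Kuhn functor computes the telescopic (equivalently, after further localization, the $K(n)$-local) localization, \emph{not} the $E(n)$-localization $L_n=L_{K(0)\vee\cdots\vee K(n)}$; there is no space-level model for $L_n$ precisely because the rational layer $K(0)$ cannot be recovered from a single component of $\Omega^\infty Y$. So the chain $L_ngl_1R\simeq L_nR\simeq R$ collapses, and with it the whole first step. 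There is also a second, independent error: even in the $K(n)$-local setting, where $L_{K(n)}gl_1R\simeq L_{K(n)}R$ \emph{does} hold, the composite $gl_1R\to L_{K(n)}gl_1R\simeq L_{K(n)}R$ does not induce $1+x\mapsto x$ on positive homotopy. For $n=1$ it induces Rezk's logarithm $1+x\mapsto(1-\tfrac1p\psi^p)x$ (Theorem \ref{theorem:formula} above), which on $\pi_2KU_p$ is multiplication by $1-p^{k-1}|_{k=1}=0$ rather than an isomorphism. The translation-by-$(-1)$ identification of $SL_1R$ with a component of $\Omega^\infty R$ is not a map of infinite loop spaces, and controlling what $\Phi_n$ does to it is the entire content of Rezk's logarithm; it cannot be waved through. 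Had your two steps both been correct, the long exact sequence would give $\pi_qd_n(R)=0$ for all $q\ge 1$, whereas the bound $q\ge n+1$ is sharp (the nonvanishing of the analogous $\pi_3F$ is the whole subject of \S 4 of this paper, and $\pi_1d_1(KU_p)\ne 0$ by the vanishing of $1-\tfrac1p\psi^p$ on $\pi_2$ just noted).

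Your closing paragraph does identify the actual proof, which is the one Ando--Hopkins--Rezk carry out: induct on $n$ using the fracture square $L_n\simeq L_{n-1}\times_{L_{n-1}L_{K(n)}}L_{K(n)}$, feed in the Bousfield--Kuhn/Rezk equivalence $L_{K(n)}gl_1R\simeq L_{K(n)}R$ only at the top chromatic layer, and settle the base case $n=0$ by noting that for a rational ring the logarithm identifies $\tau_{\ge 1}gl_1R$ with $\tau_{\ge 1}R$, so $d_0$ is concentrated in degrees $\le 0$. Each passage through $L_{n-1}L_{K(n)}$ costs one degree of connectivity, which is where the bound $n+1$ comes from. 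As written, however, the proposal presents the false equivalence as the main argument and defers the real one to a speculative remark, so it does not establish the theorem.
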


We would like to apply this to the spectrum $F$, which is the fiber of $L_{K(1)\vee K(2)}$-localization. The relationship between the two is given by the following two lemmas.
\begin{lemma} The map $L_2X \longrightarrow L_{K(1) \vee K(2)}X$ is $p$-completion.
\end{lemma}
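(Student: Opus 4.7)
The plan is to verify the two defining properties of a $p$-completion map: that the target $L_{K(1)\vee K(2)}X$ is $p$-complete, and that the map $L_2 X \to L_{K(1)\vee K(2)}X$ becomes an equivalence after smashing with the Moore spectrum $S/p$. Together these characterize $p$-completion.

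For the first property, I would use the chromatic fracture pullback square with vertices $L_{K(1)\vee K(2)}X$, $L_{K(2)}X$, $L_{K(1)}X$, and $L_{K(1)}L_{K(2)}X$. Each of the three non-pullback vertices is $K(1)$- or $K(2)$-local and hence $p$-complete, since $K(n)$-local spectra are $p$-complete for $n \geq 1$ by Hovey--Strickland. A homotopy limit of $p$-complete spectra is $p$-complete, so $L_{K(1)\vee K(2)}X$ is $p$-complete.

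For the second property, set $Y := X \wedge S/p$. Since $L_2$ is smashing by Hopkins--Ravenel and since localization at a Morava $K$-theory commutes with smashing by finite spectra (as such spectra are dualizable), the map in question smashed with $S/p$ is identified with $L_2 Y \to L_{K(1)\vee K(2)}Y$. Because $p\cdot \mathrm{id}_{S/p}$ is nullhomotopic, the spectrum $Y$ is annihilated by $p$ in the homotopy category, so $L_{K(0)}Y = Y_{\mathbb{Q}} = 0$, and likewise $L_{K(0)}L_{K(1)\vee K(2)}Y = 0$. Using the identification of Bousfield classes $\langle E(2)\rangle = \langle K(0)\vee K(1)\vee K(2)\rangle$, together with the chromatic fracture pullback square expressing $L_{K(0)\vee K(1)\vee K(2)}Y$ as the homotopy pullback of $L_{K(0)}Y \to L_{K(0)}L_{K(1)\vee K(2)}Y \leftarrow L_{K(1)\vee K(2)}Y$, the bottom row vanishes and we conclude that $L_2 Y \to L_{K(1)\vee K(2)}Y$ is an equivalence.

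There is no substantive obstacle; this is a direct application of standard chromatic machinery, relying only on the smashing property of $L_n$, the chromatic identification of Bousfield classes, the compatibility of $K(n)$-localization with smashing by finite spectra, and $p$-completeness of $K(n)$-local spectra for $n \geq 1$. The only point requiring mild care is ensuring the chromatic fracture pullback is applied to $Y$ rather than $X$, which relies on $Y$ being $p$-power torsion so that the $K(0)$-local vertices collapse.
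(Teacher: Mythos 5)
Your proof is correct and follows essentially the same route as the paper: both use the fracture square through $L_{K(1)}$, $L_{K(2)}$, $L_{K(1)}L_{K(2)}$ plus $p$-completeness of $K(n)$-local spectra to see the target is $p$-complete, and both use the fracture square relating $L_2$ to $L_{K(1)\vee K(2)}$ over the rational vertices, which vanish after smashing with $S/p$. Your version merely spells out the smashing step (via the smashing property of $L_2$ and dualizability of $S/p$) that the paper compresses into the statement $L_{S/p}L_{K(0)}=0$.
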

\begin{proof} We have fiber squares
\[
\xymatrix{L_2 \ar[r]\ar[d] & L_{K(1) \vee K(2)}\ar[d]\\
L_{K(0)} \ar[r] & L_{K(0)}L_{K(1) \vee K(2)}}
 \quad \xymatrix{
L_{K(1) \vee K(2)} \ar[r]\ar[d] & L_{K(2)} \ar[d]\\
L_{K(1)} \ar[r] & L_{K(1)}L_{K(2)}
}
\]
Note that $K(n) \wedge S/p = K(n)$, and thus all $K(n)$-local spectra are $p$-complete. Since $p$-complete spectra are closed under homotopy limits, the right hand square implies that $L_{K(1)\vee K(2)}$ is $p$-complete. So it suffices to show that the top horizontal arrow of the left hand square is an equivalence, or equivalently that the bottom horizontal map is an equivalence. But this is clear because $L_{S/p}L_{K(0)} = 0$.\end{proof}
\begin{lemma} Let $R$ be a $p$-complete and $L_2$-local $E_\infty$-ring spectrum. Define $F$ as the fiber 
\[
F \longrightarrow gl_1R \longrightarrow L_{K(1)\vee K(2)}gl_1R
\]
Then $\pi_qF = 0$ for $q \ge 4$. 
\end{lemma}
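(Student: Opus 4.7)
My approach is to factor the map $gl_1R \to L_{K(1)\vee K(2)}gl_1R$ through the intermediate $L_2$-localization and analyze the two pieces separately, one via Theorem \ref{theorem:units} and the other via the preceding lemma combined with the Milnor exact sequence for $p$-completion.

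Specifically, I would consider the factorization $gl_1R \to L_2gl_1R \to L_{K(1)\vee K(2)}gl_1R$, write $d_2(R)$ and $E$ for the fibers of the first and second maps, and apply the octahedral axiom to produce a fiber sequence $d_2(R)\to F\to E$. Theorem \ref{theorem:units} with $n=2$ gives $\pi_qd_2(R)=0$ for $q\ge 3$, so both $\pi_qd_2(R)$ and $\pi_{q-1}d_2(R)$ vanish for $q\ge 4$, and the associated long exact sequence reduces the problem to showing $\pi_qE=0$ for $q\ge 4$.

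To handle $E$, I would use the previous lemma to identify $L_2gl_1R\to L_{K(1)\vee K(2)}gl_1R$ with the $p$-completion map, so $E=\textup{fib}(L_2gl_1R\to (L_2gl_1R)^{\wedge}_p)$. For $q\ge 4$ the long exact sequence of $d_2(R)\to gl_1R\to L_2gl_1R$ together with the AHR vanishing forces $\pi_qL_2gl_1R\cong \pi_qR$; since $R$ is $p$-complete as a spectrum, each $\pi_qR$ is $\textup{Ext}$-$p$-complete, so $\pi_qL_2gl_1R$ is $\textup{Ext}$-$p$-complete for $q\ge 4$. Invoking the Milnor short exact sequence $0\to \textup{Ext}^1(\mathbb{Z}/p^\infty,\pi_qY)\to \pi_qY^{\wedge}_p\to \textup{Hom}(\mathbb{Z}/p^\infty,\pi_{q-1}Y)\to 0$ with $Y=L_2gl_1R$, I would then conclude that $\pi_qL_2gl_1R\to \pi_q(L_2gl_1R)^{\wedge}_p$ is injective for $q\ge 4$ and an isomorphism for $q\ge 5$. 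Feeding this into the long exact sequence of $E$ yields $\pi_qE=0$ for $q\ge 4$, and hence $\pi_qF=0$ for $q\ge 4$.

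The main point requiring care is the bookkeeping at $q=4$, since only $\pi_4L_2gl_1R$ (not necessarily $\pi_3L_2gl_1R$) is known there to be $\textup{Ext}$-$p$-complete, so a priori the Milnor sequence only gives injectivity on $\pi_4$. However, the cokernel of $\pi_5L_2gl_1R\to\pi_5(L_2gl_1R)^{\wedge}_p$ vanishes by the isomorphism established at $q=5$, and this cokernel is precisely the obstruction to the vanishing of $\pi_4E$ in the long exact sequence, so the argument still closes off.
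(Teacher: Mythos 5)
Your proposal is correct and follows essentially the same route as the paper: factoring through $L_2gl_1R$, using Theorem \ref{theorem:units} to kill the fiber $d_2(R)$ in degrees $\ge 3$, identifying the remaining map with $p$-completion via the preceding lemma, and reading off injectivity on $\pi_4$ and an isomorphism on $\pi_q$ for $q\ge 5$ from the $\textup{Ext}/\textup{Hom}(\mathbb{Z}/p^\infty,-)$ short exact sequence. Your explicit bookkeeping at $q=4$ (injectivity on $\pi_4$ plus surjectivity on $\pi_5$) is exactly the point the paper's proof relies on as well.
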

\begin{proof} Recall we have defined $d_n(R)$ via the fiber sequence
\[
d_n(R) \longrightarrow gl_1R \longrightarrow L_ngl_1R
\]
and, by (\ref{theorem:units}), we know that $\pi_qd_n(R) = 0$ for $q\ge 3$. 
 
Let $0$ denote a zero object, and consider the larger diagram
\[
\xymatrix{
d_n(R) \ar[r]\ar[d] & 0\ar[d]\\
F \ar[r] \ar[d]& F'\ar[r]\ar[d] & 0\ar[d]\\
gl_1R \ar[r] & L_2 gl_1R \ar[r] & L_{K(1) \vee K(2)}gl_1R
}
\]
where all the squares are homotopy cartesian. Since $\pi_qd_n(R)= 0$ for $q\ge 3$ it suffices to show that $\pi_q F'= 0$ for $q \ge 4$. By the previous lemma, we have that the map
\[
L_2 \longrightarrow L_{K(1) \vee K(2)}
\]
is $p$-completion. Now, since 
\[
\pi_qR \cong \pi_q gl_1R \cong \pi_qL_2gl_1R
\]
for $q \ge 4$ and a spectrum is $p$-complete if and only if its homotopy groups are Ext-$p$-complete \cite[Prop. 2.5]{Bous}, we have that $\pi_qL_2gl_1R$ is Ext-$p$-complete for $q \ge 4$. 

The homotopy groups of a $p$-completion, by \cite[Prop. 2.5]{Bous}, are given by:
\[
\xymatrix{0 \ar[r] & \textup{Ext}(\mathbb{Z}/p^\infty, \pi_qL_2gl_1R) \ar[r] & \pi_qL_{K(1)\vee K(2)}gl_1R \ar[r] & \textup{Hom}(\mathbb{Z}/p^\infty, \pi_{q-1}L_2gl_1R) \ar[r] & 0}
\]
By the previous remark, since $\textup{Ext}(\mathbb{Z}/p^\infty, -)$ is Ext-$p$-completion, the first entry is just $\pi_qL_2gl_1R$ for $q \ge 4$, and the last entry vanishes for $q \ge 5$. Thus the map
\[
\pi_qL_2gl_1R \longrightarrow \pi_qL_{K(1) \vee K(2)} gl_1R
\]
is an equivalence for $q \ge 5$ and an inclusion for $q = 4$, whence $\pi_qF' = 0$ for $q \ge 4$ which completes the proof.
\end{proof}
\begin{remark} The same proof gives analogous results comparing $L_n$ and $L_{K(1)\vee \cdots \vee K(n)}$. 
\end{remark}
\subsection{Analysis of $F$}
In order to understand $F$ in the fiber sequence
\[
\xymatrix{F \ar[r] & gl_1tmf(\Gamma) \ar[r] & L_{K(1)\vee K(2)}gl_1tmf(\Gamma)}
\]
we first need to understand something about $L_{K(1)\vee K(2)}gl_1tmf(\Gamma)$. The crucial homotopy group is $\pi_4$. Recall that the Atkin operator was constructed in \S3.1, and it's effect on $q$-expansions is
\[
U_p(\sum a_nq^n) = \sum a_{np}q^n
\]
\begin{proposition} Let $p \ge 3$ be a prime not dividing the level $N$. Then there is a pullback square
\[
\xymatrix{\pi_4L_{K(1)\vee K(2)}gl_1tmf(\Gamma)^{\wedge}_p \ar[r] \ar[d]&MF_2(\Gamma)^{\wedge}_p\ar[d]\\
MF_{p, 2} \ar[r]^{1-U_p} & MF_{p, 2}}
\]
and the vertical maps are injective. 
\end{proposition}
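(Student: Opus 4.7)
The starting point is the fiber square of Corollary \ref{corollary:tmf log}:
\[
\xymatrix{
L_{K(1)\vee K(2)}gl_1tmf(\Gamma)^{\wedge}_p \ar[r]\ar[d] & Tmf(\Gamma)^{\wedge}_p\ar[d] \\
L_{K(1)}tmf(\Gamma)^{\wedge}_p \ar[r]_{1-U_p} & L_{K(1)}tmf(\Gamma)^{\wedge}_p
}
\]
The plan is to apply $\pi_4$ and read off the desired pullback from the resulting Mayer--Vietoris long exact sequence. First I would identify $\pi_4$ of the other three corners: the descent spectral sequence $H^s(\mathscr{M},\omega^{\otimes t/2})\Rightarrow \pi_{t-s}$ for $Tmf(\Gamma)^{\wedge}_p$ and for $L_{K(1)}tmf(\Gamma)^{\wedge}_p = \mathcal{O}^{\textup{top}}(\mathscr{M}^{\textup{ord}}_\Gamma)$ yields $\pi_4 Tmf(\Gamma)^{\wedge}_p \cong MF_2(\Gamma)^{\wedge}_p$ and $\pi_4 L_{K(1)}tmf(\Gamma)^{\wedge}_p \cong MF_{p,2}(\Gamma)$ respectively. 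Because the relevant moduli are $1$-dimensional and $\omega^{\otimes k}$ only contributes at even weight $t = 2k$, nothing can hit or leave $E_2^{0,4}$, so the identifications are clean.

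The key technical input is the vanishing $\pi_5 L_{K(1)}tmf(\Gamma)^{\wedge}_p = 0$ for $p \ge 3$. I would prove this by running the $K(1)$-local Adams--Novikov spectral sequence
\[
\widehat{\textup{Ext}}^{s,t}_{\mathbb{Z}_p^\times}\!\bigl((K_p)_*,(K_p)^{\wedge}_*L_{K(1)}tmf(\Gamma)\bigr) \Rightarrow \pi_{t-s}L_{K(1)}tmf(\Gamma)^{\wedge}_p
\]
and invoking Theorem \ref{theorem:torsor} exactly as in the proof of Proposition \ref{proposition:oddvanishing}: at odd $p$, $(K_p)^\wedge_*L_{K(1)}tmf(\Gamma)$ is a summand of a $\mathbb{Z}_p^\times$-torsor, so higher Ext vanishes and the spectral sequence collapses onto its $s=0$ line. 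Since both $(K_p)_*$ and $(K_p)^{\wedge}_*L_{K(1)}tmf(\Gamma) \cong (K_p)_* \widehat{\otimes} V_\infty(\Gamma)$ are concentrated in even internal degree, the continuous equivariant Hom between them vanishes in odd total degree.

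With $\pi_5 L_{K(1)}tmf(\Gamma)^{\wedge}_p = 0$ in hand, the piece of the Mayer--Vietoris sequence
\[
0 \longrightarrow \pi_4 L_{K(1)\vee K(2)}gl_1tmf(\Gamma)^{\wedge}_p \longrightarrow MF_2(\Gamma)^{\wedge}_p \oplus MF_{p,2}(\Gamma) \stackrel{\partial}{\longrightarrow} MF_{p,2}(\Gamma)
\]
with $\partial(x,y) = \textup{incl}(x)-(1-U_p)(y)$ is short exact, giving exactly the pullback described in the proposition. For injectivity of the right vertical I would cite the $q$-expansion principle, or equivalently observe that restriction of sections of a line bundle from $\mathscr{M}_\Gamma\otimes\mathbb{Z}_p$ to the dense open $\mathscr{M}^{\textup{ord}}_\Gamma$ (complement of a finite set of supersingular points) is injective. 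Injectivity of the left vertical is then formal from the pullback description: any $(y,x)$ with $y=0$ must satisfy $\textup{incl}(x) = (1-U_p)(0) = 0$, hence $x=0$.

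The main obstacle I anticipate is not any of the above steps individually but rather verifying the torsor/collapse argument cleanly in the level-$\Gamma$ setting, where the $\mathbb{Z}_p^\times$-action on $V_\infty(\Gamma)$ factors through a torsor only after adding an auxiliary level structure whose Galois group has order prime to $p$ (as in \cite[Prop. 7.1, Prop. 8.6]{Beh}). Once that reduction is in place, the proof is largely mechanical.
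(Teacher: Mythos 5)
Your proof is correct and follows essentially the same route as the paper: both reduce the proposition to the Mayer--Vietoris sequence of the fiber square in Corollary \ref{corollary:tmf log} together with the vanishing of $\pi_5 L_{K(1)}tmf(\Gamma)^{\wedge}_p$ for $p\ge 3$. The only (cosmetic) difference is how that vanishing is obtained --- the paper uses the homotopy fixed point spectral sequence for the $(\mathbb{Z}/p)^{\times}$-action on $tmf(\Gamma;p)$ and the formal affineness of $\mathscr{M}_{\Gamma}(p)$, whereas you run the $K(1)$-local Adams--Novikov spectral sequence and invoke the torsor vanishing of Theorem \ref{theorem:torsor} --- but these are the same descent argument in two guises.
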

\begin{proof} By (\ref{corollary:tmf log}) we have a homotopy fiber square
\[
\xymatrix{
L_{K(1)\vee K(2)}gl_1tmf(\Gamma)^{\wedge}_p \ar[r]\ar[d] & Tmf(\Gamma)^{\wedge}_p\ar[d]\\
L_{K(1)}tmf(\Gamma)^{\wedge}_p \ar[r]^{1-U_p}& L_{K(1)}tmf(\Gamma)^{\wedge}_p
}
\]
The result follows from the observation that $\pi_{\textup{odd}}L_{K(1)}tmf(\Gamma)^{\wedge}_p = 0$ for $p\ge 3$. To see this, recall that we have an equivalence
\[
L_{K(1)}tmf(\Gamma)^{\wedge}_p \longrightarrow tmf(\Gamma; p)^{h(\mathbb{Z}/p)^{\times}}
\]
The homotopy fixed point spectral sequence collapses to the zero line, so it suffices to make the same observation about $tmf(\Gamma;p)$. But $\mathscr{M}_{\Gamma}(p)$ is representable by a formally affine scheme when $p \ge 3$, so the descent spectral sequence for $tmf(\Gamma;p)$ collapses to the zero line and we have the result.
\end{proof}
\begin{remark} It is not true that $\pi_*tmf(\Gamma)^{\wedge}_p$ is concentrated in even degrees at large primes, in general. Indeed, when the modular curve has large genus, Serre duality forces contributions in odd, positive degrees for the homotopy of $Tmf(\Gamma)$ which then persist after taking the connective cover. Indeed, this observation is one of the reasons why the current definition of $tmf(\Gamma)$ is undesirable when the modular curve has large genus. 
\end{remark}
\begin{proposition}\label{proposition:usual inclusion} The following diagram commutes
\[
\xymatrix{
\pi_4gl_1tmf(\Gamma)^{\wedge}_p \ar[r]\ar[d]^{\cong} & \pi_4L_{K(1)\vee K(2)}gl_1tmf(\Gamma)^{\wedge}_p\ar[d]\\
MF_2(\Gamma) \otimes \mathbb{Z}_p \ar[r]_{1 - \psi^p} & MF_{p,2}(\Gamma)
}
\]
\end{proposition}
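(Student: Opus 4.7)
The plan is to identify the right-then-bottom composite in the diagram with the $\pi_4$-effect of Rezk's $K(1)$-local logarithm $\ell_1$ applied to $gl_1 tmf(\Gamma)^{\wedge}_p$, at which point the claim reduces to the explicit formula in Theorem~\ref{theorem:formula}.

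First I would unpack the right vertical arrow. The preceding proposition presents $\pi_4 L_{K(1)\vee K(2)}gl_1 tmf(\Gamma)^{\wedge}_p$ as a pullback of modular-form groups obtained by applying $\pi_4$ to the homotopy pullback square of Corollary~\ref{corollary:tmf log}, and the map to $MF_{p,2}(\Gamma)$ is the $\pi_4$-effect of that square's left vertical arrow. On the level of spectra this vertical is the composite
\[
L_{K(1)\vee K(2)}gl_1 tmf(\Gamma)^{\wedge}_p \longrightarrow L_{K(1)}gl_1 tmf(\Gamma)^{\wedge}_p \xrightarrow{\ell_1} L_{K(1)}tmf(\Gamma)^{\wedge}_p,
\]
with the last arrow being Rezk's logarithm equivalence for $K(1)$-local $E_\infty$-rings. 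Precomposing with the top horizontal map of the diagram and using that iterated Bousfield localizations compose, the top-then-right composite becomes the $\pi_4$-effect of
\[
gl_1 tmf(\Gamma)^{\wedge}_p \longrightarrow L_{K(1)}gl_1 tmf(\Gamma)^{\wedge}_p \xrightarrow{\ell_1} L_{K(1)}tmf(\Gamma)^{\wedge}_p.
\]

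To conclude, I invoke Theorem~\ref{theorem:formula}: for $k \ge 1$ the effect of $\ell_1$ on $\pi_{2k}$ is $1 + x \mapsto (1 - \tfrac{1}{p}\psi^p)x$. Specializing to $k = 2$ and tracing through the identifications $\pi_4 gl_1 tmf(\Gamma)^{\wedge}_p \cong MF_2(\Gamma) \otimes \mathbb{Z}_p$ (via $1 + g \mapsto g$ coming from the fact that $\pi_{>0} gl_1 R \cong \pi_{>0} R$ as abelian groups) and $\pi_4 L_{K(1)}tmf(\Gamma)^{\wedge}_p \cong MF_{p,2}(\Gamma)$, the map becomes $g \mapsto (1 - \tfrac{1}{p}\psi^p) g = g^{(p)}$, which is the stated operator $1 - \psi^p$ after accounting for the weight-$2$ normalization of $\psi^p$ on $MF_{p,2}(\Gamma)$. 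There is no substantive obstacle: once Rezk's formula and the pullback description of the preceding proposition are both in hand, the argument is a direct bookkeeping exercise.
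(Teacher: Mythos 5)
Your argument is correct, and it is the intended one: the paper actually states this proposition with no proof at all, and the content is exactly what you supply --- the right vertical arrow is (by construction of the preceding pullback square via Corollary~\ref{corollary:tmf log}) the $\pi_4$-effect of $L_{K(1)\vee K(2)}gl_1tmf(\Gamma)^{\wedge}_p \to L_{K(1)}gl_1tmf(\Gamma)^{\wedge}_p \xrightarrow{\ell_1} L_{K(1)}tmf(\Gamma)^{\wedge}_p$, localizations compose, and Theorem~\ref{theorem:formula} (in the form of Corollary~\ref{corollary:formula}) gives $g \mapsto g^{(p)} = (1-\tfrac{1}{p}\psi^p)g$ on weight-$2$ forms. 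The one point you rightly flag but slightly gloss is that the operator labelled $1-\psi^p$ in the statement must be read as $1-\tfrac{1}{p}\psi^p$ (equivalently, $q$-expansion effect $\sum a_nq^n \mapsto \sum(a_n - p\,a_{n/p})q^n$ in weight $2$); this is a notational inconsistency of the paper itself --- compare condition (b) of Theorem~\ref{theorem:main} with the expression $(1-\psi^p)g_2$ in the proof of Theorem~\ref{theorem:p-complete} --- and not a defect of your proof, since either normalization reduces to the identity mod $p$ and so the downstream uses (Corollary~\ref{corollary:torsion-free}, Theorem~\ref{theorem:p-complete}) are unaffected.
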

Putting these two together we can often compute $\pi_3F$ in practice. For now, we only remark that it is torsion-free.
\begin{corollary}\label{corollary:torsion-free} Let $p\ge 5$, then the group $\pi_3F$ is torsion free.
\end{corollary}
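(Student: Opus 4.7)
The plan is to analyze $\pi_3 F$ via the long exact sequence of homotopy groups for the defining fiber sequence
\[
F \longrightarrow gl_1 tmf(\Gamma)^{\wedge}_p \longrightarrow L_{K(1)\vee K(2)} gl_1 tmf(\Gamma)^{\wedge}_p,
\]
extracting the segment
\[
\pi_4 gl_1 tmf(\Gamma)^{\wedge}_p \xrightarrow{\phi} \pi_4 L_{K(1)\vee K(2)} gl_1 tmf(\Gamma)^{\wedge}_p \xrightarrow{\partial} \pi_3 F \longrightarrow \pi_3 gl_1 tmf(\Gamma)^{\wedge}_p \xrightarrow{\iota} \pi_3 L_{K(1)\vee K(2)} gl_1 tmf(\Gamma)^{\wedge}_p.
\]
This presents $\pi_3 F$ as an extension of $\ker(\iota)$ by $\mathrm{coker}(\phi)$. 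Since an extension of a torsion-free abelian group by a torsion-free abelian group is torsion-free, it suffices to show both $\ker(\iota)$ and $\mathrm{coker}(\phi)$ are torsion-free $\mathbb{Z}_p$-modules.

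For $\ker(\iota)$, the standard identification $\pi_q gl_1 R \cong \pi_q R$ for $q > 0$ gives $\ker(\iota) \hookrightarrow \pi_3 tmf(\Gamma)^{\wedge}_p$, so it is enough to show the latter is torsion-free. The descent spectral sequence has $E_2^{s,t} = H^s(\mathscr{M}_\Gamma, \omega^{\otimes t})^{\wedge}_p$. For $p \ge 5$, every automorphism group of an elliptic curve in $\mathscr{M}_\Gamma$ has order prime to $p$ (since such groups have order dividing $24$), so $\mathscr{M}_\Gamma \otimes \mathbb{Z}_p$ is cohomologically tame; together with the fact that $\mathscr{M}_\Gamma$ has relative dimension one, this forces $E_2^{s,t} = 0$ for $s \ge 2$. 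All differentials then land in trivial groups, so the spectral sequence collapses at $E_2$, and the only contribution to $\pi_3$ is the flat $\mathbb{Z}_p$-module $H^1(\mathscr{M}_\Gamma, \omega^{\otimes 2})^{\wedge}_p$, which is torsion-free.

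For $\mathrm{coker}(\phi)$, I would combine the preceding proposition with Proposition \ref{proposition:usual inclusion} to identify $\phi$ inside the pullback description as the map sending $g \in MF_2(\Gamma)^{\wedge}_p$ to
\[
\bigl((1-T_p+p)g,\ (1-\tfrac{1}{p}\psi^p)g\bigr) \in MF_2(\Gamma)^{\wedge}_p \times MF_{p,2}(\Gamma),
\]
whose two coordinates realize the $K(2)$-local and $K(1)$-local logarithm formulas in weight $2$, with consistency in the pullback ensured by the identity $T_p = U_p + \tfrac{1}{p}\psi^p$ together with the Eichler-Shimura relation $U_p \psi^p = p^2$. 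To check that $\mathrm{image}(\phi)$ is saturated, suppose $p(h,f) = \phi(g)$. Reading off the second coordinate gives $pg - \psi^p g = p^2 f$; since $\psi^p g$ concentrates on the $q^{pn}$-terms of the $q$-expansion, this forces $g \equiv 0 \pmod{p \cdot MF_{p,2}(\Gamma)}$. Because the Hasse invariant is not a zero-divisor modulo $p$, the map $MF_2(\Gamma)^{\wedge}_p/p \hookrightarrow MF_{p,2}(\Gamma)/p$ is injective, so $g = p g'$ for some $g' \in MF_2(\Gamma)^{\wedge}_p$, whence $(h,f) = \phi(g')$. This establishes saturation, and therefore $\mathrm{coker}(\phi)$ is torsion-free.

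The main obstacle is the saturation step for $\mathrm{coker}(\phi)$: correctly matching the two coordinates of $\phi$ with the Rezk logarithm formulas and tracking the argument on $q$-expansions using Eichler-Shimura. The treatment of $\ker(\iota)$ is comparatively routine once one invokes the cohomological tameness of $\mathscr{M}_\Gamma$ at $p \ge 5$, which is precisely where the hypothesis on the prime enters.
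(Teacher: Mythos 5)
Your argument is essentially the paper's: the long exact sequence reduces the claim to torsion-freeness of $\mathrm{coker}(\phi)$, and the key step --- that the weight-$2$ map $MF_2(\Gamma)\otimes\mathbb{Z}_p \to MF_{p,2}(\Gamma)$ becomes injective mod $p$ because a classical form divisible by $p$ as a $p$-adic form is divisible by $p$ classically --- is exactly the mechanism the paper uses, with your $q$-expansion manipulation and saturation argument being a spelled-out version of its ``short diagram chase.'' The one under-justified point is your bare assertion that $H^1(\mathscr{M}_\Gamma\otimes\mathbb{Z}_p,\omega^{\otimes 2})$ (hence $\pi_3 tmf(\Gamma)^{\wedge}_p$) is torsion-free for $p\ge 5$: collapse of the descent spectral sequence gives you that $\pi_3$ \emph{equals} this $H^1$, but its torsion-freeness requires an input such as non-jumping of $h^0(\omega^{\otimes 2})$ in characteristic $p$ (the paper outsources the analogous claim to Hill--Lawson's result that torsion is concentrated in $\pi_1$).
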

\begin{proof} By the above proposition and a short diagram chase, $\pi_3F$ injects into the cokernel of the map
\[
1-\psi^p: MF_2(\Gamma) \otimes \mathbb{Z}_p \longrightarrow MF_{p,2}(\Gamma)
\]
This is torsion-free since the reduction modulo $p$ of this map is injective and $MF_{p,2}(\Gamma)$ is torsion-free. 
\end{proof}
\subsection{The Discrepancy Map and Weight 2 Modular Forms}
We have a homotopy pullback diagram of spectra under $gl_1S$
\[
\xymatrix{
gl_1tmf(\Gamma)^{\wedge}_p \ar[r]\ar[d] &L_{K(1)\vee K(2)}gl_1tmf(\Gamma)\ar[d] \\
0 \ar[r] & \Sigma F
}
\]
where the structure map $gl_1S\longrightarrow \Sigma F$ is defined as the composite $gl_1S \rightarrow L_{K(1)\vee K(2)}gl_1tmf(\Gamma)\rightarrow \Sigma F$. Thus, we get a homotopy pullback diagram
\[
\xymatrix{
\textup{Nulls}(spin, gl_1tmf(\Gamma)^{\wedge}_p) \ar[r]\ar[d] &\textup{Nulls}(spin, L_{K(1)\vee K(2)}gl_1tmf(\Gamma))\ar[d]^{\circleit{B}} \\
\ast \ar[r]^{\circleit{A}} & \textup{Nulls}(spin, \Sigma F)
}
\]
 and we need to understand the maps \circleit{A} and \circleit{B}. By definition, the first map hits the orientation corresponding to the composite
 \[
 gl_1S/spin \longrightarrow 0 \longrightarrow \Sigma F
 \]
 using this to trivialize the torsor we get the following tautological result.
 \begin{proposition} Using the zero map $gl_1S/spin \longrightarrow \Sigma F$ to trivialize the torsor $\textup{Nulls}(spin, \Sigma F)$, the image of the map $\circleit{A}$ is the zero map
 \[
 bspin \longrightarrow \Sigma F
 \]
 \end{proposition}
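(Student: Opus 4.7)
The claim is essentially tautological once one carefully unwinds the torsor structure supplied by Theorem \ref{theorem:ABGHR obstruction}. I would organize the argument as follows.

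First, I would recall that the space $\textup{Nulls}(spin,\Sigma F)$ is, by definition, the homotopy pullback
\[
\xymatrix{
\textup{Nulls}(spin,\Sigma F) \ar[r]\ar[d] & \textup{Map}(gl_1S/spin,\Sigma F)\ar[d]\\
\{\epsilon\} \ar[r] & \textup{Map}(gl_1S,\Sigma F)
}
\]
where $\epsilon\colon gl_1S\to \Sigma F$ is the chosen structure map; here $\epsilon$ is the composite $gl_1S \to gl_1tmf(\Gamma)^{\wedge}_p \to L_{K(1)\vee K(2)}gl_1tmf(\Gamma) \to \Sigma F$, which is null because it factors through the zero spectrum. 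Thus the zero map $0\colon gl_1S/spin\to \Sigma F$ is a legitimate point of $\textup{Nulls}(spin,\Sigma F)$, and using it to trivialize the torsor identifies
\[
\textup{Nulls}(spin,\Sigma F)\ \cong\ \textup{Map}(bspin,\Sigma F)
\]
via the cofiber sequence $gl_1S \to gl_1S/spin \to bspin$, in such a way that the chosen basepoint $0$ corresponds to the zero map $bspin\to \Sigma F$.

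Second, I would describe the map $\circleit{A}$ explicitly. It is the map on $\textup{Nulls}(spin,-)$ induced by the canonical morphism of spectra under $gl_1S$ given by $gl_1tmf(\Gamma)^{\wedge}_p \to 0 \to \Sigma F$. Tracing through the definition, $\circleit{A}$ therefore sends the unique point of $\ast = \textup{Nulls}(spin,0)$ to the point of $\textup{Nulls}(spin,\Sigma F)$ represented by the composite $gl_1S/spin \to 0 \to \Sigma F$, which is literally the zero map $0\colon gl_1S/spin \to \Sigma F$.

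Finally, combining the two observations, under the trivialization chosen in the first step this image point is precisely the zero map $bspin \to \Sigma F$. There is no real obstacle here beyond careful bookkeeping of which nullhomotopy is being used to trivialize the torsor; the statement is a direct naturality check and requires no further ingredients.
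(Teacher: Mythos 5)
Your argument is correct and is exactly the paper's (the paper treats this as a tautology, noting only that $\circleit{A}$ by definition hits the composite $gl_1S/spin \to 0 \to \Sigma F$ and that this same point is used to trivialize the torsor). Your write-up just spells out the same bookkeeping in more detail.
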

To detect elements in $\textup{Nulls}(spin, \Sigma F)$ we need only check one homotopy group.
\begin{lemma}\label{lemma:pi-4} Evaluation at $\pi_4$ gives an isomorphism
\[
[bspin, \Sigma F ] \cong \pi_4\Sigma F
\]
\end{lemma}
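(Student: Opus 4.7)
The plan is a two-step Postnikov calculation exploiting the connectivity gap between $bspin$ and $\Sigma F$. By the previous lemma, $\pi_q F = 0$ for $q \ge 4$, so $\pi_q \Sigma F = 0$ for $q \ge 5$; inductively up the Postnikov towers of $F$ and $\Sigma F$, this means that $[X, \Sigma F] = 0$ and $[X, F] = 0$ whenever $X$ is $n$-connective for some $n \ge 5$. On the source side, $bspin = \tau_{\ge 4}bo$ satisfies $\pi_k bspin = 0$ for $5 \le k \le 7$, so the fiber of the Postnikov projection $bspin \to \tau_{\le 4}bspin = \Sigma^4 H\mathbb{Z}$ is in fact $8$-connective.

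First I would apply $[-,\Sigma F]$ to the cofiber sequence
\[
\tau_{\ge 8}bspin \longrightarrow bspin \longrightarrow \Sigma^4 H\mathbb{Z}.
\]
Using the identification $[\Sigma \tau_{\ge 8} bspin, \Sigma F] = [\tau_{\ge 8} bspin, F]$ together with the observation above, both error terms in the resulting long exact sequence vanish, so precomposition with the Postnikov projection gives an isomorphism $[\Sigma^4 H\mathbb{Z}, \Sigma F] \cong [bspin, \Sigma F]$.

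Next I would compare $[\Sigma^4 H\mathbb{Z}, \Sigma F]$ with $[\Sigma^4 S, \Sigma F] = \pi_4 \Sigma F$. Let $K$ be the fiber of the unit map $S \to H\mathbb{Z}$; since the unit is an isomorphism on $\pi_0$, the spectrum $K$ is $1$-connective. The cofiber sequence $\Sigma^4 S \to \Sigma^4 H\mathbb{Z} \to \Sigma^5 K$ gives a long exact sequence whose error terms $[\Sigma^5 K, \Sigma F]$ and $[\Sigma^4 K, \Sigma F]$ vanish by the same connectivity argument, since $\Sigma^5 K$ is $6$-connective and $\Sigma^4 K$ is $5$-connective. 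Hence precomposition with the suspended unit induces an isomorphism $[\Sigma^4 H\mathbb{Z}, \Sigma F] \cong \pi_4 \Sigma F$.

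Finally, identifying the composite isomorphism with evaluation on $\pi_4$ is a matter of unwinding definitions: it sends $g : bspin \to \Sigma F$ first to its unique factorization $\tilde g : \Sigma^4 H\mathbb{Z} \to \Sigma F$ through the Postnikov projection, and then to the precomposition of $\tilde g$ with the $4$-fold suspension of the unit $S \to H\mathbb{Z}$. Since a generator $\alpha$ of $\pi_4 bspin$ projects to the fundamental class of $\Sigma^4 H\mathbb{Z}$, this composite recovers $\pi_4(g)(\alpha)$. There is no serious obstacle; the whole proof rests on the bound $\pi_q F = 0$ for $q \ge 4$ established in the previous lemma, together with the triviality of $\pi_5 bspin$, $\pi_6 bspin$, and $\pi_7 bspin$.
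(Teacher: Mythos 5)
Your proof is correct, and it is dual to the one in the paper rather than identical to it: you truncate the \emph{source}, replacing $bspin$ by $\tau_{\le 4}bspin = \Sigma^4 H\mathbb{Z}$ and then comparing with $\Sigma^4 S$, whereas the paper truncates the \emph{target}, observing that $[bspin, \Sigma F] = [bspin, \tau_{\ge 4}\Sigma F] = [bspin, \Sigma^4 H\pi_4\Sigma F] = H^4(bspin; \pi_4\Sigma F)$ and concluding by the Hurewicz theorem. Both arguments rest on the same connectivity gap coming from $\pi_q F = 0$ for $q \ge 4$. The one substantive difference in inputs: your route needs the vanishing of $\pi_5 bspin$, $\pi_6 bspin$, $\pi_7 bspin$ (so that the fiber of the Postnikov projection is $8$-connective and in particular $\ge 5$-connective), while the paper's route needs only that $bspin$ is $3$-connected, since $[bspin, \Sigma^4 HA] \cong \operatorname{Hom}(H_4(bspin), A) \cong \operatorname{Hom}(\pi_4 bspin, A)$ follows from universal coefficients and Hurewicz with no information about $\pi_{\ge 5}$. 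For this lemma the extra input is harmless (the gap in $\pi_* bo$ is recorded in the paper), but the target-truncation argument is the more economical and more general one; your version would not transport, say, to $bso$ in place of $bspin$, where $\pi_5$ of the fiber of the truncation is nontrivial, while the paper's would. Your final identification of the composite isomorphism with evaluation on $\pi_4$ is also correct: the generator of $\pi_4 bspin$ maps to the fundamental class of $\Sigma^4 H\mathbb{Z}$, so the two factorizations compose to $\pi_4(g)$.
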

\begin{proof} Since $bspin$ has no homotopy below $\pi_4$, we have $[bspin, \Sigma F] = [bspin, \tau_{\ge 4}\Sigma F]$. Since $\Sigma F$ has no homotopy above $\pi_4$ this is $[bspin, H\pi_4\Sigma F]$ and the result follows from Hurewicz. 
\end{proof}
Let's recall how to associate a sequence of modular forms to a nullhomotopy as in (\ref{theorem:unscrewable orientations}). Given an element in $\pi_0\textup{Nulls}(spin, L_{K(1)\vee K(2)}gl_1tmf(\Gamma))$ we have a map
\[
gl_1S/spin \longrightarrow L_{K(1)\vee K(2)}gl_1tmf(\Gamma)^{\wedge}_p
\]
Upon $K(1)$-localization, we get a composite
\[
\xymatrix{
KO_p \ar[r]_-{b_c}^-{\cong} & L_{K(1)}gl_1S/spin \ar[r] & L_{K(1)}gl_1tmf(\Gamma)^{\wedge}_p \otimes \mathbb{Q}
}
\]
and the image of generators of $\pi_{4k}KO_p$, suitably divided by 2 as in (\ref{definition:char}), give us a sequence of modular forms of the shape $(1-c^{2k})(1-\psi^p)g_{2k}$. The sequence $\{g_{2k}\}$ is the associated sequence.   
\begin{lemma} Let $c \in \mathbb{Z}_{\ge 0}$ be a topological generator of $\mathbb{Z}_p$. Let $b_c: KO_p \cong L_{K(1)}gl_1S/spin$ be the equivalence constructed in (\ref{theorem:diagram}). Then there exists an element $x \in \pi_4 gl_1S/spin$ whose image under $K(1)$-localization is $b_c(x_4)$, where we recall that $x_4 \in \pi_4KO_p$ is a generator.
\end{lemma}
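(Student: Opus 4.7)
\emph{Plan.} The strategy is to compute $\pi_4(gl_1 S/spin)$ directly from the long exact sequence of $spin \to gl_1 S \to gl_1 S/spin$ and compare this integral calculation to the $K(1)$-local picture, where Theorem~\ref{theorem:diagram} makes $b_c$ explicit. The lemma then reduces to checking that a certain rational number is an integer after a suitable choice of $c$.

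First, I would extract from the long exact sequence of homotopy groups the portion
\[
0 = \pi_4 S \longrightarrow \pi_4(gl_1 S/spin) \xrightarrow{\partial_*} \pi_3 spin = \mathbb{Z}\{x_4\} \xrightarrow{J} \pi_3 S = \mathbb{Z}/24.
\]
The classical calculation of the image of $J$ in degree $3$ shows that the rightmost map is surjective, so $\pi_4(gl_1 S/spin) \cong 24\mathbb{Z}$ with a canonical generator $y$ whose boundary is $\partial_* y = 24\, x_4 \in \pi_4 bspin$.

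Next, I would compute the image of $b_c(x_4) \in \pi_4 L_{K(1)}(gl_1 S/spin)$ under $(L_{K(1)}\partial)_*$. By Theorem~\ref{theorem:diagram}, $L_{K(1)}\partial \circ b_c = 1 - \psi^c$; since $\psi^c$ acts on $\pi_4 KO_p = \mathbb{Z}_p$ by $c^2$, this image is $(1-c^2)x_4$. The topological generator hypothesis on $c$ forces $c^2 \not\equiv 1 \pmod p$, so $1 - c^2$ is a unit in $\mathbb{Z}_p$. Chasing the naturality square
\[
\xymatrix{
\pi_4(gl_1 S/spin) \ar[r]^-{\partial_*} \ar[d]_{\mathrm{loc}} & \pi_4 bspin \ar[d]^{\mathrm{loc}} \\
\pi_4 L_{K(1)}(gl_1 S/spin) \ar[r]^-{(L_{K(1)}\partial)_*} & \pi_4 KO_p
}
\]
in which the right vertical is the usual inclusion $\mathbb{Z} \hookrightarrow \mathbb{Z}_p$ and the bottom is multiplication by $(1-c^2)$ through $b_c$, commutativity yields $\mathrm{loc}(y) = \tfrac{24}{1-c^2}\, b_c(x_4)$ in $\pi_4 L_{K(1)}(gl_1 S/spin) \cong \mathbb{Z}_p$. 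Consequently, if $n := (1-c^2)/24$ is an integer, then $x := n\, y \in \pi_4(gl_1 S/spin) = \mathbb{Z}$ satisfies $\mathrm{loc}(x) = b_c(x_4)$.

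The main obstacle is thus the integrality of $(1-c^2)/24$. Any integer $c$ coprime to $24$ has $c^2 \equiv 1 \pmod{24}$ (as can be checked on the eight residue classes in $(\mathbb{Z}/24)^{\times}$), so it suffices to arrange that the topological generator $c$ is coprime to $24$. This is always possible: $c = 5$ is coprime to $24$ and topologically generates $\mathbb{Z}_p^\times/\{\pm 1\}$ for $p \in \{2,3\}$, while for $p \geq 5$ one lifts any generator of the residue group modulo $p$ to a positive integer coprime to $24$ via the Chinese Remainder Theorem, using density of $\mathbb{Z}$ in $\mathbb{Z}_p$ to preserve the topological-generation property. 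With such a $c$, the element $x$ constructed above lies in $\pi_4(gl_1 S/spin)$ and has the required $K(1)$-localization.
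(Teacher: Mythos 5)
Your proposal follows the same skeleton as the paper's proof — the long exact sequence $0 = \pi_4 S \to \pi_4(gl_1S/spin) \to \pi_4 bspin \to \pi_3 gl_1S = \mathbb{Z}/24$ together with the diagram of Theorem \ref{theorem:diagram} — but you carry it further than the paper does, and in doing so you surface a point the paper's one-line ``by inspection of the diagram'' elides. Since $J\colon \pi_3 spin \to \pi_3 S$ is surjective, the image of $\pi_4(gl_1S/spin)$ in $\pi_4 bspin$ is exactly $24\mathbb{Z}\{x_4\}$, while $b_c(x_4)$ maps to $(1-c^2)x_4$ under $L_{K(1)}\partial$; so an integral preimage of $b_c(x_4)$ exists if and only if $24 \mid c^2 - 1$, i.e.\ if and only if $\gcd(c,6)=1$. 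For an arbitrary topological generator this can fail (e.g.\ $c=3$ for $p=7$ gives $1-c^2=-8$), so the lemma as stated needs either this divisibility as a hypothesis or your observation that such a $c$ can always be chosen ($c=5$ for $p=2,3$; a CRT lift of a primitive root mod $p^2$ congruent to $1$ mod $24$ for $p\ge 5$). Your argument is correct, and the restriction on $c$ is harmless downstream since Theorem \ref{theorem:p-complete} only requires one such generator. Two small points: to generate $\mathbb{Z}_p^\times$ topologically for $p\ge 5$ you should lift a generator of $(\mathbb{Z}/p^2)^\times$, not merely of $(\mathbb{Z}/p)^\times$; and it is cleaner to phrase the conclusion as $x := \tfrac{1-c^2}{24}\,y$ (as you do at the end) rather than via the a priori rational coefficient $\tfrac{24}{1-c^2}$.
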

\begin{proof} Since $\pi_4S = 0$, we have an exact sequence
\[
\xymatrix{ 0\ar[r] & \pi_4gl_1S/spin \ar[r] & \pi_4bspin \ar[r] & \pi_3gl_1S}
\]
Let $x_4$ denote the generator of $\pi_4bspin$. Then, since $\pi_3S = \mathbb{Z}/24$, $24x_4$ is hit by an element $x \in \pi_4gl_1S/spin$. This has the desired properties by inspection of the diagram (\ref{theorem:diagram}). 
\end{proof}
\begin{theorem}\label{theorem:p-complete} Suppose that $\pi_3F$ is torsion-free (e.g. if $p\ge 5$ by (\ref{corollary:torsion-free})). Identifying elements in $$\pi_0\textup{Nulls}(spin, L_{K(1)\vee K(2)}gl_1tmf(\Gamma))$$ with certain sequences of $p$-adic modular forms as reviewed above, the map \circleit{B} takes an element $\{g_k\}_{k\ge 2}$ to the image of \circleit{A} if and only if $g_2$ is in the image of the usual inclusion
\[
MF_2(\Gamma) \otimes \mathbb{Z}_p \longrightarrow MF_{p, 2}(\Gamma)
\]
\end{theorem}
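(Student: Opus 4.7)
The plan is to detect the equality of the two points $\circleit{A}$ and $\circleit{B}(\{g_k\})$ of $\textup{Nulls}(spin,\Sigma F)$ by a single class in $\pi_3 F$, compute that class in terms of $g_2$ via the $K(1)$-local picture, and translate its vanishing into the classicality of $g_2$.

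First I would use the tautological proposition above: $\circleit{A}$ corresponds to the zero map $bspin\to\Sigma F$, while $\circleit{B}(\{g_k\})$ corresponds to the composite $bspin\to L_{K(1)\vee K(2)}gl_1tmf(\Gamma)^\wedge_p\to\Sigma F$ induced by $\alpha$. Their difference lies in $[bspin,\Sigma F]\cong\pi_4\Sigma F=\pi_3F$ by Lemma \ref{lemma:pi-4}. Using the element $x\in\pi_4 gl_1S/spin$ of the preceding lemma (which lifts $24x_4$ and $K(1)$-localizes to $b_c(x_4)$), this difference is precisely the image of $\alpha_*(x)\in\pi_4 L_{K(1)\vee K(2)}gl_1tmf(\Gamma)^\wedge_p$ under the connecting homomorphism $\partial$ of the fiber sequence defining $F$. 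Thus it suffices to show $\partial\alpha_*(x)=0$ if and only if $g_2\in MF_2(\Gamma)\otimes\mathbb{Z}_p$.

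Next I would compute the $L_{K(1)}$-component of $\alpha_*(x)$. By Theorem \ref{theorem:diagram} the $K(1)$-localization of $x$ is $b_c(x_4)$, so by Rezk's formula (Theorem \ref{theorem:formula}) and the parameterization Theorem \ref{theorem:K(1)-local parameterization}, the image of $\alpha_*(x)$ in $\pi_4 L_{K(1)}tmf(\Gamma)\cong MF_{p,2}(\Gamma)$ is $(1-c^2)g_2^{(p)}$ up to a nonzero integer scalar. Combining the preceding pullback proposition with Proposition \ref{proposition:usual inclusion}, the map $\partial$ factors as the $L_{K(1)}$-projection followed by reduction modulo $(1-\psi^p)(MF_2(\Gamma)\otimes\mathbb{Z}_p)$; by Corollary \ref{corollary:torsion-free} the resulting map $\pi_3 F\to\textup{coker}(1-\psi^p)$ is injective under the torsion-freeness hypothesis. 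Since the cokernel is torsion-free and $(1-c^2)$ is a nonzero element of $\mathbb{Z}_p$, $\partial\alpha_*(x)=0$ reduces to vanishing of the class of $g_2^{(p)}$ in $\textup{coker}(1-\psi^p)$.

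The main obstacle is the final algebraic translation: $g_2^{(p)}\in(1-\psi^p)(MF_2(\Gamma)\otimes\mathbb{Z}_p)$ if and only if $g_2\in MF_2(\Gamma)\otimes\mathbb{Z}_p$. The easier implication starts with classical $g_2$ and uses $U_p\psi^p=p^2$ in weight $2$ together with the Eichler-Shimura relation $T_p=U_p+\tfrac{1}{p}\psi^p$ to exhibit $g_2^{(p)}$ explicitly as $(1-\psi^p)$ of a classical form. For the converse, one uses that $\{g_k\}$ lies in $\pi_0\textup{Nulls}(spin,L_{K(1)\vee K(2)}gl_1tmf)$, so by Theorem \ref{theorem:unscrewable orientations} we have $U_pg_2^{(p)}=g_2^{(p)}$; hence $g_2^{(p)}$ is ordinary. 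Applying Hida's idempotent $e=\lim_n U_p^{n!}$ and using $e\psi^p=0$ so that $e\circ(1-\psi^p)=e$ on $V_\infty(\Gamma)$, the equation $g_2^{(p)}=(1-\psi^p)h$ for $h\in MF_2(\Gamma)\otimes\mathbb{Z}_p$ yields $g_2^{(p)}=eh$; Hida's control theorem for ordinary forms at weight $2$ reviewed in \S 4.1 then pins $g_2^{(p)}$ down as a classical form of level $\Gamma$, and inverting the operator $1-\tfrac{1}{p}\psi^p$ on the ordinary part recovers $g_2$ as a classical modular form.
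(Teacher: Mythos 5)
Your first two paragraphs track the paper's own proof closely: the paper likewise detects the discrepancy between $\circleit{A}$ and $\circleit{B}(\{g_k\})$ on $\pi_4$ via Lemma \ref{lemma:pi-4}, pushes the element $x\in\pi_4 gl_1S/spin$ around the square comparing $gl_1S/spin\to bspin$ with $L_{K(1)\vee K(2)}gl_1tmf(\Gamma)^{\wedge}_p\to\Sigma F$, identifies the resulting class as $2(1-c^2)$ times $g_2^{(p)}$ modulo the image of $\pi_4gl_1tmf(\Gamma)^{\wedge}_p$, and uses torsion-freeness of $\pi_3F$ (equivalently of the cokernel appearing in Corollary \ref{corollary:torsion-free}) to strip off the scalar $(1-c^2)$. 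Up to that point your proposal is the paper's argument in different packaging.

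The gap is in your final paragraph. The identity $e\psi^p=0$ is false: since $U_p\psi^p=p^k$, one has $e(\psi^p h)=\lim_n U_p^{n!-1}(p^k h)=p^k\,U_p^{-1}(eh)$ on the ordinary part, which is nonzero whenever $eh\neq 0$ (what $e$ annihilates is the $p$-depletion $1-\tfrac{1}{p^{k-1}}\psi^pU_p$, not $\psi^p$ itself). So the chain $g_2^{(p)}=(1-\psi^p)h\Rightarrow g_2^{(p)}=eh$ breaks, and the appeal to Hida's control theorem --- which is not in fact reviewed in \S4.1, and which at weight $2$ only returns classical forms with $p$ in the level --- does not repair it; your forward direction likewise obscures what is really a tautology. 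The whole detour is unnecessary. By Proposition \ref{proposition:usual inclusion} the gluing map $\pi_4gl_1tmf(\Gamma)^{\wedge}_p\to MF_{p,2}(\Gamma)$ sends a classical form $h$ to $h^{(p)}$, and $g\mapsto g^{(p)}$ is injective on $MF_{p,2}(\Gamma)\otimes\mathbb{Q}_p$ by the $q$-expansion principle (vanishing of $g^{(p)}$ forces $a_0=0$ and $a_n=p\,a_{n/p}$, hence all $a_n=0$ by induction on the $p$-adic valuation of $n$). Therefore $g_2^{(p)}=h^{(p)}$ with $h$ classical forces $g_2=h$, and combined with the saturation of $MF_2(\Gamma)\otimes\mathbb{Z}_p$ in $MF_{p,2}(\Gamma)$ supplied by your torsion-freeness hypothesis (needed to absorb the factor $2(1-c^2)$ when $p\le 3$), this gives exactly the equivalence you want; this is what the paper does.
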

\begin{proof} Consider the following diagram:
\[
\xymatrix{
gl_1S/spin \ar[r]\ar[d] & bspin \ar[d]\\
L_{K(1)\vee K(2)}gl_1tmf(\Gamma)^{\wedge}_p \ar[r] & \Sigma F
}
\]
By the previous lemma, there is an element $x \in \pi_4gl_1S/spin$ whose image under $K(1)$-localization is $b_c(x_4)$.  Let $\phi$ denote the image of $x_4\in \pi_4bspin$ under the right hand vertical map, and let $y$ denote the image of $x$ under the left hand vertical map. Then commutativity of the diagram, together with the diagram of (\ref{theorem:diagram}), gives the picture
\[
\xymatrix{
x\ar[r]\ar[d] & (1-c^2)x_4\ar[d]\\
y \ar[r]& (1-c^2)\phi
}
\]
When $p\ge 5$, $1-c^2$ is a $p$-adic unit. Indeed, if $1-c^2 \equiv 0$ mod $p$, then $c$ would be $\pm 1$ mod $p$ and so wouldn't generate $\mu_{p-1}/ \{\pm 1\}$ when $p\ge 5$.  Thus, when $p\ge 5$ or $\pi_3F$ is torsion-free, $\phi = 0$ if and only if $(1-c^2)\phi = 0$, and this happens if and only if $y$ maps to zero in $\pi_3F$. Equivalently, we're asking that $y$ be in the image of 
\[
\pi_4gl_1tmf(\Gamma)^{\wedge}_p \longrightarrow \pi_4L_{K(1)\vee K(2)}gl_1tmf(\Gamma)^{\wedge}_p
\]
which we may check after composition with the injective map
\[
\pi_4L_{K(1)\vee K(2)} gl_1tmf(\Gamma)^{\wedge}_p \longrightarrow \pi_4L_{K(1)}tmf(\Gamma)^{\wedge}_p \otimes \mathbb{Q}. 
\]
By definition of $x$, the element $y$ maps to the element $2(1-c^2)(1-\psi^p)g_2$, and by (\ref{proposition:usual inclusion}) this lies in the image of the map from $\pi_4gl_1tmf(\Gamma)^{\wedge}_p$ if and only if $g_2$ came from an element in $MF_2 \otimes \mathbb{Z}_p$. 
\end{proof}
\subsection{The case $p=3$}
When $p=3$ trouble may occur if $\pi_3F$ has $3$-torsion. This doesn't happen as soon as the moduli problem is representable.
\begin{lemma} If $\mathscr{M}_{\Gamma}[1/N]$ is representable by a scheme, $\pi_3F$ is torsion-free. In particular, this is true for $\Gamma = \Gamma_0(2)$. 
\end{lemma}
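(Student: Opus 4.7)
The plan is to adapt the strategy of Corollary \ref{corollary:torsion-free} to cover the $p=3$ case by exploiting scheme-representability at two crucial places: first to force $\pi_3 tmf(\Gamma)^{\wedge}_p$ to vanish, and then to control the mod-$p$ behaviour of $1-\psi^p$ on weight-$2$ forms. I proceed in three steps.

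The first step is to verify $\pi_3 tmf(\Gamma)^{\wedge}_p = 0$. Under the assumption that $\mathscr{M}_\Gamma[1/N]$ is a scheme (in particular a $1$-dimensional Deligne--Mumford stack with no stacky points), the descent spectral sequence
\[
E_2^{s,2t} \;=\; H^s\bigl(\mathscr{M}_\Gamma[1/N],\,\omega^{\otimes t}\bigr) \;\Longrightarrow\; \pi_{2t-s} Tmf(\Gamma)^{\wedge}_p
\]
is concentrated in columns $s \in \{0,1\}$. The odd homotopy group $\pi_3$ receives contributions only from $H^1(\mathscr{M}_\Gamma[1/N], \omega^{\otimes 2})$, which vanishes in the cases of interest (for $\Gamma_0(2)$ the coarse modular curve has genus $0$ and $\omega^{\otimes 2}$ has nonnegative degree). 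Taking connective covers yields $\pi_3 tmf(\Gamma)^{\wedge}_p = 0$, and hence $\pi_3 gl_1 tmf(\Gamma)^{\wedge}_p = 0$ via the positive-degree identification $\pi_q gl_1 R \cong \pi_q R$. The long exact sequence of $F \to gl_1 tmf(\Gamma)^{\wedge}_p \to L_{K(1)\vee K(2)}gl_1 tmf(\Gamma)^{\wedge}_p$ then exhibits
\[
\pi_3 F \;=\; \mathrm{coker}\Bigl(\pi_4 gl_1 tmf(\Gamma)^{\wedge}_p \longrightarrow \pi_4 L_{K(1)\vee K(2)}gl_1 tmf(\Gamma)^{\wedge}_p\Bigr).
\]

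The second step is to reduce torsion-freeness of this cokernel to a question about $p$-adic modular forms. The pullback square for $\pi_4$ from \S 4.2 has injective vertical maps and so embeds $\pi_4 L_{K(1)\vee K(2)}gl_1 tmf(\Gamma)^{\wedge}_p$ into $MF_{p,2}(\Gamma)$; meanwhile Proposition \ref{proposition:usual inclusion} identifies the composite $\pi_4 gl_1 tmf(\Gamma)^{\wedge}_p \to MF_{p,2}(\Gamma)$ with the operator $1-\psi^p$ applied to $MF_2(\Gamma) \otimes \mathbb{Z}_p$. A short diagram chase, essentially the one in Corollary \ref{corollary:torsion-free}, then places $\pi_3 F$ as a subgroup of $\mathrm{coker}\bigl(1-\psi^p \colon MF_2(\Gamma)\otimes\mathbb{Z}_p \to MF_{p,2}(\Gamma)\bigr)$, and it suffices to prove this cokernel has no $p$-torsion.

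The third step, which is the main obstacle, is to verify torsion-freeness of this cokernel at $p=3$. The reduction-mod-$p$ argument of Corollary \ref{corollary:torsion-free} breaks down here because $\psi^3$ reduces to the Frobenius endomorphism on $q$-expansions, and weight-$2$ forms fixed by Frobenius mod $3$ can exist. The scheme-representability hypothesis is the key extra input: it forces $MF_*(\Gamma)$ to be a sufficiently regular ring (for $\Gamma_0(2)$ concretely the polynomial ring $\mathbb{Z}[1/2][\delta,\epsilon]$) that the $p$-adic completion $MF_{p,2}(\Gamma)$ has an explicit Banach basis obtained from multiplying the weight-$2$ generator by powers of a lift of the Hasse invariant. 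A direct $q$-expansion computation, tracking the action of $1-\psi^3$ on this basis, then shows the cokernel is $3$-torsion-free. Specialising to $\Gamma_0(2)$ yields the parenthetical claim of the lemma.
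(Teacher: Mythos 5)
Your skeleton (the long exact sequence in $\pi_*$ for $F \to gl_1tmf(\Gamma)^{\wedge}_3 \to L_{K(1)\vee K(2)}gl_1tmf(\Gamma)^{\wedge}_3$, handling the $\pi_3 gl_1tmf$ term and the cokernel on $\pi_4$ separately) is the paper's, but you have misplaced where the $p=3$ difficulty actually lives, and your ``main obstacle'' step rests on a wrong normalization of $\psi^p$. In this paper $\psi^p$ on weight-$k$ forms is $p^k$ times the operator $q\mapsto q^p$ (cf.\ the formula $g_k^{(p)}(q)=\sum(a_n-p^{k-1}a_{n/p})q^n$, and the remark that the $V_p$ of Ando--Hopkins--Rezk is $\frac{1}{p^k}\psi^p$). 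Hence on weight-$2$ forms $1-\psi^p\equiv \mathrm{id} \bmod p$ for \emph{every} prime, the argument of Corollary \ref{corollary:torsion-free} for the cokernel of $1-\psi^p\colon MF_2(\Gamma)\otimes\mathbb{Z}_p\to MF_{p,2}(\Gamma)$ goes through verbatim at $p=3$, and the paper says exactly this (``the former claim has the same proof as \ref{corollary:torsion-free}''). Your Step 3 --- the claim that $\psi^3$ reduces to Frobenius, so that Frobenius-fixed weight-$2$ forms obstruct mod-$3$ injectivity, to be repaired by an unspecified $q$-expansion computation against a Hasse-invariant Banach basis --- is therefore addressing a non-problem, and in any case is only a sketch, not a proof.

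The actual reason Corollary \ref{corollary:torsion-free} is restricted to $p\ge 5$ is the last term of the exact sequence: $\pi_3 gl_1tmf(\Gamma)^{\wedge}_p\cong\pi_3 tmf(\Gamma)^{\wedge}_p$ can have $p$-torsion when $p=2,3$ (at level one, $\pi_3 tmf=\mathbb{Z}/24$). That is the term the representability hypothesis is there to kill, and it is the content of your Step 1. Your descent-spectral-sequence argument there is right in outline, but you only justify $H^1(\mathscr{M}_\Gamma,\omega^{\otimes 2})=0$ for $\Gamma_0(2)$ (``genus $0$''); for the lemma as stated you would need either Kodaira--Spencer plus Serre duality, which give $H^1(X,\omega^{\otimes 2})\cong H^0(X,\mathcal{O}(-\mathrm{cusps}))^{\vee}=0$ for any modular curve with a cusp, or simply the citation the paper uses (Hill--Lawson 6.4: the only torsion in $\pi_*tmf(\Gamma)^{\wedge}_3$ is in $\pi_1$). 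Note also that the paper gets by with torsion-freeness of $\pi_3 tmf(\Gamma)^{\wedge}_3$ rather than vanishing, since $\pi_3F$ is then an extension of a torsion-free subgroup of $\pi_3 gl_1tmf(\Gamma)^{\wedge}_3$ by the torsion-free cokernel on $\pi_4$.
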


Recall we have an exact sequence
\[
\pi_4gl_1tmf(\Gamma) \longrightarrow \pi_4 L_{K(1)\vee K(2)}gl_1tmf(\Gamma)^{\wedge}_3 \longrightarrow \pi_3F \longrightarrow \pi_3gl_1tmf(\Gamma)^{\wedge}_3
\]
Thus, it suffices to show that the first map has torsion-free cokernel and the last group is torsion-free. The former claim has the same proof as (\ref{corollary:torsion-free}). The latter claim is the context of the next lemma, which completes the proof of the theorem. 
\begin{lemma} When $\left(\mathscr{M}_\Gamma\right)^{\wedge}_3$ is representable by a scheme, the group $\pi_3tmf(\Gamma)^{\wedge}_3$ is torsion-free. 
\end{lemma}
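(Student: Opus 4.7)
The plan is to compute $\pi_3 tmf(\Gamma)^{\wedge}_3$ via the descent spectral sequence for $Tmf(\Gamma)^{\wedge}_3$, then apply Grothendieck--Serre duality to reduce the torsion-freeness claim to a statement about cohomology of the structure sheaf, which I will control using the Jacobian. Set $X := \mathscr{M}_\Gamma^{\wedge}_3$; by assumption $X$ is a smooth, proper scheme of relative dimension one over $\operatorname{Spec}\mathbb{Z}_3$, so it has cohomological dimension at most one. The descent spectral sequence
\[
E_2^{s,t} = H^s(X, \omega^{\otimes t/2}) \Rightarrow \pi_{t-s} Tmf(\Gamma)^{\wedge}_3
\]
therefore vanishes for $s \ge 2$, and in total degree $3$ the only contributing bidegree is $E_2^{1,4} = H^1(X, \omega^{\otimes 2})$. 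No differentials touch this slot, and since $tmf(\Gamma)$ and $Tmf(\Gamma)$ agree in positive degrees, I conclude $\pi_3 tmf(\Gamma)^{\wedge}_3 \cong H^1(X, \omega^{\otimes 2})$.

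Next, I will invoke Grothendieck--Serre duality for the smooth proper morphism $f\colon X \to \operatorname{Spec}\mathbb{Z}_3$. By the Kodaira--Spencer isomorphism $\Omega^1_{X/\mathbb{Z}_3}(\log\,\textup{cusps}) \cong \omega^{\otimes 2}$ (extended across the cusps using the log structure from the Tate curves), the relative dualizing sheaf is $\omega_{X/\mathbb{Z}_3} \cong \omega^{\otimes 2}(-\textup{cusps})$. Duality then supplies a quasi-isomorphism
\[
R\Gamma(X, \omega^{\otimes 2}) \simeq R\operatorname{Hom}_{\mathbb{Z}_3}(R\Gamma(X, \mathcal{O}_X(-\textup{cusps})), \mathbb{Z}_3)[-1].
\]
Since every connected component of $X$ meets the cusp divisor, $H^0(X, \mathcal{O}_X(-\textup{cusps})) = 0$, and the universal coefficient sequence degenerates in the relevant degree to an isomorphism
\[
H^1(X, \omega^{\otimes 2}) \cong \operatorname{Ext}^1_{\mathbb{Z}_3}(H^1(X, \mathcal{O}_X(-\textup{cusps})), \mathbb{Z}_3),
\]
which is exactly the $\mathbb{Z}_3$-torsion subgroup of $H^1(X, \mathcal{O}_X(-\textup{cusps}))$. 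It thus suffices to show this last cohomology group is torsion-free.

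To finish, I will analyze the long exact sequence associated to
\[
0 \to \mathcal{O}_X(-\textup{cusps}) \to \mathcal{O}_X \to \mathcal{O}_{\textup{cusps}} \to 0.
\]
Because $\mathcal{O}_{\textup{cusps}}$ is supported on a zero-dimensional locus, $H^1(X, \mathcal{O}_{\textup{cusps}}) = 0$, and $H^0(X, \mathcal{O}_{\textup{cusps}})$ is a product of finite unramified $\mathbb{Z}_3$-algebras (in particular a free $\mathbb{Z}_3$-module) into which $H^0(X, \mathcal{O}_X) = \mathbb{Z}_3^{\pi_0(X)}$ sits diagonally with free cokernel. Moreover, $H^1(X, \mathcal{O}_X)$ is $\operatorname{Lie}(\operatorname{Jac}(X))$, locally free over $\mathbb{Z}_3$ of rank equal to the genus by flatness of the smooth proper family. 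The long exact sequence then exhibits $H^1(X, \mathcal{O}_X(-\textup{cusps}))$ as an extension of two finitely generated torsion-free $\mathbb{Z}_3$-modules; over the PID $\mathbb{Z}_3$ such an extension is torsion-free (in fact free), completing the proof. The only delicate ingredient is the Kodaira--Spencer identification of $\omega_{X/\mathbb{Z}_3}$ near the cusps, which is standard from \cite{DR, KM}; the remaining steps are formal consequences of smooth properness of $X$.
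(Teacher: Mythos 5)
Your argument is correct in substance, but it is a genuinely different route from the paper's. The paper disposes of the lemma in one line by citing \cite[6.4]{HL}, which asserts that for representable level structures the only torsion in $\pi_*tmf(\Gamma)$ occurs in $\pi_1$. You instead give a self-contained computation: the descent spectral sequence collapses to identify $\pi_3$ with $H^1(X,\omega^{\otimes 2})$, Grothendieck--Serre duality together with Kodaira--Spencer converts this into the $\mathbb{Z}_3$-torsion of $H^1(X,\mathcal{O}_X(-\textup{cusps}))$, and the ideal-sheaf sequence of the cusps shows that group is torsion-free. This is essentially the mechanism underlying the Hill--Lawson statement, so what your version buys is transparency --- it isolates exactly where torsion could enter (the cokernel of restriction to the cusps and $\operatorname{Lie}(\operatorname{Jac})$) --- at the cost of length and of a few assertions you leave implicit. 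Two of these deserve a sentence each: (i) $H^0(X,\mathcal{O}_X)$ need not be $\mathbb{Z}_3^{\pi_0(X)}$, since components of $X$ need not be geometrically connected; it is a finite \'etale $\mathbb{Z}_3$-algebra, which is all you use. (ii) The claim that $H^0(X,\mathcal{O}_X)\to H^0(\mathcal{O}_{\textup{cusps}})$ has torsion-free cokernel is not automatic from "sitting diagonally": it follows because each factor $A_j$ of $H^0(X,\mathcal{O}_X)$ is a discrete valuation ring and the cusps on the corresponding component are nonempty and finite flat, so $A_j/\mathfrak{m}_j\to \Gamma(\textup{cusps}_j,\mathcal{O})/\mathfrak{m}_j$ is injective by Nakayama, which is exactly the saturation needed. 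With those two points made explicit, the proof is complete and could replace the citation.
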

\begin{proof} The only possible torsion is in $\pi_1$, see \cite[6.4]{HL}. 
\end{proof}

\subsection{The case $p=2$}
At the prime $2$, torsion appears in even simple examples:
\begin{lemma} The torsion in $\pi_3F$ is at most a $\mathbb{Z}/2$. 
\end{lemma}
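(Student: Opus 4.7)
The plan is to adapt the strategy used in the previous lemma at $p=3$. Combining the vanishing $\pi_qF=0$ for $q\ge 4$ with the long exact sequence of the defining fibration $F \to gl_1tmf(\Gamma)^{\wedge}_2 \to L_{K(1)\vee K(2)}gl_1tmf(\Gamma)^{\wedge}_2$ produces a short exact sequence
\[
0 \longrightarrow \operatorname{coker}(\alpha_4) \longrightarrow \pi_3F \longrightarrow \ker(\alpha_3) \longrightarrow 0,
\]
where $\alpha_n$ denotes the map $\pi_n gl_1tmf(\Gamma)^{\wedge}_2 \to \pi_nL_{K(1)\vee K(2)}gl_1tmf(\Gamma)^{\wedge}_2$. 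It therefore suffices to bound the 2-torsion in each outer term.

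For $\operatorname{coker}(\alpha_4)$, I would run the argument of Corollary~\ref{corollary:torsion-free} essentially verbatim. By Proposition~\ref{proposition:usual inclusion}, the map $\alpha_4$ is controlled by $1-\psi^2 \colon MF_2(\Gamma)\otimes\mathbb{Z}_2 \to MF_{p,2}(\Gamma)$. On weight $k\ge 1$ the operator $\psi^p$ is divisible by $p^k$, so mod $2$ the map $1-\psi^2$ reduces to the identity on weight-two forms, which is injective. Since $MF_{p,2}(\Gamma)$ is torsion-free, this algebraic cokernel is torsion-free, and by the same reasoning as at odd primes the topological cokernel $\operatorname{coker}(\alpha_4)$ is torsion-free as well.

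For $\ker(\alpha_3)$, I would use the natural set-level isomorphism $\pi_3gl_1R \cong \pi_3R$ in positive degree to reduce to bounding the 2-torsion in $\pi_3tmf(\Gamma)^{\wedge}_2$. Invoking the Hill--Lawson computation~\cite[6.4]{HL} cited in the $p=3$ case, the only possible torsion in $\pi_3tmf(\Gamma)^{\wedge}_2$ comes from $\eta$-multiples of classes in $\pi_1$ and $\pi_2$, and one verifies this contributes at most a single $\mathbb{Z}/2$.

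The main obstacle is this kernel bound: while the cokernel analysis copies the odd-prime argument almost word for word, the control of $\pi_3tmf(\Gamma)^{\wedge}_2$ genuinely relies on input that is specific to $p=2$, since the odd-primary torsion-freeness used for $p=3$ fails at $p=2$ precisely because of $\eta$. Once the Hill--Lawson input pins this down to $\mathbb{Z}/2$, the short exact sequence displayed above immediately yields the claimed bound.
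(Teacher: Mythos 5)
Your proof is correct and follows the same route as the paper: the long exact sequence of the fibration, torsion-freeness of $\operatorname{coker}(\alpha_4)$ by the mod-$2$ injectivity of $1-\psi^2$ exactly as in Corollary~\ref{corollary:torsion-free}, and an external bound on $\pi_3$ of the target. One caution: the kernel bound is \emph{not} a general fact about $\pi_3 tmf(\Gamma)^{\wedge}_2$ --- the paper's proof is specific to $\Gamma_0(3)$, where $\pi_3 tmf_0(3)^{\wedge}_2 = \mathbb{Z}/2$ by the Mahowald--Rezk computation (the reference is \cite{MR}, not \cite[6.4]{HL}, which was used for odd-primary torsion in the $p=3$ case); for other levels the $2$-torsion in $\pi_3$ can be larger, which is why the paper only ``suspects'' an analogue for $tmf_0(5)$ with a possibly larger power of $2$.
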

\begin{proof} We have an exact sequence
\[
\pi_4gl_1tmf_0(3) \longrightarrow \pi_4L_{K(1)\vee K(2)}gl_1tmf_0(3) \longrightarrow \pi_3F \longrightarrow \pi_3gl_1tmf_0(3)
\]
and the last term is $\mathbb{Z}/2$ by \cite{MR}. The cokernel of the first map is also torsion-free by the argument as in the previous section, thus $\pi_3F$ can, at most, contain a $\mathbb{Z}/2$ as torsion.
\end{proof}
\begin{corollary} Let $\textup{bswing}$ denote the fiber of the map
\[
bspin \stackrel{w_4}{\longrightarrow} \Sigma^4H\mathbb{Z}/2
\]
Then, for every prime $p \ne 3$, $\textup{MSwing}$ admits a $tmf_0(3)^{\wedge}_p$-orientation for every sequence $\{g_k\}_{k \ge 2} \in MF_{p,k}(\Gamma_0(3))\otimes \mathbb{Q}$ of modular forms satisfying the conditions of (\ref{theorem:unscrewable orientations}) and such that $g_2 \in MF_2(\Gamma_0(3))$. 
\end{corollary}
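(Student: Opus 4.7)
The plan is to rerun the obstruction-theoretic analysis of Theorem \ref{theorem:p-complete} with $spin$ replaced by $swing$, and to verify that the one point at which the $spin$ argument can fail at $p=2$---namely the potential $\mathbb{Z}/2$-torsion in $\pi_3 F$ allowed by the preceding lemma---is exactly killed upon passing from $bspin$ to $bswing$. By naturality in $g$ of the pullback square defining $\textup{Nulls}(g, -)$, the obstruction for $swing$ to lift a given element of $\textup{Nulls}(swing, L_{K(1)\vee K(2)}gl_1 tmf_0(3))$ is the image of the corresponding $spin$ obstruction under $[bspin,\Sigma F]\to[bswing,\Sigma F]$, via the natural map $bswing\to bspin$.

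For $p\ne 2, 3$, Corollary \ref{corollary:torsion-free} ensures that $\pi_3F$ is torsion-free, so Theorem \ref{theorem:p-complete} already produces an $E_\infty$-ring spin orientation $\textup{MSpin}\to tmf_0(3)^{\wedge}_p$ from any admissible sequence $\{g_k\}$ with $g_2\in MF_2(\Gamma_0(3))$, and we simply restrict along $\textup{MSwing}\to \textup{MSpin}$. At $p=2$, the proof of Theorem \ref{theorem:p-complete} still produces a class $\phi\in\pi_3F$ with $(1-c^2)\phi = 0$ whenever $g_2\in MF_2(\Gamma_0(3))$, where $c$ is a topological generator of $\mathbb{Z}_2^\times/\{\pm 1\}$. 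Since $1-c^2$ is a nonzero element of $\mathbb{Z}_2$ and acts injectively on the torsion-free quotient of the $2$-adic module $\pi_3F$, the element $\phi$ must be torsion; combined with the bound that the torsion subgroup of $\pi_3F$ is at most $\mathbb{Z}/2$, this forces $\phi\in\pi_3F[2]$.

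It remains to show that every $2$-torsion class in $[bspin,\Sigma F]\cong\pi_3F$ restricts to zero in $[bswing,\Sigma F]$. Applying $[-,\Sigma F]$ to the defining fiber sequence $bswing\to bspin\xrightarrow{w_4}\Sigma^4H\mathbb{Z}/2$ produces the exact sequence
\[
[\Sigma^4 H\mathbb{Z}/2,\Sigma F]\longrightarrow[bspin,\Sigma F]\longrightarrow[bswing,\Sigma F].
\]
Because $\pi_qF=0$ for $q\ge 4$, the Postnikov truncation $\tau_{\ge 4}\Sigma F$ is the Eilenberg--MacLane spectrum $\Sigma^4 H\pi_3F$. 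Together with the fact that $H^4(BSpin;\mathbb{Z})$ is generated by $p_1/2$, this identifies $[bspin,\Sigma F]\cong\pi_3F$ and $[\Sigma^4 H\mathbb{Z}/2,\Sigma F]\cong\mathrm{Hom}(\mathbb{Z}/2,\pi_3F)=\pi_3F[2]$, with the connecting map equal to the canonical inclusion $\pi_3F[2]\hookrightarrow\pi_3F$, since $w_4$ represents the mod $2$ reduction of the integral generator $p_1/2$. Thus $\pi_3F[2]$ lies in the kernel, $\phi$ restricts to zero in $[bswing,\Sigma F]$, and the desired swing orientation exists.

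The main obstacle, beyond the routine Postnikov bookkeeping, is confirming that under the identifications above the map induced by $w_4$ really is the canonical inclusion $\pi_3F[2]\hookrightarrow\pi_3F$ rather than zero or some twisted variant; but this reduces to the standard fact that $w_4$ represents the mod $2$ reduction of $p_1/2$ in $H^4(BSpin;\mathbb{Z}/2)$. One should also verify that the homotopy pullback square defining $\textup{Nulls}(swing, -)$ is compatible with the $spin$ version in the asserted way, so that the class $\phi$ attached to a sequence $\{g_k\}$ really is natural in $g$; this is immediate from functoriality of the definition of $\textup{Nulls}$ and of the fiber sequence defining $F$.
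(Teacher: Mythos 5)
Your argument is correct and is exactly the content the paper compresses into its one-line proof (``combine the previous lemma with Theorem \ref{theorem:p-complete}''): the hypothesis $g_2\in MF_2(\Gamma_0(3))$ forces the spin obstruction $\phi\in\pi_3F$ to satisfy $(1-c^2)\phi=0$, hence to be $2$-torsion at $p=2$, and restricting along $bswing\to bspin$ kills $\pi_3F[2]$ because the cofiber sequence $bswing\to bspin\xrightarrow{w_4}\Sigma^4H\mathbb{Z}/2$ identifies the kernel of $[bspin,\Sigma F]\to[bswing,\Sigma F]$ with the image of $\mathrm{Hom}(\mathbb{Z}/2,\pi_3F)$, which is all of $\pi_3F[2]$ since $w_4$ is nonzero on the bottom cell of $bspin$. (Equivalently, and slightly more directly: the generator of $\pi_4 bswing$ maps to $2x_4\in\pi_4 bspin$, so the swing obstruction is $2\phi=0$.)
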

\begin{proof} Combine the previous lemma with Theorem \ref{theorem:p-complete}. 
\end{proof}

We suspect a similar statement can be made for $tmf_0(5)$, though perhaps a larger power of 2 is necessary. 
\section{Building Orientations}
We can now combine our results thus far to prove the main theorem.
\begin{proof}[Proof of Theorem \ref{theorem:main}] Let $M$ be any of the Thom spectra as in the statement of the theorem. We have a homotopy pullback square
\[
\xymatrix{
\textup{Map}_{E_\infty}(M, tmf(\Gamma)) \ar[r]\ar[d] & \prod_p\textup{Map}_{E_\infty}(M, tmf(\Gamma)^{\wedge}_p)\ar[d]\\
\textup{Map}_{E_\infty}(M, tmf(\Gamma)_{\mathbb{Q}}) \ar[r]& \textup{Map}_{E_\infty}\left(M, \left(\prod_ptmf(\Gamma)^{\wedge}_p\right)_{\mathbb{Q}}\right)
}
\]
and $\pi_1$ of the lower right hand side is zero, so we get a pullback square on connected components. This mostly completes the proof, except that we are using the Eichler-Schimura relation and the fact that $U_p\psi^p = p^k$ to see that:
\[
(1-U_p)(1-\frac{1}{p}\psi^p) = 0 \iff 1+p^{k-1} - T_p = 0
\]
We are free to replace the condition (\ref{theorem:K(1)-local parameterization}.3) on the total mass of the measures by the congruence condition (\ref{theorem:main}(d)) once we have shown (\S 5.2) that there is at least one orientation whose characteristic series satisfies this congruence (c.f. \cite[10.7]{AHR}). 
\end{proof}
We will use this theorem below to construct examples of genera valued in topological modular forms with level structure. Before we do, we must recall some preliminary algebraic results concerning Eisenstein series. 
\subsection{Eisenstein Series and the Eisenstein Measure}
Let $\mathscr{M}^{\infty}_{\Gamma} \subset \mathscr{M}_{\Gamma}$ denote the degenerate locus. This is a relative, effective Cartier divisor in $\mathscr{M}_{\Gamma}$ over $\mathbb{Z}$, and so corresponds to an exact sequence
\[
\xymatrix{
0 \ar[r]&\mathcal{O}_{\mathscr{M}_{\Gamma}}(-\textup{cusps}) \ar[r] & \mathcal{O}_{\mathscr{M}_\Gamma} \ar[r] & \mathcal{O}_{\mathscr{M}^{\infty}_{\Gamma}} \ar[r] & 0
}
\]
which gives an exact sequence
\[
\xymatrix{
0 \ar[r]&\omega^{\otimes k}_{\mathscr{M}_{\Gamma}}(-\textup{cusps}) \ar[r] & \omega^{\otimes k}_{\mathscr{M}_\Gamma} \ar[r] & \omega^{\otimes k}_{\mathscr{M}^{\infty}_{\Gamma}} \ar[r] & 0
}
\]
The Hecke correspondence preserves the divisor $\mathscr{M}^{\infty}_{\Gamma}$ so this sequence is Hecke equivariant.
\begin{theorem}\label{theorem:hecke-splitting} Let $I$ denote the image of the map $H^0(\mathscr{M}_{\Gamma}, \omega^{\otimes k}) \longrightarrow H^0(\mathscr{M}^{\infty}_{\Gamma}, \omega^{\otimes k})$. When $k\ge 2$, there is a unique, Hecke-equivariant splitting of the exact sequence
\[
\xymatrix{
0 \ar[r]&H^0(\mathscr{M}_{\Gamma}, \omega^{\otimes k}_{\mathscr{M}_{\Gamma}}(-\textup{cusps})) \ar[r] & H^0(\mathscr{M}_{\Gamma},\omega^{\otimes k}) \ar[r] & I \ar[r] & 0
}
\]
\end{theorem}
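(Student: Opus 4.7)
The plan is to construct the splitting explicitly using Eisenstein series and then establish its uniqueness by a Hecke-eigenvalue separation argument. The existence and uniqueness will reinforce each other: uniqueness pins down a canonical Eisenstein lift, and the Eisenstein construction provides the required section.

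\textbf{Uniqueness.} Any two Hecke-equivariant splittings differ by a Hecke-equivariant homomorphism
\[
\delta \colon I \longrightarrow H^0(\mathscr{M}_\Gamma, \omega^{\otimes k}(-\textup{cusps})),
\]
that is, from the space of ``Eisenstein constant terms'' into the space of cusp forms. Since both sides are torsion-free, it suffices to check $\delta=0$ after base change to $\overline{\mathbb{Q}}$. On $I_{\overline{\mathbb{Q}}}$, the operator $T_p$ acts with eigenvalues of the shape $1 + p^{k-1}\chi(p)$ for Dirichlet characters $\chi$ of conductor dividing $N$, which one can read off from the explicit description of the Hecke correspondence on the cuspidal divisor $\mathscr{M}^\infty_\Gamma$. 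On cusp forms, Deligne's proof of Ramanujan--Petersson bounds the eigenvalues by $|a_p| \leq 2p^{(k-1)/2}$. For $k \geq 2$ one checks $p^{k-1} - 1 > 2 p^{(k-1)/2}$ at every prime, so the two spectra of Hecke eigenvalues are disjoint and $\delta$ must vanish on each eigenspace, hence identically.

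\textbf{Existence.} For each $\alpha \in I$ I would construct a lift $E_\alpha \in H^0(\mathscr{M}_\Gamma, \omega^{\otimes k})$ using the theory of Eisenstein series. Rationally this is the classical Eisenstein/cuspidal decomposition $H^0(\mathscr{M}_\Gamma, \omega^{\otimes k}) \otimes \mathbb{Q} = S_k(\Gamma, \mathbb{Q}) \oplus \textup{Eis}_k(\Gamma, \mathbb{Q})$, under which $\textup{Eis}_k$ maps isomorphically onto $I \otimes \mathbb{Q}$; inverting gives a rational Hecke-equivariant section $s_{\mathbb{Q}}$. To promote this to an integral section, I would use the Hecke eigenvalue separation from the previous paragraph to exhibit a polynomial $P$ in the $T_p$'s that acts as the identity on the Eisenstein eigenspaces and as zero on the cuspidal ones; applied to any integral lift of $\alpha$ (which exists precisely because $I$ is defined as the image), this projector produces the required $E_\alpha$ and the assignment is automatically Hecke-equivariant.

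\textbf{Main obstacle.} The principal difficulty is integrality: the classical $q$-expansions of Eisenstein series carry denominators involving Bernoulli numbers, so naively $s_{\mathbb{Q}}$ need not preserve integral structures. The crucial point that salvages the argument is that $I$ is the \emph{image} of the restriction map, so the constant-term data to be lifted is exactly that which already arises from an integral modular form. The uniqueness from the first step then forces the splitting to be the projection to the Eisenstein part of any such integral lift, and one only needs to verify that a suitable Hecke-theoretic projector acts integrally on $I$ -- which is controlled by the eigenvalue gap $|1+p^{k-1}\chi(p)| \gg |a_p|$ established above.
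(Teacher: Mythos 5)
The paper gives no argument for this statement---it is dispatched with a citation to \cite{Eme}---so your proposal must stand on its own, and it does not. The uniqueness half is essentially sound, but note that your numerical claim that $p^{k-1}-1>2p^{(k-1)/2}$ holds ``at every prime'' for $k\ge 2$ is false in the small cases $k=2,\ p\in\{2,3,5\}$ and $k=3,\ p=2$ (e.g.\ $1\not>2\sqrt2$). This is harmless for uniqueness, since to kill a Hecke-equivariant map $\delta$ it suffices to separate each pair of eigensystems at a single sufficiently large prime.

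The genuine gap is in the existence step. The archimedean eigenvalue gap you invoke produces an idempotent in the Hecke algebra tensored with $\mathbb{Q}$ projecting onto the Eisenstein component, but it says nothing about whether that idempotent is \emph{integral}: the obstruction to integrality is measured by congruences between Eisenstein series and cusp forms (Mazur's Eisenstein ideal), i.e.\ by residues of eigenvalues modulo auxiliary primes, not by their archimedean size. Such congruences do occur. Concretely, at level $1$ and weight $12$ the image $I$ of the constant-term map is all of $\mathbb{Z}$ (since $a_0(E_4^3)=1$), and the unique rational Hecke-equivariant section must send $1\mapsto E_{12}=E_4^3-\tfrac{432000}{691}\Delta$, which is not integral---this is Ramanujan's congruence $\tau(n)\equiv\sigma_{11}(n)\bmod 691$. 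Hence no polynomial in the $T_p$'s with integral coefficients can realize your projector, and applying the rational projector to an integral lift of $\alpha$ does not land in $H^0(\mathscr{M}_\Gamma,\omega^{\otimes k})$; the same phenomenon occurs for $\Gamma_0(p)$ in weight $2$, where the Eisenstein ideal has index the numerator of $(p-1)/12$. Your closing remark that one ``only needs to verify that a suitable Hecke-theoretic projector acts integrally on $I$'' is precisely the hard point of the theorem, it is not implied by the eigenvalue gap, and in the examples above it fails outright. Any correct proof must either restrict the base ring so as to invert the relevant congruence primes, or argue as in \cite{Eme} with the Eisenstein ideal; the archimedean separation argument cannot supply the integral splitting.
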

\begin{proof} This is classical, see \cite{Eme} for a discussion and further references.   
\end{proof}
The image of the splitting guaranteed in (\ref{theorem:hecke-splitting}) is called the Eisenstein subspace of $M_k(\Gamma)$ and denotes $\textup{Eisen}_k(\Gamma)$. 

We have the following dimension formulae for distinct primes $p$ and $\ell$. 
\[
\textup{dim}(\textup{Eisen}_k(\Gamma_0(p))) = \begin{cases} 2 & k>2\\
1 &k=2
\end{cases}
\]
\[
\textup{dim}(\textup{Eisen}_k(\Gamma_0(p\ell)) = \begin{cases} 4 &k>2\\
3& k=2
\end{cases}
\]
An explicit basis is given as follows. The $q$-expansion of the unnormalized Eisenstein series of weight $k$ and level 1, for $k\ge 4$ even is given by
\[
G_k = \frac{B_k}{2k} + \sum_{n\ge 1}\left(\sum_{d \vert n} d^{k-1}\right) q^n
\]
When $k=2$, the formal power series still makes sense, but it is not a modular form. However, for any prime $p$,
\[
G_2(q) - pG_2(q^p)
\]
is the $q$-expansion of a modular form of level $\Gamma_0(p)$. 
\begin{proposition} For $k>2$ even, the modular forms $G_k$ and $\frac{1}{p^k}G_k\vert_{\psi^p}$ form a basis for $\textup{Eisen}_k(\Gamma_0(p))$, and the modular forms $G_k, G_k^{(p)}, G_k^{(\ell)}, G_k^{(p)(\ell)}$ form a basis for $\textup{Eisen}_k(\Gamma_0(p\ell))$. When $k=2$, we must remove $G_2$ from the list. 
\end{proposition}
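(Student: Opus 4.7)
The plan is to prove the three claims (basis for $\Gamma_0(p)$ with $k > 2$, basis for $\Gamma_0(p\ell)$ with $k > 2$, and the $k = 2$ adjustments) by (i) verifying each listed expression is a modular form of the correct level, (ii) showing it lies in $\mathrm{Eisen}_k$, and (iii) checking linear independence. The dimensions of $\mathrm{Eisen}_k(\Gamma_0(p))$ and $\mathrm{Eisen}_k(\Gamma_0(p\ell))$ are already recorded, so in each case (i), (ii), (iii) together force the lists to be bases.

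First I would handle modularity. For $k \ge 4$ even, $G_k$ is a level-$1$ modular form, hence of level $\Gamma_0(N)$ for any $N$; the operator $V_q \colon f(q) \mapsto f(q^q)$ (equivalently $\tfrac{1}{q^k}\psi^q$ in weight $k$) takes level-$1$ forms to level-$q$ forms. Hence $\tfrac{1}{p^k}\psi^pG_k$, $G_k^{(p)}=G_k-\tfrac{1}{p}\psi^pG_k$ etc.\ are modular of the claimed levels. For $k=2$ one instead uses the classical fact that $G_2(q)-pG_2(q^p)$ is a modular form of level $\Gamma_0(p)$ (a computation of the failure of modularity of $G_2$), which, expressed in the notation of \S3.1, is $G_2^{(p)}$ up to scalar; the analogous combinations $G_2^{(p)(\ell)}$ etc.\ are then modular of level $\Gamma_0(p\ell)$.

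Next, to see that each form is Eisenstein, I would invoke the unique Hecke-equivariant splitting of Theorem \ref{theorem:hecke-splitting}: $\mathrm{Eisen}_k$ is characterized as the Hecke-equivariant complement of cusp forms, so it suffices to show each listed form has nonzero image in $H^0(\mathscr{M}^\infty_\Gamma,\omega^{\otimes k})$ in a way compatible with Hecke operators. Concretely, the maps $\mathscr{M}_{\Gamma_0(p)} \to \mathscr{M}_{\mathrm{ell}}$ corresponding to $(E,C) \mapsto E$ and $(E,C) \mapsto E/C$ pull back $\mathscr{M}^{\infty}$ to $\mathscr{M}^{\infty}$, and the induced operations on the Eisenstein subspace are precisely $1$ and $V_p$ (resp.\ $\tfrac{1}{p^k}\psi^p$) up to the canonical units at the cusps. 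Hence $V_p G_k$ is Eisenstein because it is the pullback of the Eisenstein $G_k$ along a map of compactified moduli stacks that respects the splitting. The same argument places $V_\ell G_k$ and $V_p V_\ell G_k$ in $\mathrm{Eisen}_k(\Gamma_0(p\ell))$, and passing to the linear combinations $G_k^{(p)}, G_k^{(\ell)}, G_k^{(p)(\ell)}$ is immediate.

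Finally, linear independence: for $\Gamma_0(p)$ with $k > 2$, the $q$-expansion of $G_k$ has $q^1$-coefficient $1$, while that of $\tfrac{1}{p^k}\psi^pG_k = G_k(q^p)$ is supported on multiples of $p$, so the two are linearly independent. For $\Gamma_0(p\ell)$ I would first observe that $\{G_k, V_pG_k, V_\ell G_k, V_pV_\ell G_k\}$ is linearly independent because their first non-constant $q$-terms occur in degrees $1,p,\ell,p\ell$ respectively and these are distinct (using $p\ne\ell$); the claimed set $\{G_k, G_k^{(p)}, G_k^{(\ell)}, G_k^{(p)(\ell)}\}$ is obtained from this one by an upper-triangular change of basis with unit diagonal entries, so it is also linearly independent. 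When $k=2$ the symbol $G_2$ does not denote a modular form, but the remaining combinations $G_2^{(p)}, G_2^{(\ell)}, G_2^{(p)(\ell)}$ are genuine modular forms of level $\Gamma_0(p\ell)$ by the modularity discussion above, and the same staircase-of-$q$-orders argument shows they are linearly independent; combined with the dimension $\dim\mathrm{Eisen}_2(\Gamma_0(p\ell)) = 3$ this yields a basis. The level-$p$ case at $k=2$ is the analogous one-line verification. The only delicate step is the Hecke-equivariance of the splitting argument in the preceding paragraph, which is where one must be careful that the Eisenstein subspace, defined via the unique splitting, is genuinely preserved by the level-raising maps $V_p, V_\ell$; everything else is either classical modularity or a $q$-expansion comparison.
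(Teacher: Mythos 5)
Your proof reconstructs an argument that the paper does not actually give: its entire proof is the citation to Diamond--Shurman 4.5.2, where explicit bases of the Eisenstein subspaces of $M_k(\Gamma_0(N))$ are written down. What you assemble --- the dimension formulae recorded just above the proposition, the staircase $q$-expansion argument for linear independence of $\{G_k, V_pG_k, V_\ell G_k, V_pV_\ell G_k\}$ followed by the triangular change of basis to the $(1-\tfrac{1}{p}\psi^p)$-combinations, and the $k=2$ modification via the classical modularity of $G_2(q)-pG_2(q^p)=G_2^{(p)}$ --- is correct, and your reading of ``remove $G_2$'' as meaning the basis becomes $\{G_2^{(p)}\}$, resp.\ $\{G_2^{(p)},G_2^{(\ell)},G_2^{(p)(\ell)}\}$, is the right one, since $\tfrac{1}{p^2}\psi^pG_2$ alone is not modular. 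The one step you flag as delicate is indeed the soft spot, and as written it is slightly circular: asserting that the degeneracy map ``respects the splitting'' is essentially the claim to be proved, and having nonzero image at the cusps does not by itself place a form in $\textup{Eisen}_k$ (one could add a cusp form). The cleanest way to close this inside the paper's framework is to avoid transporting the splitting along degeneracy maps altogether: let $W$ be the span of $\{G_k, V_pG_k\}$ and check directly that $W$ is Hecke-stable --- for $q\ne p$ this is because $T_q$ commutes with $V_p$ and $T_qG_k=(1+q^{k-1})G_k$, and for the operator at $p$ a $q$-expansion computation with $\sigma_{k-1}(np)=\sigma_{k-1}(n)\sigma_{k-1}(p)-p^{k-1}\sigma_{k-1}(n/p)$ gives $U_pG_k=(1+p^{k-1})G_k-p^{k-1}V_pG_k$ and $U_pV_pG_k=G_k$ --- and that $W$ maps isomorphically onto the boundary sections $I$ (the constant terms at the two cusps are linearly independent). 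The uniqueness in Theorem \ref{theorem:hecke-splitting} then forces $W=\textup{Eisen}_k(\Gamma_0(p))$, and the same computation with four cusps handles $\Gamma_0(p\ell)$. With that substitution your argument is complete and is, in substance, a proof of the cited Diamond--Shurman result.
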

\begin{proof} See, for example, \cite[4.5.2]{DS}. 
\end{proof}
We now compute the action of $U_p$ on the space of Eisenstein series. 

\begin{proposition} Let $k>2$. Using the basis $\{G_k^{(p)}, G_k - \frac{1}{p^k}G_k\vert_{\psi^p}\}$ for $\textup{Eisen}_k(\Gamma_0(p))$ the action of $U_p$ is via the matrix
\[
\begin{pmatrix}
1 & 0\\
0 & p^{k-1}
\end{pmatrix}
\]
The action of $U_p$ on $\textup{Eisen}_k(\Gamma_0(p\ell))$ preserves the two dimensional summands $\{G_k^{(p)}, G_k - \frac{1}{p}G_k\vert_{\psi^p})\}$ and $\{G_k^{(p)(\ell)}, G_k^{(\ell)} - \frac{1}{p^k}G_k^{(\ell)}\vert_{\psi^p}\}$ and acts on each via the same matrix:
\[
\begin{pmatrix}
1 & 0 \\
0 & p^{k-1}
\end{pmatrix}
\]
In particular, the fixed points of $U_p$ acting on $\textup{Eisen}_k(\Gamma_0(p\ell))$ are spanned by $\{G_k^{(p)}, G_k^{(p)(\ell)}\}$. 
\end{proposition}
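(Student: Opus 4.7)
The plan is to reduce both claims to two elementary operator identities on $q$-expansions, combined with the classical fact that $G_k$ is a $T_p$-eigenform at level $1$. First, using the weight-$k$ identity $\psi^p = p^k V_p$ (where $V_p f(q) := f(q^p)$), I would rewrite $G_k - \tfrac{1}{p^k}G_k\vert_{\psi^p} = G_k - V_p G_k$ and $G_k^{(p)} = (1 - \tfrac{1}{p}\psi^p)G_k = G_k - p^{k-1}V_p G_k$, and similarly for the superscript-$(\ell)$ versions. In particular, each of the claimed basis pairs takes the form $\{h - p^{k-1}V_p h,\ h - V_p h\}$ with $h = G_k$ in the first plane and $h = G_k^{(\ell)}$ in the second, so both planes are contained in the $V_p$-invariant span $\textup{span}\{h, V_p h\}$.

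The two operator identities needed are $U_p V_p = \textup{id}$ (immediate on $q$-expansions) and
\[
U_p h = (1+p^{k-1})\, h - p^{k-1} V_p h \qquad \textup{for } h \in \{G_k,\ G_k^{(\ell)}\}.
\]
For $h = G_k$ this combines the classical Hecke eigenvalue $T_p G_k = (1+p^{k-1})G_k$ at level $1$ with the level-raising decomposition $T_p = U_p + p^{k-1}V_p$ on weight-$k$ forms of level coprime to $p$. For $h = G_k^{(\ell)}$ one uses that $T_p$ commutes with $V_\ell$ whenever $p \ne \ell$, and hence with the operator $(1-\ell^{k-1}V_\ell)$ defining the superscript $(\ell)$, so $G_k^{(\ell)}$ inherits the same $T_p$-eigenvalue and the identical formula applies.

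Given these identities, the matrix computation is a two-line substitution: one finds eigenvalue $1$ on $h - p^{k-1}V_p h$ and $p^{k-1}$ on $h - V_p h$ in each plane, and taking $h = G_k$ and $h = G_k^{(\ell)}$ gives the two diagonalisations, with the $U_p$-fixed lines spanned by $G_k^{(p)}$ and $G_k^{(p)(\ell)}$ respectively. Linear independence of the two planes is immediate from the hypothesis that $\{G_k, G_k^{(p)}, G_k^{(\ell)}, G_k^{(p)(\ell)}\}$ is a basis of $\textup{Eisen}_k(\Gamma_0(p\ell))$: each vector in either plane is a triangular combination of these four, so their combined span is $4$-dimensional, hence all of $\textup{Eisen}_k(\Gamma_0(p\ell))$. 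The concluding statement about the fixed subspace then reads off from the eigenvalue computation. The only things to keep straight are the normalisation $\psi^p = p^k V_p$ in weight $k$, and the compatibility of the superscript $(\ell)$ with every operator at $p$, which is precisely what allows the level-$1$ calculation to be recycled verbatim at level $\ell$; beyond these bookkeeping points there is no real obstacle.
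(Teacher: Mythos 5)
Your argument is correct and is exactly the computation the paper has in mind: its proof simply declares the statement ``an exercise in the Eichler--Shimura relation and the relation $U_p\psi^p = p^k$,'' and your identities $U_pV_p = \mathrm{id}$ and $U_p h = (1+p^{k-1})h - p^{k-1}V_ph$ (for $h = G_k$ or $G_k^{(\ell)}$, via $\psi^p = p^kV_p$ and $T_p = U_p + p^{k-1}V_p$) are precisely that exercise carried out. The diagonalisation, the commutation of the level-$\ell$ twist with the operators at $p$, and the identification of the $U_p$-fixed lines all check out.
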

\begin{proof} This is an exercise in the Eichler-Shimura relation and the relation $U_p\psi^p = p^k$. 
\end{proof}
Finally, we recall the existence of the Eisenstein measure. 
\begin{theorem}[Katz] Let $p$ and $\ell$ be distinct primes and fix $c \in \mathbb{Z}_p^{\times}/\{\pm 1\}$. Then there is a measure $\mu_c$ on $\mathbb{Z}_p^{\times}/\{\pm 1\}$ valued in $V_{\infty}(\Gamma)$ such that
\[
\int_{\mathbb{Z}_p^{\times}/\{\pm 1\}} x^{2k} d\mu_c = (1-c^{2k})(1-\frac{1}{p}\psi^p)G^{(\ell)}_{2k}
\]
\end{theorem}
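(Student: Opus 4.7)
The plan is to verify the existence of $\mu_c$ by invoking Corollary \ref{corollary:kummer condition}. Since $V_\infty(\Gamma)$ is a flat $\mathbb{Z}_p$-algebra, it suffices to check that the sequence
\[
g_{2k} \;:=\; (1-c^{2k})\left(1-\tfrac{1}{p}\psi^{p}\right)G^{(\ell)}_{2k}, \qquad g_{2k+1}:=0,
\]
satisfies the generalized Kummer congruences with values in $V_{\infty}(\Gamma)\otimes\mathbb{Q}_p$. By the $q$-expansion principle for generalized $p$-adic modular forms (and the fact that $\psi^{\ell}$ preserves the level $\Gamma$ since $\ell\nmid N$), I would reduce this to a coefficient-by-coefficient check on the Fourier expansion along the Tate curve.

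For the coefficient of $q^{n}$ with $n\ge 1$, a direct computation using the $q$-expansion formulas from \S3.1 yields
\[
\left[\left(1-\tfrac{1}{p}\psi^{p}\right)G^{(\ell)}_{2k}\right]_{q^{n}} \;=\; \sum_{\substack{d\mid n \\ (d,\, p\ell)=1}} d^{\,2k-1}.
\]
Every divisor $d$ appearing here is a $p$-adic unit, so the map $2k\mapsto d^{\,2k-1}$ is a continuous function on $\mathbb{Z}_p$ with values in $\mathbb{Z}_p$. Consequently the $q^{n}$-coefficient of $g_{2k}$ is a finite sum of continuous, integer-valued functions of the weight multiplied by the continuous unit $(1-c^{2k})$, and the Kummer congruences at this coefficient hold automatically.

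The only genuinely nontrivial case is the constant term, where the sequence is
\[
(1-c^{2k})\,(1-p^{2k-1})\,(1-\ell^{2k-1})\,\frac{B_{2k}}{4k}.
\]
Verifying the Kummer congruences for this sequence is exactly the classical theorem of Kubota--Leopoldt underlying the construction of the $p$-adic $L$-function: the factor $(1-c^{2k})$ is what permits $p$-adic interpolation across weights with $(p-1)\mid 2k$, at which the raw Bernoulli quotient $B_{2k}/(4k)$ develops a $p$-adic pole, while $(1-p^{2k-1})$ and $(1-\ell^{2k-1})$ are harmless continuous Euler factors. I would simply cite this step rather than reprove it.

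The hard part is therefore the constant term, and it reduces cleanly to Kubota--Leopoldt. The remaining bookkeeping --- verifying that the resulting measure lands in $V_\infty(\Gamma)$ and not merely in some larger $p$-adic completion --- follows because for each even power $x^{2k}$ the value $\int x^{2k}\,d\mu_c$ is by construction the honest level-$\Gamma$ $p$-adic modular form $g_{2k}$, and the polynomials $\{x^{2k}\}$ are dense in $C(\mathbb{Z}_p^{\times}/\{\pm 1\},\mathbb{Z}_p)$ by Corollary \ref{corollary:kummer condition}, so the measure extends by continuity inside $V_\infty(\Gamma)$.
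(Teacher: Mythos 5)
Your route is genuinely different from the paper's: the paper disposes of this theorem in two lines by citing Katz's construction of the Eisenstein measure (\cite[3.3.3, 3.4.1]{K2} with $b=1$, $a=c$) together with \cite[10.10]{AHR} for the passage to $\mathbb{Z}_p^{\times}/\{\pm 1\}$, whereas you reconstruct the argument from the Kummer-congruence criterion of Corollary \ref{corollary:kummer condition}. The skeleton is sound and is essentially how Katz himself proceeds. Your coefficient formula $\sum_{d\mid n,\ (d,p\ell)=1}d^{2k-1}$ for $n\ge 1$ is correct, and these are the even moments of the explicit finite measure $\sum_{d}d^{-1}(\delta_d-\delta_{cd})$ supported on $p$-adic units; the constant term is Kubota--Leopoldt twisted by the Euler factor at $\ell$, i.e.\ a convolution with $\delta_1-\ell^{-1}\delta_\ell$. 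Two corrections to the justifications: ``continuity in the weight'' is neither necessary nor sufficient for a sequence to be the moments of a measure --- the right statement in both places is that the extra factors are moment sequences of measures supported on units and that products of moment sequences are moments of convolutions. And your parenthetical that $\psi^{\ell}$ preserves the level because $\ell\nmid N$ is backwards: $G_{2k}^{(\ell)}$ has level $\Gamma_0(\ell)$, which is precisely why $\ell\mid N$ in the intended application.

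The genuine gap is the step you call bookkeeping. Corollary \ref{corollary:kummer condition} requires the Kummer congruences to hold \emph{in} $V_{\infty}(\Gamma)$: for every polynomial $h=\sum a_m z^m$ integral on $\mathbb{Z}_p^{\times}/\{\pm 1\}$ you must show $\sum a_m g_m\in V_{\infty}(\Gamma)$, not merely that its $q$-expansion is integral. Checking coefficient-by-coefficient only controls the image in the ring of $q$-expansions, and your closing argument (``the measure extends by continuity inside $V_{\infty}(\Gamma)$ because each $\int x^{2k}\,d\mu_c$ is an honest modular form'') is circular: it assumes that the relevant $p$-adic limits of the $\sum a_m g_m$ lie in $V_{\infty}(\Gamma)$, which is exactly what is to be proved. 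What closes the gap is the strong $q$-expansion principle for generalized $p$-adic modular forms --- $V_{\infty}(\Gamma)$ is $p$-adically closed inside its $q$-expansions, provided integrality is checked at a cusp on \emph{every} connected component of $\mathscr{M}^{\textup{triv}}_{\Gamma}$ (a single Tate-curve cusp does not suffice in general). That is a substantive theorem of Katz, not bookkeeping, and it, together with the care needed at $p=2$ for the $\{\pm 1\}$-quotient that the paper delegates to \cite[10.10]{AHR}, is the real content hiding behind the citation in the paper's proof.
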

\begin{proof} Take the measure constructed in \cite[3.3.3, 3.4.1]{K2} with $b=1$ and $a=c$. The extra factor of 2 needed to get a measure on $\mathbb{Z}_p^{\times}/\{\pm 1\}$ follows as in \cite[10.10]{AHR}. 
\end{proof}
\subsection{Examples}
\begin{theorem} There exist, up to homotopy, unique $E_\infty$-ring maps
\[
\sigma_{\textup{Och}}, \sigma_{\textup{WSig}}: \textup{MSpin} \longrightarrow tmf_0(2)
\]
refining the Ochanine genus and Witten signature, respectively, as defined in (\ref{definition:genera examples}). In either case, evaluating at the two different cusps of $\mathscr{M}_0(2)$ gives two different genera
\[
\textup{MSpin} \longrightarrow KO[1/2]
\]
one of which is the $\widehat{A}$-genus and the other evaluates to $\textup{Sign}(M)/2^{2d}$ on an oriented manifold of dimension $4d$. 
\end{theorem}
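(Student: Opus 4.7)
The plan is to apply Theorem \ref{theorem:main}(2) with $N = 2$ and $\Gamma = \Gamma_0(2)$. For each of the two genera we must exhibit a sequence $\{g_k\}_{k \geq 2}$ of level-$\Gamma_0(2)$ rational modular forms whose characteristic series $\exp(2\sum g_k u^k/k!)$ matches the one specified by the genus, and verify the four conditions (a)--(d). For the Ochanine genus we take $g_k = \widetilde{G}_k$ for even $k$ (zero otherwise), so (a) and (d) are immediate. For the Witten signature we take $g_k$ to be the level-$2$ Eisenstein series (of the form $G_k^{(2)}$ up to sign and normalization) whose $q$-expansion is the one produced by the definition of the Witten signature; (a) holds and (d) reduces to a congruence between two explicit level-$2$ Eisenstein series that can be read off their $q$-expansions.

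Conditions (b) and (c) are the substantive ones; both follow from the algebraic results of Section~5.1. For (c), the computation of the action of $U_p$ on $\mathrm{Eisen}_k(\Gamma_0(2))$ together with the Eichler--Shimura relation $T_p = U_p + \tfrac{1}{p}\psi^p$ and the identity $U_p\psi^p = p^k$ force every Eisenstein series of level $\Gamma_0(2)$ to satisfy $T_p g = (1 + p^{k-1})g$ for each odd prime $p$, so both $\widetilde{G}_k$ and the Witten-signature series qualify. For (b), apply Katz's Eisenstein measure theorem with $\ell = 2$: for each $c \in \mathbb{Z}_p^{\times}/\{\pm 1\}$ it produces a measure $\mu_c$ valued in $V_\infty(\Gamma_0(2))$ whose $(2k)$-th moment is $(1 - c^{2k})(1 - \tfrac{1}{p}\psi^p)G_{2k}^{(2)}$. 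Combined with the $q$-expansion identity rewriting $\widetilde{G}_k$ in terms of $G_k$ and $G_k\vert_{\psi^2}$, this measure witnesses (b) for the Ochanine sequence; a variant handles the Witten signature.

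Uniqueness up to homotopy is then the final clause of Theorem \ref{theorem:main}: the fiber over any fixed characteristic series is a torsor over a group of exponent at most $2$, so it becomes a singleton after inverting $2$, and $2$ is already inverted in $tmf_0(2)$. For the final claim about the two cusps of $\mathscr{M}_0(2)$, recall that these are interchanged by the Atkin--Lehner involution $w_2$, and that evaluation at each cusp gives a $KO[1/2]$-valued genus on $\mathrm{MSpin}$ via the corresponding Tate curve. At the standard cusp the Tate curve's formal group is $\widehat{\mathbb{G}}_m$ and setting $q = 0$ in $\widetilde{G}_k$ returns the constant term $-B_k/(2k)$, producing $\exp(-\sum_{k\geq 2} (B_k/k)u^k/k!) = K_{\widehat{A}}(u)$, i.e.\ the $\widehat{A}$-genus. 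At the other cusp the effect of $w_2$ on $\widetilde{G}_k$ is computed by Zagier to give a rescaling of the $L$-class, and the resulting characteristic series $K_L(u/2) = \exp(\sum 2(2^{k-1}-1)(B_k/k)u^k/k!)$ is precisely that of $\mathrm{Sign}(M)/2^{2d}$ on a $4d$-manifold.

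The main obstacle is the bookkeeping for condition (b): translating Katz's measure whose moments are $(1-c^{2k})(1-\tfrac{1}{p}\psi^p)G_{2k}^{(2)}$ into the required sequence $(1-c^{2k})(1-\tfrac{1}{p}\psi^p)\widetilde{G}_{2k}$, together with the parallel manipulation for the Witten signature. Both are purely combinatorial identities in $q$-expansions interacting with the operation $\psi^p$ for odd primes $p$, but they must be done carefully; once in hand, the existence of at least one orientation allows the replacement of the total-mass condition (\ref{theorem:K(1)-local parameterization}.3) by the congruence (d), as in the paper's proof of Theorem \ref{theorem:main}.
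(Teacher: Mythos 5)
Your proposal is correct and follows essentially the same route as the paper's (much terser) proof: apply Theorem \ref{theorem:main}, establish existence via Katz's Eisenstein measure together with the characteristic-series identities and congruences of the appendix (Theorem \ref{theorem:characteristic-series-formulae} and Proposition \ref{proposition:congruences}), and deduce uniqueness from the torsor statement since $2$ is inverted in $tmf_0(2)$. The cusp evaluations you give likewise match the constant-term computations the paper relies on implicitly.
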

\begin{proof} Uniqueness follows from Theorem \ref{theorem:main} so we need only show existence. This, in turn, follows from the existence of the Eisenstein measure, and the results in the appendix (\ref{proposition:congruences}). 
\end{proof}
\begin{remark} Consider the following diagram:
\[
\xymatrix{
\textup{MString}\ar[d]\ar[r]^{\sigma} & tmf\ar@{-->}[d]\\
\textup{MSpin} \ar[r]_{\textup{Ochanine}} & tmf_0(2)
}
\]
The only map we know about on the right hand side comes from the forgetful map on the corresponding moduli. This map does \emph{not} yield a commutative diagram, even rationally. Indeed, this follows immediately from the characteristic series calculations in (\ref{theorem:characteristic-series-formulae}). In order to get a commutative diagram, we would need to understand the functoriality of $\mathcal{O}^{\textup{top}}$ with respect to isogenies of formal groups. As far as the author knows, this is not treated in the literature. 
\end{remark}

The previous result makes one wonder if the signature itself is an $E_\infty$-ring map, a result that does not seem to appear in the literature, but is nevertheless an easy consequence of the work in \cite{AHR}. We record the result here. 
\begin{theorem} There exists, up to homotopy, a unique $E_\infty$-ring map
\[
\textup{MSpin} \longrightarrow KO
\]
refining the $L$-genus. 
 \end{theorem}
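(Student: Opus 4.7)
The plan is to run the machinery culminating in Theorem \ref{theorem:main} with the target $tmf(\Gamma)$ replaced by $KO$. The key simplification is that $KO^{\wedge}_p$ is $K(1)$-local at every prime $p$, so $L_{K(2)}KO^{\wedge}_p \simeq 0$: the $K(1)\vee K(2)$-local fracture square of \S 3 degenerates, the fiber spectrum $F$ studied in \S 4 is contractible, and $\textup{Nulls}(spin, gl_1 KO^{\wedge}_p)$ is computed directly by the $K(1)$-local parameterization of Theorem \ref{theorem:K(1)-local parameterization}. In this specialization the role of $V_\infty(\Gamma)$ is played by $\mathbb{Z}_p$, the weight-$k$ $p$-adic modular forms collapse to $\mathbb{Z}_p$, and the unnormalized Eisenstein series $G_k$ reduces to the rational constant $-B_k/(2k)$.

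First I would assemble the map via the integral arithmetic fracture square used in the proof of Theorem \ref{theorem:main}. Rationally, the $L$-genus furnishes a canonical ring map $\textup{MSO}_\mathbb{Q} \to KO_\mathbb{Q}$. At each prime $p$, the required sequence read off from the characteristic series in the Introduction is $g_k = 2^{k}(2^{k-1}-1) B_k/k$ in the normalization of Definition \ref{definition:char}. Condition (1a) of Theorem \ref{theorem:main} holds since $B_{2k+1} = 0$ for $k \ge 1$. Because $\psi^p$ acts on $\pi_{2k}KO^{\wedge}_p$ by $p^k$, one has $(1-\tfrac{1}{p}\psi^p) g_k = (1-p^{k-1}) g_k$, and the Eichler--Shimura relation $T_p = U_p + \tfrac{1}{p}\psi^p$ reduces the eigenvalue condition (1c) to $U_p g_k = g_k$, which holds automatically on constant-valued weight-$k$ data. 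Condition (1d) is a single scalar congruence per weight and is verified directly.

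The substantive content is condition (1b): the sequence $\{(1-c^k)(1-p^{k-1})g_k\}$ must extend to a $\mathbb{Z}_p$-valued measure on $\mathbb{Z}_p^{\times}/\{\pm 1\}$. For odd $p$ the factor $2^{k}(2^{k-1}-1)$ is continuous in $k$ (since $2 \in \mathbb{Z}_p^{\times}$), so the requirement reduces to the classical Kummer congruences for $(1-c^k)(1-p^{k-1})B_k/k$ underlying the Kubota--Leopoldt $p$-adic $\zeta$-function. At $p=2$ the factor $2^k$ in fact improves $2$-adic convergence, and the residual congruences follow from the Von Staudt--Clausen denominators for $B_k$; verifying this cleanly is the step I expect to be the main obstacle, and a possible alternative is to construct the $2$-complete map by comparing with the Ochanine orientation of the preceding corollary through an evaluation at one of the cusps of $\mathscr{M}_0(2)$. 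Uniqueness is immediate from Theorem \ref{theorem:main}: the governing torsor group $[\Sigma KO^{\wedge}_2, L_{K(1)}L_{K(2)}KO^{\wedge}_2]$ vanishes because $L_{K(2)}KO^{\wedge}_2 \simeq 0$, so a spin orientation of $KO$ is determined up to homotopy by its characteristic series.
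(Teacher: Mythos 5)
Your overall strategy is the same as the paper's: the proof given there consists of citing the work of Ando--Hopkins--Rezk and checking the hypotheses on the characteristic series in the appendix, and your proposal is essentially an unpacking of what that citation provides. You correctly observe that $L_{K(2)}KO^{\wedge}_p \simeq 0$ collapses the fracture square to the $K(1)$-local parameterization, that uniqueness follows from the vanishing of the torsor group $[\Sigma KO^{\wedge}_2, L_{K(1)}L_{K(2)}KO^{\wedge}_2]$, and that the congruence condition amounts to $-B_k/(2k) \equiv 2^{k+1}(2^{k-1}-1)B_k/(2k) \bmod \mathbb{Z}$, i.e.\ that $(2^k-1)^2B_k/(2k)\in\mathbb{Z}$, which is exactly what Proposition \ref{proposition:congruences}(2) verifies using Adams' formula for the denominator of $B_k/(2k)$.

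The gap is precisely where you flag it: the $2$-adic measure condition. Your odd-primary argument --- multiply the Mazur-measure moments by $k \mapsto 2\cdot 2^k - 4^k$, a $\mathbb{Z}_p$-linear combination of functions $\lambda^k$ with $\lambda$ a unit --- does not transfer to $p=2$, because $2, 4 \notin \mathbb{Z}_2^{\times}$, and multiplying a moment sequence by $\lambda^k$ only preserves measures on $\mathbb{Z}_2^{\times}/\{\pm 1\}$ when $\lambda$ is a unit; concretely, $f(z) = (z^2-1)/8$ is $\mathbb{Z}_2$-valued on $\mathbb{Z}_2^{\times}$ but $f(2z)$ is not, so the generalized Kummer congruences for the twisted sequence are not inherited formally. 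The heuristic that ``the factor $2^k$ improves convergence'' is not wrong --- one gets $\nu_2(g_k) \ge k - \nu_2(k) - 1$ --- but the test polynomials in Definition \ref{definition:kummer} have denominators growing like $\nu_2(k!) \approx k$, so rapid decay alone does not close the argument. Your proposed fallback via the Witten-signature orientation of $tmf_0(2)$ also fails $2$-adically: evaluation at a cusp of $\mathscr{M}_0(2)$ lands in $KO[1/2]$, and the resulting genus is $\mathrm{Sign}(M)/2^{2d}$ rather than the $L$-genus. So the $2$-primary existence statement, which the paper delegates to the measure constructions of Ando--Hopkins--Rezk together with Proposition \ref{proposition:congruences}(2), remains unproved in your write-up and requires an actual construction of the relevant measure on $\mathbb{Z}_2^{\times}/\{\pm 1\}$.
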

\begin{proof} This follows already from the work in \cite{AHR} once one checks the hypotheses on the characteristic series, as in the appendix (\ref{proposition:congruences}). 
\end{proof}
\begin{theorem}\label{theorem:tmf-away-from-6} There exist $E_\infty$-ring maps
\[
\textup{MSpin} \longrightarrow tmf[1/6]
\]
but none of these make the following diagram commute
\[
\xymatrix{
\textup{MString}\ar[d]\ar[r]^{\sigma} & tmf[1/6]\\
\textup{MSpin} \ar@{-->}[ur]_{\not\exists} &}
\]
\end{theorem}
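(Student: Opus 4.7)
The argument splits into existence of some $E_{\infty}$-ring maps $\textup{MSpin} \to tmf[1/6]$ and non-existence of a Witten-compatible one. In both parts I work $p$-locally at each prime $p \geq 5$ (the primes not inverted in $tmf[1/6]$) and then glue with a rational orientation via the arithmetic fracture square. For level $1$, the main theorem (\ref{theorem:main}) does not apply directly since $2 \nmid N$, but the obstruction theory of Section~4 still gives the relevant conditions: the sequence $\{g_k\}_{k \geq 2}$ must satisfy (a)--(c) of (\ref{theorem:main}) together with the unscrewable condition $U_p g_k^{(p)} = g_k^{(p)}$, and by (\ref{theorem:p-complete}) must have $g_2 \in MF_2(\Gamma(1)) \otimes \mathbb{Z}_p = 0$.

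For existence, I realize such a sequence by convolving Katz's level $1$ Eisenstein measure $\mu^{\textup{Eis}}_c$ with the rational point-mass combination $\nu := \tfrac{4}{3}\delta_1 - \tfrac{1}{3}\delta_2$ on $\mathbb{Z}_p^{\times}/\{\pm 1\}$. One computes $\nu(1) = 1$ and $\nu(\chi_{2k}) = (4 - 2^{2k})/3 = -\tfrac{4(4^{k-1}-1)}{3}$, which is a rational integer (since $3 \mid 4^{k-1}-1$); in particular $\nu(\chi_2) = 0$. The moments of $\mu_c := \mu^{\textup{Eis}}_c * \nu$ are $(1-c^{2k})(1-\tfrac{1}{p}\psi^p)(\nu(\chi_{2k}) G_{2k})$, so $g_{2k} := \nu(\chi_{2k}) G_{2k}$ is an Eisenstein eigenform satisfying $T_p g_{2k} = (1+p^{2k-1})g_{2k}$ and fixed by $U_p$ after $p$-stabilization, with $g_2 = 0$. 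Kummer congruences are automatic because a convolution of measures is again a measure, and the total mass $\mu_c(1) = \tfrac{1}{2p}\log(c^{p-1})$ is preserved since $\nu(1) = 1$. Because $\nu(\chi_{2k}) \in \mathbb{Z}$ is independent of $p$, these $p$-complete $E_\infty$ orientations all arise from the single rational characteristic series $\exp\bigl(\sum_{k\geq 2} \tfrac{2(4 - 2^{2k})}{3} G_{2k}\,\tfrac{u^{2k}}{(2k)!}\bigr)$, and the arithmetic fracture square then assembles them into an $E_{\infty}$-ring map $\textup{MSpin} \to tmf[1/6]$.

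For non-existence, suppose $f: \textup{MSpin} \to tmf[1/6]$ makes the Witten diagram commute up to homotopy. Its rationalization has characteristic series with $g_k = G_k$ for $k \geq 4$ (to match Witten on String bundles, where $p_1/2$ vanishes), $g_k = 0$ for odd $k$, and $g_2 = 0$ since $MF_2(\Gamma(1)) = 0$. At each prime $p \geq 5$, the moments $M_k := (1-c^{2k})(1-\tfrac{1}{p}\psi^p) g_{2k}$ must satisfy the Kummer congruences of (\ref{theorem:K(1)-local parameterization}); they agree with those of $\mu^{\textup{Eis}}_c$ for $k \geq 2$ but differ at $k = 1$, where $M_1 = 0$ while $\mu^{\textup{Eis}}_c(\chi_2) = (1-c^2)(1-\tfrac{1}{p}\psi^p) G_2$. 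Testing the Kummer congruence against the polynomial $P(x) = x^2(x^{2(p-1)p^r}-1)/p^{r+1}$, which lies in $C(\mathbb{Z}_p^{\times},\mathbb{Z}_p)$ by Fermat's little theorem, and subtracting the analogous congruence for the Eisenstein measure, forces $(1-c^2)(1-\tfrac{1}{p}\psi^p) G_2 \in p^{r+1} V_{\infty}(\Gamma(1))$ for every $r$, hence to vanish. But this element has constant term $(1-c^2)(p-1)/24 \neq 0$, since $(1-c^2) \in \mathbb{Z}_p^{\times}$ for any topological generator $c$ of $\mathbb{Z}_p^{\times}/\{\pm 1\}$, a contradiction. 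The delicate point throughout existence is ensuring the modified Eisenstein moments remain Kummer-compatible at all primes $p \geq 5$ simultaneously; the two-atom choice of $\nu$ sidesteps primewise bookkeeping by producing a single $\mathbb{Z}$-valued weight function.
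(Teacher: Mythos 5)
Your proof is correct, and it is worth comparing the two halves to the paper's. For existence, the paper follows the same strategy: writing $g_k = (1-q_k)G_k$, it reduces to finding a single sequence of integers $q_{2k}$ with $q_2 = 1$ that arises as the moment sequence of a scalar measure on $\mathbb{Z}_p^{\times}/\{\pm 1\}$ of total mass zero at every prime $p \ge 5$ simultaneously, and then cites a general result of [Spr] producing uncountably many such sequences. Your two-atom measure $\nu = \tfrac{4}{3}\delta_1 - \tfrac{1}{3}\delta_2$ is an explicit instance (corresponding to $q_{2k} = (4^k-1)/3$), which buys a concrete characteristic series at the cost of no flexibility; both versions rely on the same convolution-with-the-Eisenstein-measure mechanism and the same verification of the total-mass and $U_p$-eigenvalue conditions. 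For non-existence, however, your route is genuinely different. The paper argues geometrically: composing a hypothetical Witten-compatible lift with $tmf[1/6] \to KO^{\wedge}_5$ and using that $K(1)$-locally $\textup{MString}$ and $\textup{MSpin}$ agree, one would obtain a refinement of the $\widehat{A}$-genus factoring through $\pi_4 tmf[1/6] = 0$, contradicting $\widehat{A}(K3) = 2$. Your argument is purely arithmetic: the difference between the hypothetical moment sequence and the Eisenstein moments is supported in weight $2$, and the Kummer congruence against $x^2(x^{2(p-1)p^r}-1)/p^{r+1}$ forces $(1-c^2)G_2^{(p)}$ to be infinitely $p$-divisible, contradicting its nonzero constant term. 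Both exploit the same tension — the weight-two Eisenstein moment cannot simply be deleted — but the paper's version, which never invokes the $E_\infty$ obstruction theory, rules out even homotopy-commutative refinements, whereas yours is intrinsically $E_\infty$ (which suffices for the statement as given). The only blemish is notational: your displayed characteristic series should be indexed so that the exponent runs over all weights $2k \ge 2$; since $\nu(\chi_2) = 0$ the weight-two term vanishes and the series is unchanged.
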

\begin{proof} First we show that the given diagram cannot commute. Suppose it did, then we would have a commutative diagram
\[
\xymatrix{
\textup{MString}\ar[d]\ar[r]^{\sigma} & tmf[1/6]\ar[d]\\
\textup{MSpin} \ar@{-->}[ur]\ar[r]_{\widehat{A}} &KO^{\wedge}_5}
\]
Indeed, the composite
\[
\textup{MString} \longrightarrow tmf[1/6] \longrightarrow KO^{\wedge}_5
\]
has the same characteristic series as the restriction of the $\widehat{A}$-genus to String-manifolds, so it must be the $\widehat{A}$-genus by \cite[7.12]{AHR}. Moreover, the $K(1)$-localization of $\textup{MString}$ agrees with the $K(1)$-localization of $\textup{MSpin}$, c.f. \cite[2.3.1]{Hov}, so this determines the bottom map making the diagram commute. On the other hand, since $\pi_4tmf[1/6] = 0$, this would imply that all $4$-dimensional Spin-manifolds have trivial $\widehat{A}$-genus (after completing at $5$). This is false, for example if $K$ is a K3 surface then $\widehat{A}(K) =2$.

Now we prove existence of such a genus (there are many). It is enough to write down a suitable sequence of modular forms, $\{g_k\}_{k \ge 2}$. We will define
\[
g_k:= G_k - q_kG_k
\]
with $q_k \in \mathbb{Z}$. By Theorem \ref{theorem:main} and \cite[Def. 27, Thm 40]{Spr}, we are reduced to constructing a sequence $\{q_k\}_{k \ge 2}$ of integers such that: (i) $q_{2k+1} = 0$ for $k\ge 1$, (ii)  for all primes $p$ there is a measure $\mu_p \in \textup{Meas}(\mathbb{Z}_p^{\times}/\{\pm 1\}, \mathbb{Z}_p)$ such that 
\[
q_{2k} = \int_{\mathbb{Z}_p^{\times}/\{\pm 1\}} x^{2k} d\mu_p, \quad k\ge 1
\]
and
\[
0 = \int_{\mathbb{Z}_p^{\times}/\{\pm 1\}} x^{2k} d\mu_p
\]
and (iii) $q_2 = 1$. By \cite[Thm 31]{Spr}, there are uncountably many such sequences satisfying (i) and (ii) and we are free to specify the value of $q_2$ however we like. 
\end{proof}
The next natural set of examples are certain complex orientations $\textup{MU} \longrightarrow tmf_1(N)$ defined by Hirzebruch, which specialize to a version of the $\chi_y$ genus at the ramified cusp. In these cases it is more difficult to check the corresponding integrality conditions on the characteristic series, so we leave this to a future paper.   
\appendix
\section{Formulae for characteristic series}
It is not usually the case that the Hirzebruch characteristic series of a genus is given in the form $\textup{exp}\left(2\sum t_k x^k/k!\right)$. For the convenience of the reader, we manipulate the formulae for the genera appearing in the paper into this form. We begin by fixing some notation and recalling the definitions in the literature.

\begin{definition}\label{definition:genera examples} The following define genera for $MSO_*$.
\begin{enumerate}
\item $L$-genus
\[
\log_{\textup{Sign}}(x) = \sum_{n \ge 1} \frac{x^{2n+1}}{2n+1}
\]
\item $\widehat{A}$-genus
\[
\exp_{\widehat{A}}(u) = 2\sinh(u/2)
\]
\item Ochanine genus
\[
\log_{\textup{Och}}(x) = \int_0^x \frac{dt}{\sqrt{1 - 2\delta t^2 + \epsilon t^4}}
\]
\item Witten genus
\[
\frac{u}{\exp_{\textup{Wit}}(u)} = \frac{u/2}{\sinh(u/2)} \prod_{n=1}^\infty \frac{(1-q^n)^2}{(1-q^ne^u)(1-q^ne^{-u})}
\]
\item Witten signature
\[
\frac{u}{\exp_{\textup{WSig}}(u)}= \frac{u/2}{\tanh(u/2)} \prod_{n=1}^{\infty} \left(\frac{1+q^ne^u}{1-q^ne^u} \cdot \frac{1+q^ne^{-u}}{1-q^ne^{-u}}\right)\bigg/ \left(\frac{1+q^n}{1-q^n}\right)^2
\]

\end{enumerate}
\end{definition}

\begin{theorem}\label{theorem:characteristic-series-formulae} We have the following identities of formal power series (where we have some redundant factors of 2 we've added to put them in a more useable form for our purposes.) 
\begin{eqnarray}
\frac{u}{\exp_{\textup{Sign}}(u)} = \exp\left( 2 \sum_{k\ge 2} \frac{2^{k+1}(2^{k-1} - 1)}{2k} B_k \frac{u^k}{k!}\right)\\
\frac{u}{\exp_{\widehat{A}}(u)} = \exp\left(2\sum_{k \ge 2} \frac{-B_k}{2k} \frac{u^k}{k!}\right)\\
\frac{u}{\exp_{\textup{WSig}}(u)} = \exp\left( 2 \sum_{k \ge 2} 2G_k^{(2)} \frac{u^k}{k!}\right)\\
\frac{u}{\exp_{\textup{Wit}}(u)} = \exp\left(2\sum_{k \ge 2} G_k \frac{u^k}{k!}\right)\\
\frac{u}{\exp_{\textup{Och}}(u)} = \exp\left(2\sum_{k \ge 2} \widetilde{G}_k \frac{u^k}{k!}\right)
\end{eqnarray}
\end{theorem}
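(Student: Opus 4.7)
The plan is to verify each of the five identities independently by taking the logarithm of the left-hand side, expanding as a formal power series in $u$ (with $q$-series coefficients in (3)–(5)), and then matching coefficients against the target series. The unifying ingredients are the standard Bernoulli number expansions of $\coth$, $\tanh$, $\sinh$, $\cosh$ on one side, and the Fourier expansions of the Eisenstein series $G_k$, $\widetilde{G}_k$, and $G_k^{(p)}$ on the other.

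For (2), I would integrate the identity $\coth(u) - 1/u = \sum_{k\ge 1}\frac{2^{2k}B_{2k}}{(2k)!}u^{2k-1}$ to get $\log(\sinh u/u)$, then substitute $u\mapsto u/2$, so that the factors of $2^{2k}$ cancel and one is left with $\log((u/2)/\sinh(u/2)) = -\sum_{k\ge 1}\frac{B_{2k}}{2k}\frac{u^{2k}}{(2k)!}$, matching the target since $B_k=0$ for odd $k\ge 3$. For (1), write $\log(u\coth u) = \log(u/\sinh u) + \log\cosh u$ and compute the second summand by integrating $\tanh u = \sum_{k\ge 1}\frac{2^{2k}(2^{2k}-1)B_{2k}}{(2k)!}u^{2k-1}$; combining the two expansions produces the factor $2^{2k}B_{2k}[(2^{2k}-1)-1] = 2^{2k+1}(2^{2k-1}-1)B_{2k}$, which is exactly the form claimed.

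For (4), I would expand $\log(1-q^n e^{\pm u}) = -\sum_{m\ge 1}\frac{q^{nm}e^{\pm mu}}{m}$ and use $e^{mu}+e^{-mu} = 2\sum_k (mu)^{2k}/(2k)!$. The $k=0$ contribution reproduces $-2\sum_n\log(1-q^n)$, cancelling exactly the compensation factor $(1-q^n)^2$ in the product; the remaining sum, after setting $N=nm$, collapses to $2\sum_{k\ge 1}\frac{u^{2k}}{(2k)!}\sum_{N\ge 1}\sigma_{2k-1}(N)q^N$. Adding the $\log((u/2)/\sinh(u/2))$ contribution computed in (2) supplies precisely the Bernoulli constant term $B_{2k}/(2k)$ needed to complete $\sum \sigma_{2k-1}(N)q^N$ into $2G_{2k}$, yielding the claim. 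For (3), the same argument applies after replacing $\log(1-q^ne^{\pm u})$ with $\log((1+q^n e^{\pm u})/(1-q^n e^{\pm u}))$ and using $\log((1+x)/(1-x)) = 2\sum_{m\text{ odd}}x^m/m$; this restricts the inner divisor sum to odd divisors. The resulting divisor sum, combined with the $\log((u/2)\coth(u/2))$ contribution (obtained from (1) with $u\mapsto u/2$) and the relation $\sigma_{k-1}(N) - 2^{k-1}\sigma_{k-1}(N/2) = \sigma_{k-1}^{\mathrm{odd}}(N)$, assembles into $2G_k^{(2)}$. For (5), I would invoke Zagier's classical identification of the reciprocal of the Jacobi elliptic sine (whose inverse is the elliptic integral in Definition~\ref{definition:genera examples}(3)) with an infinite product of theta ratios at the prime 2; this presents $u/\exp_{\text{Och}}(u)$ in a form entirely parallel to the Witten product but twisted by the non-trivial character of $(\mathbb{Z}/2)^\times$, so that the computation of (4) goes through verbatim with $\sigma_{k-1}(N)$ replaced by $\sum_{d\mid N}(-1)^{N/d}d^{k-1}$, producing $2\widetilde{G}_k$ with the correct constant term $-B_k/(2k)$.

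The main obstacle is bookkeeping of constant terms: the genuine content in (3)–(5) is that the Bernoulli contribution from the $\sinh$ or $\tanh$ prefactor is exactly what is needed to upgrade the non-constant $q$-series $\sum_N \cdots q^N$ into the full Eisenstein series, with the correct normalization. For (5), there is the additional preliminary step of translating the elliptic-integral presentation of the Ochanine logarithm into a theta-function product; this is standard but non-trivial and is where one genuinely uses Jacobi's theory rather than bare power-series manipulation.
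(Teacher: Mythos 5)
Your proposal is correct and runs on essentially the same computational engine as the paper's proof: Lambert-series expansion of the infinite products to produce the (odd-)divisor sums $q^N$-coefficients, together with the Bernoulli generating function expansions of the $\sinh$/$\tanh$ prefactors to supply exactly the constant terms of the Eisenstein series. The only organizational differences are that you prove (1), (2) and (4) directly where the paper cites \cite{AHR} for (2) and (4) and deduces (1) from (3) by setting $q=0$ and rescaling $u$, and that both you and the paper ultimately defer the product presentation of the Ochanine series needed for (5) to Zagier \cite{Zag}.
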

\begin{proof} For (2) and (4) see \cite[10.2, 10.9]{AHR}. For (5) see \cite{Zag}. Zagier also states the result for the Witten's signature, but for completeness we include the derivation of this here. The formula (1) for the characteristic series of the signature follows from evaluation at $q=0$ and substitution of $u$ for $u/2$, once we note that
\[
\log_{\textup{Sign}}(x) = \frac{1}{2}\left(\log(1+x) - \log(1-x)\right)
\]
and hence
\[
\exp_{\textup{Sign}}(u) = \frac{e^u - e^{-u}}{e^u + e^{-u}} = \tanh(u)
\]
So we are left with formula (3). By the definition, 
\[
\log\left(\frac{u}{\exp_{\textup{WSig}}(u)}\right) = \log\left(\frac{u/2}{\tanh(u/2)}\right) + \sum_{n \ge 1}\left(\log\left(\frac{1+q^ne^u}{1-q^ne^u}\right) + \log\left(\frac{1+q^ne^{-u}}{1-q^ne^{-u}}\right) - 2\log\left(\frac{1+q^n}{1-q^n}\right)\right)
\]
Using the Taylor series for $\log(1-x)$, we can rewrite this as
\begin{eqnarray*}
\log\left(\frac{u}{\exp_{\textup{WSig}}(u)}\right) &=&  \log\left(\frac{u/2}{\tanh(u/2)}\right) + 2\sum_{n \ge 1} \sum_{d \textup{ odd}} (e^{ud} + e^{-ud} -2) \frac{q^{nd}}{d}\\
&=& \log\left(\frac{u/2}{\tanh(u/2)}\right) + 2\sum_{n\ge 1}\sum_{d \textup{ odd}}\frac{q^{nd}}{d}\left(-2+ \frac{1}{k!}\sum_{k \ge 0} d^k(u^k + (-1)^ku^k)\right) \\
&=&\log\left(\frac{u/2}{\tanh(u/2)}\right) + 2\sum_{n\ge 1}\sum_{d \textup{ odd}}\frac{q^{nd}}{d}\left(-2+ \frac{2}{k!}\sum_{k \textup{ even}} d^ku^k\right)
\end{eqnarray*}
When $k=0$, we are left only with the constant ($q=0$) term from cancellation. When $k>0$ and even the coefficient of $u^k/k!$ is
\[
2\sum_{d\textup{ odd}}d^{k-1}\sum_{n\ge 1} q^{nd}
\]
which is the same as
\[
2\sum_{n \ge 1}q^n \sum_{d\vert n, d \textup{ odd}} d^{k-1} = G_k^{(2)}(q) - G_k^{(2)}(0)
\]
Thus it suffices to show that
\[
\log\left(\frac{u/2}{\tanh(u/2)}\right) = 2\sum_{k \ge 2}\frac{2^{k} - 2}{2k}B_k \frac{u^k}{k!}
\]
Recall that $\tanh'(x) = \frac{1}{\cosh^2(u)}$, whence
\begin{eqnarray*}
d\log\left(\frac{u/2}{\tanh(u/2)}\right) &=& \frac{1}{u} - \frac{1}{2\sinh(u)\cosh(u)}\\
&=& \frac{1}{u} - \frac{2}{e^u - e^{-u}}\\
&=& \frac{1}{u} - 2\frac{e^u}{e^{2u} - 1}\\
&=& \frac{1}{u} - 2\frac{e^u +1}{e^{2u} -1} + \frac{2}{e^{2u} -1}\\
&=& \frac{1}{u} -  \frac{2}{e^u -1} + \frac{2}{e^{2u} - 1}
\end{eqnarray*}
Now recall the exponential generating function for the Bernoulli numbers is $u/(e^u - 1)$, so we get
\begin{eqnarray*}
d\log\left(\frac{u/2}{\tanh(u/2)}\right)&=& \frac{1}{u} - 2\sum_{k\ge 0}\frac{B_k}{k} \frac{u^{k-1}}{(k-1)!} + 2\sum_{k \ge 0} 2^{k-1}\frac{B_k}{k}\frac{u^{k-1}}{(k-1)!}\\
&=& 2\sum_{k \ge 2} \frac{2^{k-1} - 1}{k} B_k \frac{u^{k-1}}{(k-1)!}
\end{eqnarray*}
(Here we've noted that the coefficient of $1/u$ and the constant term are both zero from cancellation.) Integrating and multiplying by $\frac{2}{2}$ gives the result as desired. 
\end{proof}
\begin{proposition}\label{proposition:congruences} We have the following congruences
\begin{eqnarray}
-\frac{B_k}{2k} \equiv \frac{2(2^{k-1}-1) B_k}{2k} \quad \textup{mod }\mathbb{Z}[1/2], \quad k\ge 2\\
-\frac{B_k}{2k} \equiv \frac{2^{k+1}(2^{k-1}-1) B_k}{2k} \quad \textup{mod }\mathbb{Z}, \quad k\ge 2\\
G_k \equiv \widetilde{G}_k\equiv 2G_k^{(2)} \quad\textup{mod }\mathbb{Z}[1/2], \quad k \ge 2
\end{eqnarray}
\end{proposition}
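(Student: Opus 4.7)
The plan is to reduce all three congruences to the classical integrality statement that $(a^k-1) B_k/k \in \mathbb{Z}_p$ whenever $a$ is an integer with $\gcd(a,p)=1$ and $k \ge 2$ is even. This is a standard consequence of von Staudt--Clausen together with the Kummer congruences; it is the ``constant-term'' input to the Kubota--Leopoldt $p$-adic zeta function (see, e.g., Washington, \emph{Introduction to Cyclotomic Fields}, Ch.~5). Specialized to $a=2$, it gives $(2^k-1)B_k/k \in \mathbb{Z}[1/2]$, and this single fact will carry essentially all of the arithmetic in the proposition.

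For (1) the difference of the two sides simplifies immediately to $(2^k-1)B_k/(2k)$, which lies in $\mathbb{Z}[1/2]$ by the above. For (2) the analogous subtraction yields $(2^k-1)^2 B_k/(2k)$, i.e.\ $(2^k-1)$ times the quantity from (1), so it lies in the same ring; any mismatch between ``mod $\mathbb{Z}$'' and ``mod $\mathbb{Z}[1/2]$'' can be absorbed harmlessly into the same bookkeeping.

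For (3) the plan is to write all three $q$-expansions side by side and check that every coefficient of every pairwise difference lies in $\mathbb{Z}[1/2]$. Using the formula $\psi^p(\sum a_n q^n) = p^k \sum a_n q^{pn}$ from \S1.2 one expands
\[
2G_k^{(2)}(q) = \frac{(1-2^{k-1})B_k}{k} + 2\sum_{n\ge 1} \sigma_{k-1}(n) q^n - 2^k \sum_{n\ge 1} \sigma_{k-1}(n) q^{2n},
\]
so that the positive-degree coefficients of $2G_k^{(2)} - G_k$ are integers and the constant term is $-(2^k-1)B_k/(2k)$, in $\mathbb{Z}[1/2]$ by (1). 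For $\widetilde{G}_k - G_k$ one splits the inner sum $\sum_{d|n}(-1)^{n/d}d^{k-1}$ according to the parity of $n/d$: writing $n = 2^a m$ with $m$ odd, the odd-$n/d$ contribution is $2^{a(k-1)} \sigma_{k-1}(m)$, which makes all positive-degree coefficients even integers and reduces the constant-term analysis to the same Bernoulli input applied after pairing with $2G_k^{(2)} - G_k$. The main obstacle is keeping the constant-term arithmetic straight across the three expansions, and this is exactly what (1) is designed to handle; everything else is an elementary manipulation of divisor sums.
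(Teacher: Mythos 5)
Your arguments for (1) and (3) are correct and essentially identical to the paper's: the difference of the two sides of (1) is $(2^k-1)B_k/(2k)$, whose odd-primary integrality is exactly the classical fact you invoke (the paper gets it from Adams's formula for the denominator of $B_k/(2k)$ together with Euler's theorem, which is the same input), and (3) reduces to (1) because all positive-degree $q$-coefficients of $G_k$, $\widetilde G_k$ and $2G_k^{(2)}$ are integers, so only the constant terms matter. (Your expansion of $2G_k^{(2)}$ has a sign slip in the constant term, inherited from the inconsistent sign of the constant term of $G_k$ in \S 5.1, but this is harmless modulo $\mathbb{Z}[1/2]$.)

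The treatment of (2) has a genuine gap, and it is located at the prime $2$, which is the entire content of (2) beyond (1). You reduce the difference of the two sides to $(2^k-1)^2B_k/(2k)$ and then assert that the discrepancy between ``mod $\mathbb{Z}$'' and ``mod $\mathbb{Z}[1/2]$'' can be absorbed into the same bookkeeping; it cannot. The factor $(2^k-1)^2$ is odd, while by von Staudt--Clausen/Kummer (or the reference \cite[2.7]{Ad} used in the paper) one has $\nu_2\bigl(B_k/(2k)\bigr)=-(\nu_2(k)+2)$ for $k$ even, so $(2^k-1)^2B_k/(2k)$ has strictly negative $2$-adic valuation for every even $k\ge 2$; for $k=2$ it equals $3/8$. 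Thus the naive subtraction does not, and cannot, establish an integral congruence, and your chosen tool --- the integrality of $(c^k-1)B_k/k$ with $c=2$ --- gives no information at the prime $2$ by design. The $2$-adic input that the paper's proof actually supplies, and that any correct argument here must contain, is the comparison of the explicit power $2^{k+1}$ appearing in the right-hand side of (2) with the $2$-part $2^{\nu_2(k)+2}$ of the denominator of $B_k/(2k)$, using $k+1\ge \nu_2(k)+2$ for $k\ge 1$; equivalently, $2^{k+1}(2^k-1)B_k/(2k)\in\mathbb{Z}$. Your write-up never engages with this step, and without it the claim ``mod $\mathbb{Z}$'' is unproved (indeed, read as a literal subtraction of the two displayed quantities it is false, as the $k=2$ computation shows, so one must also identify the reading of (2) in which $2^{k+1}$ is the factor clearing the $2$-denominator).
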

\begin{proof} Let $m(k)$ denote the denominator of $\frac{B_k}{2k}$ for $k$ even. Then, by \cite[2.7]{Ad}, for $p$ odd we have
\[
\nu_p(m(k)) = \begin{cases}
0 & k \not\equiv 0 \textup{ mod }(p-1)\\
\nu_p(k) +1 & k \equiv 0 \textup{ mod }(p-1)
\end{cases}
\]
So for $p$ odd with $(p-1) \vert k$, write $k = (p-1)p^{\nu_p(k)}u$. Then, since $\phi(p^{\nu_p(k) + 1}) = (p-1)p^{\nu_p(k)}$, we have
\[
2^k = (2^{(p-1)p^{\nu_p(k)}})^u \equiv 1 \textup{ mod }p^{\nu_p(k)+1}
\]
thus
\[
(2^k - 2) + 1 \equiv -1 + 1 \equiv 0 \textup{ mod }p^{\nu_p(k) +1}
\]
and we see that $$\frac{2(2^{k-1} -1) +1}{2k}B_k$$ has only powers of $2$ in the denominator. The power of 2 that appears is, again by \cite[2.7]{Ad}, $2^{m+2}$ where $k = 2^mu$. This divides $2^{k+1}$ only when $k+1\ge m+2$, which happens as soon as $k>0$. The final congruences follow from the first one by comparing constant terms on $q$-expansions (the other coefficients are all integers).  
\end{proof}

\bibliographystyle{alpha}
\bibliography{ochanine}
\end{document}